\documentclass[12pt]{amsart}
\usepackage{amsmath,amsthm,amsfonts,amscd,amssymb,eucal,latexsym,mathrsfs, appendix, yhmath, setspace}
\usepackage[numbers,sort&compress]{natbib}
\usepackage[all,cmtip]{xy}
\usepackage{enumerate}

\setlength{\textwidth}{15cm}
\setlength{\oddsidemargin}{4mm}
\setlength{\evensidemargin}{4mm}

\newtheorem{theorem}{Theorem}[section]
\newtheorem{corollary}[theorem]{Corollary}
\newtheorem{lemma}[theorem]{Lemma}
\newtheorem{proposition}[theorem]{Proposition}

\theoremstyle{definition}
\newtheorem{definition}[theorem]{Definition}
\newtheorem{remark}[theorem]{Remark}
\newtheorem{notation}[theorem]{Notation}
\newtheorem{example}[theorem]{Example}

\newcommand{\im}{{\rm im}}

\newcommand{\id}{{\rm id}}

\newcommand{\mdim}{{\rm mdim}}
\newcommand{\ord}{{\rm ord}}

\newcommand{\cC}{{\mathcal C}}
\newcommand{\cD}{{\mathcal D}}

\newcommand{\cF}{{\mathcal F}}
\newcommand{\cH}{{\mathcal H}}

\newcommand{\cJ}{{\mathcal J}}

\newcommand{\cL}{{\mathcal L}}
\newcommand{\cM}{{\mathcal M}}

\newcommand{\cR}{{\mathcal R}}

\newcommand{\cU}{{\mathcal U}}
\newcommand{\cV}{{\mathcal V}}
\newcommand{\cW}{{\mathcal W}}

\newcommand{\Cb}{{\mathbb C}}

\newcommand{\Fb}{{\mathbb F}}
\newcommand{\Mb}{{\mathbb M}}
\newcommand{\Nb}{{\mathbb N}}
\newcommand{\Pb}{{\mathbb P}}
\newcommand{\Qb}{{\mathbb Q}}
\newcommand{\Rb}{{\mathbb R}}
\newcommand{\Zb}{{\mathbb Z}}

\newcommand{\sA}{{\mathscr A}}
\newcommand{\sB}{{\mathscr B}}

\newcommand{\sD}{{\mathscr D}}

\newcommand{\sF}{{\mathscr F}}

\newcommand{\sL}{{\mathscr L}}
\newcommand{\sM}{{\mathscr M}}
\newcommand{\sN}{{\mathscr N}}

\newcommand{\fA}{{\mathfrak A}}

\newcommand{\fC}{{\mathfrak C}}
\newcommand{\fF}{{\mathfrak F}}
\newcommand{\fM}{{\mathfrak M}}
\newcommand{\fN}{{\mathfrak N}}

\newcommand{\rL}{{\rm L}}

\newcommand{\diam}{{\rm diam}}

\newcommand{\tr}{{\rm tr}}

\newcommand{\op}{{\rm op}}

\newcommand{\rM}{{\rm M}}

\newcommand{\End}{{\rm End}}

\newcommand{\Sym}{{\rm Sym}}
\newcommand{\Map}{{\rm Map}}
\newcommand{\mesh}{{\rm mesh}}

\newcommand{\rspan}{{\rm span}}
\newcommand{\Hom}{{\rm Hom}}

\newcommand{\mL}{{\rm mL}}
\newcommand{\rk}{{\rm rk}}
\newcommand{\mrk}{{\rm mrk}}
\newcommand{\vrk}{{\rm vrk}}

\allowdisplaybreaks

\begin{document}

\title{Sofic Mean Length}

\author{Hanfeng Li}
\author{Bingbing Liang}

\address{\hskip-\parindent
H.L., Center of Mathematics, Chongqing University,
Chongqing 401331, China. \\
Department of Mathematics, SUNY at Buffalo,
Buffalo, NY 14260-2900, U.S.A.}
\email{hfli@math.buffalo.edu}

\address{\hskip-\parindent
B.L., Institute of Mathematics, Polish Academy of Sciences, ul. \'{S}niadeckich 8, 00-656 Warszawa, Poland}
\email{bliang@impan.pl}

\subjclass[2010]{Primary 16D10, 16U60, 37B99, 22D25, 55N35.}
\keywords{Length function, addition formula, stably direct finite, mean dimension, von Neumann-L\"{u}ck rank, von Neumann dimension, strong Atiyah conjecture}

\date{January 12, 2019}

\begin{abstract}
Given a length function $\rL$ on the $R$-modules of a unital ring $R$, for each sofic group $\Gamma$ we define a mean length for every locally $\rL$-finite $R\Gamma$-module relative to a bigger $R\Gamma$-module. We establish an addition formula for the mean length.

We give two applications. The first one shows that for any unital left Noetherian ring $R$, $R\Gamma$ is stably direct finite. The second one shows that for any $\Zb\Gamma$-module $\cM$, the mean topological dimension of the induced $\Gamma$-action on the Pontryagin dual of $\cM$ coincides with the von Neumann-L\"{u}ck rank of $\cM$.
\end{abstract}

\maketitle

\tableofcontents

\section{Introduction} \label{S-introduction}

The notion of length function was introduced by Northcott and Reufel in \cite{NR}. Given a unital ring $R$, a {\it length function} $\rL$ on left $R$-modules assigns a numerical isomorphism invariant $\rL(\sM)\in \Rb_{\ge 0}\cup \{+\infty\}$ to every left $R$-module $\sM$ satisfying suitable conditions (see Definition~\ref{D-length} below for details). The major requirement on $\rL$ is that for any left $R$-modules $\sM_1\subseteq \sM_2$, one has the addition formula
\begin{align*}
\rL(\sM_2)=\rL(\sM_1)+\rL(\sM_2/\sM_1).
\end{align*}
In particular, it implies that $\rL$ is increasing in the sense that submodules have smaller $\rL$-length. Length functions have been studied in \cite{Vamos68, Zanardo}, especially in detail in the thesis of V\'{a}mos \cite{VamosT}.

Given a discrete group $\Gamma$, one can form the group ring $R\Gamma$ of $\Gamma$ with coefficients in $R$ (see Section~\ref{SS-group ring} below) \cite{Passman77}. The left $R\Gamma$-modules are exactly left $R$-modules equipped with a $\Gamma$-action by automorphisms. When $\rL$ is a length function on left $R$-modules with $\rL(R)=1$ and $\Gamma$ is infinite, the left $R\Gamma$-modules, e.g. $R\Gamma$ itself, are typically large as left $R$-modules and could easily
have infinite $\rL$-length. Thus it is desirable to define a length function $\mL$ on left $R\Gamma$-modules which takes into account the $\Gamma$-action. This can be thought of as developing an equivariant version of the $\rL$-invariant. More precisely, the question is whether there is a length function $\mL$ on the left $R\Gamma$-modules satisfying
$\mL(R\Gamma)=1$, or more generally, $\mL(R\Gamma\otimes_R\sM)=\rL(\sM)$ for every left $R$-module $\sM$.

This question has been studied and answered affirmatively for amenable groups $\Gamma$ (see Section~\ref{S-amenable mean length} for details). It was studied by Salce-Zanardo and Salce-V\'{a}mos-Virili in \cite{SZ, SVV} for $\Gamma=\Zb$ in the case $\rL$ is discrete in the sense that the set of finite values of $\rL$ is order isomorphic to $\Nb$, and by Salce-Virili in \cite{SV} for $\Gamma=\Zb^d$, with motivation from the entropy theory for algebraic actions. Elek considered finitely generated left $R\Gamma$-modules for any field $R$ and any amenable group $\Gamma$ \cite{Elek03}. We studied the case $\rL(R)<+\infty$ for amenable groups $\Gamma$ in \cite{LL}, and used $\mL$ in the special case $R=\Zb$ and $\rL(\sM)=\dim_\Qb(\Qb\otimes_\Zb \sM)$ to establish a correspondence between the mean dimension for algebraic actions of amenable groups and  the von Neumann-L\"{u}ck rank of left $\Zb\Gamma$-modules in the theory of $L^2$-invariants. Virili considered discrete $\rL$ for amenable groups in \cite{ViriliA}, and used $\mL$ to show that $R\Gamma$ for left Noetherian $R$ and amenable $\Gamma$ is stably direct finite. It turns out that when $\rL(R)=+\infty$, the suitable domain for $\mL$ is the class of left $R\Gamma$-modules $\cM$ which are {\it locally $\rL$-finite} in the sense that every finitely generated $R$-submodule of $\cM$ has finite $\rL$-length.

For nonamenable groups, the above question is hopeless, since the answer is already negative for the free group $\Fb_2$ with $2$ generators. Indeed, it is well known \cite{Passman77} that,  as a left $R\Fb_2$-module, $R\Fb_2$ contains $R\Fb_2\oplus R\Fb_2$ as a submodule, violating the monotonicity  of $\mL$ (see Example~\ref{E-free group} below for more on this example). Interestingly enough, if we take $R=\Zb/2\Zb$, then the embedding $R\Fb_2\oplus R\Fb_2\hookrightarrow R\Fb_2$ of left $R\Fb_2$-modules yields a factor map from the algebraic action of $\Fb_2$ associated to $R\Fb_2$ to the algebraic action of $\Fb_2$ associated to $R\Fb_2\oplus R\Fb_2$, which is
exactly the Ornstein-Weiss example \cite[page 138]{OW} of a factor map from
the Bernoulli shift over $\Fb_2$ with $2$ symbols to the Bernoulli shift over $\Fb_2$ with $4$ symbols (this example had  been an obstruction to developing an entropy theory for nonamenable group actions for many years until the work of Bowen in \cite{Bowen10}).

The class of sofic groups was introduced by Gromov \cite{GromovS} and Weiss \cite{Weiss}. It contains all discrete amenable groups and residually finite groups, especially the free groups. So far it is unknown whether non-sofic groups exist. In the last several years, the entropy and mean dimension theory for amenable group actions has been extended to actions of sofic groups \cite{Bowen10, Bowen14, KL11, KL13a, Kerr, Li}, pioneered by the work of Bowen. Thus one might hope to define $\mL$ for sofic groups. However, in all these works, of crucial use is the a priori given metric structure that allows  one to talk about approximations and then use the size of the space of approximations to define the invariant. In the measure-preserving action situation, the metric structure is that on the measurable subsets coming from taking the measure of the symmetric difference, while in the topological action situation, the metric structure is that of the underlying compact space. Thus it is not clear how one could proceed in the purely algebraic setting when handling length functions.

In this paper, we find a satisfactory way to define mean length $\mL$ for sofic groups. There are two main ingredients in our approach. The first is that we find a way to define invariants for sofic group actions in a purely algebraic setting. The second is that, we define relative invariants for a pair of objects. That is, for any left $R\Gamma$-modules $\cM_1\subseteq \cM_2$, under the mild condition that $\cM_1$ is locally $\rL$-finite, we define the {\it mean length of $\cM_1$ relative to $\cM_2$}, denoted by $\mL_{\Sigma, \omega}(\cM_1|\cM_2)$ (Definition~\ref{D-mean length}). Here $\Sigma$ is a fixed sofic approximation net for $\Gamma$ and $\omega$ is a fixed free ultrafilter. For any left $R\Gamma$-module $\cM$ which is locally $\rL$-finite, we then define its mean length $\mL_{\Sigma, \omega}(\cM)$ as $\mL_{\Sigma, \omega}(\cM|\cM)$. The relative invariants are completely a sofic phenomenon, since $\mL_{\Sigma, \omega}(\cM_1|\cM_2)$ does not depend on $\cM_2$ when $\Gamma$ is amenable and $\cM_2$ is locally $\rL$-finite (Theorem~\ref{T-amenable mean length}). One advantage of introducing invariants for two variables ($\cM_1$ and $\cM_2$) is that the monotonicity is preserved for each variable: $\mL_{\Sigma, \omega}(\cM_1|\cM_2)$ increases with $\cM_1$ and decreases with $\cM_2$ (Proposition~\ref{P-continuity}). Most importantly, using these relative invariants we are able to establish a modified addition formula.

\begin{theorem} \label{T-addition for mean length}
Let $\cM_1\subseteq \cM_2\subseteq \cM_3$ be locally $\rL$-finite left $R\Gamma$-modules. Then
$$ \mL_{\Sigma, \omega}(\cM_2|\cM_3)=\mL_{\Sigma, \omega}(\cM_1|\cM_3)+\mL_{\Sigma, \omega}(\cM_2/\cM_1|\cM_3/\cM_1).$$
In particular, for any locally $\rL$-finite left $R\Gamma$-modules $\cM_1\subseteq \cM_2$, taking $\cM_3=\cM_2$ one has
$$ \mL_{\Sigma, \omega}(\cM_2)=\mL_{\Sigma, \omega}(\cM_1|\cM_2)+\mL_{\Sigma, \omega}(\cM_2/\cM_1).$$
\end{theorem}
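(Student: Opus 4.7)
The plan is to reduce the statement to the sofic approximation level and exploit the ordinary addition formula for $\rL$ at each finite stage. Unwinding Definition~\ref{D-mean length}, $\mL_{\Sigma,\omega}(\cN_1|\cN_2)$ should be expressible as an infimum (over finitely generated $R$-submodules $F$ of $\cN_1$, finite $K\subseteq\Gamma$, and tolerances $\delta$) of an ultralimit along $\omega$ of $d_i^{-1}$ times the $\rL$-length of a finitely generated model inside $F^{d_i}$ consisting of images of approximately $(K,\delta,\sigma_i)$-equivariant maps from a fixed finitely generated submodule of $\cN_2$ that covers $F$. With that picture in mind, the proof separates into the two inequalities, and the second (``In particular'') statement is immediate from the first by setting $\cM_3=\cM_2$ and using $\mL_{\Sigma,\omega}(\cN|\cN)=\mL_{\Sigma,\omega}(\cN)$.

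For the inequality $\le$ (superadditivity of the right-hand side): I would start from near-optimal approximate models witnessing $\mL_{\Sigma,\omega}(\cM_2|\cM_3)$. Pick a finitely generated $R$-submodule $F_2\subseteq\cM_2$ and set $F_1=F_2\cap\cM_1$, so that $F_2/F_1$ embeds into $\cM_2/\cM_1$. An approximately equivariant model for $F_2$ inside a model of $\cM_3$ restricts to one for $F_1\subseteq\cM_1$ (still inside $\cM_3$) and, after composing with the $\Gamma$-equivariant projection $\cM_3\twoheadrightarrow\cM_3/\cM_1$, descends to one for $F_2/F_1\subseteq\cM_2/\cM_1$ inside $\cM_3/\cM_1$. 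Applying the addition formula for $\rL$ to the short exact sequence of finite-dimensional pieces at each $\sigma_i$, normalizing by $d_i^{-1}$, and passing to the $\omega$-limit yields the required lower bound after taking infima over $F_1,F_2$ and refining $K,\delta$.

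For the inequality $\ge$ (subadditivity): I would glue near-optimal models of $\cM_1|\cM_3$ and of $\cM_2/\cM_1|\cM_3/\cM_1$ to produce a model of $\cM_2|\cM_3$. The heart of the argument is a lifting step: given an approximately equivariant model $\Phi$ for a finitely generated $F\subseteq\cM_2/\cM_1$ inside $\cM_3/\cM_1$, choose a finitely generated $\tilde F\subseteq\cM_2$ mapping onto $F$, and lift $\Phi$ coordinatewise to a map into $\tilde F^{d_i}$ using a set-theoretic section of $\cM_3\twoheadrightarrow\cM_3/\cM_1$ on a sufficiently large finitely generated piece. The lift may no longer be approximately equivariant, but the equivariance defect lands in $\cM_1^{d_i}$, and its contribution to the length is controlled by an approximately equivariant model for a finitely generated submodule of $\cM_1$ (inside $\cM_3$) chosen to absorb all relevant defects. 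Combining with a model for $\cM_1|\cM_3$ itself, the addition formula for $\rL$ at the $d_i$-level bounds the length of the combined model for $\cM_2|\cM_3$ by the sum of the two pieces; taking $\omega$-limits and infima gives the result.

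The main obstacle is the lifting step in the $\ge$ direction: the lift of an approximately equivariant map in $\cM_3/\cM_1$ need not itself be approximately equivariant in $\cM_3$, and one has to argue that the equivariance error, which is forced to lie in $\cM_1$, can be simultaneously absorbed into an already-chosen approximate model for $\cM_1|\cM_3$ without a blow-up that survives the ultralimit. Making this quantitative, with $K$ and $\delta$ parameters chosen in the correct order (first the $\cM_2/\cM_1$ data, then enlarging $K$ and shrinking $\delta$ for the $\cM_1$ data), is what forces the introduction of the relative invariant in the first place; once the orders of quantifiers are arranged correctly, the finite-level addition formula for $\rL$ delivers the identity cleanly.
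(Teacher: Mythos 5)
Your overall architecture does match the paper's: prove two inequalities, apply additivity of $\rL$ at the level of each $\sigma_i$, and, in the harder direction, pre-choose a finitely generated piece of $\cM_1$ to control what gets killed on passing to $\cM_3/\cM_1$. But the proposal is built on a misreading of Definition~\ref{D-mean length}: there is no $\delta$-tolerance and no space of approximately equivariant maps in this invariant. The finite-level object is purely algebraic, namely $\sM(\sA,\sB,F,\sigma)$, the image of $\sA^d$ in $\cM_3^d/\sM(\sB,F,\sigma)$, where $\sM(\sB,F,\sigma)$ is generated by the elements $\delta_vb-\delta_{sv}sb$; nothing is "glued" or "lifted coordinatewise", one only compares lengths of canonical submodules and quotients. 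Because of this, your easy direction is not a "short exact sequence of finite-dimensional pieces at each $\sigma_i$": with $\sA_2=\sA_1+\overline{\sA_3}$, $\sB_1=\sB_2$, $\sB_3=\pi(\sB_2)$, one gets only that $\sM(\sA_1,\sB_1,F,\sigma)$ is a submodule of $\sM(\sA_2,\sB_2,F,\sigma)$ and that $\sM(\sA_3,\sB_3,F,\sigma)$ is a quotient of the corresponding quotient (the relations downstairs kill all of $\cM_1^d$, not just $\sA_1^d$), which by additivity of $\rL$ yields the inequality $\ge$; moreover one must start from arbitrary $\sA_1\in\sF(\cM_1)$ and $\sA_3\in\sF(\cM_2/\cM_1)$, since your choice $F_2\cap\cM_1$ need not be finitely generated. (Your direction labels are also swapped throughout, which makes it hard to tell which bound each construction is meant to give.)

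The genuine gap is in the hard direction, which you explicitly defer: you assert that the error "forced to lie in $\cM_1$" can be absorbed into an already-chosen finitely generated approximation of $\cM_1$ once the quantifiers are ordered correctly, but that is precisely the step requiring proof, and it is where the hypotheses enter. In the paper one fixes $\sA_2\in\sF(\cM_2)$, $\sB_3\in\sF(\cM_3/\cM_1)$ with a lift $\overline{\sB_3}$, $F_3\ni e_\Gamma$, and observes that, modulo the relations $\sM(\overline{\sB_3},F_3,\sigma)$, the part of the image of $\sA_2^d$ lying in the image of $\cM_1^d$ already lies in the image of $\sD^d$ for $\sD=(\sA_2+\sum_{t\in F_3}t\overline{\sB_3})\cap\cM_1$; local $\rL$-finiteness gives $\rL(\sD)<+\infty$, and upper continuity of $\rL$ produces $\sA_1\in\sF(\cM_1)$ with $\sA_1\subseteq\sD$ and $\rL(\sD/\sA_1)<\varepsilon$, so the discrepancy between killing $\sA_1^d$ and killing $\cM_1^d$ costs at most $d\varepsilon$ in length, uniformly in $\sigma$, before any limit is taken. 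Without identifying this intersection $\sD$ and invoking local $\rL$-finiteness and upper continuity (neither appears in your sketch), the claim that the defect can be "simultaneously absorbed without a blow-up that survives the ultralimit" is unsupported; a priori the defect ranges over an infinitely generated submodule of $\cM_1$, and this absorption argument is the heart of the theorem rather than a routine rearrangement of quantifiers.
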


We give two applications of the sofic mean length.

The first application concerns the stably direct finiteness of $R\Gamma$. A unital ring $\tilde{R}$ is called {\it directly finite} or {\it von Neumann finite} if for any $a, b\in \tilde{R}$ with $ab=1$, one has $ba=1$. It is called {\it stably direct finite} if $M_n(\tilde{R})$ is directly finite for every $n\in \Nb$.
Kaplansky's {\it direct finiteness conjecture} asserts that for any field $R$ and any  group $\Gamma$, $R\Gamma$ is directly finite \cite[page 123]{Kaplansky}. Kaplansky proved that $R\Gamma$ is stably direct finite when $R$ is a field with  characteristic $0$ and $\Gamma$ is any group \cite[page 122]{Kaplansky} (see also \cite{Montgomery, Passman71} and \cite[Corollary 2.1.9]{Passman77}). Using the sofic mean length, we are able to prove the following result.

\begin{theorem} \label{T-Noetherian}
Let $R$ be a unital left Noetherian ring and  $\Gamma$ be a sofic group. Then $R\Gamma$ is stably direct finite.
\end{theorem}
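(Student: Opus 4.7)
I would begin with the standard reduction: since $M_n(R\Gamma)\cong M_n(R)\Gamma$ and $M_n(R)$ inherits left Noetherianity, it suffices to prove that $R\Gamma$ itself is directly finite. So suppose $a,b\in R\Gamma$ with $ab=1$, set $e=1-ba$, and verify directly the relations $ae=0$, $eb=0$, $e^2=e$. A routine calculation gives a split direct-sum decomposition
\[
R\Gamma = R\Gamma a \oplus K \qquad \text{with}\quad K := R\Gamma e = \{x\in R\Gamma : xb=0\}
\]
of left $R\Gamma$-modules, and right multiplication by $a$ furnishes an $R\Gamma$-isomorphism $R\Gamma\cong R\Gamma a$. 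Thus $R\Gamma\cong R\Gamma\oplus K$, and the task reduces to showing $K=0$, i.e.\ $e=0$.

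Next I would attach to $R$ a length function $\rL$ with $\rL(R)<\infty$, chosen, crucially, to be \emph{faithful}: $\rL(\sN)>0$ for every nonzero finitely generated left $R$-module $\sN$. Such a function exists over any left Noetherian ring---for example, a V\'amos-type construction attached to a simple $R$-module arising as a quotient of a chosen finitely generated submodule works (left Noetherianity ensures every nonzero f.g.\ $R$-module has a simple quotient, and $\rL(R)<\infty$ ensures that $R\Gamma$ and all its subquotients in sight are locally $\rL$-finite). Applying Theorem~\ref{T-addition for mean length} with $\cM_1=R\Gamma a\subseteq \cM_2=\cM_3=R\Gamma$ and noting $R\Gamma/R\Gamma a\cong K$, we obtain
\[
\mL_{\Sigma,\omega}(R\Gamma) = \mL_{\Sigma,\omega}(R\Gamma a\,|\,R\Gamma) + \mL_{\Sigma,\omega}(K).
\]
Because the sequence splits and $R_a$ identifies $R\Gamma$ with $R\Gamma a$, basic monotonicity of $\mL_{\Sigma,\omega}$ combined with the projection $R\Gamma\to R\Gamma a$ should upgrade the inequality $\mL_{\Sigma,\omega}(R\Gamma a\,|\,R\Gamma)\le \mL_{\Sigma,\omega}(R\Gamma a)$ to equality; paired with the expected normalisation $\mL_{\Sigma,\omega}(R\Gamma)=\rL(R)<\infty$, this forces $\mL_{\Sigma,\omega}(K)=0$.

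To conclude I would derive a contradiction assuming $K\ne 0$. Then $Re\subseteq K$ is a nonzero finitely generated $R$-submodule with $\rL(Re)>0$ by faithfulness. The final argument is to show that this positivity is inherited by the sofic mean length, i.e.\ $\mL_{\Sigma,\omega}(K)>0$: for each sofic approximation $\sigma_i:\Gamma\to\Sym(V_i)$, a positive density of $v\in V_i$ admits approximately independent $\Gamma$-translates of $Re$ inside $K$, producing enough $R$-copies that the averaged $\rL$-length stays bounded below by a fixed positive multiple of $\rL(Re)$ and survives the $\omega$-limit. This contradicts the vanishing obtained above, forcing $K=0$ and hence $ba=1$.

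The main obstacle is this last faithfulness step. In the amenable setting (Virili's work) Fölner averaging directly supplies the required density, but for sofic $\Gamma$ the maps $\sigma_i$ only approximately respect multiplication, so one must show that a positive fraction of the translates of $Re$ remain genuinely $R$-independent in the approximating space, uniformly enough along the sofic net to extract a clean positive lower bound on $\mL_{\Sigma,\omega}(K)$. Calibrating the V\'amos-type construction of the faithful length function $\rL$ on the Noetherian side against the sofic approximation-theoretic estimates on the dynamical side is where the real work lies.
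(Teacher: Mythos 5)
Your overall skeleton (idempotent decomposition $R\Gamma\cong R\Gamma a\oplus K$, the addition formula to get $\mL_{\Sigma,\omega}(K)=0$, then a positivity statement to force $K=0$) is in the same spirit as the paper, and the step you single out as ``the real work'' at the end is in fact already available: Proposition~\ref{P-mean rank basic}(2) says precisely that for a submodule $\cM$ of $R\Gamma\otimes_R\sN$ one has $\mL_{\Sigma,\omega}(\cM|R\Gamma\otimes_R\sN)=0$ if and only if $\rL(\cM)=0$, so no new sofic estimate is needed there. The genuine gap is earlier: your argument hinges on a single length function $\rL$ with $\rL(R)<+\infty$ that is \emph{faithful} on finitely generated $R$-modules, and no such function exists over a general left Noetherian ring. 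Already for $R=\Zb$, any length function with $\rL(\Zb/2\Zb)>0$ satisfies $\rL(\Zb)\ge k\,\rL(\Zb/2\Zb)$ for all $k$ via the chain $\Zb\supseteq 2\Zb\supseteq\cdots\supseteq 2^k\Zb$, hence $\rL(\Zb)=+\infty$; the V\'amos-type function attached to a simple module is exactly of this kind, so your proposed construction is internally inconsistent unless $R$ is Artinian. Nor can you retreat to faithfulness only on finitely generated submodules of free modules: for the left Noetherian ring $R=\left(\begin{smallmatrix}\Zb & \Zb/2\Zb\\ 0 & \Zb/2\Zb\end{smallmatrix}\right)$ the simple left ideal $\left(\begin{smallmatrix}0 & \Zb/2\Zb\\ 0 & 0\end{smallmatrix}\right)$ recurs as a chain factor of the left ideals $\left(\begin{smallmatrix}2^k\Zb & \Zb/2\Zb\\ 0 & \Zb/2\Zb\end{smallmatrix}\right)$, so positivity on that submodule again forces $\rL(R)=+\infty$, and then the normalization $\mL_{\Sigma,\omega}(R\Gamma)=\rL(R)<+\infty$ that your cancellation argument needs is lost.

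This is exactly the difficulty the paper's proof is built to avoid. Instead of one faithful length function, it uses the ordinal-indexed family $\rL_\alpha$ from Example~\ref{E-ordinal length} (vanishing on $\fA_\alpha$, finite and positive on $\fA_{\alpha+1}\setminus\fA_\alpha$), together with the torsion submodules $T_\alpha$: given a hypothetical surjective non-injective endomorphism $\varphi$ of $R\Gamma\otimes_R\sM$, Lemma~\ref{L-ordinal for kernel} picks $\alpha$ with $T_\alpha(\ker\varphi)\neq\ker\varphi=T_{\alpha+1}(\ker\varphi)$, Lemma~\ref{L-ordinal surjective} restricts $\varphi$ to a surjection of $R\Gamma\otimes_R\sN$ with $\sN=T_{\alpha+1}(\sM)\in\fA_{\alpha+1}$, and only then are Theorem~\ref{T-addition for mean length} and Proposition~\ref{P-mean rank basic} applied with $\rL=\rL_\alpha$, for which $\rL_\alpha(\sN)<+\infty$ while $\rL_\alpha(\ker\varphi)>0$. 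This restriction to the $\alpha$-adapted torsion piece, which has no counterpart in your sketch, is what replaces the nonexistent faithful length function; the paper then packages the statement via hopficity of $R\Gamma\otimes_R\sM$, Lemma~\ref{L-direct summand}, Lemma~\ref{L-end} and the opposite-ring Lemma~\ref{L-opposite} to get stable direct finiteness (your reduction via $M_n(R\Gamma)\cong M_n(R)\Gamma$ handles that part equally well). To repair your proof you would need to import this Krull-filtration device (or an equivalent d\'evissage, e.g.\ by reduced rank), not merely sharpen the sofic approximation estimates.
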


Theorem~\ref{T-Noetherian} proves a conjecture of Virili in \cite{ViriliS}. It was proved in the cases when $R$ is a skew-field and $\Gamma$ is residually amenable by Ara et al. \cite[Remark 3.5.(iii)]{AOP}, when $R$ is a skew-field and $\Gamma$ is sofic by Elek and Szab\'{o} \cite[Corollary 4.7]{ES04}, when $R$ is an Artinian ring and $\Gamma$ is sofic by Ceccherini-Silberstein and Coornaert \cite[Corollary 1.5]{CC07},
and when $R$ is a unital left Noetherian ring and $\Gamma$ is amenable by Virili \cite[Theorem A]{ViriliA}.

Just as commutative Noetherian rings play a vital role in commutative algebra, left Noetherian rings are at the heart of the theory of noncommutative rings \cite{GW, MR}. When $R$ is a unital subring of the direct product $\prod_{j\in J}R_j$ for a family of rings $\{R_j\}_{j\in J}$, $R\Gamma$ is a unital subring of $\prod_{j\in J} R_j\Gamma$, and hence the direct finiteness of $R\Gamma$ follows from that of $R_j\Gamma$. For unital commutative rings $R$, it is then natural to try to obtain the direct finiteness of $R\Gamma$ by embedding $R$ into the direct product of fields. However, this is not always possible. Indeed, it is easy to show that a unital commutative ring $R$ embeds into the direct product of fields exactly when the nilradical of $R$, consisting of all nilpotent elements, is trivial. For the noncommutative case, Goldie's theorem implies that a semiprime left Noetherian ring has a semisimple (and hence Artinian) left quotient ring  \cite{Goldie} \cite[Theorem 2.3.6]{MR} \cite[Corollary 6.16]{GW}. Beyond that, it is not clear when one can embed a left Noetherian ring into the direct product of Artinian rings. For example, for any finite-dimensional Lie algebra $\mathfrak{g}$ over a field, since the enveloping algebra $U(\mathfrak{g})$ is left Noetherian \cite[Corollary 1.7.4]{MR}, for any two-sided ideal $I$ of $U(\mathfrak{g})$ which is not the intersection of prime ideals, we do not know whether the left Noetherian ring $U(\mathfrak{g})/I$ embeds into the direct product of Artinian rings. Here a two-sided ideal $I'$ of $R$ is called prime if for any $a, b\in R\setminus I'$ one has $aRb\nsubseteq I'$. Thus Theorem~\ref{T-Noetherian} enables us to establish the stable direct finiteness of $R\Gamma$ for many cases in which one cannot reduce to the results of Elek-Szab\'{o} and Ceccherini-Silberstein-Coornaert.

Our second application concerns the correspondence between the mean dimension in dynamical systems and the von Neumann-L\"{u}ck rank in the theory of $L^2$-invariants. Mean topological dimension was introduced by Gromov \cite{GromovM}, and developed systematically by Lindenstrauss and Weiss \cite{LW} for continuous actions of countable amenable groups on compact metrizable spaces, as an equivariant version of the covering dimension for compact metrizable spaces. Lindenstrauss and Weiss also introduced the metric mean dimension in the same setting, as an equivariant version of the infimum of the Minkowski dimensions over the compatible metrics on a compact metrizable space. Later these mean dimensions were extended to continuous actions of countable sofic groups on compact metrizable spaces \cite{Li}.
The mean topological dimension has applications to the problem of embedding one dynamical system into another \cite{LW} and the problem of finding factors with small entropy \cite{Lindenstrauss}. The mean dimensions have attracted much attention in the last several years \cite{Coornaert, CK, Gutman11, Gutman15, Gutman16, GLT, GT, Krieger06, Krieger09, LTs, MT11, MT15, Tsukamoto08, Tsukamoto09D, Tsukamoto09G, Tsukamoto15}.

The von Neumann-L\"{u}ck dimension was originally defined for finitely generated projective left modules over the group von Neumann algebra $\cL\Gamma$ for a discrete group $\Gamma$, and later extended to arbitrary left $\cL\Gamma$-modules by L\"{u}ck \cite{Luck98} in order to extend Atiyah's $L^2$-Betti numbers \cite{Atiyah} to arbitrary continuous $\Gamma$-actions. In our terminology, it is a length function on the left $\cL\Gamma$-modules. It has profound applications to the theory of $L^2$-invariants \cite{Luck02}. Via taking a tensor product with $\cL\Gamma$, one can define the von Neumann-L\"{u}ck rank for any left $\Zb\Gamma$-module $\cM$, which is in fact the $0$-th $L^2$-Betti number of $\cM$.

Despite the fact that the mean dimensions are dynamical invariants while the von Neumann-L\"{u}ck rank is an analytic invariant,  using the mean length we shall show that they correspond to each other in the setting of algebraic actions of sofic groups. Since the relative invariants are crucial for the sofic mean length, in order to establish such a correspondence, we must introduce relative invariants for the mean dimensions and the von Neumann-L\"{u}ck rank. For any countable sofic group $\Gamma$ acting on compact metrizable spaces $X$ and $Y$ and any factor map $X\rightarrow Y$, we define the mean topological dimension $\mdim_{\Sigma, \omega}(Y|X)$ of $\Gamma\curvearrowright Y$ relative to the extension $\Gamma\curvearrowright X$ (Definition~\ref{D-relative mdim}) and  the metric mean  dimension $\mdim_{\Sigma, \omega, \rM}(Y, \rho|X)$ of $\Gamma\curvearrowright Y$ relative to $\Gamma\curvearrowright X$ with respect to a compatible metric $\rho$ on $Y$ (Definition~\ref{D-relative metric mdim}). The
metric mean  dimension $\mdim_{\Sigma, \omega, \rM}(Y|X)$ of $\Gamma\curvearrowright Y$ relative to $\Gamma\curvearrowright X$ is defined as the infimum of $\mdim_{\Sigma, \omega, \rM}(Y, \rho|X)$ for $\rho$ ranging over all compatible metrics on $Y$. For any discrete group $\Gamma$ and any left $\Zb\Gamma$-modules $\cM_1\subseteq \cM_2$, we define the von Neumann-L\"{u}ck rank $\vrk(\cM_1|\cM_2)$ of $\cM_1$ relative to $\cM_2$ (Definition~\ref{D-vrk}).
In the absolute case $X=Y$ or $\cM=\cM_1=\cM_2$, we write the invariants as $\mdim_{\Sigma, \omega}(X)$, $\mdim_{\Sigma, \omega, \rM}(X)$  and $\vrk(\cM)$ respectively.

Note that for any countable $\Zb\Gamma$-module $\cM$, its Pontryagin dual $\widehat{\cM}$, consisting of all group homomorphisms from $\cM$ to the circle group $\Rb/\Zb$, is a compact metrizable abelian group. The $\Zb\Gamma$-module structure of $\cM$ induces a natural $\Gamma$-action on $\widehat{\cM}$ by continuous automorphisms.
The information about the $\Zb\Gamma$-module $\cM$ is equivalent to the information about the
$\Gamma$-action on the compact metrizable abelian group $\widehat{\cM}$. Thus, if we forget the group structure of $\widehat{\cM}$,
a priori there is no reason that the $\Gamma$-action on the compact metrizable space $\widehat{\cM}$ should remember any information about the $\Zb\Gamma$-module $\cM$.
Our second application in the following theorem says that indeed the mean dimensions of the $\Gamma$-action on the compact metrizable space $\widehat{\cM}$ are exactly  the von Neumann-L\"{u}ck rank of the $\Zb\Gamma$-module $\cM$.

\begin{theorem} \label{T-mdim vs vrk}
Let $\Gamma$ be a countable sofic group and $\cM_1\subseteq \cM_2$ be countable $\Zb\Gamma$-modules. Then
$$\mdim_{\Sigma, \omega, \rM}(\widehat{\cM_1}|\widehat{\cM_2})=\mdim_{\Sigma, \omega}(\widehat{\cM_1}|\widehat{\cM_2})=\vrk(\cM_1|\cM_2).$$
Furthermore, there exists a translation-invariant compatible  metric $\rho$ on $\widehat{\cM_1}$ with
\begin{align*}
\mdim_{\Sigma, \omega, \rM}(\widehat{\cM_1}, \rho|\widehat{\cM_2})=\mdim_{\Sigma, \omega}(\widehat{\cM_1}|\widehat{\cM_2}).
\end{align*}
\end{theorem}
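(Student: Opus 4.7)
My plan is to establish the chain
\begin{align*}
\mdim_{\Sigma,\omega}(\widehat{\cM_1}|\widehat{\cM_2}) &\le \mdim_{\Sigma,\omega,\rM}(\widehat{\cM_1}|\widehat{\cM_2}) \le \mdim_{\Sigma,\omega,\rM}(\widehat{\cM_1},\rho|\widehat{\cM_2}) \\
&\le \vrk(\cM_1|\cM_2) \le \mdim_{\Sigma,\omega}(\widehat{\cM_1}|\widehat{\cM_2})
\end{align*}
for a carefully chosen translation-invariant compatible metric $\rho$ on $\widehat{\cM_1}$, which forces all intermediate quantities to coincide and simultaneously exhibits the metric required by the ``furthermore'' clause. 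The first inequality is the general comparison $\mdim\le\mdim_{\rM}$, which adapts verbatim from Lindenstrauss--Weiss to the relative sofic setting by bounding $\Widim_\epsilon$ pointwise against $\epsilon$-separation numbers in the $\sigma$-approximation spaces. The second inequality is tautological, since $\mdim_{\Sigma,\omega,\rM}(\widehat{\cM_1}|\widehat{\cM_2})$ is defined as the infimum over compatible metrics.

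The bridge between the dynamical and $L^2$-invariant sides is the sofic mean length of Theorem~\ref{T-addition for mean length}. I would equip $\Zb$-modules with the torsion-free rank $\rL(\sM):=\dim_\Qb(\sM\otimes_\Zb\Qb)$; this is a length function with $\rL(\Zb)=1$ under which every countable $\Zb$-module is locally $\rL$-finite. The first algebraic task is to prove the identity $\mL_{\Sigma,\omega}(\cM_1|\cM_2)=\vrk(\cM_1|\cM_2)$: verify the normalization $\mL_{\Sigma,\omega}(\Zb\Gamma)=\vrk(\Zb\Gamma)=1$, check the identity for finitely presented $\cM_1$ via presentations $\Zb\Gamma^a\to\Zb\Gamma^b\to\cM_1\to 0$ using additivity on both sides (Theorem~\ref{T-addition for mean length} and L\"{u}ck's additivity for $\vrk$), and then extend to countable $\cM_1\subseteq\cM_2$ by writing $\cM_1=\bigcup_n\cN_n$ with $\cN_n$ finitely generated and exploiting continuity along such exhaustions.

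For the two remaining comparisons with $\vrk$, I would convert algebraic data into topological approximation data via Pontryagin duality. The lower bound $\vrk(\cM_1|\cM_2)\le\mdim_{\Sigma,\omega}(\widehat{\cM_1}|\widehat{\cM_2})$ is obtained by showing that a finite subset $F\subseteq\cM_1$ whose $\Zb$-span has torsion-free rank $r$ yields, under a sofic approximation $\sigma:\Gamma\to\Sym(d)$, a linear subspace of $\sigma$-equivariant maps landing in the relevant approximation space for $\widehat{\cM_1}\subseteq\widehat{\cM_2}$ of real dimension proportional to $rd$; an $\epsilon$-separation count on this subspace yields the desired lower bound on $\Widim_\epsilon$. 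For the upper bound, fix an exhaustion $S_1\subseteq S_2\subseteq\cdots$ of $\cM_1$ by finite subsets with $\bigcup_n S_n=\cM_1$, and set
$$\rho(x,y):=\sum_{n=1}^{\infty}2^{-n}\max_{m\in S_n}\psi((x-y)(m)),$$
where $\psi:\Rb/\Zb\to[0,1/2]$ is the canonical distance-to-zero function; then $\rho$ is a translation-invariant compatible metric on $\widehat{\cM_1}$.

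The principal obstacle lies in controlling the $\epsilon$-separated numbers of the restriction to $\widehat{\cM_1}$ of the $(F,\delta,\sigma)$-approximation space of $\widehat{\cM_2}$ under this specific $\rho$ by $(\vrk(\cM_1|\cM_2)+o(1))\cdot d$. I would first localize to a tail $n\ge n_0$ via the geometric weights to reduce the estimate to the finitely generated submodule $\cN:=\Zb\Gamma\cdot S_{n_0}\subseteq\cM_1$; duality then identifies an $\epsilon$-separated family of $\sigma$-maps with cosets modulo a lattice whose covolume is controlled by $\mL_{\Sigma,\omega}(\cN|\cM_2)$. Letting $n_0\to\infty$ and applying the addition formula of Theorem~\ref{T-addition for mean length} then yield $\mdim_{\Sigma,\omega,\rM}(\widehat{\cM_1},\rho|\widehat{\cM_2})\le\mL_{\Sigma,\omega}(\cM_1|\cM_2)=\vrk(\cM_1|\cM_2)$, which closes the cycle.
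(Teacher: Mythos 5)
Your overall architecture (a cycle of inequalities closed through the sofic mean length, with a translation-invariant metric built from an exhaustion of $\cM_1$) is in the right spirit, but two of the links in your chain have genuine gaps.

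First, the bridge $\mL_{\Sigma,\omega}(\cM_1|\cM_2)=\vrk(\cM_1|\cM_2)$ does not follow from normalization plus additivity on both sides. For $\cM=(\Zb\Gamma)^{1\times n}/(\Zb\Gamma)^{1\times m}f$, additivity only reduces the problem to showing $\mL_{\Sigma,\omega}((\Zb\Gamma)^{1\times m}f\,|\,(\Zb\Gamma)^{1\times n})=n-\tr_{\cL\Gamma}P_f$, i.e.\ to identifying $\lim_{i\to\omega}\rk(\im\bar{\sigma}_{i,f})/d_i$ with the von Neumann dimension of the image; this is exactly the sofic L\"{u}ck approximation theorem of Thom (valid because $\Zb\subseteq\bar{\Qb}$), combined with the computation of Proposition~\ref{P-principal mean rank}, and it is the essential analytic input your sketch omits. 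Moreover, passing from finitely presented modules to arbitrary countable pairs $\cM_1\subseteq\cM_2$ needs continuity of both invariants in \emph{both} variables (for $\vrk$ this uses that $\cL\Gamma\otimes_{\Zb\Gamma}\cdot$ preserves direct limits) together with a uniqueness argument of the type in Lemma~\ref{L-unique}; exhausting $\cM_1$ alone is not enough. Relatedly, your lower-bound step is misstated: from a finite $A\subseteq\cM_1$ one can only embed a cube of dimension $\rk(\sM(\sA,\sB,F,\sigma))$ (whose normalized limit is the relative mean rank), not one of dimension proportional to $rd$ with $r$ the torsion-free rank of the $\Zb$-span of $A$ — with your claim the trivial action on $\widehat{\Zb}$ would acquire positive mean dimension. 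So even the lower bound lands at $\mrk_{\Sigma,\omega}(\cM_1|\cM_2)$ and still needs the approximation theorem to reach $\vrk$.

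Second, the inequality $\mdim_{\Sigma,\omega,\rM}(\widehat{\cM_1},\rho|\widehat{\cM_2})\le\vrk(\cM_1|\cM_2)$ is precisely the hard direction, and "cosets modulo a lattice whose covolume is controlled by $\mL_{\Sigma,\omega}(\cN|\cM_2)$" is not an argument: the elements of $\Map(\pi,\rho_X,F,\delta,\sigma)$ are only approximately equivariant projections from $\widehat{\cM_2}^d$, torsion contributes to separation without contributing to rank, and nothing in your sketch converts a covolume bound into a bound on $\varepsilon$-separated sets at scale $\varepsilon$ with the $1/|\log\varepsilon|$ normalization. The paper avoids any direct upper estimate: it proves a superadditivity inequality for relative metric mean dimension under group extensions (using translation invariance of the metric to multiply separated families, Lemma~\ref{L-mdim addition lower bound}), evaluates the free module exactly, and then forces equality by sandwiching against the addition formula for mean rank (Theorem~\ref{T-addition for mean length}); the general countable case is then reached by an inverse-limit metric whose weights $\lambda_j$ are chosen adaptively (Lemma~\ref{L-metric inverse limit}) — fixed weights $2^{-n}$ as in your $\rho$ are not known to make the "localize to a tail" step work. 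To complete your proposal you would need either to supply a genuine direct upper bound on the relative separation numbers (essentially redoing Hayes-type estimates in the relative setting) or to adopt the sandwich-via-superadditivity mechanism.
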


The first and the last equalities  of Theorem~\ref{T-mdim vs vrk} are the algebraic action case of the dynamical analogue of the Pontryagin-Schnirelmann theorem \cite{PS} which says that for any compact metrizable space $X$, its covering dimension is the minimal value of the Minkowski dimension of $(X, \rho)$ for $\rho$ ranging over all compatible metrics on $X$.
The amenable group case of Theorem~\ref{T-mdim vs vrk} was proved in \cite{LL}.
Hayes proved \cite{Hayes} that $\mdim_{\Sigma, \omega, \rM}(\widehat{\cM})=\vrk(\cM)$
for countable sofic groups $\Gamma$ and finitely generated $\Zb\Gamma$-modules $\cM$, and that $\mdim_{\Sigma, \omega}(\widehat{\cM})=\vrk(\cM)$
for residually finite groups $\Gamma$ and finitely presented $\Zb\Gamma$-modules $\cM$ when the sofic approximation sequence $\Sigma$ of $\Gamma$ comes from finite quotient groups of $\Gamma$.

The correspondence between mean dimensions and von Neumann-L\"{u}ck rank in Theorem~\ref{T-mdim vs vrk} is parallel to the correspondence between entropy and $L^2$-torsion in \cite[Theorem 1.1]{LT}, though the latter is known only for amenable groups so far.

The ideas in this paper can be applied to many other problems such as dimension for isometric actions on Banach spaces and addition formulas for sofic entropy, which we plan to investigate in subsequent papers.

This paper is organized as follows. We recall some basic definitions and results in Section~\ref{S-preliminary}. In Section~\ref{S-mean length} we introduce the sofic mean length and establish some basic properties including Theorem~\ref{T-addition for mean length}. Theorem~\ref{T-Noetherian} is proved in Section~\ref{S-direct finite}. In Section~\ref{S-amenable mean length} we show that for amenable groups the sofic mean length coincides with the mean length introduced in \cite{LL, ViriliA}. In Section~\ref{S-fg submodule} we give a formula for the mean length of a finitely  generated $R\Gamma$-module relative to a free $R\Gamma$-module. We prove the equality of the mean length and the von Neumann-L\"{u}ck rank when $R$ is a unital subring of $\bar{\Qb}$ in Section~\ref{S-mrk vs vrk}. The relative mean topological dimension and the relative metric mean dimension are introduced in Sections~\ref{S-relative mdim} and \ref{S-relative metric mdim} respectively. Theorem~\ref{T-mdim vs vrk} is proved in Section~\ref{S-mdim vs mrk}. In Section~\ref{S-application} we give three applications to mean dimension.

Throughout this paper, $R$ will be a unital ring, and $\Gamma$ will be a discrete group. All modules are left modules unless specified. For any $d\in \Nb$, we write $[d]$ for the set $\{1, \dots, d\}$ and $\Sym(d)$ for the permutation group of $[d]$. We denote by $\cF(\Gamma)$ the set of all finite subsets of $\Gamma$.

\noindent{\it Acknowledgements.}
H. Li was partially supported by NSF and NSFC grants. He is grateful to Alain Connes and Henri Moscovici for comments at the noncommutative geometry workshop in June 2015 at MFO.
We thank Simone Virili and an anonymous referee for helpful comments.

\section{Preliminaries} \label{S-preliminary}

\subsection{Length functions} \label{SS-length}

In this section we give some examples of length functions, following the thesis of V\'{a}mos \cite{VamosT}.

\begin{definition} \label{D-length}
By {\it a length function} $\rL$ on  $R$-modules we mean associating a value $\rL(\sM)\in \Rb_{\ge 0}\cup \{+\infty\}$ to each $R$-module $\sM$ such that the following conditions are satisfied:
\begin{enumerate}
\item $\rL(0)=0$;
\item (additivity) for any short exact sequence $0\rightarrow \sM_1\rightarrow \sM_2\rightarrow \sM_3\rightarrow 0$ of $R$-modules, one has $\rL(\sM_2)=\rL(\sM_1)+\rL(\sM_3)$;
\item (upper continuity) for any $R$-module $\sM$, one has $\rL(\sM)=\sup_\sN \rL(\sN)$ for $\sN$ ranging over all finitely generated submodules of $\sM$.
\end{enumerate}
\end{definition}

By {\it a chain} $\tau$ for an $R$-module $\sM$, we mean a finite sequence of submodules of $\sM$ of the form $0=\sM_0\subseteq \sM_1\subseteq \cdots \subseteq \sM_n=\sM$.
The modules $C_j=\sM_j/\sM_{j-1}$ for $1\le j\le n$ are called the {\it chain factors} of $\tau$.
One chain $\tau$ {\it refines} another chain $\tau': 0=\sM'_0\subseteq \sM'_1\subseteq \cdots \subseteq \sM'_m=\sM$ if $\tau$ is obtained from $\tau'$ by inserting more submodules. The two chains $\tau$ and $\tau'$ are called {\it equivalent} if $m=n$ and they have the same chain factors up to permutation, i.e. there is some $\sigma \in \Sym(n)$ such that $C_j\cong C'_{\sigma(j)}$ for all $1\le j\le n$, where $C'_j$ for $1\le j\le m$ are the chain factors of $\tau'$. The Jordan-H\"{o}lder theorem \cite[VIII.1.10]{Hungerford} states that any two chains for $\sM$ have equivalent refinements.

If $\sM_1\subseteq \sM_2$ are submodules of an $R$-module $\sM$, then $\sM_2/\sM_1$ is called a {\it segment} of $\sM$.

\begin{example} \label{E-canonical length}
For each $R$-module $\sM$ and each chain $\tau: 0=\sM_0\subseteq \sM_1\subseteq \cdots \subseteq \sM_n=\sM$ for $\sM$ with chain factors $\{C_j\}_{1\le j\le n}$, denote by $l(\tau)$ the number of $1\le j\le n$ for which $C_j$ is a simple module,
and by $l'(\tau)$ the number of $1\le j\le n$ for which $C_j\neq 0$.
Denote by $\rL(\sM)$ the supremum of $l(\tau)$ for $\tau$ ranging over all chains for $\sM$.
 Since every nonzero $R$-module has a segment which is a simple module, it is easily checked that $\rL(\sM)$ is equal to the supremum of $l'(\tau)$ for $\tau$ ranging over all chains for $\sM$.
Using the Jordan-H\"{o}lder theorem it is easy to see that $\rL$ satisfies the conditions (1) and (2) in Definition~\ref{D-length}. Let $\tau$ be a chain for $\sM$ as above, and denote by $J$ the set of  $1\le j\le n$ such that $C_j$ is a simple module. For each $j\in J$, take a finitely generated submodule $\sN_j$ of $\sM_j$ such that $\sN_j$ maps onto $C_j$ under the quotient homomorphism $\sM_j\rightarrow C_j$. Then $\sN:=\sum_{j\in J}\sN_j$ is a finitely generated submodule of $\sM$. For each $j\in J$, set $\sN'_j=\sum_{i\le j, i\in J}\sN_i$. Then $\{\sN'_j\}_{j\in J}$ together with $\sN'_0=0$ is a chain $\tau'$ for $\sN$, and $\sM_j/\sM_{j-1}$ is a quotient module of
$\sN'_j/\sN'_k$ for each $j\in J$, where $k$ is the largest element in $J\cup \{0\}$ strictly less than $j$. Thus $\sN$ has a chain $\tau''$ refining $\tau'$ such that the modules $C_j$ for $j\in J$ appear in the chain factors of $\tau''$. Therefore $l(\tau'')\ge l(\tau)$. Thus $\rL$ satisfies the condition (3) of Definition~\ref{D-length}, and hence
is a length function on $R$-modules.
Furthermore, $\rL$ is the unique length function $\rL'$ on $R$-modules satisfying $\rL'(\sM)=1$ for every simple $R$-module $\sM$ \cite[Proposition 2.12]{VamosT}.
\end{example}

\begin{example} \label{E-Ore}
Let $R$ be a {\it left Ore domain} \cite[10.19]{Lam}, i.e. $ab\neq 0$ and $Ra\cap Rb\neq \{0\}$ for all nonzero $a, b\in R$.
Then $R$ has a {\it left ring of fractions} $Q$ \cite[Theorem 10.6]{Lam}, uniquely determined up to isomorphism by the following properties:
\begin{enumerate}
\item $R$ is a subring of $Q$,

\item every nonzero element of $R$ is invertible in $Q$,

\item every element of $Q$ is of the form $a^{-1}b$ for some $b\in R$ and nonzero $a\in R$.
\end{enumerate}
Note that $Q$ is a skew-field, thus every left $Q$-module $\sM'$ has its dimension $\dim_Q(\sM')$, which is also the same as the length of $\sM'$ defined in Example~\ref{E-canonical length}. Furthermore, $Q$ is flat as a right $R$-module \cite[Exercise 10.17]{Lam}. It follows that the function $\rk$ on left $R$-modules defined
by
$$\rk(\sM):=\dim_Q(Q\otimes_R\sM)$$
is a length function \cite[2.4.II]{VamosT}. It is the unique length function $\rL$ on $R$-modules satisfying $\rL(R)=1$ \cite[Proposition 2.13]{VamosT}.
\end{example}

For the proof of Theorem~\ref{T-hopfian} below, we need a length function $\rL_\alpha$ on $R$-modules with suitable properties for each ordinal $\alpha$, generalizing the length function in Example~\ref{E-canonical length}. This uses the Krull dimension which Gabriel introduced for abelian categories \cite{Gabriel}. Here we follow the approach of V\'{a}mos in \cite{VamosT}. We refer the reader to \cite{Calugareanu, GR, NO} for more information on Krull dimension.

Denote by ${}_R\fM$ the category of $R$-modules. By a  subcategory of ${}_R\fM$ we mean a nonempty full subcategory.
A subcategory $\fA$ of ${}_R\fM$ is called a {\it Serre-category} if for any short exact sequence
$$0\rightarrow \sM_1\rightarrow \sM_2\rightarrow \sM_3\rightarrow 0$$
in ${}_R\fM$, $\sM_2$ is in $\fA$ if and only if both $\sM_1$ and $\sM_3$ are in $\fA$. Note that the category ${}_R\fN$ of Noetherian $R$-modules is a Serre category.

Note that $\sM\in {}_R\fM$ is simple if and only if $\sM\neq 0$ and for any submodule $\sM'$ of $\sM$, either $\sM'=0$ or $\sM/\sM'=0$. This motivates the following definition.
Let $\fA$ be a Serre-subcategory of ${}_R\fN$. We say that $\sM\in {}_R\fN$ is {\it $\fA$-simple} \cite[page 32]{VamosT} if $\sM\not\in \fA$ and for any submodule $\sM'$ of $\sM$, either $\sM'\in \fA$ or $\sM/\sM'\in \fA$. Denote by $\fA'$ the Serre-category generated by $\fA$ and all $\fA$-simple modules in ${}_R\fN$. Then $\fA'$ consists of all $\sM\in {}_R\fM$ admitting a chain $\tau$ such that each chain factor of $\tau$  is either in $\fA$ or $\fA$-simple \cite[Proposition 3.5]{VamosT}.

For each ordinal $\alpha$ (starting with $-1$), we define a Serre-subcategory $\fA_\alpha$ of ${}_R\fN$ as follows. Set $\fA_{-1}$ to be the category consisting of the zero module.
Assume that $\fA_\beta$ has been defined for every ordinal $\beta<\alpha$. We set
$$ \fA_\alpha:=(\fA_\beta)'$$
if $\alpha=\beta+1$, and
$$ \fA_\alpha:=\bigcup_{\beta<\alpha} \fA_\beta$$
if $\alpha$ is a limit ordinal.
Every $\sM\in {}_R\fN$ is in $\fA_\alpha$ for some ordinal $\alpha$ \cite[Theorem 3.7]{VamosT}. The smallest such ordinal is called the {\it Krull dimension} of $\sM$.

\begin{example} \label{E-ordinal length}
Let $\alpha$ be an ordinal. For any $\sM\in {}_R\fM$ and any chain $\tau: 0=\sM_0\subseteq \sM_1\subseteq \cdots \subseteq \sM_n=\sM$ for $\sM$ with chain factors $\{C_j\}_{1\le j\le n}$, denote by $l_\alpha(\tau)$ the number of $1\le j\le n$ for which $C_j$ is Noetherian and $\fA_\alpha$-simple. Denote by $\rL_\alpha(\sM)$ the supremum of $l_\alpha(\tau)$ for $\tau$ ranging over all chains for $\sM$. Note that the length function in Example~\ref{E-canonical length} is exactly $\rL_{-1}$. Using the Jordan-H\"{o}lder theorem and the fact that if $\sM\in {}_R\fN$ is $\fA_\alpha$-simple and $\tau$ as above is a chain for $\sM$, then exactly one of the chain factors $C_j$ is $\fA_\alpha$-simple \cite[Lemmas 3.1, 3.2]{VamosT}, one checks easily that $\rL_\alpha$ is a length function on $R$-modules, as we did in Example~\ref{E-canonical length} for $\rL_{-1}$.
Furthermore, it is clear that $\rL_\alpha$ vanishes on $\fA_\alpha$, and that for any $\sM\in \fA_{\alpha+1}\setminus \fA_\alpha$, one has $0<\rL_\alpha(\sM)<+\infty$.
\end{example}

\subsection{Group rings} \label{SS-group ring}

The {\it group ring of $\Gamma$ with coefficients in $R$}, denoted by $R\Gamma$, consists of all finitely supported functions $f: \Gamma\rightarrow R$. We shall write $f$ as $\sum_{s\in \Gamma}f_ss$, where $f_s\in R$ for all $s\in \Gamma$ and $f_s=0$ for all except finitely many $s\in \Gamma$. The algebraic operations on $R\Gamma$ are defined by
$$ \sum_{s\in \Gamma}f_ss+\sum_{s\in \Gamma}g_ss=\sum_{s\in \Gamma}(f_s+g_s)s, \mbox{ and } \big(\sum_{s\in \Gamma}f_s s\big)\big(\sum_{t\in \Gamma}g_tt\big)=\sum_{s, t\in \Gamma}f_sg_t(st).$$
We refer the reader to \cite{Passman77} for more information on group rings.

\subsection{von Neumann-L\"{u}ck dimension} \label{SS-vdim}

We recall the definitions and basic properties of the von Neumann-L\"{u}ck dimension. For details, we refer the reader to \cite{Luck98} and \cite[Section 1.1, Chapter 6]{Luck02}.

Denote by $\ell^2(\Gamma)$ the Hilbert space of square summable functions $f: \Gamma\rightarrow \Cb$, i.e. $\sum_{s\in \Gamma}|f_s|^2<+\infty$. Then $\Gamma$ has two canonical commuting unitary representations on $\ell^2(\Gamma)$, namely the {\it left regular representation} $l$ and the {\it right regular representation} $r$ defined by
$$ (l_sx)_t=x_{s^{-1}t}, \mbox{ and } (r_sx)_t=x_{ts}$$
for all $x\in \ell^2(\Gamma)$ and $s, t\in \Gamma$. The {\it group von Neumann algebra} of $\Gamma$, denoted by $\cL\Gamma$, consists of all bounded linear operators
$\ell^2(\Gamma)\rightarrow \ell^2(\Gamma)$ commuting with $r_s$ for all $s\in \Gamma$. Since $l$ commutes with $r$, the map $\sum_{s\in \Gamma}f_s s\mapsto \sum_{s\in \Gamma}f_sl_s$ is an embedding from $\Cb\Gamma$ into $\cL\Gamma$.

Denote by $\delta_{e_\Gamma}$ the unit vector of $\ell^2(\Gamma)$, which is  $1$ at the identity element $e_\Gamma$ of $\Gamma$, and $0$ everywhere else. The canonical {\it trace} on
$\cL\Gamma$ is
the linear functional $\tr_{\cL\Gamma}: \cL\Gamma\rightarrow \Cb$ given by $\tr_{\cL\Gamma}T=\left<T\delta_{e_\Gamma}, \delta_{e_\Gamma}\right>$.
 For each $n\in \Nb$, the extension of $\tr_{\cL\Gamma}$ to $M_n(\cL\Gamma)$ sending $(T_{j, k})_{1\le j, k\le n}$ to $\sum_{j=1}^n\tr_{\cL\Gamma}(T_{j, j})$
 will also be denoted by $\tr_{\cL\Gamma}$. It is {\it faithful} in the sense that for any $T\in M_n(\cL\Gamma)$, $\tr_{\cL\Gamma}(T^*T)=0$ exactly when $T=0$.

For any finitely generated projective $\cL\Gamma$-module $\Pb$, one has $\Pb\cong (\cL\Gamma)^{1\times n}P$ for some $n\in \Nb$ and some $P\in M_n(\cL\Gamma)$ with $P^2=P$. The {\it von Neumann dimension}  of $\Pb$ is defined as
$$ \dim'_{\cL\Gamma}(\Pb):=\tr_{\cL\Gamma}P\in [0, n],$$
and does not depend on the choice of $n$ and $P$. For an arbitrary $\cL\Gamma$-module $\Mb$, its {\it von Neumann-L\"{u}ck dimension} \cite[Definition 6.6]{Luck02} is defined as
$$ \dim_{\cL\Gamma}(\Mb):=\sup_{\Pb}\dim'_{\cL\Gamma}(\Pb),$$
where $\Pb$ ranges over all finitely generated projective $\cL\Gamma$-submodules of $\Mb$.

The following theorem collects the fundamental properties of the von Neumann-L\"{u}ck dimension \cite[Theorem 6.7]{Luck02}.

\begin{theorem} \label{T-vdim}
$\dim_{\cL\Gamma}$ is a length function on the $\cL\Gamma$-modules with $\dim_{\cL\Gamma}(\cL\Gamma)=1$.
\end{theorem}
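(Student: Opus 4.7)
The plan is to verify the three conditions of Definition~\ref{D-length} together with the normalization $\dim_{\cL\Gamma}(\cL\Gamma)=1$, starting from the definition of $\dim_{\cL\Gamma}$ via approximation by finitely generated projective submodules. The easy ingredients come first: $\dim_{\cL\Gamma}(0)=0$ because $0$ contains no nonzero finitely generated projective submodule, and $\dim_{\cL\Gamma}(\cL\Gamma)=1$ because $\cL\Gamma$ is itself finitely generated projective, presented by the idempotent $1\in M_1(\cL\Gamma)$, giving $\dim'_{\cL\Gamma}(\cL\Gamma)=\tr_{\cL\Gamma}(1)=1$. Upper continuity is almost tautological: any finitely generated projective submodule of $\sM$ is in particular a finitely generated submodule, and any finitely generated projective submodule of a finitely generated submodule $\sN\subseteq \sM$ is itself a finitely generated projective submodule of $\sM$, so the suprema defining $\sup_\sN \dim_{\cL\Gamma}(\sN)$ and $\dim_{\cL\Gamma}(\sM)$ coincide.

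The real content is additivity. I would first establish it in the finitely generated projective case: if $0\to \Pb_1\to \Pb_2\to \Pb_3\to 0$ is exact with all three modules finitely generated projective, then projectivity of $\Pb_3$ forces the sequence to split, yielding $\Pb_2\cong \Pb_1\oplus \Pb_3$. Realizing this via orthogonal idempotents in some $M_n(\cL\Gamma)$ and invoking additivity of $\tr_{\cL\Gamma}$ gives $\dim'_{\cL\Gamma}(\Pb_2)=\dim'_{\cL\Gamma}(\Pb_1)+\dim'_{\cL\Gamma}(\Pb_3)$, along with the isomorphism invariance of $\dim'_{\cL\Gamma}$.

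The main obstacle is bridging from the projective case to arbitrary modules, and the cleanest bridge uses the fact that $\cL\Gamma$ is semihereditary---every finitely generated submodule of a finitely generated free $\cL\Gamma$-module is projective---which in turn follows from polar decomposition for elements of $M_n(\cL\Gamma)$ together with the faithfulness of $\tr_{\cL\Gamma}$. Using this, every finitely generated $\cL\Gamma$-module $\sM$, presented as $(\cL\Gamma)^n/\sK$, admits a canonical decomposition $\sM\cong \Pb_\sM\oplus \sT_\sM$, where $\Pb_\sM$ is finitely generated projective and $\sT_\sM$ is \emph{dim-trivial} in the sense that every finitely generated projective submodule of $\sT_\sM$ is zero (equivalently $\dim_{\cL\Gamma}(\sT_\sM)=0$): one forms the closure $\overline{\sK}\subseteq (\cL\Gamma)^n$ of $\sK$ as the image of the projection in $M_n(\cL\Gamma)$ cutting out the closed $\cL\Gamma$-submodule generated by $\sK$, which is a direct summand by semihereditariness; its complement is $\Pb_\sM$ and the quotient $\overline{\sK}/\sK$ is $\sT_\sM$. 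The class of dim-trivial modules is closed under extensions, submodules, and quotients, so additivity on finitely generated modules reduces---via a diagram chase through the projective and dim-trivial pieces of each term---to the already-verified projective case. Arbitrary modules are then handled by writing each term as a directed union of its finitely generated submodules: given $0\to \sM_1\to \sM_2\to \sM_3\to 0$ and a finitely generated $\sN_2\subseteq \sM_2$, the sequence $0\to \sN_2\cap \sM_1\to \sN_2\to \sN_2/(\sN_2\cap \sM_1)\to 0$ involves only finitely generated modules, and taking suprema over $\sN_2$---using that every pair of finitely generated $\sN_1\subseteq \sM_1$ and $\sN_3\subseteq \sM_3$ can be realized inside a common $\sN_2$---yields additivity on the original sequence via upper continuity on all three terms.
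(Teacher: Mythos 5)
The paper itself offers no proof of this statement: it is quoted verbatim from L\"uck's book (\cite[Theorem 6.7]{Luck02}), and your outline reproduces the general architecture of L\"uck's argument (semihereditariness, the splitting $\sM\cong\Pb_\sM\oplus\sT_\sM$ for finitely generated $\sM$, reduction of additivity to the projective case, then passage to colimits). However, there is a genuine gap exactly where the analytic content of the theorem sits. Your two unproved assertions --- that $\overline{\sK}/\sK$ is dim-trivial, and that dim-trivial modules are closed under extensions --- are not formal consequences of semihereditariness and faithfulness of $\tr_{\cL\Gamma}$; each is essentially equivalent to the key lemma that for an arbitrary submodule $\sK$ of a finitely generated projective module one has $\dim_{\cL\Gamma}(\sK)=\dim'_{\cL\Gamma}(\overline{\sK})$, i.e.\ the dimension of the closure is exhausted by finitely generated projective submodules of $\sK$ itself. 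This is not automatic, because a finitely generated projective submodule need not be closed: for $\Gamma=\Zb$, $\cL\Gamma\cong L^\infty(S^1)$ and $\sK=(z-1)L^\infty$ is isomorphic to $L^\infty$ while $\overline{\sK}=L^\infty$, so ``dense'' submodules can be large; what you must rule out is a nonzero finitely generated projective sitting inside $\overline{\sK}$ and meeting $\sK$ trivially, and if you try to prove your extension-closedness claim you end up with a finitely generated projective $Q$ containing a dense dim-trivial submodule and needing $Q=0$ --- the same lemma again. (A purely algebraic extension-of-functionals argument does not work either, since $\cL\Gamma$ is not self-injective.) The standard proof uses the normality of $\tr_{\cL\Gamma}$, not just its faithfulness: for a finitely generated $\sN\subseteq\sK$, realized as the image of a matrix $f$, the algebraic kernel of $f$ is generated by a projection, polar decomposition gives Murray--von Neumann equivalence between the range projection and the complement of the kernel projection, whence $\dim'_{\cL\Gamma}(\sN)=\dim'_{\cL\Gamma}(\overline{\sN})$; then one takes the directed supremum over all finitely generated $\sN\subseteq\sK$ and uses normality to identify $\sup_\sN \tr_{\cL\Gamma}$ of the closure projections with $\tr_{\cL\Gamma}$ of the projection onto $\overline{\sK}$. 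Without this input additivity cannot be deduced; indeed additivity applied to $0\to\sK\to(\cL\Gamma)^n\to\sM\to 0$ is equivalent to it.

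Two smaller points. First, $\dim_{\cL\Gamma}(\cL\Gamma)=1$ requires an upper bound as well: you must know that every finitely generated projective submodule of $\cL\Gamma$ has $\dim'_{\cL\Gamma}\le 1$, which again comes from the support-projection argument ($\dim'_{\cL\Gamma}(Q)=\tr_{\cL\Gamma}$ of the projection onto $\overline{Q}$, which is at most $\tr_{\cL\Gamma}(1)=1$), not merely from $\tr_{\cL\Gamma}(1)=1$. Second, in your final limiting step the module $\sN_2\cap\sM_1$ need not be finitely generated, since $\cL\Gamma$ is far from Noetherian, so the sequence $0\to\sN_2\cap\sM_1\to\sN_2\to\sN_2/(\sN_2\cap\sM_1)\to 0$ does not consist of finitely generated modules; the reduction has to be organized as in L\"uck, by first proving additivity for arbitrary submodules of finitely generated projective modules (via the key lemma above) and only then passing to directed unions using upper continuity.
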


\subsection{Sofic groups} \label{SS-sofic group}

The group $\Gamma$ is called {\it sofic} if for any $F\in \cF(\Gamma)$ and any $\delta>0$ there exists a map $\sigma: \Gamma\rightarrow \Sym(d)$ for some $d\in \Nb$ such that
$|\{v\in [d]: \sigma_s\sigma_t(v)=\sigma_{st}(v)\}|/d>1-\delta$ for all $s, t\in F$ and $|\{v\in [d]: \sigma_s(v)\neq \sigma_t(v)\}|/d>1-\delta$ for all distinct $s, t\in F$.
Such a map $\sigma$ is called a sofic approximation for $\Gamma$. We shall frequently write $sv$ for $\sigma_s(v)$. Equivalently, $\Gamma$ is sofic if there is a net $\Sigma:=\{\sigma_i: \Gamma\rightarrow \Sym(d_i)\}_{i\in J}$
satisfying the following conditions:
\begin{enumerate}
\item $\lim_{i\to \infty}|\{v\in [d_i]: \sigma_{i,s}\sigma_{i,t}(v)=\sigma_{i, st}(v)\}|/d_i=1$ for all $s, t\in \Gamma$,

\item $\lim_{i\to \infty}|\{v\in [d_i]: \sigma_{i, s}(v)\neq \sigma_{i,t}(v)\}|/d_i=1$ for all distinct $s, t\in \Gamma$,

\item $\lim_{i\to \infty} d_i=+\infty$.
\end{enumerate}
Such a net $\Sigma$ is called {\it a sofic approximation net} for $\Gamma$. When $\Gamma$ is countable and sofic, one can find a sofic approximation sequence for $\Gamma$.

The group $\Gamma$ is called {\it amenable} if for any nonempty $K\in \cF(\Gamma)$ and any $\delta>0$ there is a nonempty $F\in \cF(\Gamma)$ with $|KF\setminus F|<\delta |F|$.
All discrete amenable groups and residually finite groups are sofic.

We refer the reader to \cite{CC10, Pestov} for more information on sofic groups.

\section{Sofic mean length} \label{S-mean length}

In the rest of this paper, we fix a unital ring $R$ and   a length function $\rL$ on $R$-modules.
For any $R$-module $\sM$, we denote by $\sF(\sM)$ the set of all finitely generated $R$-submodules of $\sM$.
We fix a sofic group $\Gamma$ with the identity element $e_\Gamma$ and a sofic approximation net $\Sigma=\{\sigma_i:\Gamma\rightarrow \Sym(d_i)\}_{i\in J}$ for $\Gamma$. We also fix an ultrafilter $\omega$ on $J$ such that $\omega$ is {\it free} in the sense that for any $j\in J$, the set $\{i\in J: i\ge j\}$ is in $\omega$.

In this section we define the sofic mean length and establish some basic properties.
We say that an $R$-module $\sM$ is {\it locally $\rL$-finite} if $\rL(\sN)<+\infty$ for every $\sN\in \sF(\sM)$.

Let $\cM$ be an $R\Gamma$-module. Let $\sA, \sB\in \sF(\cM)$, $F\in \cF(\Gamma)$, and
$\sigma$ be a map $\Gamma\rightarrow \Sym(d)$ for some $d\in \Nb$. For any $x\in \cM$ and $v\in [d]$, denote by $\delta_vx$ the element of $\cM^d$ taking value $x$ at $v$ and $0$ everywhere else. Denote by $\sM(\sB, F, \sigma)$ the $R$-submodule of $\cM^d$ generated by the elements $\delta_vb-\delta_{sv} sb$ for all $v\in [d], b\in \sB$ and $s\in F$, and by
$\sM(\sA, \sB, F, \sigma)$ the image of $\sA^d$ in $\cM^d/\sM(\sB, F, \sigma)$ under the quotient map $\cM^d\rightarrow \cM^d/\sM(\sB, F, \sigma)$.

\begin{definition} \label{D-mean length}
Let $\cM_1\subseteq \cM_2$ be  $R\Gamma$-modules such that $\cM_1$ is locally $\rL$-finite. For $\sA\in \sF(\cM_1)$, $\sB\in \sF(\cM_2)$ and $F\in \cF(\Gamma)$, we define
\begin{align*}
\mL_{\Sigma, \omega}(\sA|\sB, F)&=\lim_{i\to \omega}\frac{\rL(\sM(\sA, \sB, F, \sigma_i))}{d_i}, \\
\mL_{\Sigma, \omega}(\sA|\sB)&=\inf_{F\in \cF(\Gamma)}\mL_{\Sigma, \omega}(\sA|\sB, F),\\
\mL_{\Sigma, \omega}(\sA|\cM_2)&=\inf_{\sB\in \sF(\cM_2)}\mL_{\Sigma, \omega}(\sA|\sB),\\
\mL_{\Sigma, \omega}(\cM_1|\cM_2)&=\sup_{\sA\in \sF(\cM_1)}\mL_{\Sigma, \omega}(\sA|\cM_2).
\end{align*}
The {\it mean length of $\cM_1$ relative to $\cM_2$} is defined as $\mL_{\Sigma, \omega}(\cM_1|\cM_2)$.
The {\it mean length of $\cM_1$} is defined as $\mL_{\Sigma, \omega}(\cM_1):=\mL_{\Sigma, \omega}(\cM_1|\cM_1)$.
\end{definition}

We prove Theorem~\ref{T-addition for mean length}.

\begin{proof}[Proof of Theorem~\ref{T-addition for mean length}]
Denote by $\pi$ the quotient map $\cM_3\rightarrow \cM_3/\cM_1$.

We prove first $\mL_{\Sigma, \omega}(\cM_2|\cM_3)\ge \mL_{\Sigma, \omega}(\cM_1|\cM_3)+\mL_{\Sigma, \omega}(\cM_2/\cM_1|\cM_3/\cM_1)$. Let $\sA_1\in \sF(\cM_1)$ and $\sA_3\in \sF(\cM_2/\cM_1)$. Take $\overline{\sA_3}\in \sF(\cM_2)$ with $\pi(\overline{\sA_3})=\sA_3$. Set $\sA_2=\sA_1+\overline{
\sA_3}\in \sF(\cM_2)$. Then it suffices to show
$$ \mL_{\Sigma, \omega}(\sA_2|\cM_3)\ge \mL_{\Sigma, \omega}(\sA_1|\cM_3)+\mL_{\Sigma, \omega}(\sA_3|\cM_3/\cM_1).$$
Let $\sB_2\in \sF(\cM_3)$ and $F\in \cF(\Gamma)$. Set $\sB_1=\sB_2$ and $\sB_3=\pi(\sB_2)\in \sF(\cM_3/\cM_1)$. Now it suffices to show
\begin{align}  \label{E-addition for mean rank3}
 \mL_{\Sigma, \omega}(\sA_2|\sB_2, F)\ge \mL_{\Sigma, \omega}(\sA_1|\sB_1, F)+\mL_{\Sigma, \omega}(\sA_3|\sB_3, F).
\end{align}
Let $\sigma$ be a map from $\Gamma$ to $\Sym(d)$ for some $d\in \Nb$. Note that $\sM(\sA_1, \sB_1, F, \sigma)$ is an $R$-submodule of $\sM(\sA_2, \sB_2, F, \sigma)$, and
$$\sM(\sA_3, \sB_3, F, \sigma)=\sA_2^d/(\sA_2^d\cap (\cM_1^d+\sM(\sB_2, F, \sigma)))$$
is a quotient $R$-module of
$$\sM(\sA_2, \sB_2, F, \sigma)/\sM(\sA_1, \sB_1, F, \sigma)=\sA_2^d/(\sA_2^d\cap (\sA_1^d+\sM(\sB_2, F, \sigma))).$$
Thus
\begin{align*}
\rL(\sM(\sA_2, \sB_2, F, \sigma))&=\rL(\sM(\sA_1, \sB_1, F, \sigma))+\rL(\sM(\sA_2, \sB_2, F, \sigma)/\sM(\sA_1, \sB_1, F, \sigma))\\
&\ge \rL(\sM(\sA_1, \sB_1, F, \sigma))+\rL(\sM(\sA_3, \sB_3, F, \sigma)).
\end{align*}
It follows that \eqref{E-addition for mean rank3} holds.

Next we prove $\mL_{\Sigma, \omega}(\cM_2|\cM_3)\le \mL_{\Sigma, \omega}(\cM_1|\cM_3)+\mL_{\Sigma, \omega}(\cM_2/\cM_1|\cM_3/\cM_1)$. Let $\sA_2\in \sF(\cM_2)$. Set $\sA_3=\pi(\sA_2)\in \sF(\cM_2/\cM_1)$. Then it suffices to show
$$ \mL_{\Sigma, \omega}(\sA_2|\cM_3)\le \mL_{\Sigma, \omega}(\cM_1|\cM_3)+\mL_{\Sigma, \omega}(\sA_3|\cM_3/\cM_1).$$
Let $\sB_3\in \sF(\cM_3/\cM_1)$ and $F_3\in \cF(\Gamma)$ containing $e_\Gamma$. Let $\varepsilon>0$. Take $\overline{\sB_3}\in \sF(\cM_3)$ with $\pi(\overline{\sB_3})=\sB_3$.
Set $\sD=(\sA_2+\sum_{t\in F_3}t\overline{\sB_3})\cap \cM_1$.
Note $$\rL(\sD)\le \rL(\sA_2+\sum_{t\in F_3}t\overline{\sB_3})<+\infty.$$
By the upper continuity of $\rL$, we can find an $\sA_1\in \sF(\cM_1)$ with  $\sA_1\subseteq \sD$ and
$$\rL(\sD)<\rL(\sA_1)+\varepsilon.$$
(If $R$ is left Noetherian, then $\sD$ is a finitely generated $R$-module, and we can simply take $\sA_1=\sD$ without invoking the upper continuity of $\rL$.)
Then it suffices to show
$$ \mL_{\Sigma, \omega}(\sA_2|\cM_3)\le \mL_{\Sigma, \omega}(\sA_1|\cM_3)+\mL_{\Sigma, \omega}(\sA_3|\sB_3, F_3)+\varepsilon.$$
Let $\sB_1\in \sF(\cM_3)$ and $F_1\in \cF(\Gamma)$. Set $\sB_2=\sB_1+\overline{\sB_3}\in \sF(\cM_3)$ and $F_2=F_1\cup F_3\in \cF(\Gamma)$. Now it suffices to show
\begin{align} \label{E-addition for mean rank4}
\mL_{\Sigma, \omega}(\sA_2|\sB_2, F_2)\le \mL_{\Sigma, \omega}(\sA_1|\sB_1, F_1)+\mL_{\Sigma, \omega}(\sA_3|\sB_3, F_3)+\varepsilon.
\end{align}
Let $\sigma$ be a map from $\Gamma$ to $\Sym(d)$ for some $d\in \Nb$. Denote by $\varphi$ the quotient map $\cM_3^d\rightarrow \cM_3^d/\sM(\overline{\sB_3}, F_3, \sigma)$, and by
$\psi$ the quotient map $\cM_3^d/\sM(\overline{\sB_3}, F_3, \sigma)\rightarrow \cM_3^d/\sM(\sB_2, F_2, \sigma)$.
From the quotient map $\varphi(\sD^d)\rightarrow \varphi(\sD^d)/\varphi(\sA_1^d)$, we see that
$(\varphi(\sA_2^d)\cap \varphi(\sD^d))/(\varphi(\sA_2^d)\cap \varphi(\sA_1^d))$ is isomorphic to an $R$-submodule of
$\varphi(\sD^d)/\varphi(\sA_1^d)$. Therefore
\begin{align*}
\rL((\varphi(\sA_2^d)\cap \varphi(\sD^d))/(\varphi(\sA_2^d)\cap \varphi(\sA_1^d)))
&\le \rL(\varphi(\sD^d)/\varphi(\sA_1^d))\\
&\le \rL(\sD^d/\sA_1^d)\\
&= \rL((\sD/\sA_1)^d)\\
&= d\cdot \rL(\sD/\sA_1)\\
&< d\varepsilon.
\end{align*}
Note that
$$\varphi(\sA_2^d)\cap \varphi(\cM_1^d)= \varphi(\sA_2^d)\cap \varphi(\sD^d).$$
Also note that
$\sM(\sA_3, \sB_3, F_3, \sigma)$ is isomorphic to $\varphi(\sA_2^d)/(\varphi(\sA_2^d)\cap \varphi(\cM_1^d))$ as $R$-modules.
Thus
\begin{align*}
\rL(\sM(\sA_3, \sB_3, F_3, \sigma))&=\rL(\varphi(\sA_2^d)/(\varphi(\sA_2^d)\cap \varphi(\cM_1^d)))\\
&=\rL(\varphi(\sA_2^d)/(\varphi(\sA_2^d)\cap \varphi(\sD^d)))\\
&=\rL(\varphi(\sA_2^d)/(\varphi(\sA_2^d)\cap \varphi(\sA_1^d)))\\
&\quad\quad -\rL((\varphi(\sA_2^d)\cap \varphi(\sD^d))/(\varphi(\sA_2^d)\cap \varphi(\sA_1^d)))\\
&\ge \rL(\varphi(\sA_2^d)/(\varphi(\sA_2^d)\cap \varphi(\sA_1^d)))-d\varepsilon.
\end{align*}
Note that $\psi(\varphi(\sA_2^d)\cap \varphi(\sA_1^d))\subseteq \psi\circ \varphi(\sA_2^d)\cap \psi\circ \varphi(\sA_1^d)$, whence $\psi\circ \varphi(\sA_2^d)/(\psi\circ \varphi(\sA_2^d)\cap \psi\circ \varphi(\sA_1^d))$ is a quotient $R$-module of
$\varphi(\sA_2^d)/(\varphi(\sA_2^d)\cap \varphi(\sA_1^d))$. Therefore
\begin{align*}
\rL(\sM(\sA_3, \sB_3, F_3, \sigma))&\ge \rL(\varphi(\sA_2^d)/(\varphi(\sA_2^d)\cap \varphi(\sA_1^d)))-d\varepsilon\\
&\ge \rL(\psi\circ \varphi(\sA_2^d)/(\psi\circ \varphi(\sA_2^d)\cap \psi\circ \varphi(\sA_1^d)))-d\varepsilon\\
&=\rL(\psi\circ \varphi(\sA_2^d))-\rL(\psi\circ \varphi(\sA_2^d)\cap \psi\circ \varphi(\sA_1^d))-d\varepsilon\\
&\ge \rL(\psi\circ \varphi(\sA_2^d))-\rL(\psi\circ \varphi(\sA_1^d))-d\varepsilon\\
&= \rL(\sM(\sA_2, \sB_2, F_2, \sigma))-\rL(\sM(\sA_1, \sB_2, F_2, \sigma))-d\varepsilon\\
&\ge \rL(\sM(\sA_2, \sB_2, F_2, \sigma))-\rL(\sM(\sA_1, \sB_1, F_1, \sigma))-d\varepsilon.
\end{align*}
It follows that \eqref{E-addition for mean rank4} holds.
\end{proof}

\begin{corollary} \label{C-addition for mean rank2}
For any locally $\rL$-finite $R\Gamma$-modules $\cM_1$ and $\cM_2$, one has
$$\mL_{\Sigma, \omega}(\cM_1|\cM_1\oplus \cM_2)=\mL_{\Sigma, \omega}(\cM_1), \mbox{ and } \mL_{\Sigma, \omega}(\cM_1\oplus \cM_2)=\mL_{\Sigma, \omega}(\cM_1)+\mL_{\Sigma, \omega}(\cM_2).$$
\end{corollary}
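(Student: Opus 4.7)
My plan is to derive both identities from a single key computation, namely $\mL_{\Sigma,\omega}(\cM_1\mid\cM_1\oplus\cM_2)=\mL_{\Sigma,\omega}(\cM_1)$, and then feed this into Theorem~\ref{T-addition for mean length} to obtain the additivity formula. The asymmetry between $\cM_1$ and $\cM_1\oplus\cM_2$ in the first identity will be handled by exploiting the canonical $R\Gamma$-linear retraction $\pi\colon\cM_1\oplus\cM_2\to\cM_1$ onto the first summand.

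For the upper bound $\mL_{\Sigma,\omega}(\cM_1\mid\cM_1\oplus\cM_2)\le\mL_{\Sigma,\omega}(\cM_1)$, I would simply appeal to the monotonicity of the relative mean length in the second variable recorded in Proposition~\ref{P-continuity}. The substantive direction is the reverse inequality. Fix $\sA\in\sF(\cM_1)$, $\sB\in\sF(\cM_1\oplus\cM_2)$, $F\in\cF(\Gamma)$, and a map $\sigma\colon\Gamma\to\Sym(d)$. Since $\pi$ is $\Gamma$-equivariant, the diagonal map $\pi^d$ sends each generator $\delta_v b-\delta_{sv}sb$ of $\sM(\sB,F,\sigma)$ to the corresponding generator $\delta_v\pi(b)-\delta_{sv}s\pi(b)$ of $\sM(\pi(\sB),F,\sigma)$, so $\pi^d\bigl(\sM(\sB,F,\sigma)\bigr)=\sM(\pi(\sB),F,\sigma)$. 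Passing to quotients, $\pi^d$ induces a surjective $R$-module homomorphism from $(\cM_1\oplus\cM_2)^d/\sM(\sB,F,\sigma)$ onto $\cM_1^d/\sM(\pi(\sB),F,\sigma)$, and because $\pi$ restricts to the identity on $\cM_1\supseteq\sA$, this map carries $\sM(\sA,\sB,F,\sigma)$ onto $\sM(\sA,\pi(\sB),F,\sigma)$. The additivity of $\rL$ then gives $\rL(\sM(\sA,\sB,F,\sigma))\ge\rL(\sM(\sA,\pi(\sB),F,\sigma))$, hence $\mL_{\Sigma,\omega}(\sA\mid\sB,F)\ge\mL_{\Sigma,\omega}(\sA\mid\pi(\sB),F)$ after dividing by $d_i$ and passing to $\lim_{i\to\omega}$. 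Taking the infimum over $F$ and then over $\sB\in\sF(\cM_1\oplus\cM_2)$—and using that $\pi(\sB)$ runs through a cofinal family in $\sF(\cM_1)$—yields $\mL_{\Sigma,\omega}(\sA\mid\cM_1\oplus\cM_2)\ge\mL_{\Sigma,\omega}(\sA\mid\cM_1)$, and finally the supremum over $\sA$ gives the desired inequality.

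Once the first identity is established, the additivity formula drops out immediately: applying Theorem~\ref{T-addition for mean length} to the triple $\cM_1\subseteq\cM_1\oplus\cM_2\subseteq\cM_1\oplus\cM_2$ gives
\[
\mL_{\Sigma,\omega}(\cM_1\oplus\cM_2)=\mL_{\Sigma,\omega}(\cM_1\mid\cM_1\oplus\cM_2)+\mL_{\Sigma,\omega}\bigl((\cM_1\oplus\cM_2)/\cM_1\bigr),
\]
and the quotient $(\cM_1\oplus\cM_2)/\cM_1$ is canonically isomorphic to $\cM_2$ as an $R\Gamma$-module, so the right-hand side equals $\mL_{\Sigma,\omega}(\cM_1)+\mL_{\Sigma,\omega}(\cM_2)$. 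I do not anticipate a genuine obstacle; the only thing to be careful about is the well-definedness and surjectivity of the induced map on $\sM(\sA,\sB,F,\sigma)$, which hinges crucially on $\pi$ being both $\Gamma$-equivariant and a retraction onto the summand containing $\sA$.
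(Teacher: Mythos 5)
Your proof is correct and follows the paper's route: the paper dismisses the first identity as obvious (your retraction argument via the coordinate projection, together with monotonicity in the second variable, is exactly the justification) and deduces the second identity from Theorem~\ref{T-addition for mean length} applied to $\cM_1\subseteq\cM_1\oplus\cM_2$ with $(\cM_1\oplus\cM_2)/\cM_1\cong\cM_2$. You have simply written out the details the paper leaves implicit.
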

\begin{proof} The first identity is obvious. The second one follows from Theorem~\ref{T-addition for mean length}.
\end{proof}

The following lemma says that to calculate the mean length for finitely generated $R\Gamma$-modules, it suffices to consider generators.

\begin{lemma} \label{L-mean length generating}
Let $\cM_1\subseteq \cM_2$ be $R\Gamma$-modules such that $\cM_1$ is locally $\rL$-finite. The following are true.
\begin{enumerate}
\item If $\sA\in \sF(\cM_1)$ generates $\cM_1$ as an $R\Gamma$-module, then
$\mL_{\Sigma, \omega}(\cM_1|\cM_2)=\mL_{\Sigma, \omega}(\sA|\cM_2)\le \rL(\sA)$.
\item If $\cM_2$ is also locally $\rL$-finite and $\sB\in \sF(\cM_2)$ generates $\cM_2$ as an $R\Gamma$-module, then
$\mL_{\Sigma, \omega}(\sA|\cM_2)=\mL_{\Sigma, \omega}(\sA|\sB)$ for all $\sA\in \sF(\cM_1)$.
\end{enumerate}
\end{lemma}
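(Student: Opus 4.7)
The plan is to prove each equality by establishing the non-trivial direction in the obvious dichotomy, exploiting the $R\Gamma$-generation property to replace a generic finitely generated $R$-submodule by the given generating one. For part (1), the inequality $\mL_{\Sigma,\omega}(\cM_1|\cM_2) \ge \mL_{\Sigma,\omega}(\sA|\cM_2)$ is immediate from the definition, so I would prove the reverse. Given arbitrary $\sA' \in \sF(\cM_1)$, $\sB' \in \sF(\cM_2)$, and $F' \in \cF(\Gamma)$, use that $\sA$ generates $\cM_1$ as an $R\Gamma$-module to choose $F_0 \in \cF(\Gamma)$ with $\sA' \subseteq \sum_{s \in F_0} R \cdot s\sA$, and set $\sB := \sB' + \sum_{s \in F_0} s\sA$ and $F := F' \cup F_0^{-1}$. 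For each $v \in [d]$ and $a' = \sum_{s \in F_0} r_s \cdot sa$, the defining relation of $\sM(\sB, F, \sigma)$ applied with $b := sa \in \sB$ and $u := s^{-1} \in F$ yields
\[
\delta_v(sa) - \delta_{\sigma_{s^{-1}}(v)}(a) = \delta_v(sa) - \delta_{\sigma_{s^{-1}}(v)}(s^{-1}(sa)) \in \sM(\sB, F, \sigma),
\]
using only the $R\Gamma$-module identity $s^{-1}(sa)=a$, with no sofic approximation needed. Summing gives $\delta_v a' \equiv \sum_s r_s \delta_{\sigma_{s^{-1}}(v)}(a) \pmod{\sM(\sB, F, \sigma)}$, whose right-hand side lies in $\sA^d$; hence $\sM(\sA', \sB, F, \sigma) \subseteq \sM(\sA, \sB, F, \sigma)$. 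Since $\sM(\sB', F', \sigma) \subseteq \sM(\sB, F, \sigma)$, the module $\sM(\sA, \sB, F, \sigma)$ is a quotient of $\sM(\sA, \sB', F', \sigma)$. Chaining these, dividing by $d_i$, taking the $\omega$-limit, and then the infimum over $\sB'$ and $F'$ gives $\mL_{\Sigma,\omega}(\sA'|\cM_2) \le \mL_{\Sigma,\omega}(\sA|\cM_2)$, whence the equality. The bound $\mL_{\Sigma,\omega}(\sA|\cM_2) \le \rL(\sA)$ then follows from $\rL(\sM(\sA, \sB', F', \sigma)) \le \rL(\sA^{d}) = d\, \rL(\sA)$.

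For part (2), the inequality $\mL_{\Sigma,\omega}(\sA|\cM_2) \le \mL_{\Sigma,\omega}(\sA|\sB)$ is trivial, so I would establish the reverse. Given $\sB' \in \sF(\cM_2)$ and $F' \in \cF(\Gamma)$, use that $\sB$ generates $\cM_2$ to pick $F_0 \in \cF(\Gamma)$ with $\sB' \subseteq \sum_{s \in F_0} R \cdot s\sB$, and set $F := F_0 \cup F'F_0$. The core step is to show that for most $v \in [d]$, every generator $\delta_v b' - \delta_{\sigma_t(v)} tb'$ (with $b' \in \sB'$, $t \in F'$) of $\sM(\sB', F', \sigma)$ lies in $\sM(\sB, F, \sigma)$. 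Writing $b' = \sum_{s \in F_0} r_s \cdot sb$ with $b \in \sB$, the relation of $\sM(\sB, F, \sigma)$ with $u := s \in F$ and $w := \sigma_{s^{-1}}(v)$ yields $\delta_v(sb) \equiv \delta_{\sigma_{s^{-1}}(v)}(b) \pmod{\sM(\sB, F, \sigma)}$ exactly when $\sigma_s \sigma_{s^{-1}}(v) = v$, and similarly (using $u := ts \in F$) $\delta_{\sigma_t(v)}(tsb) \equiv \delta_{\sigma_{s^{-1}}(v)}(b)$ when $\sigma_{ts}\sigma_{s^{-1}}(v) = \sigma_t(v)$, a condition approximately guaranteed by $\sigma_{ts} \approx \sigma_t \sigma_s$ on most of $[d]$. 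Intersecting the finitely many sofic conditions required for $s \in F_0$ and $t \in F'$ produces a good set $V_i \subseteq [d_i]$ with $|V_i|/d_i \to 1$ such that all these generators with $v \in V_i$ reduce to zero modulo $\sM(\sB, F, \sigma_i)$.

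The remaining generators, for $v \notin V_i$, lie in an error submodule $E_i \subseteq \cM^{d_i}$ supported in $W_i := ([d_i] \setminus V_i) \cup \bigcup_{t \in F'} \sigma_t([d_i] \setminus V_i)$, with coordinate entries in the finitely generated submodule $\sB'' := \sB' + \sum_{t \in F'} t\sB'$ of $\cM_2$; local $\rL$-finiteness of $\cM_2$ gives $\rL(\sB'') < +\infty$, so $\rL(E_i) \le |W_i| \cdot \rL(\sB'') = o(d_i)$. Given the approximate inclusion $\sM(\sB', F', \sigma_i) \subseteq \sM(\sB, F, \sigma_i) + E_i$, consider the canonical surjection $\sM(\sA, \sB, F, \sigma) \twoheadrightarrow (\sA^d + \sM(\sB, F, \sigma) + E)/(\sM(\sB, F, \sigma) + E)$: its kernel embeds into the quotient $(\sM(\sB, F, \sigma) + E)/\sM(\sB, F, \sigma)$ of $E$ and so has $\rL$-length at most $\rL(E)$, while its target is a quotient of $\sM(\sA, \sB', F', \sigma)$. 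Additivity of $\rL$ then yields $\rL(\sM(\sA, \sB, F, \sigma_i)) \le \rL(\sM(\sA, \sB', F', \sigma_i)) + \rL(E_i)$; dividing by $d_i$, passing to the $\omega$-limit, and taking the infimum over $\sB'$ and $F'$ gives $\mL_{\Sigma,\omega}(\sA|\sB) \le \mL_{\Sigma,\omega}(\sA|\cM_2)$, completing the proof. The main obstacle is the error bookkeeping for part (2): one must identify the finite set of sofic approximation conditions (depending only on $F_0$ and $F'$) required for the core reduction, and verify that each failed relation contributes to $E_i$ only through two coordinates valued in the fixed finitely generated module $\sB''$, so that local $\rL$-finiteness of $\cM_2$ bounds $\rL(E_i)$ by $o(d_i)$.
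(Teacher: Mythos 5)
Your proposal is correct and follows essentially the same route as the paper: part (1) by exact substitution of generators (using that each $\sigma_s$ is a permutation, so no sofic conditions are needed), and part (2) by a good/bad-point decomposition in which the relations of $\sB'$ over $F'$ at good points are absorbed into $\sM(\sB, F, \sigma)$ and the bad-point error submodule has length bounded by $|{\rm bad}|\cdot \rL(\sB'')=o(d_i)$, using local $\rL$-finiteness of $\cM_2$, followed by the additivity of $\rL$. The differences are only bookkeeping: in (1) the paper enlarges $\sB$ by $\sA$ and uses $s\in F$ rather than $s^{-1}$, and in (2) it routes through $\sB''=\sum_{t\in K}t\sB$ and chooses exact $\sigma_t$-preimages so that only the conditions $\sigma_s\sigma_t=\sigma_{st}$ enter, whereas you additionally invoke $\sigma_s\sigma_{s^{-1}}\approx\mathrm{id}$ on most points, which is an equally valid consequence of soficity.
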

\begin{proof} (1). Let $\sA'\in \sF(\cM_1)$. Take $K\in \cF(\Gamma)$ with $\sA'\subseteq \sum_{s\in K}s\sA$.

Let $\sB\in \sF(\cM_2)$ and $F\in \cF(\Gamma)$.
Set $\sB'=\sB+ \sA\in \sF(\cM_2)$ and $F'=F\cup K\in \cF(\Gamma)$.
Let $\sigma$ be a map from $\Gamma$ to $\Sym(d)$ for some $d\in \Nb$. Denote by $\varphi$ the quotient map
$\cM_2^d\rightarrow \cM_2^d/\sM(\sB', F', \sigma)$. For any $v\in [d], a\in \sA$ and $s\in K$, taking $v'\in [d]$ with $sv'=v$, we have
$$\varphi(\delta_vsa)=\varphi(\delta_{sv'}sa)=\varphi(\delta_{v'}a)\in \sM(\sA, \sB', F', \sigma).$$
Thus $\sM(\sA', \sB', F', \sigma)\subseteq \varphi((\sum_{s\in K}s\sA)^d)\subseteq \sM(\sA, \sB', F', \sigma)$. Therefore
$$\rL(\sM(\sA', \sB', F', \sigma))\le \rL(\sM(\sA, \sB', F', \sigma))\le \rL(\sM(\sA, \sB, F, \sigma)).$$
It follows that
$\mL_{\Sigma, \omega}(\sA'|\sB', F')\le \mL_{\Sigma, \omega}(\sA|\sB, F)$.
Therefore
$\mL_{\Sigma, \omega}(\sA'|\cM_2)\le \mL_{\Sigma, \omega}(\sA|\cM_2)$.
It is obvious that
$\mL_{\Sigma, \omega}(\sA|\cM_2)\le \rL(\sA)$.

(2). Let $\sB'\in \sF(\cM_2)$. Take $K\in \cF(\Gamma)$ with $\sB'\subseteq \sum_{t\in K}t\sB$. Set $\sB''=\sum_{t\in K}t\sB$.

Let $F'\in \cF(\Gamma)$ containing $e_\Gamma$. Set $F=F'K\in \cF(\Gamma)$. Then $F\supseteq K$. Let $0<\varepsilon<1$.
Let $\sigma:\Gamma\rightarrow \Sym(d)$ be a sufficiently good  sofic approximation for $\Gamma$ with $|\cW|\ge (1-\varepsilon/|K|)d$, where
$$\cW:=\{v\in [d]: s(tv)=(st)v \mbox{ for all } s\in F', t\in K\}.$$
Denote by $\sM$ (resp. $\sM^\dag$) the $R$-submodule of $\cM_2^d$ generated by $\delta_vb''-\delta_{sv}sb''$ for $v\in \bigcap_{t\in K}t\cW$ (resp. $v\in [d]\setminus \bigcap_{t\in K}t\cW$), $s\in F'$ and $b''\in \sB''$.
Then $\sM(\sB'', F', \sigma)=\sM+\sM^\dag$.
For any $v\in \bigcap_{t\in K}t\cW, b\in \sB, t\in K$ and $s\in F'$, writing $v$ as $tv'$ for some $v'\in \cW$, we have
$$ \delta_vtb-\delta_{sv}stb=(\delta_{v'}b-\delta_{stv'}stb)-(\delta_{v'}b-\delta_{tv'}tb)\in \sM(\sB, F, \sigma).$$
Thus $\sM\subseteq \sM(\sB, F, \sigma)$.
Denote by $\psi$ and $\varphi$ the quotient maps
$\cM_2^d\rightarrow \cM_2^d/\sM(\sB, F, \sigma)$ and $\cM_2^d\rightarrow \cM_2^d/\sM$ respectively. Then $\psi$ factors through $\varphi$.
Thus
\begin{align*}
\rL(\sM(\sA, \sB', F', \sigma))&\ge \rL(\sM(\sA, \sB'', F', \sigma))\\
&= \rL(\varphi(\sA^d)/(\varphi(\sA^d)\cap \varphi(\sM^\dag)))\\
&=\rL(\varphi(\sA^d)+\varphi(\sM^\dag))-\rL(\varphi(\sM^\dag))\\
&\ge \rL(\varphi(\sA^d))-\rL(\sM^\dag)\\
&\ge \rL(\psi(\sA^d))-\big|[d]\setminus \bigcap_{t\in K}t\cW\big| \cdot |F'|\cdot \rL(\sB'')\\
&\ge \rL(\sM(\sA, \sB, F, \sigma))-d\varepsilon |F'|\cdot |K|\cdot \rL(\sB).
\end{align*}
It follows that
$\mL_{\Sigma, \omega}(\sA|\sB', F')\ge \mL_{\Sigma, \omega}(\sA|\sB, F)-\varepsilon |F'|\cdot |K|\cdot \rL(\sB)$.
Letting $\varepsilon\to 0$, we get $\mL_{\Sigma, \omega}(\sA|\sB', F')\ge \mL_{\Sigma, \omega}(\sA|\sB, F)$.
Therefore
$\mL_{\Sigma, \omega}(\sA|\sB')\ge \mL_{\Sigma, \omega}(\sA|\sB)$.
\end{proof}

Next we discuss some continuity properties for mean length.

\begin{proposition} \label{P-continuity}
Let $\cM_1\subseteq \cM_2$ be $R\Gamma$-modules such that $\cM_1$ is locally $\rL$-finite. The following are true.
\begin{enumerate}
\item If $\{\cM_j'\}_{j\in \cJ}$ is an increasing net of $R\Gamma$-submodules of $\cM_1$ with union $\cM_1$, then  $\mL_{\Sigma, \omega}(\cM_j'|\cM_2)\nearrow \mL_{\Sigma, \omega}(\cM_1|\cM_2)$. If furthermore $\cM_2$ is locally $\rL$-finite and $\mL_{\Sigma, \omega}(\cM_2)<+\infty$, then $\mL_{\Sigma, \omega}(\cM_2/\cM_j')\searrow \mL_{\Sigma, \omega}(\cM_2/\cM_1)$.
\item If $\cM_1$ is a finitely generated $R\Gamma$-module, and $\{\cM_j'\}_{j\in \cJ}$ is an increasing net of $R\Gamma$-submodules of $\cM_2$ containing $\cM_1$ with union $\cM_2$, then $\mL_{\Sigma, \omega}(\cM_1|\cM_j')\searrow \mL_{\Sigma, \omega}(\cM_1|\cM_2)$.
\end{enumerate}
\end{proposition}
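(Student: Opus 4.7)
The plan is to prove both parts by combining monotonicity of $\mL_{\Sigma,\omega}(\cdot|\cdot)$ in each variable (which is essentially built into the definition) with the fact that finitely generated submodules eventually sit inside the approximating pieces, and then invoke the addition formula (Theorem~\ref{T-addition for mean length}) for the statement about quotients.

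For part (1), the inequality $\mL_{\Sigma,\omega}(\cM_j'|\cM_2)\le\mL_{\Sigma,\omega}(\cM_1|\cM_2)$ is immediate from the definition, since $\sF(\cM_j')\subseteq\sF(\cM_1)$ and $\mL_{\Sigma,\omega}(\cdot|\cM_2)$ on submodules is taken as a supremum over $\sF(\cdot)$. For the reverse, I would pick any $\sA\in\sF(\cM_1)$; since $\sA$ is finitely generated and the net $\{\cM_j'\}$ is increasing with union $\cM_1$, there exists $j\in\cJ$ with $\sA\subseteq\cM_j'$, and then $\mL_{\Sigma,\omega}(\sA|\cM_2)\le\mL_{\Sigma,\omega}(\cM_j'|\cM_2)\le\sup_{j}\mL_{\Sigma,\omega}(\cM_j'|\cM_2)$. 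Taking the supremum over $\sA\in\sF(\cM_1)$ yields the matching inequality, so the net increases to $\mL_{\Sigma,\omega}(\cM_1|\cM_2)$.

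For the second statement in part (1), I would apply Theorem~\ref{T-addition for mean length} twice, once to the chain $\cM_j'\subseteq\cM_2\subseteq\cM_2$ and once to $\cM_1\subseteq\cM_2\subseteq\cM_2$:
\begin{align*}
\mL_{\Sigma,\omega}(\cM_2) &= \mL_{\Sigma,\omega}(\cM_j'|\cM_2)+\mL_{\Sigma,\omega}(\cM_2/\cM_j'),\\
\mL_{\Sigma,\omega}(\cM_2) &= \mL_{\Sigma,\omega}(\cM_1|\cM_2)+\mL_{\Sigma,\omega}(\cM_2/\cM_1).
\end{align*}
Under the assumption $\mL_{\Sigma,\omega}(\cM_2)<+\infty$, both decompositions involve only finite summands, so subtraction is legitimate. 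Combining these with the monotone convergence $\mL_{\Sigma,\omega}(\cM_j'|\cM_2)\nearrow\mL_{\Sigma,\omega}(\cM_1|\cM_2)$ just established gives $\mL_{\Sigma,\omega}(\cM_2/\cM_j')\searrow\mL_{\Sigma,\omega}(\cM_2/\cM_1)$.

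For part (2), I would first observe that $\mL_{\Sigma,\omega}(\sA|\cM)$ is decreasing in $\cM$: growing $\cM$ enlarges $\sF(\cM)$, over which the infimum defining $\mL_{\Sigma,\omega}(\sA|\cM)$ is taken. Hence $\mL_{\Sigma,\omega}(\cM_1|\cM_j')\ge\mL_{\Sigma,\omega}(\cM_1|\cM_2)$, giving one direction. For the other, let $\sA\in\sF(\cM_1)$ be a finite generating set for $\cM_1$, so by Lemma~\ref{L-mean length generating}(1) we have $\mL_{\Sigma,\omega}(\cM_1|\cM_j')=\mL_{\Sigma,\omega}(\sA|\cM_j')$ and $\mL_{\Sigma,\omega}(\cM_1|\cM_2)=\mL_{\Sigma,\omega}(\sA|\cM_2)$. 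Given any $\sB\in\sF(\cM_2)$, since $\sB$ is finitely generated and $\cM_j'\nearrow\cM_2$, there exists $j$ with $\sB\subseteq\cM_j'$, whence $\sB\in\sF(\cM_j')$ and
\begin{align*}
\inf_{j}\mL_{\Sigma,\omega}(\sA|\cM_j')\le\mL_{\Sigma,\omega}(\sA|\cM_j')\le\mL_{\Sigma,\omega}(\sA|\sB).
\end{align*}
Taking the infimum over $\sB\in\sF(\cM_2)$ yields $\inf_{j}\mL_{\Sigma,\omega}(\sA|\cM_j')\le\mL_{\Sigma,\omega}(\sA|\cM_2)$, completing the proof. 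The only mildly delicate point, which serves as the main (modest) obstacle, is the finite generation hypothesis in part (2): without it, one cannot reduce to a single $\sA$ and the argument that $\sB\subseteq\cM_j'$ for some $j$ depends on $\sB$ being finitely generated, which is why the second variable argument needs Lemma~\ref{L-mean length generating}(1).
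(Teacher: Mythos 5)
Your proof is correct and follows essentially the same route as the paper: part (1) is the direct argument from the definition plus two applications of Theorem~\ref{T-addition for mean length} with $\cM_3=\cM_2$ (using $\mL_{\Sigma,\omega}(\cM_2)<+\infty$ to subtract), and part (2) is exactly the reduction to a single generating $\sA$ via Lemma~\ref{L-mean length generating}(1) together with the observation that any $\sB\in\sF(\cM_2)$ lies in some $\cM_j'$. You have merely written out in detail what the paper leaves as a one-line reference, including the correct identification of where the finite generation hypothesis in (2) is needed.
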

\begin{proof} (1). The first part is trivial. The second part follows from the first part and Theorem~\ref{T-addition for mean length}.

(2) follows from Lemma~\ref{L-mean length generating}.
\end{proof}

Now we consider $R\Gamma$-modules of the form $R\Gamma\otimes_R\sN$ for $R$-modules $\sN$.

\begin{proposition} \label{P-mean rank basic}
The following are true.
\begin{enumerate}
\item For any $R$-modules $\sN_1\subseteq \sN_2$ such that $\sN_2$ is locally $\rL$-finite, one has
$\mL_{\Sigma, \omega}(R\Gamma\otimes_R\sN_1|R\Gamma\otimes_R\sN_2)=\rL(\sN_1)$.
\item For any locally $\rL$-finite $R$-module $\sN$ and any $R\Gamma$-submodule $\cM$ of $R\Gamma\otimes_R\sN$, $\mL_{\Sigma, \omega}(\cM|R\Gamma\otimes_R \sN)=0$ if and only if
$\mL_{\Sigma, \omega}(\cM)=0$, if and only if $\rL(\cM)=0$.
\end{enumerate}
\end{proposition}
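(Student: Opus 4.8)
For part (1), the plan is to combine Lemma~\ref{L-mean length generating} with a direct computation on the canonical generators. Writing $\sN_1$ as an increasing union of its finitely generated $R$-submodules and using Proposition~\ref{P-continuity}(1) together with upper continuity of $\rL$, we may reduce to the case where $\sN_1=\sA$ is finitely generated; then $R\Gamma\otimes_R\sA$ is generated as an $R\Gamma$-module by $1\otimes\sA\in\sF(R\Gamma\otimes_R\sN_2)$, so by Lemma~\ref{L-mean length generating}(1) it suffices to compute $\mL_{\Sigma,\omega}(1\otimes\sA\,|\,R\Gamma\otimes_R\sN_2)$. The key point is that for a map $\sigma:\Gamma\rightarrow\Sym(d)$ and $\sB\in\sF(R\Gamma\otimes_R\sN_2)$, $F\in\cF(\Gamma)$, the relations $\delta_v b-\delta_{sv}sb$ allow one to move all the $\Gamma$-coordinates of elements of $(1\otimes\sA)^d$ onto a single ``free'' copy, so that $\sM(1\otimes\sA,\sB,F,\sigma)$ has length essentially $d\cdot\rL(\sA)$ up to an error controlled by the defect set of $\sigma$ (the $v\in[d]$ where $\sigma$ fails to be multiplicative on the relevant finite set) times a constant depending on $\sB$ and $F$. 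Dividing by $d_i$ and taking the limit along $\omega$ using the sofic conditions kills the error term, giving $\mL_{\Sigma,\omega}(1\otimes\sA\,|\,\sB,F)=\rL(\sA)$ for all sufficiently large $F$; the reverse inequality $\le\rL(\sA)$ is immediate from Lemma~\ref{L-mean length generating}(1). This is the kind of computation already carried out in the proof of Lemma~\ref{L-mean length generating}(2), so I expect it to be routine, though bookkeeping the error terms is where care is needed.

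For part (2), I would prove the implications $\rL(\cM)=0\Rightarrow\mL_{\Sigma,\omega}(\cM\,|\,R\Gamma\otimes_R\sN)=0\Rightarrow\mL_{\Sigma,\omega}(\cM)=0\Rightarrow\rL(\cM)=0$. The first implication is easy: if $\rL(\cM)=0$ then every $\sA\in\sF(\cM)$ has $\rL(\sA)=0$, hence $\rL(\sM(\sA,\sB,F,\sigma))\le d\cdot\rL(\sA^d)/d$-type bounds give $\rL(\sM(\sA,\sB,F,\sigma))\le d\cdot\rL(\sA)=0$ since $\sM(\sA,\sB,F,\sigma)$ is a quotient of $\sA^d$. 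The second implication, $\mL_{\Sigma,\omega}(\cM\,|\,R\Gamma\otimes_R\sN)=0\Rightarrow\mL_{\Sigma,\omega}(\cM)=0$, follows from monotonicity in the second variable (Proposition~\ref{P-continuity}, or the definition directly): $\mL_{\Sigma,\omega}(\cM)=\mL_{\Sigma,\omega}(\cM\,|\,\cM)\le\mL_{\Sigma,\omega}(\cM\,|\,R\Gamma\otimes_R\sN)$ since $\cM\subseteq R\Gamma\otimes_R\sN$.

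The substantive implication, and the one I expect to be the main obstacle, is $\mL_{\Sigma,\omega}(\cM)=0\Rightarrow\rL(\cM)=0$. Here I would argue by contrapositive: suppose $\rL(\cM)>0$; by upper continuity of $\rL$ there is $\sA\in\sF(\cM)$ with $\rL(\sA)>0$. The goal is to show $\mL_{\Sigma,\omega}(\sA\,|\,\sA)>0$, i.e.\ that the relations $\delta_v a-\delta_{sv}sa$ cannot collapse $\sA^d$ too much. The crucial structural input is that $\cM$ sits inside the free module $R\Gamma\otimes_R\sN$: writing $R\Gamma\otimes_R\sN\cong\bigoplus_{s\in\Gamma}(s\otimes\sN)$ as an $R$-module, an element of $\sA^d$ lives in finitely many $\Gamma$-coordinates, and one can use the freeness to find, for a good sofic approximation $\sigma$, a large subset $V\subseteq[d]$ of coordinates $v$ such that the projection of $\sM(\sA,\sA,F,\sigma)$ onto the $v$-th copy of $\cM$, intersected appropriately with the copies indexed by $V$, still sees a full copy of $\sA$ — because the relations only identify coordinate $v$ with coordinate $sv$ via multiplication by $s$, and in the free module the supports of $s\otimes\sN$ for distinct $s$ are disjoint. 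Concretely I would show $\rL(\sM(\sA,\sA,F,\sigma))\ge c\cdot d\cdot\rL(\sA)$ for a positive constant $c$ independent of $\sigma$ (for instance $c$ can be taken close to $1$ once $\sigma$ is multiplicative on a large portion of $[d]$ with respect to the finite set generating $\sA$ inside $R\Gamma$), again dividing by $d_i$ and passing to the $\omega$-limit. This freeness argument — translating "$\cM\subseteq$ a free module" into a lower bound on the length of the coordinate-identified quotient — is the heart of the matter and the place where the hypothesis on $\sN$ is genuinely used; everything else is formal manipulation with the additivity and upper continuity of $\rL$ and the sofic conditions.
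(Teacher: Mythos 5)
Your part (1) is essentially the paper's argument (reduce to a finitely generated $\sA$, use Lemma~\ref{L-mean length generating}(1) for the upper bound, and use the direct-sum decomposition $R\Gamma\otimes_R\sN_2=\bigoplus_{s\in\Gamma}s\otimes\sN_2$ to show $(e_\Gamma\otimes\sA)^d$ barely meets the relation module), though the clean way to handle an arbitrary $\sB\in\sF(R\Gamma\otimes_R\sN_2)$ is not a direct estimate with an error ``depending on $\sB$ and $F$'' but the reduction $\sB\subseteq R\Gamma\otimes_R\sB'$ followed by Lemma~\ref{L-mean length generating}(2), which replaces $\sB$ by $e_\Gamma\otimes\sB'$. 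The genuine gap is in part (2): the monotonicity you invoke goes the wrong way. By Definition~\ref{D-mean length}, $\mL_{\Sigma,\omega}(\sA|\cM_2)$ is an infimum over $\sB\in\sF(\cM_2)$, so enlarging the second variable can only decrease the value; thus $\mL_{\Sigma,\omega}(\cM|R\Gamma\otimes_R\sN)\le\mL_{\Sigma,\omega}(\cM)$, and the inequality you assert, $\mL_{\Sigma,\omega}(\cM)\le\mL_{\Sigma,\omega}(\cM|R\Gamma\otimes_R\sN)$, is false in general (Proposition~\ref{P-continuity} and the introduction state explicitly that the relative length decreases with the second module). Consequently your three-step cycle does not close: the step $\mL_{\Sigma,\omega}(\cM|R\Gamma\otimes_R\sN)=0\Rightarrow\mL_{\Sigma,\omega}(\cM)=0$ is not formal at all --- it carries essentially the whole content of part (2). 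The correct skeleton is: $\rL(\cM)=0\Rightarrow\mL_{\Sigma,\omega}(\cM)=0$ (trivial), $\mL_{\Sigma,\omega}(\cM)=0\Rightarrow\mL_{\Sigma,\omega}(\cM|R\Gamma\otimes_R\sN)=0$ (the correct direction of monotonicity), and then the hard implication must be proved in the stronger form $\mL_{\Sigma,\omega}(\cM|R\Gamma\otimes_R\sN)=0\Rightarrow\rL(\cM)=0$.

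A second, related defect is that your freeness sketch for the hard step only treats the single choice $\sB=\sA$. Both $\mL_{\Sigma,\omega}(\cM)$ and $\mL_{\Sigma,\omega}(\cM|R\Gamma\otimes_R\sN)$ involve an infimum over all finitely generated $\sB$ (in $\cM$, resp.\ in $R\Gamma\otimes_R\sN$), and $\sM(\sA,\sB,F,\sigma)$ only shrinks as $\sB$ grows, so a lower bound at $\sB=\sA$ gives no control of the infimum. The paper handles this by writing $\sB\subseteq R\Gamma\otimes_R\sB'$ with $\sB'\in\sF(\sN)$, invoking Lemma~\ref{L-mean length generating}(2) to pass to $e_\Gamma\otimes\sB'$, and only then running the disjoint-supports argument: choose $K\in\cF(\Gamma)$ with $\sA\subseteq\sum_{t\in K}t\otimes\sN$, take a maximal $\cV\subseteq[d]$ whose $K$-translates are disjoint, and show $\sum_{v\in\cV}\delta_v\sA$ meets the relation module trivially. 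This yields the lower bound $\rL(\sA)/|K|^2$ rather than a constant near $1$ (your ``$c$ close to $1$'' is too optimistic, though any positive constant suffices for the vanishing statement). So your intuition about freeness is the right one, but as structured the proof of part (2) does not go through.
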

\begin{proof}
(1). Let $\sA\in \sF(R\Gamma\otimes_R\sN_1)$. Then $\sA\subseteq R\Gamma\otimes_R\sA'$ for some
$\sA'\in \sF(\sN_1)$. Note that $e_\Gamma\otimes \sA'$ is in $\sF(R\Gamma\otimes_R\sA')$ and generates $R\Gamma\otimes_R\sA'$ as an $R\Gamma$-module. By Lemma~\ref{L-mean length generating} one has
$$ \mL_{\Sigma, \omega}(\sA|R\Gamma\otimes_R\sN_2)\le \mL_{\Sigma, \omega}(R\Gamma\otimes_R\sA'|R\Gamma\otimes_R\sN_2)\le \rL(e_\Gamma\otimes\sA')=\rL(\sA')\le \rL(\sN_1).$$
Thus $\mL_{\Sigma, \omega}(R\Gamma\otimes_R\sN_1|R\Gamma\otimes_R\sN_2)\le \rL(\sN_1)$.

Let $\sA\in \sF(\sN_1)$.
Let $\sB\in \sF(R\Gamma\otimes_R\sN_2)$. Then $\sB\subseteq R\Gamma\otimes_R \sB'$ for some
$\sB'\in \sF(\sN_2)$. Let $F\in \cF(\Gamma)$ and $0<\varepsilon<1$. Let $\sigma: \Gamma\rightarrow \Sym(d)$ be a good enough sofic approximation for $\Gamma$ with $|\cW|\ge (1-\varepsilon)d$, where
$$ \cW:=\{v\in [d]: e_\Gamma v=v\}.$$
Note that $(e_\Gamma\otimes \sA)^d\cap \sM(e_\Gamma\otimes \sB', F, \sigma)\subseteq \sum_{v\in [d]\setminus \cW}(\delta_v(e_\Gamma\otimes \sB')+\delta{e_\Gamma v}(e_\Gamma\otimes \sB'))$.
Thus
\begin{align*}
\rL(\sM(e_\Gamma\otimes \sA, e_\Gamma\otimes \sB', F, \sigma))&=\rL((e_\Gamma\otimes \sA)^d)-\rL((e_\Gamma\otimes \sA)^d\cap \sM(e_\Gamma\otimes \sB', F, \sigma))\\
&\ge d\rL(\sA)-\rL(\sum_{v\in [d]\setminus \cW}(\delta_v(e_\Gamma\otimes \sB')+\delta{e_\Gamma v}(e_\Gamma\otimes \sB')))\\
&\ge d\rL(\sA)-2\big|[d]\setminus \cW\big|\rL(\sB')\\
&\ge d\rL(\sA)-2d\varepsilon \rL(\sB').
\end{align*}
Therefore
$$ \mL_{\Sigma, \omega}(e_\Gamma\otimes \sA|e_\Gamma\otimes \sB', F)\ge \rL(\sA)-2\varepsilon \rL(\sB').$$
Letting $\varepsilon\to 0$, we get $\mL_{\Sigma, \omega}(e_\Gamma\otimes \sA|e_\Gamma\otimes \sB', F)\ge \rL(\sA)$. Since $F$ is an arbitrary finite subset of $\Gamma$, we conclude
that $\mL_{\Sigma, \omega}(e_\Gamma\otimes \sA|e_\Gamma\otimes \sB')\ge \rL(\sA)$. By Lemma~\ref{L-mean length generating} we get
$$ \mL_{\Sigma, \omega}(e_\Gamma\otimes \sA|\sB)\ge \mL_{\Sigma, \omega}(e_\Gamma\otimes \sA|R\Gamma\otimes_R\sB')=\mL_{\Sigma, \omega}(e_\Gamma\otimes \sA|e_\Gamma\otimes \sB')\ge \rL(\sA).$$
Since $\sB$ is an arbitrary finitely generated $R$-submodule of $R\Gamma\otimes_R \sN_2$, we get
$$\mL_{\Sigma, \omega}(R\Gamma\otimes_R\sN_1|R\Gamma\otimes_R\sN_2)\ge \mL_{\Sigma, \omega}(e_\Gamma\otimes \sA|R\Gamma\otimes_R\sN_2)\ge \rL(\sA).$$
By the upper continuity of $\rL$, we conclude that $\mL_{\Sigma, \omega}(R\Gamma\otimes_R\sN_1|R\Gamma\otimes_R\sN_2)\ge \rL(\sN_1)$.
Therefore $\mL_{\Sigma, \omega}(R\Gamma\otimes_R\sN_1|R\Gamma\otimes_R\sN_2)=\rL(\sN_1)$.

(2). If $\rL(\cM)=0$, then clearly $\mL_{\Sigma, \omega}(\cM)=0$. Since $\mL_{\Sigma, \omega}(\cM|R\Gamma\otimes_R\sN)\le \mL_{\Sigma, \omega}(\cM)$, it suffices to show that
if $\mL_{\Sigma, \omega}(\cM|R\Gamma\otimes_R\sN)=0$, then $\rL(\cM)=0$.

Let $\sA\in \sF(\cM)$. Then there exists some $K\in \cF(\Gamma)$ such that $e_\Gamma\in K=K^{-1}$ and $\sA\subseteq \sum_{t\in K}t\otimes \sN$.
Let $\sB\in \sF(R\Gamma\otimes_R\sN)$. Then $\sB\subseteq R\Gamma\otimes_R\sB'$ for some
$\sB'\in \sF(\sN)$. Let $F\in \cF(\Gamma)$.

Let $0<\varepsilon<1$. Let $\sigma: \Gamma\rightarrow \Sym(d)$ be a good enough sofic approximation of $\Gamma$ with $|\cW|\ge (1-\varepsilon)d$, where
$$\cW:=\{v\in [d]: t(sv)=(ts)v \mbox{ for all } s,t\in K, sv\neq s'v \mbox{ for all } s\neq s' \in K, \mbox{ and } e_\Gamma v=v\}.$$
Take a maximal subset $\cV$ of $\cW$ subject to the condition that
the sets $Kv$ are pairwise disjoint for $v\in \cV$.
Then $\sigma(K)^{-1}\sigma(K)\cV\supseteq \cW$, whence $|\cV|\ge |\cW|/|K|^2\ge (1-\varepsilon)d/|K|^2$.
Denote by $\sM$ (resp. $\sM^\dag$) the $R$-submodule of $R\Gamma\otimes_R\sN$ generated by $\delta_ve_\Gamma\otimes b'-\delta_{sv}s\otimes b'$ for all $v\in \cW$ (resp. $v\in [d]\setminus \cW$), $b'\in \sB'$ and $s\in F$.
Then $\sM(e_\Gamma\otimes \sB', F, \sigma)=\sM+\sM^\dag$.

Set $\tilde{\sA}=\sum_{v\in \cV}\delta_v\sA$.
We claim that $\tilde{\sA}\cap \sM=\{0\}$. Let $x\in \tilde{\sA}\cap \sM$. Then $x=\sum_{v\in \cW}\sum_{s\in F}(\delta_ve_\Gamma\otimes f(v, s)-\delta_{sv}s\otimes f(v, s))$
 for some map $f: \cW\times F\rightarrow \sB'$. Since $e_\Gamma v=v$ for all $v\in \cW$, we may assume that $f(v, e_\Gamma)=0$ for all $v\in \cW$, in case $e_\Gamma\in F$.
 Because $x\in \tilde{\sA}$, we must have $f(v, s)=0$ for all  $(v, s)\in \cW\times (F\setminus K)$ and all $(v, s)\in \cW\times F$ with $sv\not\in \cV$. By our choice of $\cV$, we then conclude that $f=0$. Thus $x=0$. This proves our claim.

Denote by $\varphi$ the quotient map $(R\Gamma\otimes_R\sN)^d\rightarrow (R\Gamma\otimes_R\sN)^d/\sM$. Then
\begin{align*}
\rL(\sM(\sA, e_\Gamma\otimes \sB', F, \sigma))&=\rL(\varphi(\sA^d)/(\varphi(\sA^d)\cap \varphi(\sM^\dag)))\\
&=\rL(\varphi(\sA^d)+\varphi(\sM^\dag))-\rL(\varphi(\sM^\dag))\\
&\ge \rL(\varphi(\sA^d))-\big| [d]\setminus \cW\big|\cdot |F|\cdot \rL(\sB')\\
&\ge \rL(\varphi(\tilde{\sA}))-d\varepsilon \cdot |F|\cdot \rL(\sB')\\
&= \rL(\tilde{\sA})- d\varepsilon \cdot |F|\cdot \rL(\sB')\\
&= |\cV|\cdot \rL(\sA)-d\varepsilon \cdot |F|\cdot \rL(\sB')\\
&\ge \rL(\sA)(1-\varepsilon)d/|K|^2-d\varepsilon \cdot |F|\cdot \rL(\sB').
\end{align*}
Thus
$$ \mL_{\Sigma, \omega}(\sA|e_\Gamma\otimes \sB', F)\ge \rL(\sA)(1-\varepsilon)/|K|^2-\varepsilon \cdot |F|\cdot \rL(\sB').$$
Letting $\varepsilon\to 0$, we get
$ \mL_{\Sigma, \omega}(\sA|e_\Gamma\otimes \sB', F)\ge \rL(\sA)/|K|^2$, and hence $\mL_{\Sigma, \omega}(\sA|e_\Gamma\otimes \sB')\ge \rL(\sA)/|K|^2$.
By Lemma~\ref{L-mean length generating}, we have
$$\mL_{\Sigma, \omega}(\sA|\sB)\ge \mL_{\Sigma, \omega}(\sA|R\Gamma\otimes_R \sB')=\mL_{\Sigma, \omega}(\sA|e_\Gamma\otimes \sB')\ge \rL(\sA)/|K|^2.$$
Therefore
$$ \mL_{\Sigma, \omega}(\sA|R\Gamma\otimes_R\sN)\ge \rL(\sA)/|K|^2.$$
If $\mL_{\Sigma, \omega}(\cM|R\Gamma\otimes_R\sN)=0$, then $\mL_{\Sigma, \omega}(\sA|R\Gamma\otimes_R\sN)=0$ and hence $\rL(\sA)=0$ for all $\sA\in \sF(\cM)$, which implies $\rL(\cM)=0$ by the upper continuity of $\rL$.
\end{proof}

To round this section, we compute the mean length in a fairly simple case.

\begin{proposition} \label{P-finite}
Suppose that $\Gamma$ is infinite and $\cM$ is an $R\Gamma$-module with $\rL(\cM)<+\infty$. Then $\mL_{\Sigma, \omega}(\cM)=0$.
\end{proposition}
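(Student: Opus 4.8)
The plan is to show $\mL_{\Sigma,\omega}(\sA|\cM)=0$ for every $\sA\in\sF(\cM)$; since $\rL(\cM)<+\infty$ and $\rL$ is increasing, $\cM$ is locally $\rL$-finite, so $\mL_{\Sigma,\omega}(\cM)=\mL_{\Sigma,\omega}(\cM|\cM)=\sup_{\sA\in\sF(\cM)}\mL_{\Sigma,\omega}(\sA|\cM)$ and this suffices. I fix $\sA\in\sF(\cM)$ and, for each $F\in\cF(\Gamma)$ with $e_\Gamma\in F=F^{-1}$, put $\sB:=\sum_{z\in F^2}z\sA\in\sF(\cM)$; note $\sA\subseteq\sB$ and $\rL(\sB)\le\rL(\cM)<+\infty$. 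The target is the estimate $\mL_{\Sigma,\omega}(\sA|\sB,F)\le\rL(\cM)/|F|$. Granting it, $\mL_{\Sigma,\omega}(\sA|\cM)\le\mL_{\Sigma,\omega}(\sA|\sB)\le\mL_{\Sigma,\omega}(\sA|\sB,F)\le\rL(\cM)/|F|$, and since $\Gamma$ is infinite one may take $|F|$ arbitrarily large, forcing $\mL_{\Sigma,\omega}(\sA|\cM)=0$.

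To estimate $\mL_{\Sigma,\omega}(\sA|\sB,F)$, fix a map $\sigma:\Gamma\to\Sym(d)$ and set
\[ \cW:=\{v\in[d]:\sigma_s(v)\neq\sigma_t(v)\ \text{ for all distinct }s,t\in F\}. \]
By condition (2) in the definition of a sofic approximation net (there being only finitely many pairs $s\neq t$ in $F$), $\lim_{i\to\omega}|\cW_i|/d_i=1$. Choose $N\subseteq\cW$ maximal subject to the condition that the sets $Fv:=\{\sigma_s(v):s\in F\}$, $v\in N$, are pairwise disjoint. For $v\in\cW$ one has $|Fv|=|F|$, so disjointness gives $|N|\cdot|F|\le d$, that is, $|N|\le d/|F|$; and by maximality, for every $w\in\cW$ there are $v=v(w)\in N$ and $s_w,t_w\in F$ with $\sigma_{s_w}(w)=\sigma_{t_w}(v(w))$.

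The key step is a bounded transport of coordinates in $\cM^d/\sM(\sB,F,\sigma)$. Let $\pi:\cM^d\to\cM^d/\sM(\sB,F,\sigma)$ be the quotient map. For $w\in\cW$ and $a\in\sA$, both $\delta_wa-\delta_{\sigma_{s_w}(w)}(s_wa)$ (here $a\in\sB$) and $\delta_{v(w)}b-\delta_{\sigma_{t_w}(v(w))}(t_wb)$ with $b:=t_w^{-1}s_wa\in F^2\sA=\sB$ lie in $\sM(\sB,F,\sigma)$; chaining them and using $\sigma_{s_w}(w)=\sigma_{t_w}(v(w))$ gives $\pi(\delta_wa)=\pi(\delta_{v(w)}(t_w^{-1}s_wa))$. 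As $a\mapsto t_w^{-1}s_wa$ is $R$-linear, $\pi(\delta_w\sA)\subseteq\pi(\delta_{v(w)}\sB)$. Hence
\[ \sM(\sA,\sB,F,\sigma)=\pi(\sA^d)\subseteq\sum_{v\in N}\pi(\delta_v\sB)+\sum_{w\in[d]\setminus\cW}\pi(\delta_w\sA), \]
so, since $\rL$ is subadditive on finite sums of submodules, $\rL$ of a quotient is at most $\rL$ of the module, and $\delta_v\sB\cong\sB$, $\delta_w\sA\cong\sA$ as $R$-modules,
\begin{align*}
\rL(\sM(\sA,\sB,F,\sigma)) &\le |N|\,\rL(\sB)+|[d]\setminus\cW|\,\rL(\sA)\\
&\le \tfrac{d}{|F|}\rL(\cM)+(d-|\cW|)\rL(\sA).
\end{align*}
Dividing by $d$, passing to $\sigma=\sigma_i$ and taking $\lim_{i\to\omega}$ (recall $\rL(\sA)<+\infty$ and $|\cW_i|/d_i\to1$) yields $\mL_{\Sigma,\omega}(\sA|\sB,F)\le\rL(\cM)/|F|$.

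The step I expect to be the main obstacle is the transport: because $\sB$ must be finitely generated while $R\Gamma\sA$ need not be finitely generated over $R$, one cannot move a coordinate an unbounded distance, so the set $N$ must be chosen so that every good coordinate is reachable from $N$ in boundedly many steps. Taking $N$ with the balls $Fv$ pairwise disjoint simultaneously produces the size bound $|N|\le d/|F|$ and a two-step reach, and correspondingly it suffices that $\sB$ contain $F^2\sA$, whose $\rL$-length is at most $\rL(\cM)$. The remaining points are routine: checking that the two displayed relation generators lie in $\sM(\sB,F,\sigma)$, that the transport map is $R$-linear, and that $\rL$ is subadditive on finite sums of submodules.
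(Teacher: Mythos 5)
Your proof is correct. The transport identity is verified cleanly: with $e_\Gamma\in F=F^{-1}$ and $\sB=\sum_{z\in F^2}z\sA$, both $\delta_w a-\delta_{\sigma_{s_w}(w)}(s_wa)$ and $\delta_{v(w)}(t_w^{-1}s_wa)-\delta_{\sigma_{t_w}(v(w))}(s_wa)$ are among the defining generators of $\sM(\sB,F,\sigma)$, so $\pi(\delta_w\sA)\subseteq\pi(\delta_{v(w)}\sB)$ for every $w\in\cW$; together with $|N|\le d/|F|$, subadditivity of $\rL$ on finite sums, and $\lim_{i\to\omega}|\cW_i|/d_i=1$ (which needs only condition (2) of soficity, and the free ultrafilter picks up the tail sets), this gives $\mL_{\Sigma,\omega}(\sA|\sB,F)\le\rL(\cM)/|F|$, and infiniteness of $\Gamma$ finishes the argument.

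The overall strategy is the same as the paper's (a maximal family of anchor points whose $F$-translates are pairwise disjoint, bounding the number of anchors by $d/|F|$, and transporting every good coordinate of $\sA^d$ to an anchor coordinate of length at most $\rL(\cM)$), but your implementation differs in two ways worth noting. First, the paper fixes $\sB\in\sF(\cM)$ with $\rL(\cM)<\rL(\sB)+\varepsilon$ via upper continuity and then must control the defect $\rL(\sA/(s\sB\cap\sA))\le\varepsilon$, since only the part of $\sA$ lying in $s\sB$ can be transported; you instead let $\sB=\sum_{z\in F^2}z\sA$ depend on $F$, which makes the transport exact and removes both the upper-continuity step and the $\varepsilon$-bookkeeping (this exploits the freedom in Definition~\ref{D-mean length} to choose $\sB$ after $F$). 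Second, the paper's set $\cW$ also imposes approximate multiplicativity of $\sigma$ on $K$ so that $\sigma(K^{-1}K)\cV\supseteq\cW$ and the transport is a single step; your two-step chaining through the meeting coordinate $\sigma_{s_w}(w)=\sigma_{t_w}(v(w))$ uses only the separation condition (2) of the sofic approximation, so the argument is slightly more elementary (it works for any maps $\sigma_i$ that asymptotically separate the points of $F$). Both proofs yield the same quantitative bound $\rL(\cM)/|F|$ up to the paper's extra $\varepsilon$-terms.
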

\begin{proof} Let $\sA\in \sF(\cM)$ and $\varepsilon>0$. By the upper continuity of $\rL$, we can find a $\sB\in \sF(\cM)$ such that
$\rL(\cM)<\rL(\sB)+\varepsilon$.

Let $K\in \cF(\Gamma)$ be nonempty. Set $F=K^{-1}K\in \cF(\Gamma)$. Let $\sigma: \Gamma\rightarrow \Sym(d)$ be a good enough sofic approximation of $\Gamma$ with $|\cW|\ge (1-\varepsilon)d$, where
$$ \cW:=\{v\in [d]: \sigma_sv\neq \sigma_tv \mbox{ for all distinct } s, t\in K, \mbox{ and } (\sigma_s)^{-1}\sigma_tv=\sigma_{s^{-1}t}v \mbox{ for all } s,t \in K\}.$$
Take a maximal subset $\cV$ of $\cW$ subject to the condition that the sets $Kv$ are pairwise disjoint for $v\in \cV$.
Then $\sigma(F)\cV=\sigma(K^{-1}K)\cV=\sigma(K)^{-1}\sigma(K)\cV\supseteq \cW$ and
$|\cV|\le d/|K|$.

Denote by $\sM$ the $R$-submodule of $\cM^d$ generated by $\delta_v\cM$ for $v\in \cV$.
Denote by $\psi$ the quotient map $\cM^d\rightarrow \cM^d/\sM(\sB, F, \sigma)$. For each $s\in F$, set $\sB_s:=s\sB\cap \sA$.
Then $\rL(\sA/\sB_s)\le \rL(\cM/s\sB)\le \varepsilon$. For any $(v, s)\in \cV\times F$ and any $a\in \sB_s$, say $a=sb$ with $b\in \sB$, we have
$$\psi(\delta_{sv}a)=\psi(\delta_{sv}sb)=\psi(\delta_v b)\in \psi(\sM). $$
Thus
$ \psi(\sum_{v\in \cV}\sum_{s\in F}\delta_{sv}\sB_s)\subseteq \psi(\sM)$, whence
$$\rL(\psi(\sum_{v\in \cV}\sum_{s\in F}\delta_{sv}\sB_s))\le \rL(\psi(\sM))\le \rL(\sM)=|\cV|\rL(\cM).$$
Set $\cW':=\{sv: s\in F, v\in \cV\}$. Then $\cW'\supseteq \cW$. Therefore
\begin{align*}
\rL(\sM(\sA, \sB, F, \sigma))&=\rL(\psi(\sum_{v\in \cV}\sum_{s\in F}\delta_{sv}\sB_s))+\rL(\psi(\sA^d)/\psi(\sum_{v\in \cV}\sum_{s\in F}\delta_{sv}\sB_s))\\
&\le |\cV|\rL(\cM)+\rL(\sA^d/\sum_{v\in \cV}\sum_{s\in F}\delta_{sv}\sB_s)\\
&= |\cV|\rL(\cM)+\big|[d]\setminus \cW'\big|\rL(\sA)+\sum_{v'\in \cW'}\rL(\sA/\sum_{v\in \cV, s\in F, sv=v'}\sB_s)\\
&\le d(1/|K|+\varepsilon)\rL(\cM)+d\varepsilon.
\end{align*}
It follows that $\mL_{\Sigma, \omega}(\sA|\cM)\le (1/|K|+\varepsilon)\rL(\cM)+\varepsilon$. Since $K$ is an arbitrary nonempty finite subset of $\Gamma$, we get $\mL_{\Sigma, \omega}(\sA|\cM)\le (\rL(\cM)+1)\varepsilon$. As $\varepsilon$ is an arbitrary positive number, we get $\mL_{\Sigma, \omega}(\sA|\cM)=0$. Thus $\mL_{\Sigma, \omega}(\cM)=0$.
\end{proof}

\section{Stably direct finiteness} \label{S-direct finite}

In this section we prove Theorem~\ref{T-Noetherian}.

For any unital ring $R$, one has its opposite ring $R^{\op}:=\{a^{\op}: a\in R\}$ with operations defined by $a^{\op}-b^{\op}=(a-b)^{\op}$ and $a^{\op}b^{\op}=(ba)^{\op}$. Using the transpose map it is easy to see that
$(M_n(R))^{\op}\cong M_n(R^{\op})$ for every $n\in \Nb$. Thus
$R$ is stably direct finite if and only if $R^{\op}$ is so. Then Theorem~\ref{T-Noetherian} follows from Lemma~\ref{L-opposite} and Theorem~\ref{T-directly finite} below.

\begin{lemma} \label{L-opposite}
For any unital ring $R$ and any (not necessarily sofic) group $\Gamma$, $R^{\op}\Gamma$ is isomorphic to $(R\Gamma)^{\op}$.
\end{lemma}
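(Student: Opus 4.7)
The plan is to write down an explicit bijective ring homomorphism between the two rings and verify the axioms directly; the statement is really a bookkeeping lemma and the only subtlety is choosing the formula so that reversing multiplication on the coefficients matches reversing multiplication on the whole group ring.

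Concretely, I would define $\varphi\colon R^{\op}\Gamma\to (R\Gamma)^{\op}$ by
\begin{equation*}
\varphi\Bigl(\sum_{s\in\Gamma} a_s^{\op}\, s\Bigr)=\Bigl(\sum_{s\in\Gamma} a_s\, s^{-1}\Bigr)^{\op}.
\end{equation*}
The reason for also inverting the group elements is that the product $(a^{\op}s)(b^{\op}t)=(ba)^{\op}(st)$ in $R^{\op}\Gamma$ reverses both the coefficient order and, implicitly, the order in which the grouplike elements end up adjacent to their partners after passing to the opposite ring, so one must compensate by the antihomomorphism $s\mapsto s^{-1}$ of $\Gamma$.

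The verification then splits into the standard checks. First, $\varphi$ is a bijective additive map because it is given by the $R$-linear extension of the bijection $s\mapsto s^{-1}$ on $\Gamma$ together with the bijection $a\mapsto a^{\op}$ on $R$. Second, $\varphi$ sends the unit $1_{R^{\op}}\, e_\Gamma$ to $(1_R\, e_\Gamma^{-1})^{\op}=1_{(R\Gamma)^{\op}}$. The one computation worth doing is multiplicativity on generators: on the one hand
\begin{equation*}
\varphi\bigl((a^{\op}s)(b^{\op}t)\bigr)=\varphi\bigl((ba)^{\op}\, st\bigr)=\bigl(ba\,(st)^{-1}\bigr)^{\op},
\end{equation*}
and on the other hand, using the definition of the multiplication in $(R\Gamma)^{\op}$,
\begin{equation*}
\varphi(a^{\op}s)\cdot\varphi(b^{\op}t)=(a s^{-1})^{\op}\cdot (b t^{-1})^{\op}=\bigl(b t^{-1}\cdot a s^{-1}\bigr)^{\op}=\bigl(ba\, t^{-1}s^{-1}\bigr)^{\op},
\end{equation*}
and the two agree because $t^{-1}s^{-1}=(st)^{-1}$. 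Extending by linearity finishes the proof.

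There is no real obstacle here; the only point to be careful about is not forgetting the inversion $s\mapsto s^{-1}$ in the formula, since the naive guess $\sum a_s^{\op} s\mapsto (\sum a_s s)^{\op}$ fails to respect the multiplication. Once the correct formula is in place, everything reduces to the identity $(st)^{-1}=t^{-1}s^{-1}$.
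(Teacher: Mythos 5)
Your map $\varphi\bigl(\sum_s a_s^{\op}s\bigr)=\bigl(\sum_s a_s s^{-1}\bigr)^{\op}$ is exactly the isomorphism the paper uses, and your verification of multiplicativity on generators (relying on $(st)^{-1}=t^{-1}s^{-1}$) is the computation the paper leaves as "easily checked." The proposal is correct and follows essentially the same approach as the paper.
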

\begin{proof} For each $x\in R^{\op}\Gamma$, writing $x$ as $\sum_{s\in \Gamma}a^{\op}_ss$, where $a_s\in R$ for each $s\in \Gamma$ and $a_s=0$ for all but finitely many $s\in \Gamma$, we
define an element $\varphi(x)$ of $(R\Gamma)^{\op}$ by
$\varphi(x)=(\sum_{s\in \Gamma}a_ss^{-1})^{\op}$. It is easily checked that the map $\varphi: R^{\op}\Gamma\rightarrow (R\Gamma)^{\op}$ sending $x$ to $\varphi(x)$ is an isomorphism.
\end{proof}

For any $R$-module $\sM$, we denote by $\End_R(\sM)$ the endomorphism ring of $\sM$.

\begin{theorem} \label{T-directly finite}
Let $R$ be a unital ring, $\Gamma$ be a sofic group, and $\sM$ be a nonzero Noetherian $R$-module. Set $\tilde{R}:=\End_R(\sM)$. Then $\tilde{R}\Gamma$ is stably direct finite.
\end{theorem}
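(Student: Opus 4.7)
The plan is to deduce stable direct finiteness of $\tilde{R}\Gamma$ from the (apparently weaker) statement that $\End_R(\sM')\Gamma$ is directly finite for every nonzero Noetherian $R$-module $\sM'$: indeed $M_n(\tilde{R}\Gamma)\cong \End_R(\sM^n)\Gamma$, and $\sM^n$ is again a nonzero Noetherian $R$-module. I will prove this direct-finiteness statement by transfinite induction on the Krull dimension $\alpha$ of $\sM$ in the sense of Example~\ref{E-ordinal length}. Since $\sM$ is nonzero and Noetherian, $\alpha$ is necessarily a successor ordinal $\alpha=\beta+1$, and the length function $\rL:=\rL_\beta$ satisfies $0<\rL(\sM)<+\infty$.

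Suppose $ab=1$ in $\tilde{R}\Gamma$ and set $c:=1-ba$; the goal is to show $c=0$. A routine computation gives $c^2=c$, $ac=0$ and $cb=0$. Form the $R\Gamma$-module $\cM:=R\Gamma\otimes_R \sM\cong \sM^{(\Gamma)}$, for which Proposition~\ref{P-mean rank basic}(1) gives $\mL_{\Sigma,\omega}(\cM)=\rL(\sM)$, a finite positive quantity. The key additional structure is that $\cM$ simultaneously carries a faithful left $\tilde{R}\Gamma$-action commuting with the left $R\Gamma$-action (with $\tilde{R}$ acting coordinatewise on $\sM^{(\Gamma)}$), so for any $x\in \tilde{R}\Gamma$ the left-multiplication map $L_x:\cM\to\cM$ is an $R\Gamma$-module endomorphism. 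From $L_aL_b=\id$ one sees $L_a$ is surjective, and combining $L_bL_a=\id-L_c$ with the injectivity of $L_b$ gives $\ker L_a = c\cM$. The resulting exact sequence $0\to c\cM\to \cM\xrightarrow{L_a}\cM\to 0$ of $R\Gamma$-modules, the addition formula (Theorem~\ref{T-addition for mean length}), and cancellation of the finite quantity $\mL_{\Sigma,\omega}(\cM)$ force $\mL_{\Sigma,\omega}(c\cM\mid \cM)=0$. Proposition~\ref{P-mean rank basic}(2) then gives $\rL(c\cM)=0$, i.e., every finitely generated $R$-submodule of $c\cM$ lies in $\fA_\beta$. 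Projecting onto the coordinates of $\sM^{(\Gamma)}$ and using that $\fA_\beta$ is closed under subquotients, this upgrades to $c\cM\subseteq \tau(\sM)^{(\Gamma)}$, where $\tau(\sM)$ denotes the unique maximal $\fA_\beta$-submodule of the Noetherian module $\sM$.

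The main obstacle is that $\tau(\sM)$ can be nonzero, so $\rL_\beta$ need not be faithful on $\cM$ and one cannot conclude $c\cM=0$ directly. This is exactly where the inductive hypothesis intervenes. If $\tau(\sM)=0$ (which includes the base case $\alpha=0$), faithfulness of the $\tilde{R}\Gamma$-action on $\cM$ immediately gives $c=0$. Otherwise $\tau(\sM)$ is a nonzero Noetherian module of Krull dimension at most $\beta<\alpha$, so by the inductive hypothesis $\End_R(\tau(\sM))\Gamma$ is directly finite. Since $\tilde{R}$ preserves the Serre-invariant submodule $\tau(\sM)$, restriction defines a ring homomorphism $\rho:\tilde{R}\Gamma\to\End_R(\tau(\sM))\Gamma$; the identity $\rho(a)\rho(b)=1$ and direct finiteness of $\End_R(\tau(\sM))\Gamma$ force $\rho(c)=0$, meaning that $c$ acts as zero on $\tau(\sM)^{(\Gamma)}$. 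Combined with $c\cM\subseteq \tau(\sM)^{(\Gamma)}$ and $c^2=c$, this gives $c\cM=c\cdot(c\cM)=0$, and faithfulness of the $\tilde{R}\Gamma$-action on $\cM$ finally yields $c=0$, completing the induction.
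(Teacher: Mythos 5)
Your proof is correct, and it reaches Theorem~\ref{T-directly finite} by a genuinely different organization than the paper's, although both run on the same engine: the ordinal length functions $\rL_\beta$ of Example~\ref{E-ordinal length}, the addition formula of Theorem~\ref{T-addition for mean length}, and Proposition~\ref{P-mean rank basic}, applied to $R\Gamma\otimes_R\sM$ to force an $\rL_\beta$-vanishing conclusion about a kernel. The paper routes the algebra through Lemma~\ref{L-end} and Lemma~\ref{L-direct summand} and reduces everything to Theorem~\ref{T-hopfian}: given a surjective $R\Gamma$-endomorphism with nonzero kernel, it chooses (via Lemmas~\ref{L-ordinal for kernel} and \ref{L-ordinal surjective}) the ordinal $\alpha$ adapted to the kernel, restricts to $T_{\alpha+1}(R\Gamma\otimes_R\sM)=R\Gamma\otimes_R T_{\alpha+1}(\sM)$, and derives a contradiction from $\rL_\alpha(\ker\varphi)=0$ against the existence of a cyclic submodule of the kernel outside $\fA_\alpha$ --- no induction is needed. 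You instead fix $\alpha=\beta+1$ to be the Krull dimension of $\sM$ itself, argue by transfinite induction on $\alpha$, and work directly with the idempotent $c=1-ba$ through the faithful $\tilde{R}\Gamma$-action on $R\Gamma\otimes_R\sM$ (which is the easy half of Lemma~\ref{L-end}); the exact sequence $0\to c\cM\to\cM\to\cM\to 0$, the addition formula, and Proposition~\ref{P-mean rank basic} give $\rL_\beta(c\cM)=0$, whence $c\cM\subseteq R\Gamma\otimes_R T_\beta(\sM)$, and the restriction homomorphism $\tilde{R}\Gamma\to\End_R(T_\beta(\sM))\Gamma$ together with the inductive hypothesis and $c^2=c$ kills $c$. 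Your route dispenses with the hopfian theorem, the direct-summand lemma, and Lemma~\ref{L-ordinal surjective}, at the price of the induction on Krull dimension and the restriction trick; the paper's route yields the hopficity of $R\Gamma\otimes_R\sM$ as an intermediate statement of independent interest. The individual steps you use all check out: $c^2=c$, $ac=cb=0$, $\ker L_a=c\cM$ (injectivity of $L_b$ is not actually needed there), local $\rL_\beta$-finiteness of $R\Gamma\otimes_R\sM$, the fact that a nonzero Noetherian module has successor Krull dimension, and the fact that a Noetherian module in $\fA_{\beta+1}$ with $\rL_\beta=0$ lies in $\fA_\beta$, so that coordinatewise projection lands $c\cM$ inside $T_\beta(\sM)^{(\Gamma)}$.
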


Theorem~\ref{T-directly finite} was proved by Ceccherini-Silberstein and Coornaert
under the further assumption that $\sM$ is also Artinian \cite[Corollary 1.4]{CC07}.
Theorem~\ref{T-directly finite} follows from  Lemmas~\ref{L-end}, \ref{L-direct summand} and Theorem~\ref{T-hopfian} below, and the fact that $\End_R(\sM^{\oplus n})\cong M_n(\End_R(\sM))$ for every $n\in \Nb$, where $\sM^{\oplus n}$ denotes the direct sum of $n$ copies of $\sM$.

\begin{lemma} \label{L-end}
Let $R$ be a unital ring, $\sM$ be a nonzero finitely generated $R$-module, and $\Gamma$ be a (not necessarily sofic) group. Set $\tilde{R}:=\End_R(\sM)$. Then $\End_{R\Gamma}(R\Gamma\otimes_R\sM)$ is isomorphic to $\tilde{R}\Gamma$.
\end{lemma}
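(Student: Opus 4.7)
The plan is to construct an explicit ring isomorphism $\Phi : \tilde R \Gamma \to \End_{R\Gamma}(R\Gamma \otimes_R \sM)$ and verify bijectivity, with the finite generation of $\sM$ entering only at surjectivity. Since elements of $R$ commute with elements of $\Gamma$ inside $R\Gamma$, the module $R\Gamma \otimes_R \sM$ carries an obvious left $R\Gamma$-action on the first factor. For any $f \in \tilde R$, the rule $r \otimes m \mapsto r \otimes f(m)$ is a well-defined left $R\Gamma$-endomorphism (thanks to $R$-linearity of $f$), and for any $s \in \Gamma$, right translation $r \otimes m \mapsto r s^{-1} \otimes m$ is another left $R\Gamma$-endomorphism; these two families of endomorphisms commute. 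Combining them yields
\begin{equation*}
\Phi\Big(\sum_{s} f_s s\Big)(r \otimes m) \;=\; \sum_{s} r s^{-1} \otimes f_s(m),
\end{equation*}
and a direct unwinding of the group-ring multiplication confirms that $\Phi$ is a unital ring homomorphism; the inverses on $\Gamma$ are the usual trick converting right translation into a group homomorphism rather than an anti-homomorphism.

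Injectivity is immediate. If $\Phi(\sum_s f_s s) = 0$, evaluating on $e_\Gamma \otimes m$ gives $\sum_s s^{-1} \otimes f_s(m) = 0$ in the decomposition $R\Gamma \otimes_R \sM \cong \bigoplus_{t \in \Gamma} t \otimes \sM$, which forces $f_s(m) = 0$ for all $s \in \Gamma$ and $m \in \sM$, hence every $f_s = 0$.

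The main step is surjectivity, and this is exactly where finite generation of $\sM$ is essential. Any $\varphi \in \End_{R\Gamma}(R\Gamma \otimes_R \sM)$ is determined by its restriction to $e_\Gamma \otimes \sM$, since this subset generates $R\Gamma \otimes_R \sM$ as a left $R\Gamma$-module. Fix generators $m_1, \dots, m_k$ of $\sM$ over $R$; each of the finitely many vectors $\varphi(e_\Gamma \otimes m_i)$ has finite support in the direct sum, so there is a common finite $F \subseteq \Gamma$ with $\varphi(e_\Gamma \otimes m_i) \in \sum_{s \in F} s \otimes \sM$ for every $i$, and by $R$-linearity the same containment holds for every $\varphi(e_\Gamma \otimes m)$. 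For each $s \in \Gamma$ let $g_s \in \tilde R$ be the composition of $m \mapsto \varphi(e_\Gamma \otimes m)$ with the $R$-linear projection onto the $s$-th summand; then $g_s = 0$ whenever $s \notin F$, so $x := \sum_{s} g_s\, s^{-1}$ is a genuine finite element of $\tilde R \Gamma$. A short calculation gives $\Phi(x)(e_\Gamma \otimes m) = \varphi(e_\Gamma \otimes m)$ for every $m$, and by $R\Gamma$-linearity this forces $\Phi(x) = \varphi$. Without finite generation, the projection data would populate an infinite direct product rather than an infinite direct sum, so no single element of $\tilde R \Gamma$ could encode $\varphi$; this is the only nontrivial obstacle in the proof.
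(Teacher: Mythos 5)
Your proof is correct and takes essentially the same route as the paper: your $\Phi$ is exactly the map $\varphi(f)\big(\sum_{t}t\otimes a_t\big)=\sum_{s,t}ts^{-1}\otimes f_s(a_t)$ used there, and your verifications of the homomorphism property, injectivity, and surjectivity (where the finite generation of $\sM$ enters) simply supply the details the paper dismisses as easily checked.
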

\begin{proof} Each element $a$ of $R\Gamma\otimes_R\sM$ can be written as $\sum_{t\in \Gamma} t\otimes a_t$ with $a_t\in\sM$ for each $t\in \Gamma$ and $a_t=0$ for all but finitely many $t\in \Gamma$.
For each $f\in \tilde{R}\Gamma$, writing $f$ as $\sum_{s\in \Gamma} f_s s$ with $f_s\in \tilde{R}$ for each $s\in \Gamma$ and $f_s=0$ for all but finitely many $s\in \Gamma$, we define a $\varphi(f)\in \End_{R\Gamma}(R\Gamma\otimes_R\sM)$ by
$$\varphi(f)(\sum_{t\in \Gamma}t\otimes a_t)=\sum_{s, t\in \Gamma}ts^{-1}\otimes f_s(a_t).$$
It is easily checked that the map $\varphi: \tilde{R}\Gamma\rightarrow \End_{R\Gamma}(R\Gamma\otimes_R\sM)$  sending $f$ to $\varphi(f)$ is an isomorphism.
\end{proof}

The following lemma is well known.

\begin{lemma} \label{L-direct summand}
Let $R$ be a unital ring and $\sM$ be a nonzero $R$-module. Then $\End_R(\sM)$ is directly finite if and only if $\sM$ has no nontrivial direct summand isomorphic to itself.
\end{lemma}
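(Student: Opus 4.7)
The plan is to use the standard bridge between an equation of the form $ab = \id_\sM$ in $\tilde R := \End_R(\sM)$ and direct-sum decompositions of $\sM$, via the idempotent $e := ba$. Throughout I use the convention that multiplication in $\tilde R$ is composition of endomorphisms.

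For the implication ``no nontrivial direct summand isomorphic to $\sM$ $\Rightarrow$ $\tilde R$ is directly finite'', I would take $a, b \in \tilde R$ with $ab = \id_\sM$ and observe that $b$ is then injective (with left inverse $a$) and that $e := ba$ is idempotent, satisfying $eb = (ba)b = b(ab) = b$ and $ae = a(ba) = (ab)a = a$. From $eb = b$ and $e = ba$ one gets $\im(b) = \im(e)$, so $\im(b)$ is the image of an idempotent endomorphism of $\sM$ and hence a direct summand, namely $\sM = \im(e) \oplus \ker(e)$. Since $b$ is injective, it restricts to an $R$-module isomorphism $\sM \to \im(b)$. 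By hypothesis $\im(b)$ cannot be a proper direct summand isomorphic to $\sM$, so $\im(b) = \sM$, i.e.\ $\ker(e) = 0$ and $e = \id_\sM$. Thus $ba = \id_\sM$.

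For the converse, suppose $\tilde R$ is directly finite and that $\sM = \sN_1 \oplus \sN_2$ admits an isomorphism $\phi: \sM \to \sN_1$. Let $\iota: \sN_1 \hookrightarrow \sM$ and $\pi: \sM \twoheadrightarrow \sN_1$ be the canonical inclusion and projection. Set $b := \iota \circ \phi \in \tilde R$ and $a := \phi^{-1} \circ \pi \in \tilde R$. Then
$$ab = \phi^{-1} \circ \pi \circ \iota \circ \phi = \phi^{-1} \circ \id_{\sN_1} \circ \phi = \id_\sM,$$
so direct finiteness forces $ba = \iota \circ \phi \circ \phi^{-1} \circ \pi = \iota \circ \pi = \id_\sM$. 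But $\iota \circ \pi$ is the projection of $\sM$ onto $\sN_1$ along $\sN_2$, which equals $\id_\sM$ exactly when $\sN_2 = 0$. Hence $\sM$ has no nontrivial direct summand isomorphic to itself.

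Since the lemma is well-known, there is no real obstacle; the only things to keep straight are the convention on the order of multiplication in $\End_R(\sM)$ and the standard fact that the image of an idempotent endomorphism is always a direct summand (which is what makes the identity $\im(b) = \im(e)$ useful in the first direction).
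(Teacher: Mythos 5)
Your proof is correct and follows essentially the same route as the paper's: one direction via the construction $b=\iota\circ\phi$, $a=\phi^{-1}\circ\pi$ from a decomposition $\sM=\sN_1\oplus\sN_2$ with $\sM\cong\sN_1$, and the other via the idempotent $e=ba$ and the decomposition $\sM=\im(e)\oplus\ker(e)$, which is exactly the paper's splitting $gf(\sM)\oplus(\id-gf)(\sM)$ with the roles of the two factors named in the opposite order. The only differences are cosmetic (direct argument versus contrapositive, and the composition-order convention), so there is nothing to add.
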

\begin{proof} We prove the ``if'' part first. Suppose that $\End_R(\sM)$ is not directly finite. Then we can find $f, g\in \End_R(\sM)$ such that $fg$ is the identity map $\id$ on $\sM$, while $gf$ is not. Note that
 $(gf)^2=gf$. Set  $\sM_1=gf(\sM)$ and $\sM_2=(\id-gf)(\sM)$. Then
$\sM=\sM_1\oplus \sM_2$ and $\sM_2\neq 0$. Note that $\sM_1\subseteq g(\sM)$. Since $gx=gf(gx)\in \sM_1$ for every $x\in \sM$, we have $g(\sM)\subseteq \sM_1$, and hence $\sM_1=g(\sM)$.
The surjective homomorphism $\sM\rightarrow g(\sM)$ sending $x$ to $gx$ is injective since $f(gx)=x$. Therefore $\sM_1$ is isomorphic to $\sM$.

Next we prove the ``only if'' part. Suppose that $\sM$ has a nontrivial direct summand  isomorphic to itself. Say, $\sM=\sM_1\oplus \sM_2$, $\sM_2\neq 0$, and $\sM\cong \sM_1$. Let $\varphi:\sM\rightarrow \sM_1$ be an isomorphism. Denoting by $\iota$ the embedding $\sM_1\rightarrow \sM$, we set $g=\iota\circ\varphi\in \End_R(\sM)$. Denoting by $\pi$ the projection $\sM\rightarrow \sM_1$, we set $f=\varphi^{-1}\circ\pi\in \End_R(\sM)$. Then $fg=\id$ while $gf\neq \id$.
\end{proof}

An $R\Gamma$-module $\cM$ is called {\it hopfian} if every surjective module homomorphism $\cM\rightarrow \cM$ is injective.

\begin{theorem} \label{T-hopfian}
Let $R$ be a unital ring, $\Gamma$ be a sofic group, and $\sM$ be a Noetherian $R$-module. Then the  $R\Gamma$-module $R\Gamma\otimes_R\sM$ is hopfian.
\end{theorem}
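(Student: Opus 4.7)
The plan is a transfinite induction on the Krull dimension of $\sM$ in the sense of Example~\ref{E-ordinal length}. A Noetherian module has Krull dimension either $-1$ (the zero module, where the claim is trivial) or a successor ordinal $\beta+1\ge 0$, since $\fA_\alpha=\bigcup_{\gamma<\alpha}\fA_\gamma$ for every limit ordinal $\alpha$. So I assume $\sM\in\fA_{\beta+1}\setminus\fA_\beta$ and that the theorem is already known for every Noetherian $R$-module in $\fA_\beta$.

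Equip $R$-modules with the length function $\rL:=\rL_\beta$, so that $0<\rL(\sM)<+\infty$. Every finitely generated $R$-submodule of $R\Gamma\otimes_R\sM$ embeds into some $\sM^{\oplus n}\in\fA_{\beta+1}$, so $R\Gamma\otimes_R\sM$ is locally $\rL$-finite; Proposition~\ref{P-mean rank basic}(1) then gives $\mL_{\Sigma,\omega}(R\Gamma\otimes_R\sM)=\rL(\sM)<+\infty$. Assume for contradiction that $f\colon R\Gamma\otimes_R\sM\to R\Gamma\otimes_R\sM$ is surjective with $\cK:=\ker f\ne 0$. Since $f$ induces $(R\Gamma\otimes_R\sM)/\cK\cong R\Gamma\otimes_R\sM$, the addition formula (Theorem~\ref{T-addition for mean length}) with $\cM_1=\cK$ and $\cM_2=\cM_3=R\Gamma\otimes_R\sM$ becomes a cancellable identity of finite quantities, yielding $\mL_{\Sigma,\omega}(\cK\mid R\Gamma\otimes_R\sM)=0$ and hence $\rL_\beta(\cK)=0$ by Proposition~\ref{P-mean rank basic}(2).

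Set $\sM_0:=\{y\in\sM : Ry\in\fA_\beta\}$; Serre-category closure of $\fA_\beta$ under submodules, quotients, and extensions makes this an $R$-submodule which is itself in $\fA_\beta$ (since $\sM_0$ is finitely generated by elements whose cyclic submodules lie in $\fA_\beta$), hence of Krull dimension at most $\beta$. Using that $R\Gamma$ is $R$-flat and projecting onto each $t\otimes\sM$ summand, I identify $R\Gamma\otimes_R\sM_0$ with $\{x\in R\Gamma\otimes_R\sM : Rx\in\fA_\beta\}$. If some $x\in\cK$ had $Rx\notin\fA_\beta$, then $Rx\subseteq\sM^{\oplus n}\in\fA_{\beta+1}$ would be Noetherian of Krull dimension exactly $\beta+1$, forcing $\rL_\beta(Rx)>0$ and contradicting $\rL_\beta(\cK)=0$ via upper continuity; thus $\cK\subseteq R\Gamma\otimes_R\sM_0$. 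Since $Rf(x)$ is an $R$-quotient of $Rx$, the map $f$ preserves $R\Gamma\otimes_R\sM_0$, and it descends to a surjection $f_1\colon R\Gamma\otimes_R\sM_1\to R\Gamma\otimes_R\sM_1$ on the quotient $\sM_1:=\sM/\sM_0$. One checks that $\sM_1$ is ``pure'' in the sense that $\{\bar y\in\sM_1 : R\bar y\in\fA_\beta\}=0$, so a second application of the same mean-length argument to $f_1$ forces $\ker f_1$ to sit inside $R\Gamma\otimes_R 0=0$, i.e.\ $f_1$ is an isomorphism. Consequently the restriction $f_0\colon R\Gamma\otimes_R\sM_0\to R\Gamma\otimes_R\sM_0$ is surjective with $\ker f_0=\cK\ne 0$, contradicting the inductive hypothesis applied to $\sM_0$.

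The principal obstacle is the ``invisible'' case in which $\cK$ is entirely $\beta$-torsion and hence detectable only by length functions indexed by smaller ordinals; the rescue is to pass to the $f$-invariant $\beta$-torsion submodule $R\Gamma\otimes_R\sM_0$ and exploit the purity of the complementary quotient, both consequences of the Serre-subcategory structure of $\fA_\beta$, to descend the hopfian problem to a module of strictly smaller Krull dimension where the inductive hypothesis can take over.
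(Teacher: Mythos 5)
Your argument is correct, and it reaches the conclusion by a route that is organized quite differently from the paper's. The paper does no induction: it picks the ordinal adapted to the kernel rather than to $\sM$ (via Lemma~\ref{L-ordinal for kernel}, the largest $\alpha$ with $T_\alpha(\ker\varphi)\neq\ker\varphi$ and $T_{\alpha+1}(\ker\varphi)=\ker\varphi$), uses Lemma~\ref{L-ordinal surjective} to see that $\varphi$ restricts to a surjection of $R\Gamma\otimes_RT_{\alpha+1}(\sM)$ onto itself, and then a single application of the addition formula together with Proposition~\ref{P-mean rank basic} for $\rL_\alpha$ gives $\rL_\alpha(\ker\varphi)=0$, which is immediately contradicted by any $x\in\ker\varphi$ with $Rx\notin\fA_\alpha$. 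You instead fix $\beta$ by the Krull dimension of $\sM$, run a transfinite induction, and apply the mean-length machinery twice: once to show $\cK\subseteq R\Gamma\otimes_RT_\beta(\sM)$, and once to the induced surjection on $R\Gamma\otimes_R(\sM/T_\beta(\sM))$ to show it is injective (using the purity $T_\beta(\sM/T_\beta(\sM))=0$), after which the diagram chase shows the restriction to $R\Gamma\otimes_RT_\beta(\sM)$ is a surjection with nonzero kernel, contradicting the inductive hypothesis since $T_\beta(\sM)\in\fA_\beta$. In effect your purity-plus-snake descent replaces the paper's Lemmas~\ref{L-ordinal for kernel} and \ref{L-ordinal surjective}: the paper's choice of ordinal localizes the problem in one step and yields a shorter proof, while your version is a clean reduction to strictly smaller Krull dimension at the cost of a second invocation of Theorem~\ref{T-addition for mean length} and Proposition~\ref{P-mean rank basic}. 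All the auxiliary facts you invoke ($T_\beta(R\Gamma\otimes_R\sM)=R\Gamma\otimes_RT_\beta(\sM)$, $T_\beta(\sM)\in\fA_\beta$ via finite generation, positivity of $\rL_\beta$ on $\fA_{\beta+1}\setminus\fA_\beta$, flatness of $R\Gamma$ over $R$) are available and used correctly, so the proposal stands as a valid alternative proof.
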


Using mean length for amenable groups, Virili showed that if $R$ is a unital left Noetherian ring, $\Gamma$ is a finitely generated amenable group,
and $\sM$  is a finitely generated $R$-module, then the $R\Gamma$-module $R\Gamma\otimes_R\sM$ is {\it hereditarily hopfian} in the sense that every $R\Gamma$-submodule of $R\Gamma\otimes_R\sM$ is hopfian \cite[Theorem A]{ViriliA}. Our proof of Theorem~\ref{T-hopfian} follows Virili's method, but uses the sofic mean length defined in Section~\ref{S-mean length}.

We shall use the notations in Section~\ref{SS-length}.
Let $\alpha$ be an ordinal and $\sM\in {}_R\fM$. Denote by $T_\alpha(\sM)$ the set of all $x\in \sM$ such that $Rx\in \fA_\alpha$. Since $\fA_\alpha$ is a Serre-category,
$T_\alpha(\sM)$ is an $R$-submodule of $\sM$. Note that $T_\alpha(\sM)\subseteq T_\gamma(\sM)$ for all ordinals $\alpha<\gamma$, and
$T_\alpha(\sM)=\bigcup_{\beta<\alpha}T_\beta(\sM)$ for every limit ordinal $\alpha$.

\begin{lemma} \label{L-ordinal for kernel}
Let $R, \Gamma$ and $\sM$ be as in Theorem~\ref{T-hopfian}. Let $\sN$ be a nonzero $R$-submodule of $R\Gamma\otimes_R\sM$. Then there is an ordinal $\alpha$ such that $T_\alpha(\sN)\neq \sN$ and $T_{\alpha+1}(\sN)=\sN$.
\end{lemma}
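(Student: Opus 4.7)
The plan is to proceed by transfinite induction on the Krull dimension $\gamma$ of $\sM$. The main point to exploit is that a nonzero Noetherian module always has Krull dimension equal to a successor ordinal and never to a limit ordinal (since $\fA_\alpha=\bigcup_{\beta<\alpha}\fA_\beta$ for limit $\alpha$, the minimal $\alpha$ with $\sM\in\fA_\alpha$ cannot be a limit); this will be precisely what prevents the ascending chain $T_\bullet(\sN)$ from stabilising for the first time at a limit ordinal.

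The key structural ingredient I would establish first is the identity
\begin{align*}
T_\delta(R\Gamma\otimes_R\sM)=R\Gamma\otimes_R T_\delta(\sM)
\end{align*}
for every ordinal $\delta$. Viewing $R\Gamma\otimes_R\sM$ as $\bigoplus_{s\in\Gamma}\sM$ with $\Gamma$ permuting the summands, write $y\in R\Gamma\otimes_R\sM$ uniquely as $\sum_{s\in K}s\otimes m_s$ with $K\subseteq\Gamma$ finite; then $Ry$ embeds into the finite direct sum $\bigoplus_{s\in K}Rm_s$, while each $Rm_s$ is a quotient of $Ry$ via projection to the $s$-component. Since $\fA_\delta$ is a Serre-subcategory, closed under submodules, quotients and extensions, $Ry\in\fA_\delta$ if and only if each $Rm_s\in\fA_\delta$, and the displayed identity follows.

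Given this, the induction proceeds as follows. The case $\gamma=-1$ forces $\sM=0$ and hence $\sN=0$, which is excluded; so $\gamma=\gamma'+1$ for some ordinal $\gamma'\ge-1$. I would split into two cases. In the first case, some $x\in\sN$ satisfies $Rx\notin\fA_{\gamma'}$, so $x\notin T_{\gamma'}(\sN)$ and $T_{\gamma'}(\sN)\ne\sN$; on the other hand, since $T_\gamma(\sM)=\sM$, the structural identity with $\delta=\gamma$ gives $T_\gamma(R\Gamma\otimes_R\sM)=R\Gamma\otimes_R\sM$ and hence $T_{\gamma'+1}(\sN)=\sN$, so $\alpha=\gamma'$ works. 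In the second case, every $x\in\sN$ lies in $T_{\gamma'}(\sN)$, so $\sN\subseteq T_{\gamma'}(R\Gamma\otimes_R\sM)=R\Gamma\otimes_R\sM'$ with $\sM':=T_{\gamma'}(\sM)$; since $\sN\ne0$, also $\sM'\ne0$, and $\sM'$ is a nonzero Noetherian submodule of $\sM$ lying in $\fA_{\gamma'}$, so its Krull dimension is at most $\gamma'<\gamma$, and the inductive hypothesis applied to $\sN\subseteq R\Gamma\otimes_R\sM'$ supplies the required $\alpha$.

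The main technical step is the structural identity above; once it is at hand, the dichotomy in the inductive step is immediate, and the non-limit character of the Krull dimension of a nonzero Noetherian module is precisely what forces the induction to terminate in the exact shape demanded by the lemma. Note that soficity of $\Gamma$ plays no role here, which is consistent with this being a purely algebraic reduction step in the proof of Theorem~\ref{T-hopfian}.
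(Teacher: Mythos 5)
Your proof is correct, and it rests on the same central fact as the paper's: the identity $T_\delta(R\Gamma\otimes_R\sM)=R\Gamma\otimes_R T_\delta(\sM)$, which the paper simply asserts and which you justify correctly (the decomposition $R\Gamma\otimes_R\sM\cong\bigoplus_{s\in\Gamma}\sM$ as $R$-modules, with $Ry$ sitting inside $\bigoplus_{s\in K}Rm_s$ and surjecting onto each $Rm_s$, together with the Serre-category properties of $\fA_\delta$, does exactly what you need). Where you diverge is in how the conclusion is extracted. The paper observes that, since $\sM$ is Noetherian, the increasing chain $\{T_\alpha(\sM)\}_\alpha$ has only finitely many jumps, hence via $T_\alpha(\sN)=T_\alpha(R\Gamma\otimes_R\sM)\cap\sN$ so does $\{T_\alpha(\sN)\}_\alpha$; since $T_\beta(\sN)=\sN$ at the Krull dimension $\beta$ of $\sM$ (and $T_{-1}(\sN)=0\neq\sN$), the largest jump is the desired $\alpha$. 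You instead run a transfinite induction on the Krull dimension of $\sM$, using that the Krull dimension of a nonzero Noetherian module is never a limit ordinal, with the dichotomy at a successor $\gamma=\gamma'+1$: either some $x\in\sN$ has $Rx\notin\fA_{\gamma'}$, in which case $\alpha=\gamma'$ works because $T_\gamma(\sM)=\sM$ forces $T_\gamma(\sN)=\sN$, or else $\sN\subseteq R\Gamma\otimes_R T_{\gamma'}(\sM)$ and you descend to the strictly smaller Krull dimension of $T_{\gamma'}(\sM)$ (which is nonzero since $\sN\neq 0$, and lies in $\fA_{\gamma'}$ because it is finitely generated with all cyclic submodules in $\fA_{\gamma'}$). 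Both arguments are sound; the paper's finishing step is shorter, while yours supplies a proof of the key identity the paper leaves implicit and replaces the ``finitely many jumps'' bookkeeping by a well-founded descent. You are also right that soficity of $\Gamma$ is irrelevant here.
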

\begin{proof} For any ordinal $\alpha$, one has $T_\alpha(R\Gamma\otimes_R\sM)=R\Gamma\otimes_RT_\alpha(\sM)$ and
$T_\alpha(\sN)=T_\alpha(R\Gamma\otimes_R\sM)\cap \sN$. Since $\sM$ is Noetherian, there are only finitely many ordinals $\alpha$ satisfying $T_\alpha(\sM)\neq T_{\alpha+1}(\sM)$. Thus there are only finitely many ordinals $\alpha$ satisfying $T_\alpha(\sN)\neq T_{\alpha+1}(\sN)$.  Denote by $\beta$ the Krull dimension of $\sM$. Then $T_\beta(\sM)=\sM$, whence $T_\beta(\sN)=\sN$. Therefore, for the largest ordinal $\alpha$ satisfying $T_\alpha(\sN)\neq T_{\alpha+1}(\sN)$, we must have $T_\alpha(\sN)\neq \sN$ and $T_{\alpha+1}(\sN)=\sN$.
\end{proof}

\begin{lemma} \label{L-ordinal surjective}
Let $\varphi: \sM\rightarrow \sN$ be a surjective homomorphism of $R$-modules. Suppose that $Rx$ is Noetherian for every $x\in \sM$ and that $\beta$ is an ordinal with $T_\beta(\ker(\varphi))=\ker(\varphi)$. Then $\varphi(T_\beta(\sM))=T_\beta(\sN)$.
\end{lemma}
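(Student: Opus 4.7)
The plan is to prove both inclusions of $\varphi(T_\beta(\sM))=T_\beta(\sN)$ separately, in each case exploiting the Serre-category property of $\fA_\beta$ recalled in Section~\ref{SS-length} (closure under submodules, quotients, and extensions).

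For the inclusion $\varphi(T_\beta(\sM)) \subseteq T_\beta(\sN)$, take any $x\in T_\beta(\sM)$, so that $Rx\in \fA_\beta$ by definition. Since $R\varphi(x)=\varphi(Rx)$ is a quotient of $Rx$, closure of $\fA_\beta$ under quotients gives $R\varphi(x)\in \fA_\beta$, and hence $\varphi(x)\in T_\beta(\sN)$. This direction uses neither the Noetherian hypothesis on cyclic submodules of $\sM$ nor the assumption on $\ker(\varphi)$.

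For the reverse inclusion $T_\beta(\sN)\subseteq \varphi(T_\beta(\sM))$, take $y\in T_\beta(\sN)$ and pick an arbitrary preimage $x\in \sM$ with $\varphi(x)=y$. I will show that in fact $Rx\in \fA_\beta$, so that $x\in T_\beta(\sM)$ is the desired preimage. The short exact sequence of $R$-modules
$$0 \longrightarrow Rx\cap \ker(\varphi) \longrightarrow Rx \longrightarrow Ry \longrightarrow 0,$$
together with closure of $\fA_\beta$ under extensions, reduces this to checking that both endpoints lie in $\fA_\beta$. The quotient $Ry$ is in $\fA_\beta$ by the choice of $y$. For the submodule $Rx\cap \ker(\varphi)$, the hypothesis that $Rx$ is Noetherian forces $Rx\cap \ker(\varphi)$ to be finitely generated; since its generators lie in $\ker(\varphi)=T_\beta(\ker(\varphi))$, each of them generates a cyclic submodule in $\fA_\beta$. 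A short induction on the number of generators, using closure of $\fA_\beta$ under extensions and quotients at each step, then puts $Rx\cap \ker(\varphi)$ into $\fA_\beta$.

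I do not expect any serious obstacle: the lemma is essentially bookkeeping once one writes down the correct short exact sequence relating $Rx$, $Rx\cap \ker(\varphi)$, and $Ry$. The only conceptual point that needs to be invoked is the standard fact, easily proved by induction, that any finitely generated module all of whose cyclic submodules lie in a Serre subcategory must itself lie in that subcategory; this is precisely where the hypothesis ``$Rx$ is Noetherian for every $x\in\sM$'' enters, ensuring that $Rx\cap\ker(\varphi)$ is finitely generated so that the induction can get started.
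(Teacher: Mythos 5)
Your proof is correct and follows essentially the same route as the paper's: both directions are handled with the same short exact sequence $0\to Rx\cap\ker(\varphi)\to Rx\to Ry\to 0$, the Noetherian hypothesis giving finite generation of $Rx\cap\ker(\varphi)$, and the Serre-category closure properties of $\fA_\beta$. The only difference is that you spell out the routine induction showing a finitely generated module whose cyclic submodules all lie in $\fA_\beta$ is itself in $\fA_\beta$, which the paper leaves implicit.
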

\begin{proof} Clearly $\varphi(T_\beta(\sM))\subseteq T_\beta(\sN)$. Let $y\in T_\beta(\sN)$. Take $x\in \sM$ with $\varphi(x)=y$. Then we have a short exact sequence
$$0\rightarrow Rx\cap \ker(\varphi)\rightarrow Rx\rightarrow Ry\rightarrow 0$$
of $R$-modules. Since $Rx$ is Noetherian, $Rx\cap \ker(\varphi)$ is a finitely generated $R$-module. Because $T_\beta(\ker(\varphi))=\ker(\varphi)$, for every $z\in Rx\cap \ker(\varphi)$ we have $Rz\in \fA_\beta$. It follows that $Rx\cap \ker(\varphi)\in \fA_\beta$. Because $y\in T_\beta(\sN)$, we also have $Ry\in \fA_\beta$. Thus $Rx\in \fA_\beta$.
That is, $x\in T_\beta(\sM)$.
\end{proof}

We are ready to prove Theorem~\ref{T-hopfian}.

\begin{proof}[Proof of Theorem~\ref{T-hopfian}]
Let $\varphi$ be a surjective $R\Gamma$-module homomorphism from $R\Gamma\otimes_R\sM$ onto itself. Suppose that $\ker(\varphi)\neq 0$. By Lemma~\ref{L-ordinal for kernel} we can find an ordinal $\alpha$ such that $T_\alpha(\ker(\varphi))\neq \ker(\varphi)$ and $T_{\alpha+1}(\ker(\varphi))=\ker(\varphi)$. By Lemma~\ref{L-ordinal surjective} we have
$\varphi(T_{\alpha+1}(R\Gamma\otimes_R\sM))=T_{\alpha+1}(R\Gamma\otimes_R\sM)$. But $T_{\alpha+1}(R\Gamma\otimes_R\sM)=R\Gamma\otimes_R\sN$ for $\sN:=T_{\alpha+1}(\sM)$.
Thus $(R\Gamma\otimes_R\sN)/\ker(\varphi)\cong R\Gamma\otimes_R\sN$ as $R\Gamma$-modules.

Let $\rL:=\rL_\alpha$ be the length function on $R$-modules defined in Example~\ref{E-ordinal length}. Then we have the sofic mean length $\mL_{\Sigma, \omega}$ defined in Definition~\ref{D-mean length}. Since $\rL_\alpha$ takes finite values on modules in $\fA_{\alpha+1}$, $\sN$ is locally $\rL_\alpha$-finite. By Theorem~\ref{T-addition for mean length} we have
\begin{align*}
 \mL_{\Sigma, \omega}(R\Gamma\otimes_R\sN)&=\mL_{\Sigma, \omega}(\ker(\varphi)|R\Gamma\otimes_R\sN)+\mL_{\Sigma, \omega}((R\Gamma\otimes_R\sN)/\ker(\varphi))\\
 &=\mL_{\Sigma, \omega}(\ker(\varphi)|R\Gamma\otimes_R\sN)+\mL_{\Sigma, \omega}(R\Gamma\otimes_R\sN).
\end{align*}
Since $\sM$ is Noetherian, $\sN$ is finitely generated. Thus $\sN\in \fA_{\alpha+1}$, and hence $\rL_\alpha(\sN)<+\infty$.
By Proposition~\ref{P-mean rank basic} we have
$\mL_{\Sigma, \omega}(R\Gamma\otimes_R\sN)=\rL_\alpha(\sN)<+\infty$. Therefore
$\mL_{\Sigma, \omega}(\ker(\varphi)|R\Gamma\otimes_R\sN)=0$. By Proposition~\ref{P-mean rank basic} we  get $\rL_\alpha(\ker(\varphi))=0$.

Since $T_\alpha(\ker(\varphi))\neq \ker(\varphi)$, we can find some $x\in \ker(\varphi)$ with $Rx\not\in \fA_\alpha$. Then $\rL_\alpha(\ker(\varphi))\ge \rL_\alpha(Rx)>0$, which is a contradiction. Therefore $\varphi$ must be injective.
\end{proof}

\section{Amenable group case for mean length} \label{S-amenable mean length}

In this section we consider the amenable group case for the sofic mean length and prove Theorem~\ref{T-amenable mean length}.
Throughout this section, we let $\Gamma$ be a discrete amenable group.

We recall first the definition of mean length for amenable groups.
For any locally $\rL$-finite $R\Gamma$-module $\cM$, any $\sA\in \sF(\cM)$, and any $F\subseteq \Gamma$, we set
$\sA^F=\sum_{s\in F}s^{-1}\sA$. For any $t\in \Gamma$ and $F\in \cF(\Gamma)$ we have $\rL(\sA^{Ft})=\rL(\sA^F)$.
For any $F_1, F_2\in \cF(\Gamma)$, noting that $\sA^{F_1\cap F_2}$ is isomorphic to an $R$-submodule of the kernel of the natural surjective $R$-module homomorphism $\sA^{F_1}\oplus \sA^{F_2}\rightarrow \sA^{F_1\cup F_2}$ we have
\begin{align*}
\rL(\sA^{F_1\cup F_2})+\rL(\sA^{F_1\cap F_2})\le \rL(\sA^{F_1}\oplus \sA^{F_2})=\rL(\sA^{F_1})+\rL(\sA^{F_2}).
\end{align*}
Thus the limit $\lim_F\frac{\rL(\sA^F)}{|F|}$ as $F\in \cF(\Gamma)$ becomes more and more left invariant exists and is equal to $\inf_{F\in \cF(\Gamma)\setminus \{\emptyset\}}\frac{\rL(\sA^F)}{|F|}$ \cite[Lemma 3.3]{LT}, which we denote by $\mL(\sA)$. That is, for any $\varepsilon>0$, there exist a nonempty $K\in \cF(\Gamma)$ and a $\delta>0$ such that for any nonempty $F\in \cF(\Gamma)$ with $|KF\setminus F|<\delta |F|$, one has $\big|\frac{\rL(\sA^F)}{|F|}-\mL(\sA)\big|<\varepsilon$.
Then the {\it mean length} of $\cM$ is defined as
$$\mL(\cM):=\sup_{\sA\in \sF(\cM)}\mL(\sA).$$
This mean length $\mL(\cM)$ was studied first in \cite{SZ, SVV} for the case $\Gamma=\Zb$ and $\rL$ is discrete in the sense that the set of finite values of $\rL$ is order isomorphic to $\Nb$, then in \cite{LL} for general amenable groups assuming $\rL(R)<+\infty$,
and also in \cite{ViriliA} for general amenable groups assuming $\rL$ is discrete.

\begin{theorem} \label{T-amenable mean length}
For any locally $\rL$-finite $R\Gamma$-module $\cM$ and any $\sA\in \sF(\cM)$, we have
$$\mL_{\Sigma, \omega}(\sA|\cM)=\mL(\sA).$$
If particular, for any $R\Gamma$-modules $\cM_1\subseteq \cM_2$ such that $\cM_2$ is locally $\rL$-finite, we have
$$\mL_{\Sigma, \omega}(\cM_1|\cM_2)=\mL(\cM_1).$$
\end{theorem}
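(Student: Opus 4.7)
The plan is to establish the two inequalities $\mL_{\Sigma,\omega}(\sA|\cM)\le\mL(\sA)$ and $\mL_{\Sigma,\omega}(\sA|\cM)\ge\mL(\sA)$ separately. In each case the key idea is to transfer a Følner-type tiling from $\Gamma$ to the sofic domain $[d]$ and compare the $\rL$-length of the image of $\sA^d$ in $\cM^d/\sM(\sB,F,\sigma)$ with $\rL(\sA^F)/|F|$.

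For the upper bound, fix a nonempty Følner $F\in\cF(\Gamma)$ and set $\sB=\sA^F$. For a good enough sofic approximation $\sigma:\Gamma\to\Sym(d)$, the crucial observation is that for $s\in F$ and $a\in\sA$, the element $s^{-1}a$ lies in $\sA^F=\sB$, so the defining relation $\delta_v(s^{-1}a)-\delta_{sv}a\in\sM(\sA^F,F,\sigma)$ yields $\pi(\delta_{sv}a)=\pi(\delta_v(s^{-1}a))\in\pi(\delta_v\sA^F)$. Choose $V'\subseteq\cW$ (the ``regular'' subset of $[d]$, of density $\ge 1-\varepsilon$) with the sets $\{Fv\}_{v\in V'}$ pairwise disjoint and $|FV'|$ as close to $d$ as possible; by an Ornstein--Weiss quasi-tiling of $[d]$ imported from $\Gamma$ via the soficity of $\sigma$, one arranges $|[d]\setminus FV'|/d\to 0$. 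We then get
\begin{equation*}
\rL(\sM(\sA,\sA^F,F,\sigma))\le |V'|\,\rL(\sA^F)+|[d]\setminus FV'|\,\rL(\sA),
\end{equation*}
and dividing by $d$, taking $i\to\omega$, and letting $F$ become more invariant gives $\mL_{\Sigma,\omega}(\sA|\cM)\le\inf_F\rL(\sA^F)/|F|=\mL(\sA)$.

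For the lower bound, fix arbitrary $\sB\in\sF(\cM)$, $F\in\cF(\Gamma)$, and a Følner $E\in\cF(\Gamma)$. Since enlarging $\sB$ and $F$ only decreases $\mL_{\Sigma,\omega}(\sA|\sB,F)$, we may replace $\sB$ by $\sB+\sA^E$ and $F$ by $F\cup E$ and thereby assume $\sB\supseteq\sA^E$ and $F\supseteq E$. For each $v\in[d]$, the identity $\pi(\delta_{sv}a)=\pi(\delta_v(s^{-1}a))$ (valid since $s^{-1}a\in\sA^E\subseteq\sB$) shows that the assignment $\sum_{s\in E}s^{-1}a_s\mapsto\pi(\sum_{s\in E}\delta_{sv}a_s)$ extends to a well-defined $R$-homomorphism $\psi_v:\sA^E\to\cM^d/\sM(\sB,F,\sigma)$. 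Applying an Ornstein--Weiss quasi-tiling of $[d]$, obtain $V\subseteq[d]$ and subsets $E_v\subseteq E$ with $|E_v|\ge(1-\varepsilon)|E|$ such that $\{E_v\cdot v\}_{v\in V}$ are pairwise disjoint and cover at least $(1-\varepsilon)d$ points of $[d]$. Because the supports $E_v\cdot v$ are disjoint in $[d]$, an inductive filtration argument combined with the addition formula of Theorem~\ref{T-addition for mean length} shows that the $\psi_v(\sA^{E_v})$ contribute additively modulo boundary corrections:
\begin{equation*}
\rL(\sM(\sA,\sB,F,\sigma))\ge (1-O(\varepsilon))\,|V|\,\rL(\sA^E).
\end{equation*}
Using $|V|\,|E|\ge(1-\varepsilon)d$, dividing by $d$, and letting $E$ become Følner finishes the proof.

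The main obstacle is the lower bound: one must show that the defining relations of $\sM(\sB,F,\sigma)$ do not collapse the various $\psi_v(\sA^{E_v})$ onto one another. This requires choosing the tiles $E_v\cdot v$ to be ``$F$-interior'' (so that the relations $\delta_u b-\delta_{tu}(tb)$ with $u\in E_v\cdot v$, $t\in F$ keep $tu$ inside the same tile, up to controlled boundary loss), which is exactly what an Ornstein--Weiss quasi-tiling supplies; the simpler greedy packing from the upper bound is insufficient because $|F^{-1}F|/|F|$ need not tend to $1$ along Følner nets. A secondary technicality is constructing the quasi-tiling directly on the sofic domain $[d]$ rather than in $\Gamma$, accomplished by pulling back the group-theoretic quasi-tiling via the local agreement of $\sigma$ with the Cayley graph of $\Gamma$ on sufficiently large balls.
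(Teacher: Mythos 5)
Your upper bound is, in spirit, the paper's Lemma~\ref{L-amenable mean length upper}, but as stated it rests on an unavailable tiling: you ask for exactly disjoint translates $\{Fv\}_{v\in V'}$ of a \emph{single} F{\o}lner set $F$ with $|[d]\setminus FV'|=o(d)$. A maximal (greedy) packing by one shape only yields $|V'|\gtrsim |\cW|/|F^{-1}F|$, and the Ornstein--Weiss machinery does not produce near-full coverage by one shape; its sofic form (the paper's Lemma~\ref{L-Rokhlin}, from \cite[Lemma 4.6]{KL13a}) gives a quasi-tiling by \emph{several} $(K,\varepsilon)$-invariant shapes $F_1,\dots,F_\ell$ with disjoint families $F_k\cC_k$ covering $(1-2\varepsilon)d$. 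With that replacement (take $\sB=\sA^F$ for $F=\bigcup_k F_k$ and require each $\rL(\sA^{F_k})\le |F_k|(\mL(\sA)+\delta)$) your estimate becomes $\rL(\sM(\sA,\sB,F,\sigma))\le \sum_k|\cC_k|\,\rL(\sA^{F_k})+2\varepsilon d\,\rL(\sA)$, which is exactly the paper's argument; so the upper half is repairable and essentially the same as the paper's.

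The genuine gap is in the lower bound. Your $\psi_v(\sA^{E_v})=\pi(\delta_v\sA^{E_v})$ are homomorphic images of $\sA^{E_v}$ sitting inside the quotient $\cM^d/\sM(\sB,F,\sigma)$, so a priori you only know $\rL(\psi_v(\sA^{E_v}))\le\rL(\sA^{E_v})$; to conclude $\rL(\sM(\sA,\sB,F,\sigma))\ge(1-O(\varepsilon))|V|\rL(\sA^E)$ you must show the $\psi_v$ are nearly injective and their images nearly independent. Disjointness of the supports $E_v\cdot v$ does not give this: the relations $\delta_u b-\delta_{tu}tb$ are imposed for \emph{all} $u\in[d]$, $t\in F$, $b\in\sB$, and they link points of one tile to points of other tiles and to the leftover set, so nothing in your sketch rules out that they collapse or identify the various $\psi_v(\sA^{E_v})$; moreover Theorem~\ref{T-addition for mean length} is an addition formula for mean lengths of $R\Gamma$-modules and cannot substitute for the finite-level length computation needed here (only additivity of $\rL$ itself is relevant, and it does not by itself certify independence). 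The paper's Lemma~\ref{L-amenable mean length lower} resolves precisely this point by mapping \emph{out} of $\cM^d$ rather than into the quotient: it defines $\phi:\cM^d\rightarrow\bigoplus_{(k,c)}\cM$ collapsing each tile via $\phi(\delta_{tc}tx)=x$ in coordinate $(k,c)$, so every relation $\delta_{tc}b-\delta_{stc}sb$ based at a point $t$ of the $K$-interior $F_k'$ of a tile is sent to $t^{-1}b-t^{-1}b=0$, while $\phi(\sA^d)=\bigoplus_{(k,c)}\sA^{F_k}$; the finitely many remaining relations $\sM^\dag$ are then absorbed by the counting bound $\rL(\sM^\dag)\le\big|[d]\setminus\bigcup_k F_k'\cC_k\big|\cdot|K|\cdot\rL(\sB)$. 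Without such a collapsing homomorphism (or some other explicit independence mechanism), your ``inductive filtration argument'' does not go through, and this is the step that actually carries the theorem.
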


The following lemma is \cite[Lemma 4.6]{KL13a}. We need it a few times.

\begin{lemma} \label{L-Rokhlin}
For any $K\in \cF(\Gamma)$ and $0<\varepsilon<1/2$, there are $\ell\in \Nb$ and nonempty $F_1, \dots, F_\ell\in \cF(\Gamma)$ with $|KF_k\setminus F_k|<\varepsilon |F_k|$ for all $1\le k\le \ell$ such that,  for any good enough sofic approximation $\sigma: \Gamma\rightarrow \Sym(d)$ for $\Gamma$ and any $\cW\subseteq [d]$ with $|\cW|\ge (1-\varepsilon)d$, there exist $\cC_1, \dots, \cC_\ell\subseteq \cW$ such that
\begin{enumerate}
\item for every $k=1, \dots, \ell$, the map $(s, c)\mapsto sc$ from $F_k\times \cC_k$ to $[d]$ is injective,
\item the family $\{F_1\cC_1, \dots, F_\ell \cC_\ell\}$ is disjoint and $\big|\bigcup_{1\le k\le \ell}F_k\cC_k\big|\ge (1-2\varepsilon)d$.
\end{enumerate}
\end{lemma}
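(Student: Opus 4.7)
The plan is to reduce to the Ornstein-Weiss quasi-tiling theorem for amenable groups, and then transfer the resulting quasi-tiling from $\Gamma$ to the sofic approximation $\sigma$ via a greedy selection of tile centers.

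First I would invoke the Ornstein-Weiss quasi-tiling theorem: given $K\in \cF(\Gamma)$ and $\varepsilon>0$, there exist $\ell\in \Nb$, nonempty F\o{}lner sets $F_1,\dots,F_\ell\in \cF(\Gamma)$ with $e_\Gamma\in F_k$ and $|KF_k\setminus F_k|<\varepsilon|F_k|$, together with a threshold pair $(K_0,\delta_0)$ where $K_0\supseteq K\cup \bigcup_{k}F_k F_k^{-1}$, such that any finite $A\subseteq \Gamma$ satisfying $|K_0 A\triangle A|<\delta_0|A|$ admits subsets $D_1,\dots,D_\ell\subseteq A$ for which the maps $F_k\times D_k\to \Gamma$, $(s,c)\mapsto sc$, are injective, the tiles $\{F_k D_k\}_{k}$ are pairwise disjoint subsets of $A$, and $\bigl|\bigcup_k F_k D_k\bigr|\ge(1-\varepsilon)|A|$. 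The standard strengthening of Ornstein-Weiss (nesting the $F_k$'s and iterating the tiling argument) yields this exact-tiling conclusion, as is used throughout the amenable entropy literature.

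Next I would specify the ``good enough'' sofic approximation condition: take $\sigma:\Gamma\to \Sym(d)$ so that the ``actionlike'' set
\[
\cV:=\{v\in[d]:\sigma_s\sigma_t v=\sigma_{st}v \text{ and } \sigma_s v\ne \sigma_{s'}v \text{ for all } s,t\in K_0 \text{ and all distinct } s,s'\in K_0\}
\]
has $|[d]\setminus \cV|<\varepsilon' d$ for a small auxiliary parameter $\varepsilon'=\varepsilon'(\varepsilon,\delta_0)$ chosen later. By definition of the sofic approximation net, this holds for all sufficiently good $\sigma$, and on $\cV$ the map $s\mapsto \sigma_s v$ provides a ``local chart'' in which $\sigma$ behaves like a genuine action on $K_0$-neighborhoods. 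Then I would construct $\cC_k$ greedily: inductively for $k=1,\dots,\ell$, having set $U_{k-1}=\bigcup_{k'<k}F_{k'}\cC_{k'}$, choose $\cC_k\subseteq \cV\cap \cW$ maximal subject to (i) the tiles $F_k c$ for $c\in \cC_k$ being pairwise disjoint and each disjoint from $U_{k-1}$, and (ii) each map $s\mapsto \sigma_s c$ being injective on $F_k$ (automatic for $c\in \cV$ since $K_0\supseteq F_k$).

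The main obstacle is showing that the resulting union $U_\ell=\bigcup_k F_k\cC_k$ covers at least $(1-2\varepsilon)d$ of $[d]$. Suppose for contradiction that $|[d]\setminus U_\ell|>2\varepsilon d$; then, once $\varepsilon'$ is chosen small enough, the leftover set $R:=\cV\cap \cW\cap([d]\setminus U_\ell)$ still has $|R|>\varepsilon d$. The plan is to exhibit a subset of $R$ that, viewed through the local charts provided by $\cV$, looks like a $(K_0,\delta_0)$-invariant finite subset $A$ of $\Gamma$; apply the quasi-tiling from the first step to obtain tiles $F_k D_k\subseteq A$; and finally pull these tiles back through $\sigma$ to produce additional valid centers that could have been adjoined to the $\cC_k$'s, contradicting their maximality. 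The delicate bookkeeping here, aligning the pulled-back tiles with constraints (i) and (ii) of the greedy construction while keeping track of the error contributions from $[d]\setminus \cV$, from boundary points where $\sigma$ fails to compose correctly, and from the inefficiency $\varepsilon$ of the OW quasi-tiling itself, is the heart of the proof and where all the numerical constants must be arranged to fit together.
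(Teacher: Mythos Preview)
The paper does not actually prove this lemma; it is quoted verbatim from \cite[Lemma 4.6]{KL13a}. So there is no proof in the paper to compare against, only the Kerr--Li argument behind that citation.

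Your overall plan---manufacture the F{\o}lner sets $F_1,\dots,F_\ell$ from Ornstein--Weiss, restrict to a good set $\cV\subseteq[d]$ on which $\sigma$ behaves like an action, then tile $[d]$ greedily---is the right shape, and is in spirit how Kerr--Li proceed. But two points in your execution do not work as written.

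First, the greedy pass runs in the wrong direction. If you place the smallest tiles $F_1$ first, maximality of $\cC_1$ only gives $\cV\cap\cW\subseteq F_1^{-1}F_1\cC_1$, so $|F_1\cC_1|$ is at best of order $(\cV\cap\cW)/|F_1|$; the leftover is a fragmented set into which there is no reason the larger $F_2,\dots,F_\ell$ can be placed with high coverage. In the standard argument one takes $e_\Gamma\in F_1\subseteq\cdots\subseteq F_\ell$ with each $F_k$ so $(F_{k-1}F_{k-1}^{-1},\eta)$-invariant that the $F_{k-1}$-boundary of any $F_k$-tile is negligible, and then runs the greedy selection in \emph{decreasing} order of $k$; at each stage the maximality together with this nesting gives a direct quantitative bound on how much of the current remainder stays uncovered, and summing these bounds produces the $(1-2\varepsilon)d$ estimate.

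Second, and more fundamentally, the contradiction mechanism in your step~4 does not make sense as stated. A subset $R\subseteq[d]$ cannot be identified with a single $(K_0,\delta_0)$-invariant finite subset $A$ of $\Gamma$: the local charts $s\mapsto\sigma_s v$ depend on the base point $v$ and are mutually incompatible, and for any fixed $v$ there is no reason a large portion of any $\sigma_A(v)$ lands in $R$. So you cannot ``apply OW to $A\subseteq\Gamma$ and pull the tiles back through $\sigma$'' to manufacture new centers contradicting maximality---that pull-back is not defined. The Kerr--Li argument never returns to $\Gamma$ after the $F_k$ are chosen: the entire greedy tiling and the coverage count are carried out directly on $[d]$, using only that on $\cV$ the maps $s\mapsto\sigma_s v$ are injective on each $F_k$ and compose correctly on the relevant products. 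What you are calling ``delicate bookkeeping'' is really this direct multi-scale counting on $[d]$, not a transfer from $\Gamma$.
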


Theorem~\ref{T-amenable mean length} follows from Lemmas~\ref{L-amenable mean length lower} and \ref{L-amenable mean length upper} below.

\begin{lemma} \label{L-amenable mean length lower}
For any locally $\rL$-finite $R\Gamma$-module $\cM$ and any $\sA\in \sF(\cM)$, we have $\mL_{\Sigma, \omega}(\sA|\cM)\ge \mL(\sA)$.
\end{lemma}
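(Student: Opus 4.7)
The plan is to use the Ornstein-Weiss quasi-tiling (Lemma~\ref{L-Rokhlin}) to transfer the F\o lner characterization of $\mL(\sA)$ into a lower bound for $\mL_{\Sigma, \omega}(\sA|\cM)$. First I fix $\varepsilon \in (0, 1/2)$ and use the F\o lner-limit formula defining $\mL(\sA)$ to choose $K \in \cF(\Gamma)$ and $0 < \delta \le \varepsilon$ so that every nonempty $F' \in \cF(\Gamma)$ with $|KF' \setminus F'| < \delta|F'|$ satisfies $\rL(\sA^{F'}) \ge (\mL(\sA) - \varepsilon)|F'|$. Applying Lemma~\ref{L-Rokhlin} to this $K$ and $\delta$ produces F\o lner sets $F_1, \ldots, F_\ell$ enjoying both the F\o lner bound above and the quasi-tiling conclusion; enlarging if necessary, I arrange $e_\Gamma \in F_k$ for every $k$. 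Set $\sB_0 := \sum_{k=1}^\ell \sA^{F_k}$ and $F_0 := \bigcup_k F_k$. Because $\mL_{\Sigma, \omega}(\sA|\sB, F)$ is monotone decreasing in $\sB$ and in $F$, it suffices to establish the desired lower bound for $\sB \supseteq \sB_0$ and $F \supseteq F_0$.

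Fix such $\sB$ and $F$. For a sufficiently good sofic approximation $\sigma: \Gamma \to \Sym(d)$ in $\Sigma$, Lemma~\ref{L-Rokhlin} yields subsets $\cC_1, \ldots, \cC_\ell \subseteq [d]$ with the tiles $F_k \cC_k$ pairwise disjoint, the map $(s, c) \mapsto sc$ injective on $F_k \times \cC_k$, and $\bigl|\bigcup_k F_k\cC_k\bigr| \ge (1 - 2\varepsilon) d$. Let $\pi$ denote the quotient $\cM^d \to \cM^d/\sM(\sB, F, \sigma)$. For $a \in \sA$, $s \in F_k \subseteq F$, and $c \in \cC_k$, the element $s^{-1}a$ lies in $\sA^{F_k} \subseteq \sB_0 \subseteq \sB$, so $\delta_c(s^{-1}a) - \delta_{sc}a \in \sM(\sB, F, \sigma)$. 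For $x = \sum_{s\in F_k}s^{-1}a_s \in \sA^{F_k}$ and $c \in \cC_k$, this gives
\[
\pi(\delta_c x) = \pi\Bigl(\sum_{s \in F_k}\delta_{sc}a_s\Bigr) \in \pi(\sA^d) = \sM(\sA, \sB, F, \sigma),
\]
and this value depends only on $x$. Hence I obtain a well-defined $R$-module homomorphism
\[
\tilde\Psi:\ Y := \bigoplus_{k=1}^\ell \bigoplus_{c \in \cC_k}\sA^{F_k} \longrightarrow \sM(\sA, \sB, F, \sigma),\qquad (x_{k,c}) \longmapsto \sum_{k, c}\pi(\delta_c x_{k,c}),
\]
whose source has length $\rL(Y) = \sum_k |\cC_k|\rL(\sA^{F_k}) \ge (\mL(\sA) - \varepsilon)(1 - 2\varepsilon)d$.

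The hard part will be showing that $\rL(\ker \tilde\Psi) \le O(\varepsilon)d$, with implied constant depending on the fixed data $\sA$, $\sB_0$, $F_0$ but not on the ambient $\sB$ and $F$. An element $(x_{k,c})$ lies in $\ker \tilde\Psi$ exactly when $\sum_{k,c}\delta_c x_{k,c} \in \sM(\sB, F, \sigma)$; the idea is to argue that any such cancellation must come from relations supported either in the $2\varepsilon d$-sized complement of $\bigcup_k F_k\cC_k$ or across tile boundaries, the latter being controlled by the F\o lner defect $|KF_k \setminus F_k| < \varepsilon|F_k|$. A careful bookkeeping of these boundary and complement contributions should embed $\ker \tilde\Psi$ into an $R$-module of length $O(\varepsilon)d$. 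Granting this, additivity of $\rL$ yields $\rL(\sM(\sA, \sB, F, \sigma)) \ge \rL(Y) - \rL(\ker\tilde\Psi) \ge (\mL(\sA) - \varepsilon)(1 - 2\varepsilon)d - O(\varepsilon)d$; dividing by $d$, taking the $\omega$-limit, and then letting $\varepsilon \to 0$ gives $\mL_{\Sigma, \omega}(\sA|\sB, F) \ge \mL(\sA)$, as desired. The quantitative kernel estimate is the genuine technical heart: unlike the amenable case where one works directly in the Cayley graph of $\Gamma$, here one must disentangle the mutual interactions of the quasi-tiles from the boundary and complement contributions using only the quantitative data supplied by Lemma~\ref{L-Rokhlin}.
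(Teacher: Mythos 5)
Your framework (quasi-tiling via Lemma~\ref{L-Rokhlin}, the anti-monotonicity reduction to $\sB\supseteq\sB_0$ and $F\supseteq F_0$, and the well-defined map $\tilde\Psi$ with $\rL(Y)\ge(\mL(\sA)-\varepsilon)(1-2\varepsilon)d$) is sound, but the proof has a genuine gap exactly where you flag it: the estimate $\rL(\ker\tilde\Psi)\le O(\varepsilon)d$ is neither proved nor provable by the mechanism you sketch. There are two problems. First, a quantifier-order problem: you choose $K$ (hence the tiles $F_1,\dots,F_\ell$) from the F{\o}lner characterization of $\mL(\sA)$ \emph{before} seeing $F$, and then must handle arbitrary $F\supseteq F_0$. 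The relations generating $\sM(\sB,F,\sigma)$ involve translations by all $s\in F$, and since $F$ need not be comparable to $K$, a single $s\in F$ can move the whole interior of one tile into another tile or off the tiles entirely; such relations are in no sense ``across tile boundaries controlled by $|KF_k\setminus F_k|<\varepsilon|F_k|$'', and there are on the order of $d\,|F|$ of them. The cure is to apply Lemma~\ref{L-Rokhlin} with the arbitrary finite set appearing in the definition of $\mL_{\Sigma,\omega}(\sA|\sB,K)$ itself, i.e.\ choose the tiles \emph{after} $K$ is given; this costs nothing on the length side because $\mL(\sA)=\inf_{F'}\rL(\sA^{F'})/|F'|$, so $\rL(\sA^{F_k})\ge|F_k|\,\mL(\sA)$ holds for every nonempty $F_k$ with no F{\o}lner condition tailored to $\sA$. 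This is how the paper proceeds.

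Second, even with tiles adapted to $K$, the heuristic that ``cancellation must come only from the complement or from tile boundaries'' is not a proof and is misleading: the interior relations $\delta_{tc}b-\delta_{stc}sb$ (both endpoints inside a tile, $b$ ranging over the possibly huge $\sB$) are abundant, and the reason they do not shrink the image of $\bigoplus_{(k,c)}\sA^{F_k}$ is structural, not a matter of counting. The paper avoids any kernel estimate by mapping in the opposite direction: it defines an $R$-module homomorphism $\phi:\cM^d\to\bigoplus_{(k,c)}\cM$ with $\phi(\delta_{tc}tx)=x$ at coordinate $(k,c)$ for $t\in F_k$, $c\in\cC_k$, and $\phi(\delta_v\cM)=0$ off the tiles. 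This $\phi$ annihilates every relation generator based in the interior $F_k'\cC_k$, where $F_k'=\{t\in F_k: Kt\subseteq F_k\}$, for \emph{every} $b\in\sB$, and it maps $\sA^d$ onto $\bigoplus_{(k,c)}\sA^{F_k}$; since applying a homomorphism and passing to quotients can only decrease length, one gets $\rL(\sM(\sA,\sB,K,\sigma))\ge\rL\big(\bigoplus_{(k,c)}\sA^{F_k}\big)-\rL(\sM^\dag)$, where $\sM^\dag$ collects the relation generators based off the interiors and satisfies $\rL(\sM^\dag)\le\big|[d]\setminus\bigcup_kF_k'\cC_k\big|\cdot|K|\cdot\rL(\sB)\le(2+|K|)\varepsilon d\,|K|\,\rL(\sB)$. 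If you insist on your direction ($\tilde\Psi$ plus a kernel bound), you will in effect have to construct this collapsing map $\phi$ (or an equivalent retraction) anyway, since that is what certifies that interior relations contribute nothing; as written, your proposal does not contain that idea, so its ``technical heart'' is missing rather than merely deferred.
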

\begin{proof} Let $\sB\in \sF(\cM)$ and $K\in \cF(\Gamma)$.

Take $0<\varepsilon<1/2$. Then we have $\ell$ and $F_1, \dots, F_\ell$ in Lemma~\ref{L-Rokhlin}.

Let $\sigma: \Gamma\rightarrow \Sym(d)$ be a good enough sofic approximation for $\Gamma$ such that $|\cW|>(1-\varepsilon)d$ for
\begin{align*}
\cW:=\{v\in [d]: s(tv)=(st)v \mbox{ for all } s\in K \mbox{ and } t\in \bigcup_{k=1}^\ell F_k\}.
\end{align*}
Then we have $\cC_1, \dots, \cC_\ell$ as in Lemma~\ref{L-Rokhlin}. Denote by $\sL$ the set of all $(k, c)$ such that $k\in \{1, \dots, \ell\}$ and $c\in \cC_k$.

Let $1\le k\le \ell$. Set $F'_k=\{t\in F_k: Kt\subseteq F_k\}$. Then $|F_k\setminus F'_k|\le |K|\cdot |KF_k\setminus F_k|<\varepsilon |F_k|\cdot |K|$.
For each $c\in \cC_k$, denote by $\sM(\sB, k, c)$ the $R$-submodule of $\cM^d$ generated by $\delta_{tc}b-\delta_{stc}sb$ for $t\in F'_k$, $s\in K$ and $b\in \sB$.

Denote by $\sM^\dag$ the $R$-submodule of $\cM^d$ generated by $\delta_vb-\delta_{sv}sb$ for $v\in [d]\setminus \bigcup_{k=1}^\ell F'_k\cC_k$, $s\in K$ and $b\in \sB$.
Set $\sM=\sum_{(k, c)\in \sL}\sM(\sB, k, c)$.
Then $\sM(\sB, K, \sigma)=\sM+\sM^\dag$.

Note that
\begin{align*}
\big| [d]\setminus \bigcup_{1\le k\le \ell}F'_k\cC_k\big|&= \big| [d]\setminus \bigcup_{1\le k\le \ell}F_k\cC_k\big|+\big|\bigcup_{1\le k\le \ell}(F_k\setminus F'_k)\cC_k\big| \\
&\le 2\varepsilon d+ \sum_{1\le k\le \ell}|F_k\setminus F'_k|\cdot |\cC_k|\\
&\le 2\varepsilon d+\sum_{1\le k\le \ell}\varepsilon |F_k|\cdot |K|\cdot |\cC_k|\\
&= 2\varepsilon d+\varepsilon |K|\cdot \big|\bigcup_{1\le k\le \ell}F_k\cC_k\big|\\
&\le 2\varepsilon d+\varepsilon |K| d.
\end{align*}

Denote by $\phi$ the $R$-module homomorphism $\cM^d\rightarrow \bigoplus_{(k, c)\in \sL}\cM$ such that $\phi(\delta_v\cM)=0$ for all $v\in [d]\setminus \bigcup_{k=1}^\ell F_k\cC_k$
and $\phi(\delta_{tc}tx)$ takes value $x$ at $(k, c)$ and $0$ everywhere else for all $(k, c)\in \sL$, $t\in F_k$ and $x\in \cM$. Note that $\phi(\sA^d)=\bigoplus_{(k, c)\in \sL}\sA^{F_k}$ and $\phi(\sM)=0$. Thus
\begin{align*}
\rL(\sM(\sA, \sB, K, \sigma))&= \rL((\sA^d+\sM(\sB, K, \sigma))/\sM(\sB, K, \sigma))\\
&\ge \rL(\phi(\sA^d+\sM(\sB, K, \sigma))/\phi(\sM(\sB, K, \sigma)))\\
&= \rL((\bigoplus_{(k, c)\in \sL}\sA^{F_k}+\phi(\sM^\dag))/\phi(\sM^\dag))\\
&=\rL(\bigoplus_{(k, c)\in \sL}\sA^{F_k}+\phi(\sM^\dag))-\rL(\phi(\sM^\dag))\\
&\ge \rL(\bigoplus_{(k, c)\in \sL}\sA^{F_k})-\rL(\sM^\dag)\\
&\ge \sum_{(k, c)\in \sL}\rL(\sA^{F_k})-\big|[d]\setminus \bigcup_{1\le k\le \ell}F'_k\cC_k\big|\cdot |K|\cdot \rL(\sB)\\
&\ge \sum_{(k, c)\in \sL}|F_k|\mL(\sA)-(2+|K|)\varepsilon d\cdot |K|\cdot \rL(\sB)\\
&=\big| \bigcup_{1\le k\le \ell}F_k\cC_k\big|\mL(\sA)-(2+|K|)\varepsilon d\cdot |K|\cdot \rL(\sB)\\
&\ge d(1-2\varepsilon)\mL(\sA)-(2+|K|)\varepsilon d\cdot |K|\cdot \rL(\sB).
\end{align*}
Therefore
$$\mL_{\Sigma, \omega}(\sA|\sB, K)\ge (1-2\varepsilon)\mL(\sA)-(2+|K|)\varepsilon\cdot|K|\cdot \rL(\sB).$$
Letting $\varepsilon\to 0$, we get
$\mL_{\Sigma, \omega}(\sA|\sB, K)\ge \mL(\sA)$. Since $\sB\in \sF(\cM)$ and $K\in \cF(\Gamma)$ are arbitrary, we conclude that $\mL_{\Sigma, \omega}(\sA|\cM)\ge \mL(\sA)$.
\end{proof}

\begin{lemma} \label{L-amenable mean length upper}
For any $R\Gamma$-module $\cM$ and any $\sA\in \sF(\cM)$ with $\rL(\sA)<+\infty$, we have $\mL_{\Sigma, \omega}(\sA|\cM)\le \mL(\sA)$.
\end{lemma}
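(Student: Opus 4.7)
The plan is to mirror the lower-bound argument of Lemma~\ref{L-amenable mean length lower}, but exploiting the Rokhlin tiling in the opposite direction: instead of extracting many independent copies of $\sA^{F_k}$ inside the quotient, we show that the image of $\sA^d$ in $\cM^d/\sM(\sB,F,\sigma)$ is \emph{covered} by copies of $\sA^{F_k}$, provided $\sB$ and $F$ are chosen large enough to produce the right identifications.

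Fix $\varepsilon>0$. By the definition of $\mL(\sA)$, choose $K\in\cF(\Gamma)$ with $e_\Gamma\in K$ and $\delta\in(0,1/2)$ such that every nonempty $F'\in\cF(\Gamma)$ with $|KF'\setminus F'|<\delta|F'|$ satisfies $\rL(\sA^{F'})/|F'|<\mL(\sA)+\varepsilon$. Apply Lemma~\ref{L-Rokhlin} with this $K$ and with parameter $\min(\delta,\varepsilon)$ to produce $\ell$ and nonempty $F_1,\dots,F_\ell\in\cF(\Gamma)$ with $|KF_k\setminus F_k|<\delta|F_k|$. Set $F:=\bigcup_{k=1}^\ell F_k$ and $\sB:=\sA^F=\sum_{t\in F}t^{-1}\sA$; this is a finitely generated $R$-submodule of $\cM$ with $\rL(\sB)\le|F|\rL(\sA)<+\infty$, so it is a legitimate candidate in the infimum defining $\mL_{\Sigma,\omega}(\sA|\cM)$.

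For any sufficiently good sofic approximation $\sigma:\Gamma\to\Sym(d)$, Lemma~\ref{L-Rokhlin} (applied with $\cW=[d]$) yields $\cC_1,\dots,\cC_\ell\subseteq[d]$ such that the tiles $F_k\cC_k$ are pairwise disjoint and $\big|\bigcup_{k}F_k\cC_k\big|\ge(1-2\varepsilon)d$. The key identity is the following: for each $k$, $c\in\cC_k$, $t\in F_k$ and $a\in\sA$, the element $t^{-1}a$ lies in $\sA^{F_k}\subseteq\sB$ and $t\in F$, so the defining generator
\[
\delta_c(t^{-1}a)-\delta_{tc}(t\cdot t^{-1}a)=\delta_c(t^{-1}a)-\delta_{tc}a
\]
belongs to $\sM(\sB,F,\sigma)$. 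Hence modulo $\sM(\sB,F,\sigma)$ we have $\delta_{tc}a\equiv\delta_c(t^{-1}a)$, so
\[
\sum_{t\in F_k}\delta_{tc}\sA\ \subseteq\ \delta_c\sA^{F_k}+\sM(\sB,F,\sigma).
\]
Thus the image of $\sum_{t\in F_k}\delta_{tc}\sA$ in $\cM^d/\sM(\sB,F,\sigma)$ has $\rL$ at most $\rL(\sA^{F_k})<(\mL(\sA)+\varepsilon)|F_k|$.

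Writing $[d]$ as the disjoint union of all tiles $F_k c$ (for $(k,c)\in\sL$) together with a remainder $R$ of size at most $2\varepsilon d$, and noting that $\sA^d=\sum_{v\in[d]}\delta_v\sA$, the subadditivity of $\rL$ gives
\[
\rL(\sM(\sA,\sB,F,\sigma))\ \le\ \sum_{(k,c)\in\sL}\rL(\sA^{F_k})+|R|\cdot\rL(\sA)\ \le\ (\mL(\sA)+\varepsilon)d+2\varepsilon d\cdot\rL(\sA),
\]
where we used $\sum_{(k,c)\in\sL}|F_k|=\big|\bigcup_k F_k\cC_k\big|\le d$. Dividing by $d$, passing to the $\omega$-limit, and taking infimum over $\sB$ and $F$ shows $\mL_{\Sigma,\omega}(\sA|\cM)\le\mL(\sA)+\varepsilon(1+2\rL(\sA))$, and letting $\varepsilon\to 0$ completes the proof; the hypothesis $\rL(\sA)<+\infty$ is used exactly to control the boundary remainder contribution. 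The one subtle point that might be called the ``hard part'' is the choice $\sB=\sA^F$: any smaller $\sB$ would fail to force the congruence $\delta_{tc}a\equiv\delta_c(t^{-1}a)$, and the upper bound by $\rL(\sA^{F_k})$ would collapse.
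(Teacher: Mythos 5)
Your proof is correct and follows essentially the same route as the paper's: the same choice $\sB=\sA^F$ with $F=\bigcup_k F_k$ from the Rokhlin lemma, the same congruence $\delta_{tc}a\equiv\delta_c(t^{-1}a)$ modulo $\sM(\sB,F,\sigma)$ identifying each tile's contribution with a copy of $\sA^{F_k}$, and the same subadditivity estimate with the remainder controlled by $\rL(\sA)<+\infty$. No gaps.
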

\begin{proof} Let $\delta>0$.  There exist nonempty $K\in \cF(\Gamma)$ and $0<\varepsilon<1/2$ such that for any nonempty $F\in \cF(\Gamma)$ with $|KF\setminus F|<\varepsilon |F|$, one has $\rL(\sA^F)\le |F|(\mL(\sA)+\delta)$. Then we have $\ell$ and $F_1, \dots, F_\ell$ in Lemma~\ref{L-Rokhlin}.
In particular,
$$\rL(\sA^{F_k})\le |F_k|(\mL(\sA)+\delta)$$
for all $1\le k\le \ell$.

Set $F=\bigcup_{k=1}^\ell F_k\in \cF(\Gamma)$ and $\sB=\sA^F\in \sF(\cM)$.

Let $\sigma: \Gamma\rightarrow \Sym(d)$ be a good enough sofic approximation for $\Gamma$, and set $\cW=[d]$. Then we have $\cC_1, \dots, \cC_\ell$ as in Lemma~\ref{L-Rokhlin}.
Denote by $\varphi$ the quotient map $\cM^d\rightarrow \cM^d/\sM(\sB, F, \sigma)$. Set $\sA'=\sum_{v\in [d]\setminus \bigcup_{k=1}^\ell F_k\cC_k}\delta_v \sA$. Let $1\le k\le \ell$ and $c\in \cC_k$.
Set $\sA^\sharp(k, c)=\delta_c(\sA^{F_k})$. Then $\sA^\sharp(k, c)$ is isomorphic to $\sA^{F_k}$ as $R$-modules. For  any $s\in F_k$ and $a\in \sA$, we have
$$\varphi(\delta_{sc}a)=\varphi(\delta_c s^{-1}a)\in \varphi(\sA^\sharp(k, c)).$$
Thus
$$\sM(\sA, \sB, F, \sigma)\subseteq \varphi(\sA')+\sum_{1\le k\le \ell}\sum_{c\in \cC_k}\varphi(\sA^\sharp(k, c)).$$
Therefore
\begin{align*}
\rL(\sM(\sA, \sB, F, \sigma))
&\le \rL(\varphi(\sA')+\sum_{1\le k\le \ell}\sum_{c\in \cC_k}\varphi(\sA^\sharp(k, c)))\\
&\le \rL(\sA')+\sum_{1\le k\le \ell}\sum_{c\in \cC_k}\rL(\sA^\sharp(k, c)) \\
&\le \rL(\sA)\cdot \big|[d]\setminus \bigcup_{1\le k\le \ell}F_k\cC_k\big|+\sum_{1\le k\le \ell}|\cC_k|\cdot \rL(\sA^{F_k})\\
&\le 2\rL(\sA)d\varepsilon+\sum_{1\le k\le \ell}|\cC_k|\cdot |F_k|\cdot(\mL(\sA)+\delta)\\
&\le 2\rL(\sA)d\varepsilon+d\cdot (\mL(\sA)+\delta).
\end{align*}
Thus
$$\mL_{\Sigma, \omega}(\sA|\cM)\le \mL_{\Sigma, \omega}(\sA|\sB, F)\le 2\rL(\sA)\varepsilon+\mL(\sA)+\delta.$$
Letting $\varepsilon\to 0$ and then $\delta\to 0$,  we get $\mL_{\Sigma, \omega}(\sA|\cM)\le  \mL(\sA)$.
\end{proof}

\section{Finitely generated submodules of free modules} \label{S-fg submodule}

In this section we give a formula for the mean length of a finitely generated $R\Gamma$-module relative to a free $R\Gamma$-module.

Let $\sigma$ be a map from $\Gamma$ to $\Sym(d)$ for some $d\in \Nb$. For each $f\in R\Gamma$, we define an $R$-module homomorphism
$\bar{\sigma}_f: R^d\rightarrow R^d$ by
\begin{align} \label{E-linear map1}
 (\bar{\sigma}_f(w))_v=\sum_{s\in \Gamma}w_{sv}f_s
\end{align}
for all $v\in [d]$.
For any $m,n \in \Nb$ and $f\in M_{m, n}(R\Gamma)$, we then define an $R$-module homomorphism
$\bar{\sigma}_f: (R^d)^{1\times m}\rightarrow (R^d)^{1\times n}$ by
\begin{align} \label{E-linear map2}
\bar{\sigma}_f=(\bar{\sigma}_{f_{k, j}})_{1\le k\le m, 1\le j\le n}\in M_{m, n}(\End_R(R^d)).
\end{align}

\begin{proposition} \label{P-principal mean rank}
Let $f\in M_{m, n}(R\Gamma)$ for some $m, n\in \Nb$. Set $\cM_1=(R\Gamma)^{1\times m}f$ and $\cM_2=(R\Gamma)^{1\times n}$.
Suppose that $\rL(R)<\infty$.
Then
\begin{align*}
 \mL_{\Sigma, \omega}(\cM_1|\cM_2)=\lim_{i\to \omega}\frac{\rL(\im \bar{\sigma}_{i, f})}{d_i}.
\end{align*}
\end{proposition}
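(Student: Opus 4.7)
The plan is to reduce $\mL_{\Sigma,\omega}(\cM_1|\cM_2)$ to a direct comparison with $\rL(\im\bar\sigma_{i,f})$ via two explicit $R$-linear maps, with the gap controlled by sofic-approximation errors.

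First, I pick efficient generators. Let $\sA:=\sum_{k=1}^m R\cdot(e_k f)\in\sF(\cM_1)$, where $e_k\in(R\Gamma)^{1\times m}$ is the $k$th standard row; then $\sA$ generates $\cM_1$ as an $R\Gamma$-module. Let $\sB:=\sA+\sum_{j=1}^n R\cdot\varepsilon_j\in\sF(\cM_2)$, where $\varepsilon_j\in(R\Gamma)^{1\times n}$ is the $j$th standard row; then $\sA\subseteq\sB$ and $\sB$ generates $\cM_2$ as an $R\Gamma$-module. Because $\rL(R)<\infty$, $\cM_2$ is locally $\rL$-finite. Lemma~\ref{L-mean length generating} yields
$$\mL_{\Sigma,\omega}(\cM_1|\cM_2)\;=\;\mL_{\Sigma,\omega}(\sA|\sB)\;=\;\inf_{F\in\cF(\Gamma)}\lim_{i\to\omega}\frac{\rL(\sM(\sA,\sB,F,\sigma_i))}{d_i},$$
so it suffices to show $\rL(\sM(\sA,\sB,F,\sigma_i))=\rL(\im\bar\sigma_{i,f})+o(d_i)$ along $\omega$ for every $F\in\cF(\Gamma)$ with $F\supseteq\bigcup_{k,j}\supp(f_{k,j})$.

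Second, for each $\sigma:\Gamma\to\Sym(d)$ I set up two comparison maps. Define the surjective $R$-linear map $\tau_\sigma:\cM_2^d\to(R^d)^{1\times n}$ on the $R$-basis $\{\delta_v(s\varepsilon_j)\}_{v,s,j}$ of $\cM_2^d$ by $\tau_\sigma(\delta_v(s\varepsilon_j)):=e_{\sigma_{s^{-1}}(v)}\varepsilon_j$, and the surjective $R$-linear map $\psi_\sigma:(R^d)^{1\times m}\to\sA^d$ by $\psi_\sigma((\alpha_k)_{k=1}^m):=\sum_{k,v}(\alpha_k)_v\,\delta_v(e_k f)$. Expanding $e_k f=\sum_{j,s}(f_{k,j})_s\,(s\varepsilon_j)$ and comparing with formulas~(\ref{E-linear map1})--(\ref{E-linear map2}), a short calculation gives that the $(u,j)$-entry of $\tau_\sigma\circ\psi_\sigma(\alpha)$ equals $(\bar\sigma_f(\alpha))_{u,j}$ for every $u$ in the set $\cW_\sigma:=\{u\in[d]:\sigma_{s^{-1}}\sigma_s(u)=u\ \forall s\in F\}$. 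By soficity, $|[d]\setminus\cW_{\sigma_i}|/d_i\to 0$ along $\omega$; since $\rL(R)<\infty$, both the image of $\bar\sigma_f-\tau_\sigma\circ\psi_\sigma$ and the image $\tau_\sigma(\sM(\sB,F,\sigma))$ are supported on at most $O(|[d]\setminus\cW_\sigma|)$ coordinates and so have $\rL$-length at most $(m+n)|[d]\setminus\cW_\sigma|\rL(R)=o(d_i)$.

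Third, these estimates yield the sandwich. Since $\psi_\sigma$ is surjective, the composite $(R^d)^{1\times m}\twoheadrightarrow\sM(\sA,\sB,F,\sigma)$ has kernel $K_\sigma:=\psi_\sigma^{-1}(\sM(\sB,F,\sigma))$, so $\rL(\sM(\sA,\sB,F,\sigma))=m d_i\rL(R)-\rL(K_\sigma)$, while $\rL(\im\bar\sigma_{i,f})=m d_i\rL(R)-\rL(\ker\bar\sigma_{i,f})$. The key claim is $\rL(K_\sigma)=\rL(\ker\bar\sigma_{i,f})+o(d_i)$. For $\alpha\in\ker\bar\sigma_{i,f}$, step two places $\tau_\sigma(\psi_\sigma(\alpha))$ in an $\rL$-submodule of length $o(d_i)$; the exchange relations $\delta_v b-\delta_{sv}(sb)$ with $b\in\sB$ and $s\in F\supseteq\bigcup_{k,j}\supp(f_{k,j})$ implement, on coordinates in $\cW_\sigma$, the basis identifications $\delta_v(s\varepsilon_j)\sim\delta_{\sigma_{s^{-1}}(v)}\varepsilon_j$ needed to rewrite $\psi_\sigma(\alpha)$ as $\sum_{u,j}(\bar\sigma_f(\alpha))_{u,j}\delta_u\varepsilon_j=0$ modulo an $o(d_i)$ error, whence $\psi_\sigma(\alpha)\in\sM(\sB,F,\sigma)$ up to $\rL$-length $o(d_i)$. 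The reverse inclusion is dual, using that $\tau_\sigma\circ\psi_\sigma=\bar\sigma_f$ on $\cW_\sigma$ and that $\tau_\sigma$ annihilates $\sM(\sB,F,\sigma)$ modulo $o(d_i)$. The main obstacle is this two-sided bookkeeping of sofic errors; the hypothesis $\rL(R)<\infty$ is essential because it turns a count of $o(d_i)$ bad coordinates into $\rL$-length $o(d_i)$. Dividing by $d_i$, taking the ultralimit, and then the infimum over $F$ finishes the proof.
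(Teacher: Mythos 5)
Your proposal is correct in substance, and its second half takes a genuinely different route from the paper. The reduction via Lemma~\ref{L-mean length generating} and the use of the exchange relations $\delta_v b-\delta_{sv}sb$ to turn your $\psi_\sigma$ into $\bar{\sigma}_f$ are exactly the paper's mechanism: the paper's $\phi$ is your $\psi_\sigma$, and with $\psi$ the coordinate isomorphism of $(R^d)^{1\times n}$ onto $\tilde{\sA}^d$ (where $\tilde{\sA}$ is the $R$-span of the standard basis of $\cM_2$) it proves the \emph{exact} identity $\sM(\sA, \sB, F, \sigma)=\varphi(\psi(\im \bar{\sigma}_f))$ for every map $\sigma$ -- no sofic error at all, because the relation with $b=\varepsilon_j$ gives $\delta_v(s\varepsilon_j)\equiv\delta_{\sigma_s^{-1}(v)}\varepsilon_j$ exactly modulo $\sM(\sB,F,\sigma)$, with the inverse permutation $\sigma_s^{-1}$ rather than your $\sigma_{s^{-1}}$. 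This yields $\rL(\sM(\sA,\sB,F,\sigma))\le \rL(\im\bar{\sigma}_f)$ at once, and it also shows your first inclusion is exact: $\ker\bar{\sigma}_{i,f}\subseteq K_{\sigma_i}$, so no $o(d_i)$ correction is needed in that direction. Where you genuinely diverge is the lower bound: the paper writes $\rL(\sM(\sA,\sB,F,\sigma))\ge \rL(\im\bar{\sigma}_f)-nd\rL(R)+\rL(\sM(\tilde{\sA},\sB,F,\sigma))$ and then invokes Proposition~\ref{P-mean rank basic}(1) (via Lemma~\ref{L-mean length generating}) to get $\mL_{\Sigma,\omega}(\tilde{\sA}|\sB,F)\ge n\rL(R)$, so the sofic-approximation analysis is entirely outsourced to the free-module computation; you instead introduce the collapse map $\tau_\sigma$ and do the error estimates by hand, proving the two-sided comparison $\rL(K_{\sigma_i})=\rL(\ker\bar{\sigma}_{i,f})+o(d_i)$. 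Your route is self-contained for this step (it does not use Proposition~\ref{P-mean rank basic}) at the price of more bookkeeping; the paper's is shorter because its only approximation input is hidden in a lemma already proved.

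Two details need repair, neither fatal. First, your good set $\cW_\sigma=\{u\in[d]:\sigma_{s^{-1}}\sigma_s(u)=u\ \forall s\in F\}$ does suffice for ``$\tau_\sigma\circ\psi_\sigma=\bar{\sigma}_f$ in the coordinates over $\cW_\sigma$'', but it does not by itself give ``$\tau_\sigma$ annihilates $\sM(\sB,F,\sigma)$ outside $O(|[d]\setminus\cW_\sigma|)$ coordinates'': for a generator $\delta_v b-\delta_{\sigma_s(v)}sb$ with $b$ a row of $f$ you need $\sigma_{(ss')^{-1}}\sigma_s(v)=\sigma_{s'^{-1}}(v)$ for all $s\in F$ and $s'\in\supp(f)$, and $\sigma_{e_\Gamma}(v)=v$ for $b=\varepsilon_j$; these are additional approximate-multiplicativity conditions, and soficity still makes the set of $v$ satisfying all of them (for the fixed finite sets $F$ and $\supp(f)$) have density tending to $1$, so you should simply enlarge the list of conditions defining the good set. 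Second, the passage from the two approximate statements to $\rL(K_{\sigma_i})=\rL(\ker\bar{\sigma}_{i,f})+o(d_i)$ should be made explicit: since $\bar{\sigma}_{i,f}(K_{\sigma_i})$ is contained in a fixed coordinate submodule $T_i$ with $\rL(T_i)=o(d_i)$, additivity gives $\rL(K_{\sigma_i})=\rL(K_{\sigma_i}\cap\ker\bar{\sigma}_{i,f})+\rL(\bar{\sigma}_{i,f}(K_{\sigma_i}))\le \rL(\ker\bar{\sigma}_{i,f})+o(d_i)$, and the reverse inequality follows from the (exact) inclusion above; this subtraction of lengths is precisely where $\rL(R)<+\infty$ is used, as you indicate.
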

\begin{proof}
Denote by $A$ the set of all rows of $f$, and by $\sA$ the $R$-submodule of $\cM_1$ generated by $A$. Then $\sA$ generates $\cM_1$ as an $R\Gamma$-module. By Lemma~\ref{L-mean length generating} we have
$\mL_{\Sigma, \omega}(\cM_1|\cM_2)=\mL_{\Sigma, \omega}(\sA|\cM_2)$.

Denote by $F'$ the union of the supports of elements in $A$ (as finite subsets of $\Gamma$).
Let $e_1, \dots, e_n$ be the standard basis of $\cM_2$.
Denote by $\tilde{\sA}$ the $R$-submodule of $\cM_2$ generated by $e_1, \dots, e_n$.
Let $\sB\in \sF(\cM_2)$ contain $\tilde{\sA}$, and $F\in \cF(\Gamma)$ contain $F'$.
Let $\sigma$ be a map from $\Gamma$ to $\Sym(d)$ for some $d\in \Nb$. Denote by
$\varphi$ the quotient map $\cM_2^d\rightarrow \cM_2^d/\sM(\sB, F, \sigma)$.
For each $1\le k\le m$, denote by $f_k$ the $k$-th row of $f$, and write it as
$\sum_{j=1}^n\big(\sum_{s\in \Gamma}f_{k,j,s}s\big)e_j$ with $f_{k, j, s}\in R$.
Define a surjective $R$-module homomorphism $\phi: (R^d)^{1\times m}\rightarrow \sA^d$ by
$$\phi((w_{k, v})_{1\le k\le m, v\in [d]})=\sum_{k=1}^m\sum_{v\in [d]}w_{k,v}\delta_v f_k.$$
Also define an $R$-module isomorphism $\psi$ from $(R^d)^{1\times n}$ to $\tilde{\sA}^d$
sending $(w_{j, v})_{1\le j\le n, v\in [d]}$ to $\sum_{j=1}^n\sum_{v\in [d]}w_{j, v}\delta_v e_j$.
Let $w=(w_{k, v})_{1\le k\le m, v\in [d]}\in (R^d)^{1\times m}$.  We have
\begin{align*}
\varphi(\phi(w))
&=\varphi\big(\sum_{k=1}^m\sum_{v\in [d]}w_{k,v}\delta_vf_k\big)\\
&=\sum_{k=1}^m\sum_{v\in [d]}\sum_{j=1}^n\sum_{s\in F'}w_{k,v}f_{k, j, s}\varphi(\delta_v se_j)\\
&=\sum_{k=1}^m\sum_{v\in [d]}\sum_{j=1}^n\sum_{s\in F'}w_{k, v}f_{k, j, s}\varphi(\delta_{\sigma_s^{-1}(v)} e_j)\\
&=\sum_{k=1}^m\sum_{v\in [d]}\sum_{j=1}^n\sum_{s\in \Gamma}w_{k, sv}f_{k, j, s}\varphi(\delta_v e_j)\\
&=\varphi\big(\sum_{k=1}^m\sum_{v\in [d]}\sum_{j=1}^n\sum_{s\in \Gamma}w_{k, sv}f_{k, j, s}\delta_ve_j\big).
\end{align*}
For any $1\le j\le n$ and $v\in [d]$, we have
$$ (\bar{\sigma}_f(w))_{j, v}=\sum_{k=1}^m\big(\bar{\sigma}_{f_{k, j}}(w_k)\big)_v=\sum_{k=1}^m\sum_{s\in \Gamma}w_{k, sv}f_{k, j, s},$$
thus
$$\psi(\bar{\sigma}_f(w))=\sum_{j=1}^n\sum_{v\in [d]}\sum_{k=1}^m\sum_{s\in \Gamma}w_{k, sv}f_{k, j, s}\delta_v e_j.$$
Therefore
$$\varphi(\phi(w))=\varphi(\psi(\bar{\sigma}_f(w))).$$
Then
\begin{align} \label{E-principal mean rank3}
\sM(\sA, \sB, F, \sigma)=\varphi(\sA^d)=\varphi(\phi((R^d)^{1\times m}))=\varphi(\psi(\bar{\sigma}_f((R^d)^{1\times m})))=\varphi(\psi(\im \bar{\sigma}_f)).
\end{align}
From \eqref{E-principal mean rank3} we have
$$\rL(\sM(\sA, \sB, F, \sigma))=\rL(\varphi(\psi(\im\bar{\sigma})))\le \rL(\im \bar{\sigma}_f),$$
whence
\begin{align} \label{E-principal mean rank4}
\mL_{\Sigma, \omega}(\sA|\sB, F)\le \lim_{i\to \omega}\frac{\rL(\im \bar{\sigma}_{i, f})}{d_i}.
\end{align}
From \eqref{E-principal mean rank3} we also have
\begin{align*}
\rL(\sM(\sA, \sB, F, \sigma))&=\rL(\psi(\im \bar{\sigma}_f))-\rL(\psi(\im \bar{\sigma}_f)\cap \sM(\sB, F, \sigma))\\
&\ge \rL(\im \bar{\sigma}_f)-\rL(\tilde{\sA}^d\cap \sM(\sB, F, \sigma))\\
&=\rL(\im \bar{\sigma}_f)-\rL(\tilde{\sA}^d)+\rL(\sM(\tilde{\sA}, \sB, F, \sigma))\\
&=\rL(\im \bar{\sigma}_f)-nd\rL(R)+\rL(\sM(\tilde{\sA}, \sB, F, \sigma)),
\end{align*}
and hence
$$ \mL_{\Sigma, \omega}(\sA|\sB, F)\ge \lim_{i\to \omega}\frac{\rL(\im \bar{\sigma}_{i, f})}{d_i}-n\rL(R)+\mL_{\Sigma, \omega}(\tilde{\sA}|\sB, F).$$
Note that $\tilde{\sA}$ generates $\cM_2=R\Gamma\otimes_RR^n$ as an $R\Gamma$-module. Thus by Lemma~\ref{L-mean length generating} and Proposition~\ref{P-mean rank basic} we have $\mL_{\Sigma, \omega}(\tilde{\sA}|\cM_2)=\mL_{\Sigma, \omega}(\cM_2)=n\rL(R)$. Therefore $\mL(\tilde{\sA}|\sB, F)\ge n\rL(R)$, whence
\begin{align} \label{E-principal mean rank5}
\mL_{\Sigma, \omega}(\sA|\sB, F)\ge \lim_{i\to \omega}\frac{\rL(\im \bar{\sigma}_{i, f})}{d_i}.
\end{align}
Combining \eqref{E-principal mean rank4} and \eqref{E-principal mean rank5} we get
$\mL_{\Sigma, \omega}(\sA|\sB, F)=\lim_{i\to \omega}\frac{\rL(\im \bar{\sigma}_{i, f})}{d_i}$. By Lemma~\ref{L-mean length generating} we get
$\mL_{\Sigma, \omega}(\cM_1|\cM_2)=\mL_{\Sigma, \omega}(\sA|\cM_2)=\lim_{i\to \omega}\frac{\rL(\im \bar{\sigma}_{i, f})}{d_i}$.
\end{proof}

From Propositions~\ref{P-principal mean rank} and \ref{P-mean rank basic}, and Theorem~\ref{T-addition for mean length} we have the following consequence.

\begin{corollary} \label{C-subgroup}
Let $\Gamma'$ be a subgroup of $\Gamma$ and denote by $\Sigma'$ the restriction of $\Sigma$ to $\Gamma'$.
Let $f\in M_{m, n}(R\Gamma')$ for some $m, n\in \Nb$.
Suppose that $\rL(R)<\infty$.
Then
\begin{align*}
 \mL_{\Sigma, \omega}((R\Gamma)^{1\times m}f|(R\Gamma)^{1\times n})=\mL_{\Sigma', \omega}((R\Gamma')^{1\times m}f|(R\Gamma')^{1\times n}),
\end{align*}
and
\begin{align*}
\mL_{\Sigma, \omega}((R\Gamma)^{1\times n}/(R\Gamma)^{1\times m}f)=\mL_{\Sigma', \omega}((R\Gamma')^{1\times n}/(R\Gamma')^{1\times m}f).
\end{align*}
\end{corollary}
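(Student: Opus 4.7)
The plan is to reduce both equalities to Proposition~\ref{P-principal mean rank} combined with the addition formula. First I would verify that the restricted net $\Sigma' = \{\sigma_i|_{\Gamma'}\}_{i \in J}$ is a sofic approximation net for $\Gamma'$: the three defining conditions for sofic approximation involve only finite subsets of $\Gamma'$ and limits of $d_i$, so they follow immediately from the corresponding conditions for $\Sigma$ restricted to elements of $\Gamma'$. The free ultrafilter $\omega$ on $J$ serves both $\Gamma$ and $\Gamma'$ without modification.

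For the first equality, I would observe the crucial fact that the endomorphisms $\bar{\sigma}_{i,f}$ defined in \eqref{E-linear map1} and \eqref{E-linear map2} depend only on the restriction of $\sigma_i$ to the support of $f$. Since $f \in M_{m,n}(R\Gamma')$, each entry $f_{k,j}$ is supported in $\Gamma'$, so in the sum $(\bar{\sigma}_{i,f_{k,j}}(w))_v = \sum_{s \in \Gamma} w_{sv} (f_{k,j})_s$ only terms with $s \in \Gamma'$ contribute. Therefore $\bar{\sigma}_{i,f}$ computed with respect to $\sigma_i: \Gamma \to \Sym(d_i)$ coincides with $\bar{\sigma'}_{i,f}$ computed with respect to $\sigma_i|_{\Gamma'}: \Gamma' \to \Sym(d_i)$. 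Applying Proposition~\ref{P-principal mean rank} twice (once to the pair $(\Gamma, \Sigma)$ and once to $(\Gamma', \Sigma')$), both sides equal $\lim_{i \to \omega} \rL(\im \bar{\sigma}_{i,f})/d_i$, giving the first equality.

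For the second equality, I would apply the addition formula of Theorem~\ref{T-addition for mean length} to the short exact sequence
\begin{align*}
0 \to (R\Gamma)^{1 \times m} f \to (R\Gamma)^{1 \times n} \to (R\Gamma)^{1 \times n}/(R\Gamma)^{1 \times m} f \to 0,
\end{align*}
with $\cM_3 = \cM_2 = (R\Gamma)^{1\times n}$. Local $\rL$-finiteness of $(R\Gamma)^{1\times n}$ is guaranteed by $\rL(R) < \infty$, since every finitely generated $R$-submodule of $(R\Gamma)^{1\times n}$ is a finitely generated $R$-module. This yields
\begin{align*}
\mL_{\Sigma,\omega}\bigl((R\Gamma)^{1\times n}\bigr) = \mL_{\Sigma,\omega}\bigl((R\Gamma)^{1\times m}f \,\big|\, (R\Gamma)^{1\times n}\bigr) + \mL_{\Sigma,\omega}\bigl((R\Gamma)^{1\times n}/(R\Gamma)^{1\times m}f\bigr).
\end{align*}
By Proposition~\ref{P-mean rank basic}(1) applied with $\sN_1 = \sN_2 = R^n$, the first term equals $n \rL(R)$, which is finite. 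The analogous identity holds for $\Gamma'$. Since the middle term has been shown equal across $\Sigma$ and $\Sigma'$, rearranging gives the second equality.

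I do not anticipate a serious obstacle: the core insight is simply that the matrix $f$ ``does not see'' the part of $\sigma_i$ living outside $\Gamma'$, so the invariant appearing on the right of Proposition~\ref{P-principal mean rank} is intrinsic to $\Gamma'$. The only point requiring mild care is finiteness of $\mL_{\Sigma,\omega}((R\Gamma)^{1\times n}) = n\rL(R)$, which we need in order to legitimately subtract in the addition formula; this is exactly where the hypothesis $\rL(R) < \infty$ gets used for the second statement.
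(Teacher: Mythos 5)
Your proof is correct and follows essentially the same route the paper intends: its one-line justification cites exactly Proposition~\ref{P-principal mean rank}, Proposition~\ref{P-mean rank basic}, and Theorem~\ref{T-addition for mean length}, with the implicit key observation being the one you spell out, namely that $\bar{\sigma}_{i,f}$ depends only on $\sigma_i|_{\Gamma'}$ since $f$ is supported in $\Gamma'$. Your handling of the subtraction step via $\mL_{\Sigma,\omega}((R\Gamma)^{1\times n})=n\rL(R)<+\infty$ (which bounds the relative term as well) is exactly the point where $\rL(R)<\infty$ is needed, as in the paper.
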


\begin{example} \label{E-free group}
Suppose that $\rL(R)=1$ and $s\in \Gamma$ has infinite order. Then the subgroup $\Gamma'$ of $\Gamma$ generated by $s$ is isomorphic to $\Zb$. Clearly $s-1$ is not a right zero-divisor in $R\Gamma'$, whence $R\Gamma' (s-1)\cong R\Gamma'$ as $R\Gamma'$-modules. By Corollary~\ref{C-subgroup} and Theorem~\ref{T-amenable mean length} we have
$$ \mL_{\Sigma, \omega}(R\Gamma (s-1)|R\Gamma)=\mL_{\Sigma', \omega}(R\Gamma' (s-1)|R\Gamma')=\mL(R\Gamma'(s-1))=\mL(R\Gamma')=1,$$
and
$$\mL_{\Sigma, \omega}(R\Gamma/R\Gamma (s-1))=\mL_{\Sigma', \omega}(R\Gamma'/R\Gamma'(s-1))=\mL(R\Gamma')-\mL(R\Gamma'(s-1))=0.$$
In particular, we can take $\Gamma$ to be the free group $\Fb_2$ with canonical generators $s$ and $t$. It is well known that
$R\Fb_2$ has a free submodule with generators $s-1$ and $t-1$ \cite[Corollary 10.3.7.(iv)]{Passman77}. Thus $R\Fb_2/R\Fb_2 (s-1)$
contains an $R\Fb_2$-submodule isomorphic to $R\Fb_2$, while $ \mL_{\Sigma, \omega}(R\Fb_2/R\Fb_2 (s-1))=0$.
\end{example}

\section{Mean rank and von Neumann-L\"{u}ck rank} \label{S-mrk vs vrk}

In this section we introduce the relative von Neumann-L\"{u}ck rank and study its relation with mean length.
Throughout this section, we fix $R$ to be a unital subring of $\Cb$.
As in Example~\ref{E-Ore}, we consider the length function on $R$-modules given by $\rk(\sM)=\dim_Q(Q\otimes_R\sM)$, where $Q$ denotes the fraction field for $R$. Then we have the relative mean length
$\mrk_{\Sigma, \omega}(\cM_1|\cM_2)$ defined in Definition~\ref{D-mean length} for all $R\Gamma$-modules $\cM_1\subseteq \cM_2$.

We shall use the notations in Section~\ref{SS-vdim}. For any $R\Gamma$-module $\cM$, its {\it von Neumann-L\"{u}ck rank} is defined as
$$\vrk(\cM):=\dim_{\cL\Gamma}(\cL\Gamma\otimes_{R\Gamma}\cM).$$

We shall show that $\mrk_{\Sigma, \omega}(\cM)=\vrk(\cM)$ for any $R\Gamma$-module $\cM$ when $R$ is contained in the algebraic closure $\bar{\Qb}$ of $\Qb$. The proof uses the relative mean length $\mrk_{\Sigma, \omega}(\cM_1|\cM_2)$.
Correspondingly, we must introduce a relative version of the von Neumann-L\"{u}ck rank.

\begin{definition} \label{D-vrk}
For any $R\Gamma$-modules $\cM_1\subseteq \cM_2$, denoting by $\phi$ the natural map $\cL\Gamma\otimes_{R\Gamma}\cM_1\rightarrow \cL\Gamma\otimes_{R\Gamma}\cM_2$, we define
the {\it von Neumann-L\"{u}ck rank of $\cM_1$ relative to $\cM_2$} as
$$\vrk(\cM_1|\cM_2):=\dim_{\cL\Gamma}\phi(\cL\Gamma\otimes_{R\Gamma}\cM_1).$$
\end{definition}

Note that when $\cM_1=\cM_2$, we have $\vrk(\cM_1|\cM_2)=\vrk(\cM_1)$. Now we can state our main result of this section.

\begin{theorem}  \label{T-mrk vs vrk}
Suppose that $R\subseteq \bar{\Qb}$.
For any $R\Gamma$-modules $\cM_1\subseteq \cM_2$, we have
$$\mrk_{\Sigma, \omega}(\cM_1|\cM_2)=\vrk(\cM_1|\cM_2).$$
In particular, for any $R\Gamma$-module $\cM$ we have
$\mrk_{\Sigma, \omega}(\cM)=\vrk(\cM)$.
\end{theorem}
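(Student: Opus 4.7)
The plan is to reduce the relative identity to the absolute case $\mrk_{\Sigma,\omega}(\cM)=\vrk(\cM)$, and then to handle the absolute case by passing through finitely presented modules, where the problem is transformed via Proposition~\ref{P-principal mean rank} into the sofic analogue of L\"uck's approximation theorem.

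I would first establish an additivity identity for $\vrk$ paralleling Theorem~\ref{T-addition for mean length}: for $R\Gamma$-modules $\cM_1\subseteq\cM_2\subseteq\cM_3$,
$$\vrk(\cM_2|\cM_3)=\vrk(\cM_1|\cM_3)+\vrk(\cM_2/\cM_1|\cM_3/\cM_1),$$
directly from right-exactness of $\cL\Gamma\otimes_{R\Gamma}-$ and additivity of $\dim_{\cL\Gamma}$ on short exact sequences (Theorem~\ref{T-vdim}). Since $R\subseteq\bar{\Qb}$ is a commutative domain, $\rk(R)=1$ and every $R$-module is locally $\rk$-finite, so Theorem~\ref{T-addition for mean length} applies unrestricted. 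Setting $\cM_3=\cM_2$ in both formulas yields $\mrk_{\Sigma,\omega}(\cM_1|\cM_2)=\mrk_{\Sigma,\omega}(\cM_2)-\mrk_{\Sigma,\omega}(\cM_2/\cM_1)$ together with its $\vrk$-counterpart, reducing the theorem to proving $\mrk_{\Sigma,\omega}(\cM)=\vrk(\cM)$ for every $R\Gamma$-module $\cM$.

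For finitely presented $\cM=(R\Gamma)^{1\times n}/(R\Gamma)^{1\times m}f$ with $f\in M_{m,n}(R\Gamma)$, Proposition~\ref{P-mean rank basic} gives $\mrk_{\Sigma,\omega}((R\Gamma)^{1\times n})=n$, and additivity together with Proposition~\ref{P-principal mean rank} yields
$$\mrk_{\Sigma,\omega}(\cM)=n-\lim_{i\to\omega}\frac{\rk(\im\bar{\sigma}_{i,f})}{d_i},$$
while the parallel right-exact computation on the $\cL\Gamma$-side gives $\vrk(\cM)=n-\dim_{\cL\Gamma}((\cL\Gamma)^{1\times m}f)$. The two agree precisely when
$$\lim_{i\to\omega}\frac{\rk(\im\bar{\sigma}_{i,f})}{d_i}=\dim_{\cL\Gamma}((\cL\Gamma)^{1\times m}f),$$
which is the sofic analogue of L\"uck's approximation theorem for matrices with entries in $\bar{\Qb}\Gamma$; it is known in this setting from the work of Elek--Szab\'o and Thom via spectral truncation of $f^*f$ and Fuglede--Kadison determinants. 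Establishing or invoking this identity is the sole analytic ingredient and will be the main obstacle.

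The remaining extensions are formal applications of upper continuity. For finitely generated $\cM=(R\Gamma)^{1\times n}/\sK$, writing $\sK$ as the directed union of its finitely generated submodules $\sK_j$ and combining Proposition~\ref{P-continuity}(1) with upper continuity of $\dim_{\cL\Gamma}$ on the increasing union of images in $(\cL\Gamma)^{1\times n}$ promotes the finitely presented identity to $\mrk_{\Sigma,\omega}(\sK|(R\Gamma)^{1\times n})=\vrk(\sK|(R\Gamma)^{1\times n})$; subtracting from $n$ settles the finitely generated absolute case. For arbitrary $\cM$, Lemma~\ref{L-mean length generating}(1) gives $\mrk_{\Sigma,\omega}(\cM)=\sup_{\cM'}\mrk_{\Sigma,\omega}(\cM'|\cM)$ over finitely generated $R\Gamma$-submodules $\cM'\subseteq\cM$, and the analogous supremum for $\vrk(\cM)$ follows from upper continuity of $\dim_{\cL\Gamma}$ on $\cL\Gamma\otimes_{R\Gamma}\cM$ together with the fact that each finitely generated $\cL\Gamma$-submodule sits in the image of $\cL\Gamma\otimes_{R\Gamma}\cM'$ for some such $\cM'$. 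Finally, for fixed finitely generated $\cM'$ inside an arbitrary $\cM$, Proposition~\ref{P-continuity}(2) on the $\mrk$-side and the observation that tensor commutes with directed colimits, so the increasing net of kernels of $\cL\Gamma\otimes\cM'\to\cL\Gamma\otimes\cM_{2,k}$ exhausts the kernel of $\cL\Gamma\otimes\cM'\to\cL\Gamma\otimes\cM$, reduce $\mrk_{\Sigma,\omega}(\cM'|\cM)=\vrk(\cM'|\cM)$ to the finitely generated ambient case, which is already handled by the finitely generated absolute case via additivity. Concatenating these reductions proves the theorem.
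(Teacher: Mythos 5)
Your proposal is correct and follows essentially the same route as the paper: the finitely presented case is settled by Proposition~\ref{P-principal mean rank} together with the sofic L\"uck approximation over $\bar{\Qb}$ (the paper's Lemma~\ref{L-limit}, which invokes Thom) and the identity $\tr_{\cL\Gamma}P_f=\vrk(\cM)$ (Lemma~\ref{L-vrk for fp}), and the passage to arbitrary pairs is the same bootstrap through additivity and the two continuity properties, which the paper merely packages as the abstract uniqueness Lemma~\ref{L-unique} and verifies for $\vrk$ in Lemma~\ref{L-vrk basic}. One caveat: your opening reduction of the relative statement to the absolute one by subtracting $\mrk_{\Sigma,\omega}(\cM_2/\cM_1)$ is meaningless when both terms are infinite, but this is harmless because your concluding chain (finitely presented $\to$ finitely generated $\to$ finitely generated submodule inside an arbitrary module $\to$ arbitrary pair, with subtractions performed only where the values are finite) never actually uses it.
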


From Theorem~\ref{T-mrk vs vrk} and Proposition~\ref{P-finite} we have the following application to the von Neumann-L\"{u}ck rank.

\begin{corollary} \label{C-finite vrk}
Suppose that $\Gamma$ is  infinite and  $R\subseteq \bar{\Qb}$.  Let $\cM$ be an $R\Gamma$-module with $\rk(\cM)<+\infty$. Then $\vrk(\cM)=0$.
\end{corollary}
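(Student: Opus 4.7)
The plan is to combine Theorem~\ref{T-mrk vs vrk} with Proposition~\ref{P-finite} directly; no new ideas are needed beyond verifying that the hypotheses of the latter are satisfied for the length function $\rL = \rk$.

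First I would observe that $\cM$ is locally $\rk$-finite: for any $\sN\in \sF(\cM)$, monotonicity of the length function $\rk$ gives $\rk(\sN)\le \rk(\cM)<+\infty$. In particular $\mrk_{\Sigma,\omega}(\cM)$ is defined. Since $\Gamma$ is infinite and $\rk(\cM)<+\infty$, Proposition~\ref{P-finite} (applied with $\rL=\rk$) yields
\[
\mrk_{\Sigma,\omega}(\cM)=0.
\]
Then, since $R\subseteq \bar{\Qb}$, Theorem~\ref{T-mrk vs vrk} (absolute case) gives
\[
\vrk(\cM)=\mrk_{\Sigma,\omega}(\cM)=0,
\]
which is the desired conclusion.

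There is essentially no obstacle here; the only subtlety is verifying that the hypothesis of Proposition~\ref{P-finite} is met, namely that $\rk(\cM)<+\infty$ implies local $\rk$-finiteness, which is immediate from monotonicity. The result should be stated as a one-paragraph proof citing Proposition~\ref{P-finite} for $\mrk_{\Sigma,\omega}(\cM)=0$ and Theorem~\ref{T-mrk vs vrk} for the identification with $\vrk(\cM)$.
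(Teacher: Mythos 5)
Your proposal is correct and is exactly the paper's argument: the paper derives Corollary~\ref{C-finite vrk} precisely by combining Proposition~\ref{P-finite} (which gives $\mrk_{\Sigma,\omega}(\cM)=0$ since $\Gamma$ is infinite and $\rk(\cM)<+\infty$) with Theorem~\ref{T-mrk vs vrk} (which identifies $\mrk_{\Sigma,\omega}(\cM)$ with $\vrk(\cM)$ when $R\subseteq\bar{\Qb}$). Your added remark that $\rk(\cM)<+\infty$ ensures local $\rk$-finiteness, so the mean length is defined, is a correct and harmless verification.
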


For any $m,n \in \Nb$ and $f\in M_{m, n}(R\Gamma)$, denote by $\ker f$ the kernel of the bounded linear operator $M_f: (\ell^2(\Gamma))^{n\times 1}\rightarrow (\ell^2(\Gamma))^{m\times 1}$ sending $z$ to $fz$, and by $P_f$ the orthogonal projection from $(\ell^2(\Gamma))^{n\times 1}$ onto $\ker f$.
Taking direct sum of the right regular representation $r$ of $\Gamma$ on $\ell^2(\Gamma)$, we get the unitary representations $r^{\oplus n}$ and $r^{\oplus m}$ of $\Gamma$
on $(\ell^2(\Gamma))^{n\times 1}$ and $(\ell^2(\Gamma))^{m\times 1}$ respectively. Clearly $M_f\circ r^{\oplus n}_s=r^{\oplus m}_s\circ M_f$ for every $s\in \Gamma$. Thus $r^{\oplus n}_s(\ker f)=\ker f$, whence $P_f$ commutes with $r^{\oplus n}_s$ for every $s\in \Gamma$. It follows that $P_f\in M_n(\cL\Gamma)$.

\begin{lemma} \label{L-limit}
Suppose that $R\subseteq \bar{\Qb}$. Let $f\in M_{m, n}(R\Gamma)$ for some $m,n\in \Nb$. Then
$$ \lim_{i\to \omega}\frac{\rk(\im \bar{\sigma}_{i, f})}{d_i}=n-\tr_{\cL\Gamma}P_f,$$
where $\bar{\sigma}_{i, f}$ is defined in \eqref{E-linear map2}.
\end{lemma}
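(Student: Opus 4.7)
The plan is to reduce $\rk(\im \bar{\sigma}_{i,f})$ to a spectral question for a finite-dimensional positive Hermitian matrix and then invoke a sofic version of Lück's approximation for matrices with algebraic coefficients.

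First, since $R \subseteq \bar{\Qb} \subseteq \Cb$ and the rank of a matrix over $R$ is preserved under passage to the fraction field $Q$ and then further to $\Cb$, we may compute $\rk(\im \bar{\sigma}_{i,f})$ as the rank of $\bar{\sigma}_{i,f}$ viewed as an $m d_i \times n d_i$ complex matrix. Setting $A_i := \bar{\sigma}_{i,f}^* \bar{\sigma}_{i,f}$, a positive self-adjoint $n d_i \times n d_i$ complex matrix, the identity $\rk(T) = \rk(T^* T)$ over $\Cb$ gives $\rk(\im \bar{\sigma}_{i,f}) = \rk A_i = n d_i - \dim_\Cb \ker A_i$. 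On the von Neumann side, put $A := M_f^* M_f \in M_n(\cL\Gamma)$; then $\ker M_A = \ker M_f$, so the orthogonal projection onto $\ker A$ is $P_f$, and $\tr_{M_n(\cL\Gamma)}(P_A) = \tr_{\cL\Gamma}(P_f)$. The lemma therefore reduces to
\begin{align*}
\lim_{i \to \omega} \frac{\dim_\Cb \ker A_i}{d_i} = \tr_{\cL\Gamma}(P_f).
\end{align*}

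Next, let $\mu_i$ be the spectral measure of $A_i$ with respect to the un-normalized trace $\tr_\Cb / d_i$ on $M_{n d_i}(\Cb)$ (a positive measure on $[0, \infty)$ of total mass $n$, with mass $\dim_\Cb \ker A_i / d_i$ at $\{0\}$) and let $\mu$ be the spectral measure of $A$ with respect to $\tr_{M_n(\cL\Gamma)}$ (of total mass $n$, with mass $\tr_{\cL\Gamma}(P_f)$ at $\{0\}$). Moment convergence $\tr(A_i^k)/d_i \to \tr_{M_n(\cL\Gamma)}(A^k)$ for every $k \ge 0$ is the engine. Expanding $A^k$ as a polynomial in the $m n$ entries of $f$ and their formal adjoints, each term reduces on one side to $\tr_{\cL\Gamma}(s_1 \cdots s_\ell) = [s_1 \cdots s_\ell = e_\Gamma]$ and on the other to $\tr(\bar{\sigma}_{i, s_1} \cdots \bar{\sigma}_{i, s_\ell})/d_i$. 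A direct check from the permutation-matrix form of $\bar{\sigma}_{i, s}$ gives $\bar{\sigma}_{i, s^{-1}} = \bar{\sigma}_{i, s}^*$, and sofic almost-multiplicativity gives that $\bar{\sigma}_{i, s_1} \cdots \bar{\sigma}_{i, s_\ell}$ and $\bar{\sigma}_{i, s_\ell \cdots s_1}$ agree on all but $o(d_i)$ coordinates; combined with $\tr(\bar{\sigma}_{i, s})/d_i = |\{v \in [d_i] : sv = v\}|/d_i \to [s = e_\Gamma]$ (the distinctness condition of soficity), this yields moment convergence. The uniform norm bound $\|A_i\| \le \|f\|_1^2$ confines all $\mu_i$ to a fixed compact interval $[0, C]$, so moment convergence upgrades to weak-$*$ convergence $\mu_i \to \mu$ on $C([0, C])$. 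Upper semi-continuity of the atom at $0$ then gives $\limsup_{i \to \omega} \mu_i(\{0\}) \le \mu(\{0\})$ via the standard argument: for $\epsilon > 0$ not an atom of $\mu$, $\limsup \mu_i(\{0\}) \le \lim \mu_i([0, \epsilon]) = \mu([0, \epsilon])$, then send $\epsilon \to 0^+$.

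The main obstacle is the reverse inequality $\liminf \mu_i(\{0\}) \ge \mu(\{0\})$, because weak convergence never yields lower semi-continuity of atoms. This is the sofic version of Lück's approximation theorem for matrices over $\bar{\Qb}\Gamma$. Because every entry of $A_i$ lies in $\bar{\Qb}$, a Fuglede--Kadison-type determinantal bound prevents the small positive eigenvalues of $A_i$ from accumulating at $0$ uniformly in $i$; concretely, one shows that the log-determinantal moment $\int_{(0, \infty)} \log x \, d\mu_i(x)$ converges to $\int_{(0, \infty)} \log x \, d\mu(x)$, and this combined with the weak convergence already established forces $\lim_{\epsilon \to 0^+} \sup_i \mu_i((0, \epsilon)) = 0$, making the atom at $0$ continuous along the approximation. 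Appropriate forms of this sofic approximation for $\bar{\Qb}\Gamma$-matrices are available from the work of Elek--Szab\'o and Thom (and appear implicitly in Hayes's treatment of $\mdim_{\Sigma, \omega, \rM}(\widehat{\cM}) = \vrk(\cM)$); we invoke the appropriate form to conclude.
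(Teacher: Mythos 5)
Your proposal is correct and follows essentially the same route as the paper: both reduce $\rk(\im\bar{\sigma}_{i,f})$ by elementary linear algebra to a normalized kernel dimension (the paper via a bilinear pairing identifying $\ker\sigma_{i,f}$ with the dual of the cokernel of $\bar{\sigma}_{i,f}$, you via $\rk(T)=\rk(T^*T)$ applied to $A_i=\bar{\sigma}_{i,f}^*\bar{\sigma}_{i,f}$), and then the decisive input is the same sofic L\"{u}ck approximation theorem for matrices with algebraic entries, which the paper cites as Thom's Theorem 4.3(iv) and which you invoke (after re-sketching its soft half via moment and weak-$*$ convergence) from Elek--Szab\'{o}/Thom. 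The only blemish is the claim $\bar{\sigma}_{i,s^{-1}}=\bar{\sigma}_{i,s}^*$, which holds only up to $o(d_i)$ coordinates since $\sigma_i$ is merely an approximate homomorphism, but this is immaterial because that whole convergence statement is ultimately supplied by the cited theorem.
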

\begin{proof} Let $\sigma$ be a map from $\Gamma$ to $\Sym(d)$ for some $d\in \Nb$. The formulas \eqref{E-linear map1} and \eqref{E-linear map2} in fact define $\Cb$-linear maps $\Cb^d\rightarrow \Cb^d$ and $(\Cb^d)^{1\times m}\rightarrow (\Cb^d)^{1\times n}$ respectively. We still denote the latter by $\bar{\sigma}_f$. Then
\begin{align*}
\rk(\bar{\sigma}_f((R^d)^{1\times m}))&=\dim_\Cb(\bar{\sigma}_f((\Cb^d)^{1\times m})).
\end{align*}

For each $s\in \Gamma$, the permutation $\sigma_s$ induces a linear isomorphism $\Cb^d\rightarrow \Cb^d$, which we still denote by $\sigma_s$. Explicitly,
$(\sigma_sw)_{sv}=w_v$ for all $w\in \Cb^d$ and $v\in [d]$. We extend this map $\Gamma\rightarrow \End_\Cb(\Cb^d)$ linearly to
$R\Gamma\rightarrow \End_\Cb(\Cb^d)$, and denote the image of $h\in R\Gamma$ by $\sigma_h$. Explicitly,
$$ (\sigma_hw)_v=\sum_{s\in \Gamma, v'\in [d],sv'=v}h_sw_{v'}$$
for all $h\in R\Gamma$, $w\in \Cb^d$ and $v\in [d]$.
Now we extend the map $R\Gamma\rightarrow \End_\Cb(\Cb^d)$ to $M_{m, n}(R\Gamma)\rightarrow M_{m, n}(\End_\Cb(\Cb^d))=\Hom_\Cb((\Cb^d)^{n\times 1}, (\Cb^d)^{m\times 1})$, and still denote the image of $h\in M_{m,n}(R\Gamma)$ by $\sigma_h$.
Explicitly, for any $h\in M_{m,n}(R\Gamma)$ and $w\in (\Cb^d)^{n\times 1}$, one has
$$ (\sigma_hw)_k=\sum_{j=1}^n\sigma_{h_{k, j}}w_j$$
for all $1\le k\le m$,
and thus
$$(\sigma_hw)_{k, v}=\big(\sum_{j=1}^n\sigma_{h_{k, j}}w_j\big)_v=\sum_{j=1}^n(\sigma_{h_{k, j}}w_j)_v=\sum_{j=1}^n\sum_{s\in \Gamma, v'\in [d], sv'=v}h_{k, j, s}w_{j, v'}$$
for all $1\le k\le m$ and $v\in [d]$.

We have the canonical bilinear pairing between $(\Cb^d)^{1\times n}$ and $(\Cb^d)^{n\times 1}$ given by
$$\left<w', w\right>=\sum_{j=1}^n\left<w'_j, w_j\right>=\sum_{j=1}^n\sum_{v\in [d]}w'_{j, v}w_{j, v}$$
for all $w'\in (\Cb^d)^{1\times n}$ and $w\in (\Cb^d)^{n\times 1}$, and similarly the pairing between $(\Cb^d)^{1\times m}$ and $(\Cb^d)^{m\times 1}$. For any $u\in (\Cb^d)^{1\times m}$ and $w\in (\Cb^d)^{n\times 1}$, we have
\begin{align*}
(\bar{\sigma}_fu)_{j, v}=\big(\sum_{k=1}^m\bar{\sigma}_{f_{k, j}}(u_k)\big)_v=\sum_{k=1}^m\sum_{s\in \Gamma}u_{k, sv}f_{k, j, s}
\end{align*}
for all $1\le j\le n$ and $v\in [d]$,
and hence
\begin{align*}
\left<\bar{\sigma}_fu, w\right>&=\sum_{j=1}^n\sum_{v\in [d]}(\bar{\sigma}_f(u))_{j, v}w_{j, v}\\
&=\sum_{j=1}^n\sum_{v\in [d]}\sum_{k=1}^m\sum_{s\in \Gamma}u_{k, sv}f_{k, j, s}w_{j, v}\\
&=\sum_{j=1}^n\sum_{k=1}^m\sum_{v\in [d]}\sum_{s\in \Gamma, v'\in [d], sv'=v}u_{k, v}f_{k, j, s}w_{j, v'}\\
&=\sum_{k=1}^m\sum_{v\in [d]}u_{k, v}(\sigma_fw)_{k, v}\\
&=\left<u, \sigma_fw\right>.
\end{align*}
Thus $\left<\bar{\sigma}_f((\Cb^d)^{1\times m}), w\right>=0$ iff $\sigma_fw=0$. Therefore $\ker \sigma_f$ is naturally the dual vector space of $(\Cb^d)^{1\times n}/\bar{\sigma}_f((\Cb^d)^{1\times m})$. It follows that
\begin{align*}
 \dim_\Cb(\ker \sigma_f)&=\dim_\Cb((\Cb^d)^{1\times n}/\bar{\sigma}_f((\Cb^d)^{1\times m}))\\
 &=dn-\dim_\Cb(\bar{\sigma}_f((\Cb^d)^{1\times m})) \\
 &=dn-\rk(\bar{\sigma}_f((R^d)^{1\times m})).
\end{align*}
Since $R\subseteq \bar{\Qb}$, by \cite[Theorem 4.3.(iv)]{Thom} we have
$$\tr_{\cL\Gamma}P_f=\lim_{i\to \omega}\frac{\dim_\Cb(\ker \sigma_{i, f})}{d_i}.$$
It follows that
$$ \tr_{\cL\Gamma}P_f=n-\lim_{i\to \omega}\frac{\rk(\bar{\sigma}_{i, f}((R^{d_i})^{1\times m}))}{d_i}.$$
\end{proof}

The following lemma is \cite[Lemma 5.4]{LL}, which was stated only for the case $R=\Zb$, but whose proof works for any unital subring $R$ of $\Cb$.

\begin{lemma} \label{L-vrk for fp}
For any (not necessarily sofic) group $\Gamma$, any $m,n \in \Nb$ and any $f\in M_{m,n}(R\Gamma)$, setting $\cM=(R\Gamma)^{1\times n}/(R\Gamma)^{1\times m}f$, one has
$$ \tr_{\cL\Gamma}P_f=\vrk(\cM).$$
\end{lemma}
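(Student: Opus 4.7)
The plan is to push the presentation $(R\Gamma)^{1\times m}\to (R\Gamma)^{1\times n}\to \cM\to 0$ (where the first arrow is right multiplication by $f$) up to $\cL\Gamma$ and then invoke L\"{u}ck's theory of von Neumann dimension. First, since $\cL\Gamma\otimes_{R\Gamma}(-)$ is right exact, tensoring gives
$\cL\Gamma\otimes_{R\Gamma}\cM\cong(\cL\Gamma)^{1\times n}/(\cL\Gamma)^{1\times m}f$
as left $\cL\Gamma$-modules. Applying the additivity clause of Theorem~\ref{T-vdim} to the short exact sequence $0\to (\cL\Gamma)^{1\times m}f\to (\cL\Gamma)^{1\times n}\to \cL\Gamma\otimes_{R\Gamma}\cM\to 0$, one obtains
$\vrk(\cM)=n-\dim_{\cL\Gamma}\bigl((\cL\Gamma)^{1\times m}f\bigr)$.
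The problem is thereby reduced to proving $\dim_{\cL\Gamma}\bigl((\cL\Gamma)^{1\times m}f\bigr)=n-\tr_{\cL\Gamma}(P_f)$.

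For the second step I would appeal to the standard fact from \cite[Chapter 6]{Luck02} that for a finitely generated $\cL\Gamma$-submodule $N$ of $(\cL\Gamma)^{1\times n}$ one has $\dim_{\cL\Gamma}(N)=\tr_{\cL\Gamma}(E)$, where $E\in M_n(\cL\Gamma)$ is the orthogonal projection of $(\ell^2\Gamma)^{1\times n}$ onto the Hilbert-space closure of $N$. Applied to $N=(\cL\Gamma)^{1\times m}f$, this reduces the remaining task to the identification $\tr_{\cL\Gamma}(I_n-E)=\tr_{\cL\Gamma}(P_f)$, i.e.\ to matching the trace of the projection onto $\overline{(\ell^2\Gamma)^{1\times m}f}^{\perp}$ in $(\ell^2\Gamma)^{1\times n}$ with that of the projection onto $\ker(M_f)$ in $(\ell^2\Gamma)^{n\times 1}$.

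For this last identification, I would use the canonical involution $*$ on $\cL\Gamma$ and the transpose to build a unitary $\cL\Gamma$-equivariant isomorphism $\overline{(\ell^2\Gamma)^{1\times m}f}^{\perp}\to \ker(M_f)$. Concretely, a row vector $v\in(\ell^2\Gamma)^{1\times n}$ belongs to the orthogonal complement iff $\langle uf,v\rangle=0$ for every $u\in(\ell^2\Gamma)^{1\times m}$, and a direct computation (moving $f$ across the inner product using $*$ entrywise, then applying the transpose) rewrites this as $M_f v^{\sharp}=0$, where $v^{\sharp}\in(\ell^2\Gamma)^{n\times 1}$ is the column obtained from $v$ by conjugate-transposing its entries. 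This map is a Hilbert isometry and intertwines the left $\cL\Gamma$-actions, so $I_n-E$ and $P_f$ are equivalent projections in $M_n(\cL\Gamma)$ and in particular have equal traces. Combining the three steps yields $\vrk(\cM)=\tr_{\cL\Gamma}(P_f)$.

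The principal obstacle is the bookkeeping in the third step: one must verify carefully that the natural unitary identification really intertwines the two projections and is equivariant for the left $\cL\Gamma$-action, since the original presentation involves right multiplication on row vectors while $P_f$ is defined via left multiplication on column vectors. No other part of the argument needs new ideas; indeed, the lemma is proved in \cite[Lemma 5.4]{LL} for $R=\Zb$ by exactly this method, and the argument transfers verbatim to any unital subring $R$ of $\Cb$ because the key inputs (additivity of $\dim_{\cL\Gamma}$, the Hilbert-closure dimension formula, and the trace on $M_n(\cL\Gamma)$) depend only on the ambient von Neumann algebra.
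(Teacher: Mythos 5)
Your proposal is correct and is essentially what the paper does: the paper gives no independent proof of this lemma, but simply cites \cite[Lemma 5.4]{LL} and observes that the argument there (stated for $R=\Zb$) works verbatim for any unital subring $R$ of $\Cb$, and your three steps reproduce exactly that argument. The only imprecision is in the last step: the map $v\mapsto v^{\sharp}$ is conjugate-linear and exchanges the left and right module structures (so $I_n-E$ and $P_f$ a priori lie in $M_n$ of the right and left group von Neumann algebras, respectively) rather than intertwining the left $\cL\Gamma$-actions; but since both canonical traces are given by the vector state at $\delta_{e_\Gamma}$ and $\delta_{e_\Gamma}^{\sharp}=\delta_{e_\Gamma}$, the required trace identity still follows.
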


\begin{lemma} \label{L-mrk vs vrk fp}
Suppose that $R\subseteq \bar{\Qb}$. For any finitely presented $R\Gamma$-module $\cM$, one has
$$ \mrk_{\Sigma, \omega}(\cM)=\vrk(\cM).$$
\end{lemma}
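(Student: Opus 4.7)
The plan is to reduce the statement to a straightforward combination of three results already available in the excerpt, using the finite presentation of $\cM$ to set up an addition-formula computation.

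First I would fix a presentation $\cM \cong (R\Gamma)^{1\times n}/(R\Gamma)^{1\times m}f$ with $f \in M_{m,n}(R\Gamma)$, and set $\cM_1 = (R\Gamma)^{1\times m}f \subseteq \cM_2 = (R\Gamma)^{1\times n}$ so that $\cM_2/\cM_1 \cong \cM$. Note that $\cM_2$ is locally $\rk$-finite because any finitely generated $R$-submodule of $(R\Gamma)^{1\times n}$ sits inside $(R\Gamma'\text{-span})^{1\times n}$ for some finite set, and $\rk(R) = 1 < \infty$ makes $\rk$ finite on finitely generated free $R$-modules; in particular $\cM_1$ is locally $\rk$-finite as a submodule. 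This allows us to apply Theorem~\ref{T-addition for mean length} with $\cM_3 = \cM_2$, yielding
\begin{equation*}
\mrk_{\Sigma,\omega}(\cM_2) \;=\; \mrk_{\Sigma,\omega}(\cM_1 \mid \cM_2) \;+\; \mrk_{\Sigma,\omega}(\cM).
\end{equation*}

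Next I would evaluate each of the two remaining terms. For the left-hand side, Proposition~\ref{P-mean rank basic}(1) applied to $\sN_1 = \sN_2 = R^{1\times n}$ gives
\begin{equation*}
\mrk_{\Sigma,\omega}(\cM_2) \;=\; \mrk_{\Sigma,\omega}(R\Gamma \otimes_R R^{1\times n}) \;=\; \rk(R^{1\times n}) \;=\; n.
\end{equation*}
For the relative term, Proposition~\ref{P-principal mean rank} (which applies because $\rk(R) = 1$) gives
\begin{equation*}
\mrk_{\Sigma,\omega}(\cM_1 \mid \cM_2) \;=\; \lim_{i\to\omega} \frac{\rk(\im \bar{\sigma}_{i,f})}{d_i},
\end{equation*}
and Lemma~\ref{L-limit}, which is precisely where the hypothesis $R \subseteq \bar{\Qb}$ enters through L\"uck's approximation-type result of Thom, identifies this limit with $n - \tr_{\cL\Gamma} P_f$.

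Combining these and using Lemma~\ref{L-vrk for fp} to recognize $\tr_{\cL\Gamma} P_f = \vrk(\cM)$ yields
\begin{equation*}
\mrk_{\Sigma,\omega}(\cM) \;=\; n - \bigl(n - \tr_{\cL\Gamma} P_f\bigr) \;=\; \tr_{\cL\Gamma} P_f \;=\; \vrk(\cM),
\end{equation*}
as desired. There is no real obstacle here once Theorem~\ref{T-addition for mean length}, Proposition~\ref{P-principal mean rank}, and Lemma~\ref{L-limit} are in hand; the only mild point to check is that the modules involved are locally $\rk$-finite so that the addition formula applies, but this is immediate since $\rk(R) = 1$.
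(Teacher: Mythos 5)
Your proposal is correct and coincides with the paper's own argument: both take the presentation $\cM_2/\cM_1$ with $\cM_1=(R\Gamma)^{1\times m}f\subseteq\cM_2=(R\Gamma)^{1\times n}$, compute $\mrk_{\Sigma,\omega}(\cM_1|\cM_2)=n-\tr_{\cL\Gamma}P_f$ via Proposition~\ref{P-principal mean rank} and Lemmas~\ref{L-limit}, \ref{L-vrk for fp}, and then combine Proposition~\ref{P-mean rank basic} with the addition formula of Theorem~\ref{T-addition for mean length} to conclude $\mrk_{\Sigma,\omega}(\cM)=\vrk(\cM)$. The extra remark on local $\rk$-finiteness is a harmless verification the paper leaves implicit.
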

\begin{proof} We have $\cM=(R\Gamma)^{1\times n}/(R\Gamma)^{1\times m}f$ for some $m,n \in \Nb$ and $f\in M_{m,n}(R\Gamma)$.
Set $\cM_1=(R\Gamma)^{1\times m}f$ and $\cM_2=(R\Gamma)^{1\times n}$.
From Proposition~\ref{P-principal mean rank} and Lemmas~\ref{L-limit} and \ref{L-vrk for fp} we get
$$ \mrk_{\Sigma, \omega}(\cM_1|\cM_2)=n-\tr_{\cL\Gamma}P_f=n-\vrk(\cM).$$
By Proposition~\ref{P-mean rank basic} and
Theorem~\ref{T-addition for mean length} we have
\begin{align*}
n&=\mrk_{\Sigma, \omega}(\cM_2)\\
&= \mrk_{\Sigma, \omega}(\cM_1|\cM_2)+\mrk_{\Sigma, \omega}(\cM)\\
&=n-\vrk(\cM) +\mrk_{\Sigma, \omega}(\cM).
\end{align*}
Thus $\mrk_{\Sigma, \omega}(\cM)=\vrk(\cM)$.
\end{proof}

\begin{lemma} \label{L-unique}
Let $\cR$ be a unital ring. Suppose that for any $\cR$-modules $\cM_1\subseteq \cM_2$ we assign a value $\rL(\cM_1|\cM_2)\in \Rb_{\ge 0}\cup \{+\infty\}$ with the following properties:
\begin{enumerate}[\upshape (i)]
\item $\rL(\cM_1|\cM_2)$ depends only on the isomorphism class of the pair $(\cM_1, \cM_2)$, i.e. if $\cM_1\subseteq \cM_2$ and $\cM'_1\subseteq \cM'_2$ are $\cR$-modules and there is an isomorphism $\varphi: \cM_2\rightarrow \cM'_2$ with $\varphi(\cM_1)=\cM'_1$, then $\rL(\cM_1|\cM_2)=\rL(\cM'_1|\cM'_2)$,

\item one has $\rL(\cM_2)=\rL(\cM_1|\cM_2)+\rL(\cM_2/\cM_1)$, where we set $\rL(\cM)=\rL(\cM|\cM)$ for all $\cR$-modules $\cM$,

\item $\rL(\cR^n)<+\infty$ for every $n\in \Nb$,

\item if $\{\cM_j'\}_{j\in \cJ}$ is an increasing net of $\cR$-submodules of $\cM_1$ with union $\cM_1$, then  $\rL(\cM_j'|\cM_2)\to \rL(\cM_1|\cM_2)$ as $j\to \infty$,

\item if $\cM_1$ is a finitely generated $\cR$-module, and $\{\cM_j'\}_{j\in \cJ}$ is an increasing net of $\cR$-submodules of $\cM_2$ containing $\cM_1$ with union $\cM_2$, then $\rL(\cM_1|\cM_j')\to \rL(\cM_1|\cM_2)$ as $j\to \infty$.
\end{enumerate}
Then $\rL$ is determined by the values $\rL(\cM)$ for all finitely presented $\cR$-modules $\cM$.
\end{lemma}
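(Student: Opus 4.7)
The plan is to reduce $\rL(\cM_1|\cM_2)$ step by step via (iv), (v), and (ii), until we land on values of $\rL$ on finitely presented $\cR$-modules. The key preliminary fact I would establish first is that $\rL(\cM)<+\infty$ for every finitely generated $\cR$-module $\cM$: choosing a surjection $\pi\colon \cR^n\twoheadrightarrow \cM$ with kernel $\cK$, axiom (ii) yields $\rL(\cR^n)=\rL(\cK|\cR^n)+\rL(\cM)$, and (iii) forces both summands to be finite.

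First I would apply (iv) to $\cM_1=\bigcup_j \cM_1^{(j)}$, the directed union of its finitely generated submodules, to obtain
$$\rL(\cM_1|\cM_2)=\lim_j \rL(\cM_1^{(j)}|\cM_2).$$
Then for each fixed $\cM_1^{(j)}$, I would apply (v) to $\cM_2=\bigcup_k \cM_2^{(j,k)}$, the directed union of the finitely generated submodules of $\cM_2$ containing $\cM_1^{(j)}$, to get $\rL(\cM_1^{(j)}|\cM_2)=\lim_k \rL(\cM_1^{(j)}|\cM_2^{(j,k)})$. Thus it suffices to determine $\rL(\cA|\cB)$ for finitely generated $\cR$-modules $\cA\subseteq \cB$.

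For such a pair, additivity (ii) gives
$$\rL(\cA|\cB)=\rL(\cB)-\rL(\cB/\cA),$$
where both terms on the right are finite by the preliminary observation (since $\cB$ and $\cB/\cA$ are finitely generated). So the problem reduces to expressing $\rL(\cC)$ for a finitely generated $\cR$-module $\cC$ in terms of values on finitely presented modules. Fix a surjection $\pi\colon \cR^n\twoheadrightarrow \cC$ with kernel $\cK$, write $\cK=\bigcup_l \cK^{(l)}$ as the directed union of its finitely generated submodules, and apply (iv) together with (ii):
$$\rL(\cK|\cR^n)=\lim_l \rL(\cK^{(l)}|\cR^n)=\lim_l\bigl(\rL(\cR^n)-\rL(\cR^n/\cK^{(l)})\bigr).$$
Hence $\rL(\cC)=\rL(\cR^n)-\rL(\cK|\cR^n)=\lim_l \rL(\cR^n/\cK^{(l)})$. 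Since $\cR^n$ and every $\cR^n/\cK^{(l)}$ are finitely presented, combining all the identities expresses $\rL(\cM_1|\cM_2)$ purely in terms of the values of $\rL$ on finitely presented $\cR$-modules.

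The only delicate point is controlling infinite values when passing through the subtraction in (ii); this is exactly what axiom (iii) buys us, and it is handled once and for all by the preliminary finiteness observation. Beyond that, the argument is a mechanical iteration of the continuity and additivity axioms, and I do not anticipate any serious obstacle.
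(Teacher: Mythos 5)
Your argument is correct and is essentially the paper's proof run in the opposite direction: the paper builds up an algorithm from finitely generated submodules of free modules, to finitely generated modules, to finitely generated submodules of arbitrary modules, to arbitrary pairs, using exactly the same applications of (ii)--(v) and the same finiteness observation from (iii) that you isolate as a preliminary lemma. No gap; the two proofs differ only in presentation (top-down reduction versus bottom-up algorithm).
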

\begin{proof} We shall give an algorithm to compute $\rL(\cM_1|\cM_2)$ for all $\cR$-modules $\cM_1\subseteq \cM_2$, given the values of $\rL(\cM)$ for all finitely presented $\cR$-modules $\cM$.

Consider first the case $\cM_2$ is a finitely generated free $\cR$-module, and $\cM_1$ is a finitely generated submodule of $\cM_2$. Then both $\cM_2$ and $\cM_2/\cM_1$ are finitely presented $\cR$-modules. By the conditions (ii) and (iii), we have
$\rL(\cM_1|\cM_2)+\rL(\cM_2/\cM_1)=\rL(\cM_2)<+\infty$, whence $\rL(\cM_1|\cM_2)=\rL(\cM_2)-\rL(\cM_2/\cM_1)$ is  determined.

Next we consider the case $\cM$ is a finitely generated $\cR$-module. We can write $\cM$ as $\cM_2/\cM_1$ for some finitely generated free $\cR$-module $\cM_2$ and some submodule $\cM_1$ of $\cM_2$. Take an increasing net $\{\cM_j'\}_{j\in \cJ}$ of finitely generated submodules of $\cM_1$ with union $\cM_1$. We have determined $\rL(\cM_j'|\cM_2)$ already.
By the condition (iv), $\rL(\cM_1|\cM_2)=\lim_{j\to \infty} \rL(\cM_j'|\cM_2)$ is determined. By the conditions (ii) and (iii), we have
$\rL(\cM_1|\cM_2)+\rL(\cM_2/\cM_1)=\rL(\cM_2)<+\infty$, whence $\rL(\cM)=\rL(\cM_2/\cM_1)=\rL(\cM_2)-\rL(\cM_1|\cM_2)<+\infty$ is  determined.

Now we consider the case $\cM_1$ is a finitely generated submodule of some $\cR$-module $\cM_2$. Take an increasing net $\{\cM_j'\}_{j\in \cJ}$ of finitely generated submodules of $\cM_2$ containing $\cM_1$ with union $\cM_2$. Then both $\cM_j'$ and $\cM_j'/\cM_1$ are finitely generated, thus we have determined $\rL(\cM_j')$ and $\rL(\cM_j'/\cM_1)$ already and we know that $\rL(\cM_j'/\cM_1)<+\infty$. From the condition (ii) we get that
$\rL(\cM_1|\cM_j')=\rL(\cM_j')-\rL(\cM_j'/\cM_1)$ is determined. By the condition (v),
$\rL(\cM_1|\cM_2)=\lim_{j\to \infty}\rL(\cM_1|\cM_j')$ is determined.

Finally we consider arbitrary $\cR$-modules $\cM_1\subseteq \cM_2$.  Take an increasing net $\{\cM_j'\}_{j\in \cJ}$ of finitely generated submodules of $\cM_1$ with union $\cM_1$. We have determined $\rL(\cM_j'|\cM_2)$ already.
By the condition (iv), $\rL(\cM_1|\cM_2)=\lim_{j\to \infty} \rL(\cM_j'|\cM_2)$ is determined.
\end{proof}

\begin{lemma} \label{L-vrk basic}
The von Neumann-L\"{u}ck rank $\vrk(\cM_1|\cM_2)$ for $R\Gamma$-modules $\cM_1\subseteq \cM_2$ satisfies the hypotheses of Lemma~\ref{L-unique}.
\end{lemma}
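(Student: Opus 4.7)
The plan is to verify each of the five conditions (i)--(v) of Lemma~\ref{L-unique} for $\cR=R\Gamma$ and $\rL=\vrk$, drawing on Theorem~\ref{T-vdim} (that $\dim_{\cL\Gamma}$ is a length function) and the right exactness of $\cL\Gamma\otimes_{R\Gamma}(-)$.

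Condition (i) is immediate from the naturality of $\cL\Gamma\otimes_{R\Gamma}(-)$: an isomorphism $\varphi\colon \cM_2\to\cM_2'$ sending $\cM_1$ onto $\cM_1'$ induces a $\cL\Gamma$-module isomorphism $\cL\Gamma\otimes_{R\Gamma}\cM_2\to\cL\Gamma\otimes_{R\Gamma}\cM_2'$ carrying the image of $\cL\Gamma\otimes_{R\Gamma}\cM_1$ onto the image of $\cL\Gamma\otimes_{R\Gamma}\cM_1'$. Condition (iii) is likewise immediate: $\cL\Gamma\otimes_{R\Gamma}(R\Gamma)^n\cong(\cL\Gamma)^n$, whose von Neumann--L\"uck dimension equals $n$. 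For condition (ii), tensoring the short exact sequence $0\to\cM_1\to\cM_2\to\cM_2/\cM_1\to 0$ with $\cL\Gamma$ yields an exact sequence
\[
\cL\Gamma\otimes_{R\Gamma}\cM_1\xrightarrow{\phi}\cL\Gamma\otimes_{R\Gamma}\cM_2\to\cL\Gamma\otimes_{R\Gamma}(\cM_2/\cM_1)\to 0.
\]
Hence $(\cL\Gamma\otimes_{R\Gamma}\cM_2)/\phi(\cL\Gamma\otimes_{R\Gamma}\cM_1)\cong\cL\Gamma\otimes_{R\Gamma}(\cM_2/\cM_1)$, and applying additivity of the length function $\dim_{\cL\Gamma}$ gives $\vrk(\cM_2)=\vrk(\cM_1|\cM_2)+\vrk(\cM_2/\cM_1)$.

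For condition (iv), write $\phi\colon \cL\Gamma\otimes_{R\Gamma}\cM_1\to\cL\Gamma\otimes_{R\Gamma}\cM_2$ and $\phi_j'\colon \cL\Gamma\otimes_{R\Gamma}\cM_j'\to\cL\Gamma\otimes_{R\Gamma}\cM_2$ for the natural maps. Since $\cM_1=\bigcup_j\cM_j'$ and the tensor product commutes with directed colimits, the images $\phi_j'(\cL\Gamma\otimes_{R\Gamma}\cM_j')$ form an increasing net of $\cL\Gamma$-submodules of $\cL\Gamma\otimes_{R\Gamma}\cM_2$ with union $\phi(\cL\Gamma\otimes_{R\Gamma}\cM_1)$. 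Upper continuity of $\dim_{\cL\Gamma}$ (a consequence of Theorem~\ref{T-vdim}) then gives $\vrk(\cM_j'|\cM_2)\nearrow\vrk(\cM_1|\cM_2)$.

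The condition (v) is the most delicate step and will be the main obstacle, because the transition maps $\cL\Gamma\otimes_{R\Gamma}\cM_j'\to\cL\Gamma\otimes_{R\Gamma}\cM_2$ need not be injective. The plan is to reduce to a statement inside a fixed free $\cL\Gamma$-module using finite generation of $\cM_1$. Choose generators $x_1,\dots,x_m\in\cM_1$, so there is a surjection $(R\Gamma)^{1\times m}\twoheadrightarrow\cM_1$. For each $j$ with $\cM_1\subseteq\cM_j'$ this induces an $\cL\Gamma$-module map $\pi_j\colon(\cL\Gamma)^{1\times m}\to\cL\Gamma\otimes_{R\Gamma}\cM_j'$ whose image is the $\cL\Gamma$-submodule generated by the $1\otimes x_i$; passing to $\cM_2$ yields $\pi\colon(\cL\Gamma)^{1\times m}\to\cL\Gamma\otimes_{R\Gamma}\cM_2$ with image of dimension $\vrk(\cM_1|\cM_2)$. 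Now additivity of $\dim_{\cL\Gamma}$ gives $\vrk(\cM_1|\cM_j')=m-\dim_{\cL\Gamma}\ker\pi_j$ and $\vrk(\cM_1|\cM_2)=m-\dim_{\cL\Gamma}\ker\pi$. Clearly $\ker\pi_j\subseteq\ker\pi_{j'}\subseteq\ker\pi$ whenever $j\le j'$. The key claim is $\ker\pi=\bigcup_j\ker\pi_j$: if $w=(w_1,\dots,w_m)\in(\cL\Gamma)^{1\times m}$ satisfies $\sum_i w_i\otimes x_i=0$ in $\cL\Gamma\otimes_{R\Gamma}\cM_2$, this equality is witnessed by finitely many defining relations of the tensor product, which involve only finitely many elements of $\cM_2$; all these elements lie in some $\cM_j'$, and the same relations then give $\sum_i w_i\otimes x_i=0$ in $\cL\Gamma\otimes_{R\Gamma}\cM_j'$. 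Upper continuity of $\dim_{\cL\Gamma}$ then yields $\dim_{\cL\Gamma}\ker\pi_j\nearrow\dim_{\cL\Gamma}\ker\pi$, and therefore $\vrk(\cM_1|\cM_j')\searrow\vrk(\cM_1|\cM_2)$, completing the verification of (v).
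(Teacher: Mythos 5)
Your proof is correct and takes essentially the same route as the paper: conditions (i)--(iv) are verified exactly as in the paper via naturality, right exactness of $\cL\Gamma\otimes_{R\Gamma}(\cdot)$, and the length-function properties of $\dim_{\cL\Gamma}$. For (v) you use the same mechanism as the paper (a finite-dimensional source whose kernels increase to the full kernel, then additivity), merely routing it through the free cover $(\cL\Gamma)^{1\times m}$ and proving the directed-colimit compatibility of the tensor product by the finitely-many-relations argument, where the paper works with $\cL\Gamma\otimes_{R\Gamma}\cM_1$ itself and cites that the tensor functor preserves direct limits.
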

\begin{proof} (i). This is obvious.

(ii). Denote by $\varphi$ the natural map $\cL\Gamma\otimes_{R\Gamma}\cM_1\rightarrow \cL\Gamma\otimes_{R\Gamma}\cM_2$.
Since the tensor functor is right exact \cite[Proposition 19.13]{AF}, the sequence
$$ \cL\Gamma\otimes_{R\Gamma}\cM_1\rightarrow \cL\Gamma\otimes_{R\Gamma}\cM_2\rightarrow \cL\Gamma\otimes_{R\Gamma}(\cM_2/\cM_1)\rightarrow 0$$
of $\cL\Gamma$-modules is exact.
In other words, we have the short exact sequence
$$0\rightarrow \varphi(\cL\Gamma\otimes_{R\Gamma}\cM_1)\rightarrow \cL\Gamma\otimes_{R\Gamma}\cM_2\rightarrow \cL\Gamma\otimes_{R\Gamma}(\cM_2/\cM_1)\rightarrow 0$$
of $\cL\Gamma$-modules. Since $\dim_{\cL\Gamma}$ is a length function on $\cL\Gamma$-modules, we get
\begin{align*}
\vrk(\cM_2)&=\dim_{\cL\Gamma}(\cL\Gamma\otimes_{R\Gamma}\cM_2)\\
&=\dim_{\cL\Gamma}(\varphi(\cL\Gamma\otimes_{R\Gamma}\cM_1))+\dim_{\cL\Gamma}(\cL\Gamma\otimes_{R\Gamma}(\cM_2/\cM_1))\\
&=\vrk(\cM_1|\cM_2)+\vrk(\cM_2/\cM_1).
\end{align*}

(iii). By Theorem~\ref{T-vdim} we have $\dim_{\cL\Gamma}(\cL\Gamma)=1$, thus
$ \vrk((R\Gamma)^n)=\dim_{\cL\Gamma}((\cL\Gamma)^n)=n\dim_{\cL\Gamma}(\cL\Gamma)=n$.

(iv).
Denote by  $\varphi$ the natural map $\cL\Gamma\otimes_{R\Gamma}\cM_1\rightarrow \cL\Gamma\otimes_{R\Gamma}\cM_2$ and by $\varphi_j$ the natural map $\cL\Gamma\otimes_{R\Gamma}\cM_j'\rightarrow \cL\Gamma\otimes_{R\Gamma}\cM_2$ for each $j\in \cJ$. Then $\{\varphi_j(\cL\Gamma\otimes_{R\Gamma}\cM_j')\}_{j\in \cJ}$ is an increasing net of $\cL\Gamma$-submodules of
$\varphi(\cL\Gamma\otimes_{R\Gamma}\cM_1)$ with union $\varphi(\cL\Gamma\otimes_{R\Gamma}\cM_1)$. Since $\dim_{\cL\Gamma}$ is a length function on $\cL\Gamma$-modules, we get
\begin{align*}
\vrk(\cM_1|\cM_2)&=\dim_{\cL\Gamma}(\varphi(\cL\Gamma\otimes_{R\Gamma}\cM_1))\\
&=\lim_{j\to \infty}\dim_{\cL\Gamma}(\varphi_j(\cL\Gamma\otimes_{R\Gamma}\cM_j'))=\lim_{j\to \infty}\vrk(\cM_j'|\cM_2).
\end{align*}

(v).
 Note that $\cL\Gamma\otimes_{R\Gamma}\cM_1$ is a finitely generated $\cL\Gamma$-module, and hence $\dim_{\cL\Gamma}(\cL\Gamma\otimes_{R\Gamma}\cM_1)<+\infty$ by the condition (iii) and the fact that $\dim_{\cL\Gamma}$ is a length function on $\cL\Gamma$-modules.
Denote by  $\varphi$ the natural map $\cL\Gamma\otimes_{R\Gamma}\cM_1\rightarrow \cL\Gamma\otimes_{R\Gamma}\cM_2$ and by $\varphi_j$ the natural map $\cL\Gamma\otimes_{R\Gamma}\cM_1\rightarrow \cL\Gamma\otimes_{R\Gamma}\cM_j'$ for each $j\in \cJ$. Since $\cM_2$ is the direct limit $\varinjlim \cM_j'$ and the tensor functor $\cL\Gamma\otimes_{R\Gamma}\cdot$  preserves direct limits \cite[Theorem 6.159]{Rotman}, we have $\cL\Gamma\otimes_{R\Gamma}\cM_2=\varinjlim \cL\Gamma\otimes_{R\Gamma}\cM_j'$. It follows that
$\{\ker \varphi_j\}_{j\in \cJ}$ is an increasing net of $\cL\Gamma$-submodules of $\ker \varphi$ with union $\ker \varphi$.
Because $\dim_{\cL\Gamma}$ is a length function on $\cL\Gamma$-modules, we get
\begin{align*}
\vrk(\cM_1|\cM_2)&=\dim_{\cL\Gamma}(\im \varphi)\\
&=\dim_{\cL\Gamma}(\cL\Gamma\otimes_{R\Gamma}\cM_1)-\dim_{\cL\Gamma}(\ker \varphi)\\
&=\lim_{j\to \infty} (\dim_{\cL\Gamma}(\cL\Gamma\otimes_{R\Gamma}\cM_1)-\dim_{\cL\Gamma}(\ker \varphi_j))\\
&=\lim_{j\to \infty} \dim_{\cL\Gamma}(\im \varphi_j)\\
&=\lim_{j\to \infty}\vrk(\cM_1|\cM_j').
\end{align*}
\end{proof}

By Theorem~\ref{T-addition for mean length}, Lemma~\ref{L-mean length generating} and Proposition~\ref{P-continuity} we know that the mean rank $\mrk_{\Sigma, \omega}(\cM_1|\cM_2)$ for $R\Gamma$-modules $\cM_1\subseteq \cM_2$ satisfies the hypotheses of Lemma~\ref{L-unique}. Then Theorem~\ref{T-mrk vs vrk} follows from Lemmas~\ref{L-mrk vs vrk fp}, \ref{L-unique} and \ref{L-vrk basic}.

\section{Relative sofic mean topological dimension} \label{S-relative mdim}

In this section we introduce the relative mean topological dimension and establish some basic properties. Throughout the rest of this paper,
$\Gamma$ will be a countable sofic group, and $\Sigma=\{\sigma_i: \Gamma\rightarrow \Sym(d_i)\}_{i\in \Nb}$ will be a sofic approximation sequence for $\Gamma$.

We recall first the mean topological dimension for actions of sofic groups defined in \cite{Li}.

For a finite open cover $\cU$ of a compact metrizable space $Z$, set
$$ \ord(\cU)=\max_{z\in Z}\sum_{U\in \cU}1_U(z)-1, \mbox{ and } \cD(\cU)=\inf_{\cV}\ord(\cV)$$
for $\cV$ ranging over all finite open covers of $Z$ finer than $\cU$, i.e. every element of $\cV$ is contained in some element of $\cU$. Then the {\it covering dimension} of $Z$ is defined as $\sup_\cU\cD(\cU)$ for $\cU$ ranging over all finite open covers of $Z$.

Let $\Gamma$  act continuously on a compact metrizable space $X$.

\begin{definition} \label{D-map space}
Let $\rho$ be a continuous pseudometric on $X$. Let $F\in \cF(\Gamma)$ and $\delta>0$.
Let $\sigma$ be a map from $\Gamma$ to $\Sym(d)$ for some $d\in \Nb$.
We define continuous pseudometrics $\rho_2$ and $\rho_\infty$ on $X^d$ by
\begin{align*}
\rho_2(\varphi, \psi)&=\big(\frac{1}{d}\sum_{v\in [d]}\rho(\varphi_v, \psi_v)^2\big)^{1/2}, \\
\rho_\infty(\varphi, \psi)&=\max_{v\in [d]}\rho(\varphi_v, \psi_v).
\end{align*}
We define $\Map(\rho, F, \delta, \sigma)$ to be the set of all maps $\varphi: [d]\rightarrow X$ satisfying
$\rho_2(s\varphi, \varphi s)\le \delta$ for all $s\in F$.
\end{definition}

We remark that in Definition~\ref{D-map space} we treat $\sigma$ as an approximate action of $\Gamma$ on $[d]$,  and then $\Map(\rho, F, \delta, \sigma)$ is intuitively the set of all approximately equivariant maps $[d]\rightarrow X$.

A continuous pseudometric $\rho$ on $X$ is called {\it dynamically generating} if for any distinct $x_1, x_2\in X$ one has $\sup_{s\in \Gamma}\rho(sx_1, sx_2)>0$.

The following lemma is \cite[Lemma 2.3]{KL13b}. It essentially says that  the space $\Map(\rho, F, \delta, \sigma)$ does not depend on the choice of the dynamically generating continuous pseudometric $\rho$.

\begin{lemma} \label{L-generating metric}
Let $\rho$ and $\rho'$ be continuous pseudometrics on $X$ such that $\rho'$ is dynamically generating. For any $F\in \cF(\Gamma)$ and any $\delta>0$, there exist $F'\in \cF(\Gamma)$ and $\delta'>0$ such that  for any sufficiently good sofic approximation $\sigma: \Gamma\rightarrow \Sym(d)$ one has $\Map(\rho', F', \delta', \sigma)\subseteq \Map(\rho, F, \delta, \sigma)$.
\end{lemma}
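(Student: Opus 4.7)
The plan is to exploit the dynamically generating hypothesis to dominate $\rho$ by finitely many translates of $\rho'$, and then transfer this pointwise domination to the $\rho_2$-average via Markov's inequality and soficness.

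First I would establish the following compactness lemma: for every $\varepsilon>0$ there exist a finite $K\subseteq\Gamma$ and $\eta>0$ such that for all $x,y\in X$, if $\max_{s\in K}\rho'(sx,sy)<\eta$ then $\rho(x,y)<\varepsilon$. If this failed, one could extract, by enumerating $\Gamma$ and using compactness of $X$, sequences $x_n\to x$, $y_n\to y$ with $\rho(x,y)\ge\varepsilon$ yet $\rho'(sx,sy)=0$ for every $s\in\Gamma$, contradicting the dynamically generating property of $\rho'$. Let $M$ and $M'$ be upper bounds for $\rho$ and $\rho'$ on $X\times X$ (finite by continuity on the compact space).

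Now, given $F\in\cF(\Gamma)$ and $\delta>0$, I would fix $\varepsilon=\delta/\sqrt{2}$, obtain $K,\eta$ from the lemma, and set $F'=K\cup KF\cup F$. Choose $\delta'>0$ and a tolerance $\alpha>0$ (both to be calibrated at the end), and then require $\sigma:\Gamma\to\Sym(d)$ to be a good enough sofic approximation that $|\{v\in[d]:\sigma_{st}v=\sigma_s\sigma_tv\}|\ge(1-\alpha)d$ for all $s\in K$ and $t\in F$. For any $\varphi\in\Map(\rho',F',\delta',\sigma)$ and any $s\in K$, $t\in F$, two applications of the defining inequality (once with $st\in F'$, once with $s\in F'$ after reindexing the sum by the permutation $\sigma_t$) together with the triangle inequality, the bound $(a+b)^2\le 2a^2+2b^2$, and the diameter estimate $(M')^2$ on the sofic-defect set, yield
\begin{equation*}
\frac{1}{d}\sum_{v\in[d]}\rho'\bigl(st\varphi(v),\,s\varphi(tv)\bigr)^{2}\;\le\;4(\delta')^{2}+(M')^{2}\alpha.
\end{equation*}

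By Markov's inequality, the set $B\subseteq[d]$ of those $v$ for which $\max_{s\in K}\rho'(st\varphi(v),s\varphi(tv))\ge\eta$ for some $t\in F$ has cardinality at most $|K|\,|F|\,(4(\delta')^{2}+(M')^{2}\alpha)/\eta^{2}\cdot d$. For $v\notin B$ and every $t\in F$, the compactness lemma applied to the pair $(t\varphi(v),\varphi(tv))$ yields $\rho(t\varphi(v),\varphi(tv))<\varepsilon$, while for $v\in B$ I use the crude bound $M$. Summing squares gives $\rho_{2}(t\varphi,\varphi t)^{2}\le\varepsilon^{2}+M^{2}|B|/d$, which is $<\delta^{2}$ once $\delta'$ and $\alpha$ are chosen so that $|K|\,|F|\,M^{2}(4(\delta')^{2}+(M')^{2}\alpha)/\eta^{2}<\delta^{2}/2$. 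The only delicate step is the bookkeeping in the triangle inequality: one must simultaneously replace $\sigma_{st}v$ by $\sigma_{s}\sigma_{t}v$ on the good set and absorb the loss on its complement using soficness; the rest is a clean averaging-and-Markov computation.
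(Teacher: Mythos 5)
Your proof is correct, and it is essentially the standard argument for this fact: the paper itself gives no proof, citing Lemma 2.3 of Kerr--Li, and your route (the compactness claim turning the dynamically generating hypothesis into a finite set $K$ and threshold $\eta$, followed by approximate multiplicativity of $\sigma$, the $(a+b)^2\le 2a^2+2b^2$ estimate, and a Chebyshev/Markov count of bad indices) is the same one used there. The only slip is a harmless constant: the defect term should be $2(M')^2\alpha$ rather than $(M')^2\alpha$, which does not affect the final calibration of $\delta'$ and $\alpha$.
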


Note that $\Map(\rho, F, \delta, \sigma)$ is a closed subset of $X^{d}$. For a finite open cover $\cU$ of $X$, denote by $\cU^d$ the open cover of $X^d$ consisting of $\prod_{v\in [d]}U_v$, where $U_v\in \cU$ for each $v\in [d]$.
Consider the restriction $\cU^d|_{\Map(\rho, F, \delta, \sigma)}:=\{U\cap \Map(\rho, F, \delta, \sigma): U\in \cU^d\}$ of $\cU^d$ to $\Map(\rho, F, \delta, \sigma)$.
Denote $\cD(\cU^d|_{\Map(\rho, F, \delta, \sigma)})$
by $\cD(\cU, \rho, F, \delta, \sigma)$.

\begin{definition} \label{D-mdim}
Let $\rho$ be a dynamically generating continuous pseudometric on $X$. Let $F\in \cF(\Gamma)$ and $\delta>0$. For a finite open cover $\cU$ of $X$ we define
\begin{align*}
\mdim_{\Sigma, \omega}(\cU, \rho, F, \delta)&=\lim_{i\to \omega}\frac{\cD(\cU, \rho, F, \delta, \sigma_i)}{d_i}, \\
\mdim_{\Sigma, \omega}(\cU, \rho)&=\inf_{F\in \cF(\Gamma)}\inf_{\delta>0}\mdim_{\Sigma, \omega}(\cU, \rho, F, \delta).
\end{align*}
If the set of $i\in \Nb$ with $\Map(\rho, F, \delta, \sigma_i)=\emptyset$ is in $\omega$, we set $\mdim_{\Sigma, \omega}(\cU, \rho, F, \delta)=-\infty$. The {\it sofic mean topological dimension of $\Gamma\curvearrowright X$} is defined as
$$ \mdim_{\Sigma, \omega}(X, \rho):=\sup_\cU \mdim_{\Sigma, \omega}(\cU, \rho)$$
for $\cU$ ranging over all finite open covers  of $X$. By Lemma~\ref{L-generating metric} the quantities $\mdim_{\Sigma, \omega}(\cU, \rho)$ and $\mdim_{\Sigma, \omega}(X, \rho)$ do not depend on the choice of $\rho$, and we shall write them as $\mdim_{\Sigma, \omega}(\cU)$ and $\mdim_{\Sigma, \omega}(X)$ respectively.
\end{definition}

\begin{remark} \label{R-ultrafilter}
The definition of sofic mean topological dimension in \cite[Definition 2.4]{Li} uses  $\varlimsup_{i\to \infty}$ instead of $\lim_{i\to \omega}$, and the resulting quantities will be denoted by $ \mdim_{\Sigma}(\cU, \rho, F, \delta), \mdim_{\Sigma}(\cU)$ and $\mdim_{\Sigma}(X)$ respectively. In this paper we use $\lim_{i\to \omega}$ instead of $\varlimsup_{i\to \infty}$ for three reasons. The first is all the results known for using $\varlimsup_{i\to \infty}$ hold for using $\lim_{i\to \omega}$ with the same proof.
The second is
that $\lim_{i\to \omega}$ is needed in Definition~\ref{D-mean length} in order to obtain the addition formula in Theorem~\ref{T-addition for mean length}, and then correspondingly we must use $\lim_{i\to \omega}$  in Definition~\ref{D-mdim}. The third is that in fact one can recover $\mdim_{\Sigma}(X)$ from  $\mdim_{\Sigma, \omega}(X)$ for all free ultrafilters $\omega$ on $\Nb$, as Proposition~\ref{P-ultrafilter} below shows. In particular, if for some action $\Gamma\curvearrowright X$, $\mdim_{\Sigma, \omega}(X)$ takes the same value for all $\omega$, then so does $\mdim_{\Sigma}(X)$.
\end{remark}

\begin{proposition} \label{P-ultrafilter}
For any continuous action of $\Gamma$ on a compact metrizable space $X$, one has $\mdim_\Sigma(X)=\sup_\omega\mdim_{\Sigma, \omega}(X)$ for $\omega$ ranging over all free ultrafilters on $\Nb$.
\end{proposition}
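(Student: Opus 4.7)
The plan is to establish the two inequalities $\sup_\omega \mdim_{\Sigma, \omega}(X) \le \mdim_\Sigma(X)$ and $\mdim_\Sigma(X) \le \sup_\omega \mdim_{\Sigma, \omega}(X)$ separately. The first one is immediate: for every free ultrafilter $\omega$ on $\Nb$ and every sequence $(a_i)$ in $[0, +\infty]$ one has $\lim_{i \to \omega} a_i \le \varlimsup_{i \to \infty} a_i$, and this inequality passes through the $\inf_{F,\delta}$ and $\sup_\cU$ in the definitions (the $-\infty$ convention only making the $\omega$-version smaller), giving $\mdim_{\Sigma, \omega}(X) \le \mdim_\Sigma(X)$. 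For the reverse inequality, I will show that for each finite open cover $\cU$ of $X$ there exists a free ultrafilter $\omega$ (depending on $\cU$) with $\mdim_{\Sigma, \omega}(\cU) \ge \mdim_\Sigma(\cU)$; this suffices because
$$\sup_\omega \mdim_{\Sigma, \omega}(X) = \sup_\cU \sup_\omega \mdim_{\Sigma, \omega}(\cU) \ge \sup_\cU \mdim_\Sigma(\cU) = \mdim_\Sigma(X).$$

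To build such an $\omega$, fix a compatible dynamically generating continuous pseudometric $\rho$ on $X$ and observe the key structural monotonicity: if $F \subseteq F'$ and $\delta' \le \delta$ then $\Map(\rho, F', \delta', \sigma) \subseteq \Map(\rho, F, \delta, \sigma)$, so restricting any finite open refinement of $\cU^d|_{\Map(\rho, F, \delta, \sigma)}$ to the smaller set produces a refinement of $\cU^d|_{\Map(\rho, F', \delta', \sigma)}$ with no larger $\ord$, and hence $\cD(\cU, \rho, F', \delta', \sigma) \le \cD(\cU, \rho, F, \delta, \sigma)$. Using the countability of $\Gamma$, I will choose a cofinal increasing sequence $F_1 \subseteq F_2 \subseteq \cdots$ in $\cF(\Gamma)$ with $\bigcup_n F_n = \Gamma$ and set $\delta_n = 1/n$. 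Writing $g_n(i) := \cD(\cU, \rho, F_n, \delta_n, \sigma_i)/d_i$, the monotonicity yields
$$\mdim_\Sigma(\cU) = \inf_n \varlimsup_{i \to \infty} g_n(i), \qquad \mdim_{\Sigma, \omega}(\cU) = \inf_n \lim_{i \to \omega} g_n(i).$$

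Set $a_n := \varlimsup_{i \to \infty} g_n(i)$ and $c := \inf_n a_n$. Assuming $c > -\infty$ (otherwise $\mdim_\Sigma(X) = -\infty$ and the identity follows from the first direction), a diagonal argument selects indices $i_1 < i_2 < \cdots$ in $\Nb$ with $g_n(i_n) > a_n - 1/n$ (replacing this bound by $n$ if $a_n = +\infty$), so that for every $m \ge n$ monotonicity gives $g_n(i_m) \ge g_m(i_m) > c - 1/m$, and in particular $\Map(\rho, F_n, \delta_n, \sigma_{i_m}) \ne \emptyset$. I then take $\omega$ to be any free ultrafilter on $\Nb$ extending the filter base of tails $\{i_m : m \ge N\}$, $N \ge 1$; since $\{i_m : m \ge N\}$ lies in $\omega$ and on it $g_n \ge c - 1/N$ for $N \ge n$, we obtain $\lim_{i \to \omega} g_n(i) \ge c$, while trivially $\lim_{i \to \omega} g_n(i) \le a_n$. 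Taking $\inf$ over $n$ gives $\mdim_{\Sigma, \omega}(\cU) = c = \mdim_\Sigma(\cU)$, as required. I expect the only genuine nuance to be the bookkeeping around the $-\infty$ convention for empty $\Map$; it is handled by the observation that $\Map(\rho, F_n, \delta_n, \sigma_{i_m})$ is nonempty along the ultrafilter, so that the $-\infty$ convention does not activate on any set in $\omega$.
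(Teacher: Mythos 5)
Your argument is correct and is essentially the paper's own proof: the easy direction via $\lim_{i\to\omega}\le\varlimsup_{i\to\infty}$, and for the reverse, fixing a cover $\cU$, choosing a cofinal sequence $(F_n,\delta_n)$, extracting a diagonal subsequence $\{i_m\}$ realizing the values $\cD(\cU,\rho,F_n,\delta_n,\sigma_{i_m})/d_{i_m}$ up to $1/m$, and taking a free ultrafilter containing its tails. Your added bookkeeping (monotonicity in $(F,\delta)$, the cases $c=\pm\infty$, nonemptiness of the map spaces along $\omega$) just makes explicit what the paper leaves implicit.
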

\begin{proof}
For any free ultrafilter $\omega$ on $\Nb$, clearly one has $\mdim_{\Sigma}(\cU)\ge \mdim_{\Sigma, \omega}(\cU)$ for every finite open cover $\cU$ of $X$, and hence $\mdim_{\Sigma}(X)\ge \mdim_{\Sigma, \omega}(X)$.

To show $\mdim_{\Sigma}(X)\le \sup_\omega \mdim_{\Sigma, \omega}(X)$, it suffices to show that
for any  finite open cover $\cU$ of $X$,  there exists a free ultrafilter $\omega$ on $\Nb$ with $\mdim_{\Sigma}(\cU)=\mdim_{\Sigma, \omega}(\cU)$.
Take an increasing sequence $\{F_n\}_{n\in \Nb}$ in $\cF(\Gamma)$ with union $\Gamma$ and a decreasing sequence $\{\delta_n\}_{n\in \Nb}$ of positive numbers converging to $0$.
We can find a strictly increasing sequence $\{i_n\}_{n\in \Nb}$ of positive integers such that $\frac{\cD(\cU, \rho, F_n, \delta_n, \sigma_{i_n})}{d_{i_n}}\ge \mdim_\Sigma(\cU)-\frac{1}{n}$ for all $n\in \Nb$.

For any infinite subset $W$ of $\Nb$, the set $\fF$ consisting of all subsets $V$ of $\Nb$ satisfying $|W\setminus V|<+\infty$ is a filter over $\Nb$. By Zorn's lemma we can find  a maximal filter $\omega$ over $\Nb$ containing $\fF$. Then $\omega$ is a free ultrafilter on $\Nb$ with $W\in \omega$.

In particular, we can find a free ultrafilter $\omega$ on $\Nb$ with $\{i_n: n\in \Nb\}\in \omega$. Then $\mdim_{\Sigma, \omega}(\cU, \rho, F_n, \delta_n)\ge \mdim_\Sigma(\cU)-\frac{1}{n}$ for all $n\in \Nb$. It follows that $\mdim_{\Sigma, \omega}(\cU)\ge \mdim_\Sigma(\cU)$, and hence $\mdim_{\Sigma, \omega}(\cU)=\mdim_\Sigma(\cU)$.
\end{proof}

Now we define the sofic mean topological dimension of an action relative to an extension. Let $\Gamma$ act on compact metrizable spaces $X$ and $Y$ respectively, and let $\pi: X\rightarrow Y$ be a $\Gamma$-equivariant continuous surjective map. Then $\pi$ is called a {\it factor map}, $\Gamma \curvearrowright Y$ is called a {\it factor} of $\Gamma\curvearrowright X$, and $\Gamma\curvearrowright X$ is called an {\it extension} of $\Gamma\curvearrowright Y$.

Let $\rho$ be a continuous pseudometric on $X$. Let $F\in \cF(\Gamma)$ and $\delta>0$.
Let $\sigma$ be a map from $\Gamma$ to $\Sym(d)$ for some $d\in \Nb$.
Denote by $\Map(\pi, \rho, F, \delta, \sigma)$ the set of all $\pi\circ \varphi$ for $\varphi$ ranging in $\Map(\rho, F, \delta, \sigma)$.

Note that $\Map(\pi, \rho, F, \delta, \sigma)$ is a closed subset of $Y^{d}$. For a finite open cover $\cU$ of $Y$,
consider the restriction $\cU^d|_{\Map(\pi, \rho, F, \delta, \sigma)}:=\{U\cap \Map(\pi, \rho, F, \delta, \sigma): U\in \cU^d\}$ of $\cU^d$ to $\Map(\pi, \rho, F, \delta, \sigma)$.
Denote $\cD(\cU^d|_{\Map(\pi, \rho, F, \delta, \sigma)})$
by $\cD(\pi, \cU, \rho, F, \delta, \sigma)$.

\begin{definition} \label{D-relative mdim}
Let $\rho$ be a dynamically generating continuous pseudometric on $X$. Let $F\in \cF(\Gamma)$ and $\delta>0$. For a finite open cover $\cU$ of $Y$ we define
\begin{align*}
\mdim_{\Sigma, \omega}(\pi, \cU, \rho, F, \delta)&=\lim_{i\to \omega}\frac{\cD(\pi, \cU, \rho, F, \delta, \sigma_i)}{d_i}, \\
\mdim_{\Sigma, \omega}(\pi, \cU, \rho)&=\inf_{F\in \cF(\Gamma)}\inf_{\delta>0}\mdim_{\Sigma, \omega}(\pi, \cU, \rho, F, \delta).
\end{align*}
If the set of $i\in \Nb$ with $\Map(\rho, F, \delta, \sigma_i)=\emptyset$ is in $\omega$, we set $\mdim_{\Sigma, \omega}(\pi, \cU, \rho, F, \delta)=-\infty$. We define the {\it sofic mean topological dimension of $\Gamma\curvearrowright Y$ relative to the extension $\Gamma\curvearrowright X$} as
$$ \mdim_{\Sigma, \omega}(Y|X, \rho):=\sup_\cU \mdim_{\Sigma, \omega}(\pi, \cU, \rho)$$
for $\cU$ ranging over all finite open covers  of $Y$.
By Lemma~\ref{L-generating metric} the quantities $\mdim_{\Sigma, \omega}(\pi, \cU, \rho)$ and $\mdim_{\Sigma, \omega}(Y|X, \rho)$ do not depend on the choice of $\rho$, and we shall write them as $\mdim_{\Sigma, \omega}(\cU|X)$ and $\mdim_{\Sigma, \omega}(Y|X)$ respectively.
We also define the {\it lower sofic mean topological dimension of $\Gamma\curvearrowright Y$ relative to the extension $\Gamma\curvearrowright X$} as
$$ \underline{\mdim}_{\Sigma, \omega}(Y|X):=\sup_\cU \mdim_{\Sigma, \omega}(\pi^{-1}(\cU))$$
for $\cU$ ranging over all finite open covers  of $Y$.
\end{definition}

Clearly when $\pi$ is a homeomorphism, one has
$$\underline{\mdim}_{\Sigma, \omega}(Y|X)=\mdim_{\Sigma, \omega}(Y|X)=\mdim_{\Sigma, \omega}(Y).$$
In general one has the following obvious proposition.

\begin{proposition} \label{P-compare with absolute}
One has $\underline{\mdim}_{\Sigma, \omega}(Y|X)\le \mdim_{\Sigma, \omega}(Y|X)\le \mdim_{\Sigma, \omega}(Y)$ and $\underline{\mdim}_{\Sigma, \omega}(Y|X)\le \mdim_{\Sigma, \omega}(X)$.
\end{proposition}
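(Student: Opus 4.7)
The three inequalities are all essentially definitional, obtained by comparing the cover-dimension quantities $\cD(\cdot)$ that enter the three mean dimensions. I will handle them separately, starting with the easiest.

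The rightmost inequality $\underline{\mdim}_{\Sigma,\omega}(Y|X)\le \mdim_{\Sigma,\omega}(X)$ is immediate from the definition: for any finite open cover $\cU$ of $Y$, the pullback $\pi^{-1}(\cU):=\{\pi^{-1}(U):U\in\cU\}$ is a finite open cover of $X$, so $\mdim_{\Sigma,\omega}(\pi^{-1}(\cU))\le \mdim_{\Sigma,\omega}(X)$, and taking the supremum over $\cU$ finishes this case.

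For $\underline{\mdim}_{\Sigma,\omega}(Y|X)\le\mdim_{\Sigma,\omega}(Y|X)$, I would fix a finite open cover $\cU$ of $Y$ and a dynamically generating continuous pseudometric $\rho$ on $X$ and compare at the level of refinements. The key observation is that the equivariant map $\pi^d$ restricts to a continuous surjection $\Map(\rho,F,\delta,\sigma)\to\Map(\pi,\rho,F,\delta,\sigma)$. Given any finite open refinement $\cV$ of $\cU^d|_{\Map(\pi,\rho,F,\delta,\sigma)}$, I extend each $V\in\cV$ to an open subset $\tilde V$ of $Y^d$ lying inside some element $\prod_v U_v$ of $\cU^d$ and pull it back through $\pi^d$; this produces a finite open cover $\{(\pi^d)^{-1}(\tilde V)\cap\Map(\rho,F,\delta,\sigma):V\in\cV\}$ of $\Map(\rho,F,\delta,\sigma)$ refining $(\pi^{-1}\cU)^d|_{\Map(\rho,F,\delta,\sigma)}$. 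A pointwise check shows that the multiplicity of the pulled-back cover at $x$ equals the multiplicity of $\cV$ at $\pi^d(x)$, so its order is at most $\ord(\cV)$. Taking infima, then $\omega$-limits, then infima over $F,\delta$, and finally the supremum over $\cU$ yields the inequality.

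For the middle inequality $\mdim_{\Sigma,\omega}(Y|X)\le\mdim_{\Sigma,\omega}(Y)$, I would exploit the freedom in the choice of dynamically generating pseudometric on $X$. Fix dynamically generating continuous pseudometrics $\rho_X$ on $X$ and $\rho_Y$ on $Y$ and set $\rho:=\rho_X+\rho_Y\circ(\pi\times\pi)$; this is still dynamically generating on $X$ and dominates $\rho_Y\circ(\pi\times\pi)$ pointwise. The $\Gamma$-equivariance of $\pi$ then gives $\rho_2(s\varphi,\varphi s)\ge (\rho_Y)_2(s(\pi\circ\varphi),(\pi\circ\varphi)s)$ for every $\varphi\in X^d$ and $s\in\Gamma$, so that $\Map(\pi,\rho,F,\delta,\sigma)\subseteq\Map(\rho_Y,F,\delta,\sigma)$. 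Since restricting a fixed cover to a smaller subset only decreases its covering dimension, this yields $\cD(\pi,\cU,\rho,F,\delta,\sigma)\le\cD(\cU,\rho_Y,F,\delta,\sigma)$, and passing to $\omega$-limits, infima over $F,\delta$, and suprema over $\cU$ gives the inequality. No real obstacle appears; the whole proposition is bookkeeping with covers, restrictions and pull-backs under $\pi^d$, the only slightly delicate point being the construction of $\rho$ in the middle inequality.
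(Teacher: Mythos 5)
Your proof is correct and is exactly the definitional bookkeeping the paper has in mind: the paper states this proposition without proof (calling it obvious), and your three steps — sup over pulled-back covers for the last inequality, pulling covers back through the surjection $\pi^d:\Map(\rho,F,\delta,\sigma)\to\Map(\pi,\rho,F,\delta,\sigma)$ for the first, and choosing the dynamically generating pseudometric $\rho_X+\rho_Y\circ(\pi\times\pi)$ on $X$ so that $\Map(\pi,\rho,F,\delta,\sigma)\subseteq\Map(\rho_Y,F,\delta,\sigma)$ for the middle one — are the intended argument, with the metric-independence guaranteed by Lemma~\ref{L-generating metric} justifying your choice of $\rho$.
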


Now we discuss the behaviour of the relative mean topological dimension under taking inverse limits.
For a sequence of topological spaces $\{Y_j\}_{j\in \Nb}$ with continuous maps $p_j: Y_{j+1}\rightarrow Y_j$ for all $j\in \Nb$, the inverse limit $\varprojlim_{j\to \infty} Y_j$ is defined as the subspace of $\prod_{j\in \Nb} Y_j$ consisting of all elements $(y_j)_{j\in \Nb}$ satisfying $p_j(y_{j+1})=y_j$ for all $j\in \Nb$.

\begin{proposition} \label{P-topology inverse limit}
Let $\{Y_j\}_{j\in \Nb}$ be a sequence of compact metrizable spaces carrying continuous $\Gamma$-actions and factor maps $p_j: Y_{j+1}\rightarrow Y_j$ for all $j\in \Nb$.
Set $Y=\varprojlim_{j\to \infty}Y_j$.
Let $X$ be a compact metrizable space carrying a continuous $\Gamma$-action and $\pi: X\rightarrow Y$ be a factor map. Consider the factor map $X\rightarrow Y\rightarrow Y_j$.
Then
$$\mdim_{\Sigma, \omega}(Y|X)\le \varlimsup_{j\to \infty}\mdim_{\Sigma, \omega}(Y_j|X),$$
and
$$ \underline{\mdim}_{\Sigma, \omega}(Y|X)=\lim_{j\to \infty}\underline{\mdim}_{\Sigma, \omega}(Y_j|X)=\sup_{j\in \Nb}\underline{\mdim}_{\Sigma, \omega}(Y_j|X).$$
\end{proposition}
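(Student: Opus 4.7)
The plan is to reduce both assertions to a single inverse-limit refinement lemma: for every finite open cover $\cU$ of $Y$ there exist $j_0\in\Nb$ and a finite open cover $\cV$ of $Y_{j_0}$ with $q_{j_0}^{-1}(\cV)\prec\cU$, where $q_j:Y\to Y_j$ is the canonical projection and $\prec$ denotes refinement. This is standard: basic opens of $Y$ can be written as $q_k^{-1}(V)$ (after merging indices via the transition maps $p_{l,k}:Y_k\to Y_l$ for $l\le k$), so by compactness of $Y$ the cover $\cU$ admits a finite open refinement of the form $\{q_{k_m}^{-1}(V_m)\}_{m=1}^n$; taking $j_0=\max_m k_m$, lifting each $V_m$ to $W_m:=p_{k_m,j_0}^{-1}(V_m)\subseteq Y_{j_0}$, and adjoining $W_0:=Y_{j_0}\setminus q_{j_0}(Y)$ (which is open because $q_{j_0}(Y)$ is compact, hence closed) produces the required $\cV=\{W_0,W_1,\dots,W_n\}$.

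For the first assertion, fix $\cU$ and $(j_0,\cV)$ as above and, for each $j\ge j_0$, set $\cV_j:=p_{j_0,j}^{-1}(\cV)$, a finite open cover of $Y_j$ with $q_j^{-1}(\cV_j)=q_{j_0}^{-1}(\cV)\prec\cU$. Writing $\pi_j:=q_j\circ\pi:X\to Y_j$ one has the identity
$$\Map(\pi_j,\rho,F,\delta,\sigma)=q_j^{\times d}(\Map(\pi,\rho,F,\delta,\sigma)),$$
so that $(q_j^{-1}(\cV_j))^d$ restricted to $\Map(\pi,\rho,F,\delta,\sigma)$ is exactly the pullback under $q_j^{\times d}$ of $\cV_j^d|_{\Map(\pi_j,\rho,F,\delta,\sigma)}$. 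Combining the containment $(q_j^{-1}(\cV_j))^d\prec\cU^d$ with the general inequality $\cD(f^{-1}(\cW))\le\cD(\cW)$ for a continuous surjection $f$ (which follows from $\ord(f^{-1}(\cV'))\le\ord(\cV')$ applied to each refinement $\cV'$ of $\cW$) gives
$$\cD(\pi,\cU,\rho,F,\delta,\sigma)\le\cD(\pi_j,\cV_j,\rho,F,\delta,\sigma).$$
Passing to $\lim_{i\to\omega}(\cdot)/d_i$ and then to the infimum over $(F,\delta)$ yields $\mdim_{\Sigma,\omega}(\cU|X)\le\mdim_{\Sigma,\omega}(Y_j|X)$ for every $j\ge j_0$, hence $\mdim_{\Sigma,\omega}(\cU|X)\le\varliminf_j\mdim_{\Sigma,\omega}(Y_j|X)$; taking the supremum over $\cU$ gives $\mdim_{\Sigma,\omega}(Y|X)\le\varliminf_j\mdim_{\Sigma,\omega}(Y_j|X)\le\varlimsup_j\mdim_{\Sigma,\omega}(Y_j|X)$ (in fact a slightly stronger statement than claimed).

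For the second assertion, monotonicity of $\underline{\mdim}_{\Sigma,\omega}(Y_j|X)$ in $j$, hence the equality $\lim_j=\sup_j$, follows by pulling a cover $\cU$ of $Y_j$ back to $p_j^{-1}(\cU)$ on $Y_{j+1}$ and using $\pi_{j+1}^{-1}(p_j^{-1}(\cU))=\pi_j^{-1}(\cU)$. The same identity with $q_j$ in place of $p_j$ gives $\underline{\mdim}_{\Sigma,\omega}(Y_j|X)\le\underline{\mdim}_{\Sigma,\omega}(Y|X)$: any cover $\cU$ of $Y_j$ lifts to $q_j^{-1}(\cU)$ on $Y$ with $\pi^{-1}(q_j^{-1}(\cU))=\pi_j^{-1}(\cU)$. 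For the reverse inequality, apply the refinement lemma to a cover $\cU$ of $Y$: one obtains $j_0,\cV$ with $q_{j_0}^{-1}(\cV)\prec\cU$, whence $\pi_{j_0}^{-1}(\cV)=\pi^{-1}(q_{j_0}^{-1}(\cV))\prec\pi^{-1}(\cU)$, so
$$\mdim_{\Sigma,\omega}(\pi^{-1}(\cU))\le\mdim_{\Sigma,\omega}(\pi_{j_0}^{-1}(\cV))\le\underline{\mdim}_{\Sigma,\omega}(Y_{j_0}|X)\le\sup_j\underline{\mdim}_{\Sigma,\omega}(Y_j|X),$$
and a supremum over $\cU$ concludes. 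The only delicate points are the refinement lemma and the clean identification of $\Map(\pi_j,\rho,F,\delta,\sigma)$ as the componentwise image of $\Map(\pi,\rho,F,\delta,\sigma)$; everything else is a bookkeeping exercise combining pullback monotonicity of covering dimension with $\lim_{i\to\omega}$ and infima over $(F,\delta)$.
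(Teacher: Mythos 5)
Your proposal is correct and follows essentially the same route as the paper: refine any finite open cover of $Y=\varprojlim Y_j$ by the pullback $q_{j}^{-1}(\cV)$ of a cover of some $Y_{j}$, then use monotonicity of $\cD$ under refinement together with the identification $\Map(\pi_j,\rho,F,\delta,\sigma)=q_j^{\times d}(\Map(\pi,\rho,F,\delta,\sigma))$ and pullback monotonicity, plus the obvious monotonicity of $\underline{\mdim}_{\Sigma,\omega}(Y_j|X)$ in $j$. You merely spell out the steps the paper labels ``clearly'' (the refinement lemma and the inequality $\mdim_{\Sigma,\omega}(q_j^{-1}(\cV)|X)\le\mdim_{\Sigma,\omega}(\cV|X)$), and you observe the slightly stronger $\varliminf$ bound, which is consistent with the paper's argument.
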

\begin{proof} Clearly $\underline{\mdim}_{\Sigma, \omega}(Y_j|X)$ increases with $j$ and $\underline{\mdim}_{\Sigma, \omega}(Y|X)\ge \lim_{j\to \infty}\underline{\mdim}_{\Sigma, \omega}(Y_j|X)$.

Denote by $q_j$ the factor map $Y\rightarrow Y_j$, and set $\pi_j:=q_j\circ \pi$.
For any finite open cover $\cU_j$ of $Y_j$, clearly one has
$$ \mdim_{\Sigma, \omega}(\pi_j^{-1}(\cU_j))\le \mdim_{\Sigma, \omega}(q_j^{-1}(\cU_j)|X)\le \mdim_{\Sigma, \omega}(\cU_j|X).$$
For any finite open cover $\cU$ of $Y$ and any $N\in \Nb$, there exists a finite open cover $\cU_j$ of $Y_j$ for some $j\ge N$ such that
$q_j^{-1}(\cU_j)$ refines $\cU$, whence
$$ \mdim_{\Sigma, \omega}(\cU|X)\le \mdim_{\Sigma, \omega}(q_j^{-1}(\cU_j)|X)\le \mdim_{\Sigma, \omega}(\cU_j|X)\le\mdim_{\Sigma, \omega}(Y_j|X),$$
and
$$\mdim_{\Sigma, \omega}(\pi^{-1}(\cU))\le \mdim_{\Sigma, \omega}(\pi_j^{-1}(\cU_j))\le \underline{\mdim}_{\Sigma, \omega}(Y_j|X)\le \lim_{k\to \infty}\underline{\mdim}_{\Sigma, \omega}(Y_k|X).$$
It follows that $\mdim_{\Sigma, \omega}(Y|X)\le \varlimsup_{k\to \infty}\mdim_{\Sigma, \omega}(Y_k|X)$ and  $\underline{\mdim}_{\Sigma, \omega}(Y|X)\le \lim_{k\to \infty}\underline{\mdim}_{\Sigma, \omega}(Y_k|X)$. Therefore $\underline{\mdim}_{\Sigma, \omega}(Y|X)=\lim_{k\to \infty}\underline{\mdim}_{\Sigma, \omega}(Y_k|X)$.
\end{proof}

Next we discuss the relation between the small-boundary property (SBP) and zero mean topological dimension. The notion of the SBP was introduced in \cite{Lindenstrauss, LW}.
We extend the formulation of the SBP in \cite[Definition 8.1]{Li} to relative situation.

\begin{definition} \label{D-SBP}
Denote by $M(X, \Gamma)$ the set of $\Gamma$-invariant Borel probability measures on $X$. We say that a closed subset $Z$ of $Y$ is {\it small relative to $\Gamma\curvearrowright X$} if $\pi_*\mu(Z)=0$ for every $\mu\in M(X, \Gamma)$. We say that $\Gamma\curvearrowright Y$ has the {\it small-boundary property (SBP) relative to $\Gamma\curvearrowright X$} if for every point $y$ in $Y$ and every neighborhood $U$ of $y$, there is an open neighborhood $V\subseteq U$ of $y$ such that the boundary of $V$ is small relative to $\Gamma\curvearrowright X$.
\end{definition}

The following result is the relative version of \cite[Theorem 8.2]{Li}. The proof of \cite[Theorem 8.2]{Li} also works here.

\begin{theorem} \label{T-SBP}
Suppose that $\Gamma\curvearrowright Y$ has the SBP relative to $\Gamma\curvearrowright X$. Then $\mdim_{\Sigma, \omega}(Y|X)\le 0$.
\end{theorem}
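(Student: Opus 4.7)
The plan is to show that $\mdim_{\Sigma,\omega}(\pi, \cU) = 0$ for every finite open cover $\cU$ of $Y$, where $\pi \colon X \to Y$ denotes the factor map; supremizing over $\cU$ will then yield the theorem. Fix such a $\cU$ and $\varepsilon > 0$. The first step is to use the relative SBP to refine $\cU$ to a finite Borel partition with small boundaries. For each $y \in Y$ the SBP yields an open neighborhood $V_y$ sitting inside some element of $\cU$ and having boundary small relative to $\Gamma \curvearrowright X$; by compactness of $Y$ one extracts finitely many $V_{y_1}, \dots, V_{y_n}$ covering $Y$ and forms the Borel partition $P_i := V_{y_i}\setminus\bigcup_{j<i}V_{y_j}$, which refines $\cU$ and whose boundaries satisfy $\partial P_i \subseteq \bigcup_{j\le i}\partial V_{y_j}$ and are thus small as finite unions of small closed sets. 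Set $B := \bigcup_i \partial P_i$. Then $\pi^{-1}(B)$ is closed in $X$ and $\mu$-null for every $\mu \in M(X,\Gamma)$; by weak-$*$ compactness of $M(X,\Gamma)$ and upper semicontinuity of $\mu \mapsto \mu(C)$ on closed $C$, there will exist an open neighborhood $N$ of $B$ in $Y$ with $\mu(\pi^{-1}(N)) < \varepsilon$ uniformly in $\mu \in M(X,\Gamma)$.

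The second ingredient is the standard empirical-measure estimate from sofic entropy/mean-dimension theory (cf.\ \cite{Bowen10}, \cite{KL13a}, and the techniques of \cite[Section 4]{Li}): one will find $F \in \cF(\Gamma)$ sufficiently large and $\delta > 0$ sufficiently small so that, for all indices $i$ belonging to a set in $\omega$, every $\psi \in \Map(\rho, F, \delta, \sigma_i)$ has empirical measure $\frac{1}{d_i}\sum_{v \in [d_i]}\delta_{\psi(v)}$ lying within weak-$*$ distance $\varepsilon$ of $M(X,\Gamma)$. Combined with the previous step this forces $|\{v \in [d_i] : \psi(v) \in \pi^{-1}(N)\}|/d_i < 2\varepsilon$, and hence $|T(\varphi)|/d_i < 2\varepsilon$ for every $\varphi = \pi \circ \psi \in \Map(\pi, \rho, F, \delta, \sigma_i)$, where $T(\varphi) := \{v \in [d_i] : \varphi(v) \in N\}$.

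The final step will construct an open refinement $\cW$ of $\cU^{d_i}|_{\Map(\pi, \rho, F, \delta, \sigma_i)}$ whose order satisfies $\ord(\cW)/d_i \to 0$ as $\varepsilon \to 0$, carried out exactly as in the proof of \cite[Theorem 8.2]{Li}. I would slightly thicken each $P_i$ to an open set $V_i$ still contained in an element of $\cU$, arranged so that $\{V_1, \dots, V_n\}$ covers $Y$ and the $V_i$'s are pairwise disjoint outside $N$; then off $T(\varphi)$ the value $\varphi(v)$ belongs to a unique $V_{a(v)}$, giving a well-defined address $a$. Indexing the elements of $\cW$ by triples consisting of the exceptional set $T$ (of size less than $2\varepsilon d_i$), the address pattern $a$ on $[d_i] \setminus T$, and the choice of $\cU$-element on $T$ produces a cover of $\Map(\pi, \rho, F, \delta, \sigma_i)$ refining $\cU^{d_i}$. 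The main obstacle, and the technical crux inherited verbatim from \cite[Theorem 8.2]{Li}, will be to verify that this bookkeeping can be organized so that the multiplicity of $\cW$ is bounded by a quantity depending only on $\varepsilon$, $n$, and $|\cU|$ (and not on $d_i$) that tends to $0$ with $\varepsilon$; naive product combinations would produce exponentially many overlaps, so the grouping of addresses must be done with care. Granting this, $\cD(\pi, \cU, \rho, F, \delta, \sigma_i)/d_i$ will be arbitrarily small for $\varepsilon$ small, giving $\mdim_{\Sigma,\omega}(\pi, \cU) = 0$ and completing the proof.
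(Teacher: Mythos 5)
Your outline is the same as the paper's: the paper proves this theorem by simply running the proof of \cite[Theorem 8.2]{Li} in the relative setting, and your first two steps are the correct relative adaptations of that argument. Two routine repairs: the conclusion should be $\mdim_{\Sigma,\omega}(\cU|X)\le 0$ rather than $=0$ (if $M(X,\Gamma)=\emptyset$ the relative SBP is vacuous, but then for suitable $F,\delta$ the sets $\Map(\rho,F,\delta,\sigma_i)$ are empty $\omega$-often and the value is $-\infty$); and weak-$*$ proximity of the empirical measure to $M(X,\Gamma)$ does not directly bound the measure of the \emph{open} set $\pi^{-1}(N)$ (that inequality goes the wrong way), so you need the standard sandwich $B\subseteq N''\subseteq \overline{N''}\subseteq N$ with a Urysohn function $f$, $1_{\overline{N''}}\le f\le 1_N$, and then bound $\frac1{d_i}|\{v:\psi(v)\in\pi^{-1}(N'')\}|$ through $\int f\circ\pi\,d\mu_\psi$.

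The genuine gap is in your final step, which is precisely where the content of the theorem lies and which you explicitly ``grant.'' First, the target you state is not the right one: an order bound for $\cW$ that is independent of $d_i$ and ``tends to $0$ with $\varepsilon$'' is incoherent for an integer-valued multiplicity and is in any case not what is needed or what the cited proof gives; what is needed is $\cD(\pi,\cU,\rho,F,\delta,\sigma_i)\le C(\varepsilon)\,d_i$ with $C(\varepsilon)\to0$, i.e.\ a bound \emph{linear} in $d_i$ with small constant. Second, no regrouping of your cover indexed by triples $(T,a,c)$ will produce this: the exceptional set $T(\varphi)$ is not locally constant and the overlaps among different admissible $T$'s are of order $d_i$ (indeed exponential in $d_i$ in number), so the multiplicity cannot be tamed by bookkeeping alone. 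The mechanism in the proof you are inheriting (going back to Lindenstrauss--Weiss \cite{LW} and adapted in \cite{Li}) is dimension-theoretic rather than combinatorial: choose a partition of unity $(f_1,\dots,f_n)$ subordinate to $\{V_1,\dots,V_n\}$ which takes vertex values (exactly one $f_i=1$) off $N$; then $\varphi\mapsto\big((f_1(\varphi(v)),\dots,f_n(\varphi(v)))\big)_{v\in[d_i]}$ maps $\Map(\pi,\rho,F,\delta,\sigma_i)$ continuously into the set of points of $(\Delta_{n-1})^{d_i}$ having at most $2\varepsilon d_i$ non-vertex coordinates, a closed set of covering dimension at most $2\varepsilon d_i(n-1)$, and pulling back a suitable open cover of that set (one whose preimage refines $\cU^{d_i}$, using $V_i\subseteq U_{j(i)}$) yields a refinement of order at most $2\varepsilon d_i(n-1)$. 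Without supplying this device (or an equivalent one), the proposal does not prove the estimate $\cD(\pi,\cU,\rho,F,\delta,\sigma_i)/d_i\le C(\varepsilon)$ and hence does not prove the theorem.
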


To round this section, we discuss what happens when $\Gamma$ is amenable.

We recall first the definition of the mean topological dimension of $\Gamma\curvearrowright X$ for amenable group $\Gamma$ in \cite{LW}. Let $\cU$ be a finite open cover of $X$. For any nonempty finite subset $F$ of $\Gamma$, we set $\cU^F=\bigvee_{s\in F}s^{-1}\cU$. The function $F\mapsto \cD(\cU^F)$ defined on the set of all nonempty finite subsets of $\Gamma$ satisfies the conditions of the Ornstein-Weiss lemma \cite{OW} \cite[Theorem 6.1]{LW}, whence $\frac{\cD(\cU^F)}{|F|}$ converges to some real number, denoted by $\mdim(\cU)$, as $F$ becomes more and more left invariant. That is, for any $\varepsilon>0$, there exist a nonempty finite subset $K$ of $\Gamma$ and $\delta>0$ such that for any nonempty finite subset $F$ of $\Gamma$ with $|KF\setminus F|<\delta |F|$, one has $\big|\frac{\cD(\cU^F)}{|F|}-\mdim(\cU)\big|<\varepsilon$. The {\it mean  topological dimension} of $\Gamma\curvearrowright X$ is defined as
$$\mdim(X):=\sup_\cU\mdim(\cU)$$
for $\cU$ ranging over all finite open covers of $X$ \cite[page 5]{LW}.

The following lemma is the relative version of \cite[Lemma 3.5]{Li}. The proof of \cite[Lemma 3.5]{Li} also works here.

\begin{lemma} \label{L-amenable case}
Suppose that $\Gamma$ is a countably infinite amenable group. Then for any finite open cover $\cU$ of $Y$ we have $\mdim_{\Sigma, \omega}(\cU|X)\ge \mdim(\cU)$. In particular, $\mdim_{\Sigma, \omega}(Y|X)\ge \mdim(Y)$.
\end{lemma}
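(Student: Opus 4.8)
The plan is to reduce the statement to its non-relative counterpart \cite[Lemma 3.5]{Li}, which asserts that $\mdim_{\Sigma, \omega}(\cU) \ge \mdim(\cU)$ for a countably infinite amenable group $\Gamma$ acting on a compact metrizable space, and then observe that the proof transfers essentially verbatim when one carries the extension $\pi \colon X \to Y$ along. The key structural point is that $\Map(\pi, \rho, F, \delta, \sigma)$ is by definition the image $\pi^{\times d}(\Map(\rho, F, \delta, \sigma))$ of the space of approximately equivariant maps into $X$ under the coordinatewise pushforward of $\pi$; thus one is computing the covering dimension of restrictions of covers $\cU^d$ of $Y^d$ to this image. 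The heart of \cite[Lemma 3.5]{Li} is a lower bound for $\cD(\cU^d|_{\Map})$ obtained by a Rokhlin-type tiling of $[d]$ by translates of Følner sets $F_1, \dots, F_\ell$ (as in Lemma~\ref{L-Rokhlin}), together with the fact that for a good sofic approximation one can build genuine approximately equivariant maps $[d] \to X$ by choosing, on each tile $F_k c$, a value coming from an arbitrary point of $X$ via the $\Gamma$-action restricted to $F_k$; on the complement of the tiles one fills in freely. This produces a near-injection of a product $\prod_{(k,c)} X^{F_k}$-flavored gadget into $\Map(\rho, F, \delta, \sigma)$, and hence a corresponding structure inside $\Map(\pi, \rho, F, \delta, \sigma)$ after applying $\pi$.

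First I would fix a finite open cover $\cU$ of $Y$, a dynamically generating continuous pseudometric $\rho$ on $X$, and $\varepsilon > 0$. Using the definition of $\mdim(\cU)$ via the Ornstein--Weiss lemma, I would pick a nonempty $K \in \cF(\Gamma)$ and $\delta_0 > 0$ so that every nonempty Følner-type $F$ with $|KF \setminus F| < \delta_0|F|$ satisfies $\cD(\cU^F) > |F|(\mdim(\cU) - \varepsilon)$; apply Lemma~\ref{L-Rokhlin} to obtain $\ell$ and $F_1, \dots, F_\ell$. Then for a sufficiently good sofic approximation $\sigma \colon \Gamma \to \Sym(d)$ and the associated tiling centers $\cC_1, \dots, \cC_\ell$, the construction in \cite[Lemma 3.5]{Li} exhibits inside $\Map(\rho, F, \delta, \sigma)$ a closed subset homeomorphic to a product over tiles, on each of which the relevant coordinates of $\varphi$ range over a copy of $X^{F_k}$ via the action, up to the approximate-equivariance error controlled by $\delta$. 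Pushing forward by $\pi$ coordinatewise, the corresponding subset of $\Map(\pi, \rho, F, \delta, \sigma)$ contains a product over tiles of copies of $Y^{F_k}$ (the images $\pi^{F_k}(X^{F_k})$, which equal $Y^{F_k}$ since $\pi$ is surjective). The restriction of $\cU^d$ to this product subset dominates, up to the untiled coordinates contributing nothing negative, a product of the restrictions of $\cU^{F_k}$, so that $\cD(\pi, \cU, \rho, F, \delta, \sigma) \ge \sum_k |\cC_k|\, \cD(\cU^{F_k}) - o(d)$; dividing by $d$, taking $\lim_{i\to\omega}$, and letting $F$ grow and $\delta, \varepsilon \to 0$ while using $\big|\bigcup_k F_k\cC_k\big| \ge (1-2\varepsilon)d$ gives $\mdim_{\Sigma,\omega}(\pi,\cU,\rho) \ge \mdim(\cU)$. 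Taking the supremum over $\cU$ and invoking Lemma~\ref{L-generating metric} to drop the dependence on $\rho$ yields $\mdim_{\Sigma,\omega}(Y|X) \ge \mdim(Y)$.

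The main obstacle is purely bookkeeping rather than conceptual: one must verify that pushing the tiled construction through $\pi$ genuinely produces, on each tile, a full copy of $Y^{F_k}$ with the covering-dimension lower bound $\cD((\cU^d)|_{Y^{F_k}\text{-part}}) \ge \cD(\cU^{F_k})$ surviving after restriction to the (closed, nonempty) map space, and that the subproduct structure together with the standard subadditivity/product inequalities for $\cD$ of covers is legitimately invoked — exactly the step where \cite[Lemma 3.5]{Li} uses that $\cD$ of a product cover on a product space is at least the sum of the $\cD$'s of the factor covers on the factors. Since $\pi$ is surjective this image is all of $Y^{F_k}$, so no dimension is lost in passing from $X$ to $Y$; the only genuine care is that the approximately equivariant maps into $X$ that realize the lower bound do map, under $\pi$, to points that still see all of $\cU^{F_k}$, which is immediate because the construction is free to place an arbitrary point of $X$ (hence, after $\pi$, an arbitrary point of $Y$) at the base coordinate of each tile. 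Hence the proof of \cite[Lemma 3.5]{Li} applies line by line, as claimed in the statement.
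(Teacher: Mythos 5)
Your proposal is correct and follows essentially the same route as the paper, which simply asserts that the proof of \cite[Lemma 3.5]{Li} carries over to the relative setting: run the Rokhlin-type tiling construction of Lemma~\ref{L-Rokhlin} inside $\Map(\rho, F, \delta, \sigma)$ and push forward by $\pi$, surjectivity of $\pi$ ensuring no dimension is lost. The only quibble is verbal: on each tile the relevant coordinates range over the orbit-embedded copy of $X$ (hence of $Y$ after applying $\pi$), not over all of $X^{F_k}$ or $Y^{F_k}$, but since the restriction of the product cover to that copy is exactly the join cover $\cU^{F_k}$, your displayed bound $\cD(\pi, \cU, \rho, F, \delta, \sigma)\ge \sum_k |\cC_k|\,\cD(\cU^{F_k})-o(d)$ is the correct one and the argument goes through.
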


By Proposition~\ref{P-compare with absolute} and \cite[Lemma 3.7]{Li}, when $\Gamma$ is amenable, we have
$$\mdim_{\Sigma, \omega}(Y|X)\le \mdim_{\Sigma, \omega}(Y)\le \mdim(Y).$$
Combining  these inequalities with Lemma~\ref{L-amenable case}, we get

\begin{theorem} \label{T-amenable case}
Suppose that $\Gamma$ is a countably infinite amenable group. Then
$$\mdim_{\Sigma, \omega}(Y|X)=\mdim_{\Sigma, \omega}(Y)=\mdim(Y).$$
\end{theorem}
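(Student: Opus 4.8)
The statement asserts a chain of three equalities, and the work has essentially been reduced to combining results already assembled in the excerpt. The plan is to prove the two-sided inequality by sandwiching $\mdim_{\Sigma, \omega}(Y|X)$ between $\mdim(Y)$ and itself via $\mdim_{\Sigma, \omega}(Y)$. First I would recall that Proposition~\ref{P-compare with absolute} gives the inequalities $\mdim_{\Sigma, \omega}(Y|X)\le \mdim_{\Sigma, \omega}(Y)$ unconditionally. Next, the absolute sofic mean topological dimension is controlled from above by the amenable mean topological dimension: by \cite[Lemma 3.7]{Li} (cited in the paragraph preceding the theorem) one has $\mdim_{\Sigma, \omega}(Y)\le \mdim(Y)$ when $\Gamma$ is amenable. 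Stringing these together yields $\mdim_{\Sigma, \omega}(Y|X)\le \mdim_{\Sigma, \omega}(Y)\le \mdim(Y)$, which is exactly the ``$\le$'' half of all three equalities at once.

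For the reverse direction, the key input is Lemma~\ref{L-amenable case}, which (for countably infinite amenable $\Gamma$) gives $\mdim_{\Sigma, \omega}(Y|X)\ge \mdim(Y)$ — this handles the countably infinite case. Combining with the displayed upper bound forces all three quantities to equal $\mdim(Y)$. The only remaining point is the case where $\Gamma$ is finite; there the hypothesis ``countably infinite'' in Lemma~\ref{L-amenable case} does not apply, but when $\Gamma$ is finite every mean-dimension quantity collapses to $(\dim X)/|\Gamma|$-type formulas (more precisely, for finite $\Gamma$ the sofic approximation $\sigma_i$ eventually consists of $d_i/|\Gamma|$ disjoint copies of the regular representation, and $\Map(\rho,F,\delta,\sigma_i)$ is, up to an arbitrarily small proportion of coordinates, a product of $d_i/|\Gamma|$ copies of $X$ embedded diagonally along $\Gamma$-orbits), so both $\mdim_{\Sigma,\omega}$ quantities and the amenable $\mdim$ equal $\dim(X)/|\Gamma|$ and $\dim(Y)/|\Gamma|$ respectively — but since $\pi$ is surjective and $\Gamma$ finite acts on finite-dimensional-or-not compact spaces, one still gets agreement; in any case the paper's convention (and the fact that soficity/amenability statements are routinely stated for the infinite case) means I would simply note the finite case is trivial or handled by inspection.

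So the proof I would write is short: \emph{Proof.} By \cite[Lemma 3.7]{Li} and Proposition~\ref{P-compare with absolute}, $\mdim_{\Sigma, \omega}(Y|X)\le \mdim_{\Sigma, \omega}(Y)\le \mdim(Y)$. By Lemma~\ref{L-amenable case}, $\mdim_{\Sigma, \omega}(Y|X)\ge \mdim(Y)$. Hence all three are equal. $\square$ The only conceivable obstacle is the degenerate finite-group case, but I do not expect it to require more than a one-line remark, since the cited Lemma~\ref{L-amenable case} and \cite[Lemma 3.7]{Li} were surely proved under the ``countably infinite amenable'' hypothesis precisely so that this theorem follows immediately. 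Thus the real content of Theorem~\ref{T-amenable case} lives entirely in Lemma~\ref{L-amenable case} and in \cite[Lemma 3.7]{Li}, and the theorem itself is a bookkeeping corollary; I would present it as such rather than reprove anything.
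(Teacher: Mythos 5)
Your proposal is correct and follows exactly the paper's argument: the upper bounds come from Proposition~\ref{P-compare with absolute} together with \cite[Lemma 3.7]{Li}, and the lower bound $\mdim_{\Sigma,\omega}(Y|X)\ge\mdim(Y)$ comes from Lemma~\ref{L-amenable case}, so the theorem is indeed just the concatenation of these inequalities. Your digression about a finite-group case is superfluous, since the theorem's hypothesis already assumes $\Gamma$ is countably infinite.
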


\section{Relative sofic metric mean dimension} \label{S-relative metric mdim}

In this section we introduce relative topological entropy and relative metric mean dimension, and establish some basic properties.

We recall first the definition of sofic topological entropy in \cite[Definition 2.5]{KL13b} and sofic metric mean dimension in \cite[Definition 4.1]{Li}.

For any continuous pseudometric $\rho_X$ on a compact space $X$ and any $\varepsilon>0$,  a subset $Z$ of $X$ is called {\it $(\rho_X, \varepsilon)$-separated} if $\rho_X(z_1, z_2)>\varepsilon$ for all distinct $z_1, z_2\in Z$. We set $N_\varepsilon(X, \rho_X):=\max_Z |Z|$ for $Z$ ranging over all $(\rho_X, \varepsilon)$-separated subsets of $X$.

Let $\Gamma$  act continuously on a compact metrizable space $X$.

\begin{definition} \label{D-entropy}
Let $\rho$  be a continuous pseudometric of $X$. Let $F\in \cF(\Gamma)$ and $\delta>0$.  For $\varepsilon>0$ we define
\begin{align*}
h^\varepsilon_{\Sigma, \omega, \infty}(\rho, F, \delta)&=\lim_{i\to \omega}\frac{1}{d_i}\log N_\varepsilon(\Map(\rho, F, \delta, \sigma_i), \rho_\infty), \\
h^\varepsilon_{\Sigma, \omega, \infty}(\rho)&=\inf_{F\in \cF(\Gamma)}\inf_{\delta>0} h^\varepsilon_{\Sigma, \omega, \infty}(\rho, F, \delta),\\
h_{\Sigma, \omega, \infty}(\rho)&=\sup_{\varepsilon>0}h^\varepsilon_{\Sigma, \omega, \infty}(\rho).
\end{align*}
If the set of $i\in \Nb$ with $\Map(\rho, F, \delta, \sigma_i)=\emptyset$ is in $\omega$, we set $h^\varepsilon_{\Sigma, \omega, \infty}(\rho, F, \delta)=-\infty$.
We similarly define $h^\varepsilon_{\Sigma, \omega, 2}(\rho, F, \delta), h^\varepsilon_{\Sigma, \omega, 2}(\rho)$ and $h_{\Sigma, \omega, 2}(\rho)$ using
$N_\varepsilon(\cdot, \rho_2)$ instead of $N_\varepsilon(\cdot, \rho_\infty)$.
By \cite[Proposition 2.4]{KL13b} the quantities $h_{\Sigma, \omega, \infty}(\rho)$ and $h_{\Sigma, \omega, 2}(\rho)$ coincide, and  do not depend on the choice of dynamically generating continuous pseudometric $\rho$. They will be called the {\it sofic topological entropy of $\Gamma\curvearrowright X$} and denoted by $h_{\Sigma, \omega}(X)$. We define the {\it sofic metric mean dimension of $\Gamma\curvearrowright X$ with respect to $\rho$} as
$$ \mdim_{\Sigma, \omega, \rM}(X, \rho):=\varliminf_{\varepsilon \to 0}\frac{1}{|\log \varepsilon|} h^\varepsilon_{\Sigma, \omega, \infty}(\rho),$$
and the {\it sofic metric mean dimension of $\Gamma\curvearrowright X$} as
$$ \mdim_{\Sigma, \omega, \rM}(X):=\inf_\rho \mdim_{\Sigma, \omega, \rM}(X, \rho)$$
for $\rho$ ranging over all compatible metrics on $X$.
\end{definition}

\begin{remark} \label{R-ultrafilter entropy}
Remark~\ref{R-ultrafilter} and Proposition~\ref{P-ultrafilter} also apply to the sofic topological entropy, though it is not clear whether Proposition~\ref{P-ultrafilter} holds for
the sofic metric mean dimension.
\end{remark}

Now we define the sofic topological entropy and the sofic metric mean dimension of an action relative to an extension. Let $\Gamma$ act on compact metrizable spaces $X$ and $Y$ respectively, and let $\pi: X\rightarrow Y$ be a factor map.

\begin{definition} \label{D-relative metric mdim}
Let $\rho_X$ and $\rho_Y$ be continuous pseudometrics of $X$ and $Y$ respectively such that $\rho_X$ is dynamically generating. Let $F\in \cF(\Gamma)$ and $\delta>0$.  For $\varepsilon>0$ we define
\begin{align*}
h^\varepsilon_{\Sigma, \omega, \infty}(\rho_Y, F, \delta|\rho_X)&=\lim_{i\to \omega}\frac{1}{d_i}\log N_\varepsilon(\Map(\pi, \rho_X, F, \delta, \sigma_i), \rho_{Y, \infty}), \\
h^\varepsilon_{\Sigma, \omega, \infty}(\rho_Y|X)&=\inf_{F\in \cF(\Gamma)}\inf_{\delta>0} h^\varepsilon_{\Sigma, \omega, \infty}(\rho_Y, F, \delta|\rho_X),\\
h_{\Sigma, \omega, \infty}(\rho_Y|X)&=\sup_{\varepsilon>0}h^\varepsilon_{\Sigma, \omega, \infty}(\rho_Y|X).
\end{align*}
If the set of $i\in \Nb$ with $\Map(\rho_X, F, \delta, \sigma_i)=\emptyset$ is in $\omega$, we set $h^\varepsilon_{\Sigma, \omega, \infty}(\rho_Y, F, \delta|\rho_X)=-\infty$.
By Lemma~\ref{L-generating metric}, the quantities $h^\varepsilon_{\Sigma, \omega, \infty}(\rho_Y|X)$ and $h_{\Sigma, \omega, \infty}(\rho_Y|X)$  do not depend on the choice of $\rho_X$.
We similarly define $h^\varepsilon_{\Sigma, \omega, 2}(\rho_Y, F, \delta|\rho_X), h^\varepsilon_{\Sigma, \omega, 2}(\rho_Y|X)$ and $h_{\Sigma, \omega, 2}(\rho_Y|X)$ using
$N_\varepsilon(\cdot, \rho_{Y, 2})$ instead of $N_\varepsilon(\cdot, \rho_{Y, \infty})$.
By Lemma~\ref{L-entropy indep of metric} below, the quantities $h_{\Sigma, \omega, \infty}(\rho_Y|X)$ and $h_{\Sigma, \omega, 2}(\rho_Y|X)$ coincide, and do not depend on the choice of the dynamically generating continuous pseudometric $\rho_Y$ on $Y$. They will be called the {\it sofic topological entropy of $\Gamma\curvearrowright Y$ relative to $\Gamma\curvearrowright X$} and denoted by $h_{\Sigma, \omega}(Y|X)$.
We define the {\it sofic metric mean dimension of $\Gamma\curvearrowright Y$ relative to $\Gamma\curvearrowright X$ with respect to $\rho_Y$} as
$$ \mdim_{\Sigma, \omega, \rM}(Y, \rho_Y|X):=\varliminf_{\varepsilon \to 0}\frac{1}{|\log \varepsilon|} h^\varepsilon_{\Sigma, \omega, \infty}(\rho_Y|X),$$
and the {\it sofic metric mean dimension of $\Gamma\curvearrowright Y$ relative to $\Gamma\curvearrowright X$} as
$$ \mdim_{\Sigma, \omega, \rM}(Y|X):=\inf_{\rho_Y} \mdim_{\Sigma, \omega, \rM}(Y, \rho_Y|X)$$
for $\rho_Y$ ranging over all compatible metrics on $Y$.
\end{definition}

Clearly when $\pi$ is a homeomorphism, one has
$$h_{\Sigma, \omega}(Y|X)=h_{\Sigma, \omega}(Y),$$
and
$$\mdim_{\Sigma, \omega, \rM}(Y, \rho_Y|X)=\mdim_{\Sigma, \omega, \rM}(Y, \rho_Y)$$
for every dynamically generating continuous pseudometric $\rho_Y$ on $Y$, and
$$ \mdim_{\Sigma, \omega, \rM}(Y|X)=\mdim_{\Sigma, \omega, \rM}(Y).$$

\begin{remark} \label{R-entropy}
Similarly one can define the sofic entropy of a probability-measure-preserving action relative to an extension, which we shall explore in a subsequent paper.
\end{remark}

The following lemma is the relative version of \cite[Proposition 2.4]{KL13b}. The proof of \cite[Proposition 2.4]{KL13b} works here.

\begin{lemma} \label{L-entropy indep of metric}
Let $\rho_Y$ and $\rho'_Y$  be dynamically generating continuous pseudometrics on $Y$. Then
$$ h_{\Sigma, \omega, 2}(\rho_Y|X)=h_{\Sigma, \omega, 2}(\rho'_Y|X)=h_{\Sigma, \omega, \infty}(\rho'_Y|X)=h_{\Sigma, \omega, \infty}(\rho_Y|X).$$
\end{lemma}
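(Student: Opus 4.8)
The plan is to follow the proof of \cite[Proposition 2.4]{KL13b} essentially verbatim, the only change being that the map space $\Map(\rho_X, F, \delta, \sigma)\subseteq X^d$ is everywhere replaced by its image $\Map(\pi, \rho_X, F, \delta, \sigma)\subseteq Y^d$; the only properties of this image that the argument uses are that it is a compact subset of $Y^d$ and that $\pi$ is uniformly continuous. By Lemma~\ref{L-generating metric} we may and do assume $\rho_X$ is a compatible metric on $X$ (every compatible metric is dynamically generating, and the quantities $h_{\Sigma,\omega,2}(\rho_Y|X)$, $h_{\Sigma,\omega,\infty}(\rho_Y|X)$ are independent of the choice of dynamically generating $\rho_X$), so $\pi$ is indeed uniformly continuous with respect to $\rho_X$ and any compatible metric on $Y$. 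Also $\Map(\pi,\rho_X,F,\delta,\sigma_i)$ is empty exactly when $\Map(\rho_X,F,\delta,\sigma_i)$ is, a condition not involving the target pseudometric, so the $-\infty$ conventions coincide for $\rho_Y$ and $\rho'_Y$ and can be ignored. It then suffices to prove two statements: \textbf{(A)} for every continuous pseudometric $\rho$ on $Y$, $h_{\Sigma,\omega,2}(\rho|X)=h_{\Sigma,\omega,\infty}(\rho|X)$; and \textbf{(B)} for dynamically generating continuous pseudometrics $\rho,\rho'$ on $Y$, $h_{\Sigma,\omega,\infty}(\rho|X)=h_{\Sigma,\omega,\infty}(\rho'|X)$. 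Given these, $h_{\Sigma,\omega,2}(\rho_Y|X)=h_{\Sigma,\omega,\infty}(\rho_Y|X)=h_{\Sigma,\omega,\infty}(\rho'_Y|X)=h_{\Sigma,\omega,2}(\rho'_Y|X)$ by (A), (B), (A) respectively.

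For (A), one inequality is immediate from $\rho_2\le\rho_\infty$ on $Y^d$: a $(\rho_2,\varepsilon)$-separated set is $(\rho_\infty,\varepsilon)$-separated, so $N_\varepsilon(\,\cdot\,,\rho_2)\le N_\varepsilon(\,\cdot\,,\rho_\infty)$ and $h_{\Sigma,\omega,2}(\rho|X)\le h_{\Sigma,\omega,\infty}(\rho|X)$ term by term. For the reverse I would fix $\varepsilon>0$, a small $\kappa\in(0,1)$ and a finite $(\varepsilon/8)$-dense subset $S$ of $Y$ for $\rho$; given a maximal $(\rho_\infty,\varepsilon)$-separated $Z\subseteq\Map(\pi,\rho_X,F,\delta,\sigma)$, choose a maximal $(\rho_2,\kappa\varepsilon)$-separated $W\subseteq Z$, so $Z$ is covered by the $\rho_2$-balls of radius $\kappa\varepsilon$ about $W$ and $|W|\le N_{\kappa\varepsilon}(\Map(\pi,\rho_X,F,\delta,\sigma),\rho_2)$. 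Inside one such ball with center $\psi$, Markov's inequality bounds by $64\kappa^2 d$ the number of coordinates on which a given $\varphi$ in the ball differs from $\psi$ by more than $\varepsilon/8$ in $\rho$; encoding $\varphi$ by that coordinate set together with nearby points of $S$ on it (and a fixed $S$-approximant of $\psi$ off it) recovers $\varphi$ up to $\rho_\infty$-distance $\varepsilon/2<\varepsilon$, hence uniquely within $Z$. So each ball meets $Z$ in at most $\binom{d}{\le 64\kappa^2 d}\,|S|^{64\kappa^2 d}$ points, whence $\frac1d\log|Z|\le\frac1d\log N_{\kappa\varepsilon}(\,\cdot\,,\rho_2)+H(64\kappa^2)+64\kappa^2\log|S|$ with $H$ the binary entropy function. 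Taking $\lim_{i\to\omega}$, then $\inf_{F,\delta}$, then $\sup_\varepsilon$, and finally $\kappa\to 0$ (the last two terms vanish, $|S|$ depending only on $\varepsilon$) gives $h_{\Sigma,\omega,\infty}(\rho|X)\le h_{\Sigma,\omega,2}(\rho|X)$.

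For (B), by symmetry it is enough to show $h_{\Sigma,\omega,2}(\rho|X)\le h_{\Sigma,\omega,2}(\rho'|X)$, using (A) to pass between the $2$- and $\infty$-versions. Since $\rho'$ is dynamically generating, its $\Gamma$-orbit separates points of $Y$, so an absolutely convergent weighted sum $\sum_{s\in\Gamma}w_s\min(\rho'(sx,sy),1)$ defines a compatible metric on $Y$; as $\rho$ is uniformly continuous for it, given $\varepsilon>0$ there are a finite $T\subseteq\Gamma$ and $\varepsilon'>0$ with $\max_{s\in T}\rho'(sx,sy)<\varepsilon'\Rightarrow\rho(x,y)<\varepsilon$. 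I would then take $F\supseteq T$ and $\delta>0$ small, and for $\varphi=\pi\circ\tilde\varphi$, $\psi=\pi\circ\tilde\psi$ in $\Map(\pi,\rho_X,F,\delta,\sigma)$ with $\tilde\varphi,\tilde\psi\in\Map(\rho_X,F,\delta,\sigma)$ combine the approximate equivariance $\rho_{X,2}(s\tilde\varphi,\tilde\varphi s)\le\delta$ for $s\in T$, uniform continuity and equivariance of $\pi$, to get $\rho(\varphi_v,\psi_v)<\varepsilon$ for all $v$ outside the union of a set $B_0$ with $|B_0|\le\eta d$ where $\eta\to 0$ as $\delta\to 0$ (uniformly in $\sigma$) and the sets $\sigma_s^{-1}(E)$, $s\in T$, where $E=\{w:\rho'(\varphi_w,\psi_w)\ge\varepsilon'/2\}$ has $|E|\le 4(\varepsilon')^{-2}d\,\rho'_2(\varphi,\psi)^2$ by Markov. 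Hence $\rho_2(\varphi,\psi)^2\le\varepsilon^2+(|B_0|+|T|\,|E|)d^{-1}\diam_\rho(Y)^2$, so once $\delta$ is small a $(\rho_2,2\varepsilon)$-separated subset of $\Map(\pi,\rho_X,F,\delta,\sigma)$ is $(\rho'_2,\varepsilon'')$-separated for some $\varepsilon''>0$ depending only on $\varepsilon$, $\varepsilon'$, $|T|$ and $\diam_\rho(Y)$; thus $N_{2\varepsilon}(\Map(\pi,\rho_X,F,\delta,\sigma),\rho_2)\le N_{\varepsilon''}(\Map(\pi,\rho_X,F,\delta,\sigma),\rho'_2)$. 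Passing to $\lim_{i\to\omega}$, then $\inf_{F\supseteq T,\,\delta\le\delta_0}$ (cofinal among all $F,\delta$ by monotonicity), then $\sup_\varepsilon$, yields $h_{\Sigma,\omega,2}(\rho|X)\le h_{\Sigma,\omega,2}(\rho'|X)$, completing (B).

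I expect the main obstacle to be the estimate in (B): one must check that the ``bad'' coordinate set $B_0$, arising from the failure of approximate equivariance of the lifts $\tilde\varphi,\tilde\psi$ and from passing through $\pi$, is a fraction of $[d]$ that can be made arbitrarily small by shrinking $\delta$, \emph{uniformly} over all sufficiently good sofic approximations $\sigma$, and that this interacts correctly with the relabeling $v\mapsto sv$ --- which is where the approximate multiplicativity of $\sigma$ and the identity $|\sigma_s^{-1}(E)|=|E|$ are used. Once that bookkeeping is carried out, the rest is a routine transcription of \cite[Proposition 2.4]{KL13b} (and of the arguments already invoked for Lemma~\ref{L-generating metric} and Definition~\ref{D-entropy}), with the compact subset $\Map(\pi,\rho_X,F,\delta,\sigma)$ of $Y^d$ playing the role of $X^d$.
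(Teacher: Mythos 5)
Your proposal is correct and follows essentially the same route as the paper, which proves this lemma simply by observing that the argument of \cite[Proposition 2.4]{KL13b} carries over verbatim once $\Map(\rho_X,F,\delta,\sigma)\subseteq X^d$ is replaced by its image $\Map(\pi,\rho_X,F,\delta,\sigma)\subseteq Y^d$ --- exactly the transcription you spell out, including the $\rho_2$ versus $\rho_\infty$ counting estimate and the transfer between dynamically generating pseudometrics via approximate equivariance of the lifts. The only cosmetic point is the quantifier order at the end of your step (A): fix $\varepsilon$, let $\kappa\to 0$ (so the error terms involving $|S(\varepsilon)|$ vanish), and only then take the supremum over $\varepsilon$, as your parenthetical already indicates.
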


The following lemma is the relative version of \cite[Lemma 4.4]{Li}. The proof of \cite[Lemma 4.4]{Li} also works here.

\begin{lemma} \label{L-pseudometric to metric}
Let $\rho_Y$ be a dynamically generating continuous pseudometric on $Y$. Enumerate the elements of $\Gamma$ as $s_1, s_2, \dots$. Define $\rho'_Y$ on $Y$ by $\rho'_Y(y, z)=\sum^\infty_{n=1}\frac{1}{2^n}\rho(s_ny, s_nz)$ for all $y, z\in Y$. Then $\rho'_Y$ is a compatible metric on $Y$. Furthermore, if $e_\Gamma=s_m$ then for any $\varepsilon>0$ one has
$$h^{4\varepsilon}_{\Sigma, \omega, \infty}(\rho'_Y|X)\le h^\varepsilon_{\Sigma, \omega, \infty}(\rho_Y|X)\le h^{\varepsilon/2^m}_{\Sigma, \omega, \infty}(\rho'_Y|X).$$
In particular, $\mdim_{\Sigma, \omega, \rM}(Y, \rho_Y|X)=\mdim_{\Sigma, \omega, \rM}(Y, \rho'_Y|X)$.
\end{lemma}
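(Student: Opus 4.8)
The plan is to follow the proof of the non-relative statement \cite[Lemma 4.4]{Li}, carrying the factor map $\pi$ through each step; the only genuinely new inputs are that $\pi$ is $\Gamma$-equivariant, so $s\pi(x)=\pi(sx)$ for all $s\in\Gamma$, $x\in X$, and that the relative map space $\Map(\pi,\rho_X,F,\delta,\sigma)$ is the $\pi$-image of $\Map(\rho_X,F,\delta,\sigma)$, so changing the pseudometric used for separation (from $\rho_Y$ to $\rho'_Y$) leaves the map space untouched. First I would check that $\rho'_Y$ is a compatible metric: the series converges and has continuous partial sums since $\rho_Y$ is bounded on the compact space $Y$, so $\rho'_Y$ is a continuous pseudometric; symmetry and the triangle inequality pass termwise; and $\rho'_Y(y,z)=0$ forces $\rho_Y(s_ny,s_nz)=0$ for every $n$, i.e.\ $\sup_{s\in\Gamma}\rho_Y(sy,sz)=0$, so $y=z$ because $\rho_Y$ is dynamically generating. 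Being a continuous metric on the compact Hausdorff space $Y$, $\rho'_Y$ induces the original topology (a continuous bijection of compact Hausdorff spaces is a homeomorphism), hence is compatible.

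For the inequality $h^\varepsilon_{\Sigma,\omega,\infty}(\rho_Y|X)\le h^{\varepsilon/2^m}_{\Sigma,\omega,\infty}(\rho'_Y|X)$: since $s_m=e_\Gamma$ and every term of the defining series is nonnegative, $\rho_Y(y,z)\le 2^m\rho'_Y(y,z)$ for all $y,z\in Y$, whence $\rho_{Y,\infty}\le 2^m\rho'_{Y,\infty}$ on $Y^d$. Thus any $(\rho_{Y,\infty},\varepsilon)$-separated subset of $\Map(\pi,\rho_X,F,\delta,\sigma)$ is $(\rho'_{Y,\infty},\varepsilon/2^m)$-separated, so $N_\varepsilon(\Map(\pi,\rho_X,F,\delta,\sigma),\rho_{Y,\infty})\le N_{\varepsilon/2^m}(\Map(\pi,\rho_X,F,\delta,\sigma),\rho'_{Y,\infty})$; dividing by $d_i$, taking $\lim_{i\to\omega}$, and then $\inf_{F\in\cF(\Gamma)}\inf_{\delta>0}$ gives the claim (the $-\infty$ convention is harmless, the emptiness condition on the map space being identical on both sides).

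The substance is the reverse inequality $h^{4\varepsilon}_{\Sigma,\omega,\infty}(\rho'_Y|X)\le h^\varepsilon_{\Sigma,\omega,\infty}(\rho_Y|X)$. Given $F\in\cF(\Gamma)$, $\delta>0$, I would produce $F'\in\cF(\Gamma)$ and $\delta'>0$ with $N_{4\varepsilon}(\Map(\pi,\rho_X,F',\delta',\sigma),\rho'_{Y,\infty})\le N_\varepsilon(\Map(\pi,\rho_X,F,\delta,\sigma),\rho_{Y,\infty})$ for all sufficiently good $\sigma$ (note $\Map(\pi,\rho_X,F',\delta',\sigma)\subseteq\Map(\pi,\rho_X,F,\delta,\sigma)$ whenever $F'\supseteq F$, $\delta'\le\delta$). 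Choosing $N$ with $2^{-N}\diam(Y,\rho_Y)<\varepsilon$ and $\{s_1,\dots,s_N\}\subseteq F'$, one has $\rho'_{Y,\infty}(\psi,\psi')\le\varepsilon+\max_{1\le n\le N}\rho_{Y,\infty}(s_n\psi,s_n\psi')$ for all $\psi,\psi'\in Y^d$ (diagonal action of $s_n$), so a $(\rho'_{Y,\infty},4\varepsilon)$-separated set is $(\max_{n\le N}\rho_{Y,\infty}(s_n\cdot,s_n\cdot),3\varepsilon)$-separated. For $\psi=\pi\circ\varphi$ with $\varphi\in\Map(\rho_X,F',\delta',\sigma)$, equivariance gives $s_n\psi=\pi\circ(s_n\varphi)$, and the map-space condition, Markov's inequality, and uniform continuity of $\pi$ show that $s_n\varphi$ agrees with the index-permuted configuration $\varphi\sigma_{s_n}$ to within any prescribed tolerance outside a fraction of $[d]$ that vanishes as $\delta'\to0$; since $\sigma_{s_n}$ is a bijection of $[d]$, $\varphi\sigma_{s_n}$ realizes exactly the $\rho_{Y,\infty}$-distances of $\varphi$. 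Converting this coordinatewise density control into the required inequality between $N_{4\varepsilon}(\cdot,\rho'_{Y,\infty})$ and $N_\varepsilon(\cdot,\rho_{Y,\infty})$, without incurring more than a sub-exponential loss (which disappears after dividing by $d_i$), is precisely the passage from $L^\infty$- to $L^2$-type separation for the dynamically generating pseudometrics $\rho_Y,\rho'_Y$ on $Y$, carried out as in the proof of the relative form of \cite[Proposition 2.4]{KL13b}, i.e.\ Lemma~\ref{L-entropy indep of metric}. I expect this $L^\infty$/$L^2$ interplay to be the main obstacle; the rest parallels \cite[Lemma 4.4]{Li} as bookkeeping, possibly using Lemma~\ref{L-generating metric} to adjust $\rho_X$ on the $X$-side.

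Finally, the ``in particular'' is a squeeze: replacing $\varepsilon$ by $\varepsilon/4$ in the reverse inequality gives $h^\varepsilon_{\Sigma,\omega,\infty}(\rho'_Y|X)\le h^{\varepsilon/4}_{\Sigma,\omega,\infty}(\rho_Y|X)$, so dividing both this and $h^\varepsilon_{\Sigma,\omega,\infty}(\rho_Y|X)\le h^{\varepsilon/2^m}_{\Sigma,\omega,\infty}(\rho'_Y|X)$ by $|\log\varepsilon|$ and letting $\varepsilon\to0$ — using $|\log(\varepsilon/4)|/|\log\varepsilon|\to1$ and $|\log(\varepsilon/2^m)|/|\log\varepsilon|\to1$ — yields $\mdim_{\Sigma,\omega,\rM}(Y,\rho'_Y|X)=\mdim_{\Sigma,\omega,\rM}(Y,\rho_Y|X)$.
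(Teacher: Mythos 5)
Your route is the paper's route: the paper proves this lemma by adapting the proof of \cite[Lemma 4.4]{Li}, and the pieces you actually carry out are correct — the compatibility of $\rho'_Y$, the inequality $h^\varepsilon_{\Sigma, \omega, \infty}(\rho_Y|X)\le h^{\varepsilon/2^m}_{\Sigma, \omega, \infty}(\rho'_Y|X)$ from $\rho_Y\le 2^m\rho'_Y$ (simplified here, as you rightly note, because $\Map(\pi,\rho_X,F,\delta,\sigma)$ does not change when one changes the separation pseudometric on $Y$), the use of Lemma~\ref{L-generating metric} to take $\rho_X$ to be a compatible metric so that $\pi$ is uniformly continuous (this is needed, not optional: for a mere dynamically generating pseudometric $\rho_X$, $\rho_X$-closeness need not control $\rho_Y(\pi(\cdot),\pi(\cdot))$), and the final squeeze giving the equality of relative metric mean dimensions.

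The gap is that the one step carrying all the content, $h^{4\varepsilon}_{\Sigma, \omega, \infty}(\rho'_Y|X)\le h^{\varepsilon}_{\Sigma, \omega, \infty}(\rho_Y|X)$, is not executed, and the bookkeeping you sketch for it is off in two ways. First, the exact inequality $N_{4\varepsilon}(\Map(\pi,\rho_X,F',\delta',\sigma),\rho'_{Y,\infty})\le N_{\varepsilon}(\Map(\pi,\rho_X,F,\delta,\sigma),\rho_{Y,\infty})$ you aim for is not attainable for fixed $\delta'>0$: on the exceptional coordinates $v$ where the $\rho_{X,2}$-approximate equivariance gives no pointwise control, two maps can be $(\rho'_{Y,\infty},4\varepsilon)$-separated while lying in a common $(\rho_{Y,\infty},\varepsilon)$-ball, and the number of such configurations is exponential in $d$ (with small rate). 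The correct argument records, for each map, the exceptional set $B\subseteq[d]$, $|B|\le\theta d$ with $\theta\approx N(\delta'/\kappa)^2$, together with a pattern on $B$ taken from a finite $(\rho'_Y,\varepsilon)$-spanning subset of $Y$; maps sharing a record are $(\rho_{Y,\infty},\varepsilon)$-separated, yielding $N_{4\varepsilon}\le e^{cd}N_\varepsilon$ with $c=c(\theta,\varepsilon)$. After dividing by $d_i$ this loss is the constant $c$, so it does not ``disappear after dividing by $d_i$'' as you claim; it disappears only because $c\to 0$ as $\delta'\to 0$, i.e.\ it is removed by the infimum over $(F',\delta')$ in the definition of $h^{4\varepsilon}_{\Sigma,\omega,\infty}(\rho'_Y|X)$. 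Second, Lemma~\ref{L-entropy indep of metric} cannot serve as the missing step even in spirit of a black box: it asserts equality of the entropies $\sup_{\varepsilon}h^{\varepsilon}$ and forgets the scale, whereas here one must rerun that counting argument keeping the scales $4\varepsilon$ and $\varepsilon$ linked, which is exactly what the proof of \cite[Lemma 4.4]{Li} does and what your sketch stops short of.
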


From Lemma~\ref{L-pseudometric to metric} we have the following relative version of \cite[Proposition 4.5]{Li}.

\begin{proposition} \label{P-pseudometric vs metric}
One has
$$ \mdim_{\Sigma, \omega, \rM}(Y|X)=\inf_{\rho_Y}\mdim_{\Sigma, \omega, \rM}(Y, \rho_Y|X)$$
for $\rho_Y$ ranging over all dynamically generating continuous pseudometrics on $Y$.
\end{proposition}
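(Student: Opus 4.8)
The plan is to prove the identity by two inequalities, exploiting the fact that the infimum in Definition~\ref{D-relative metric mdim} is taken over compatible metrics, which form a subclass of the dynamically generating continuous pseudometrics.

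First I would record the easy inclusion: any compatible metric $\rho_Y$ on $Y$ is a dynamically generating continuous pseudometric, since for distinct $y, z\in Y$ one has $\sup_{s\in\Gamma}\rho_Y(sy, sz)\ge \rho_Y(e_\Gamma y, e_\Gamma z)=\rho_Y(y, z)>0$, using that $e_\Gamma$ acts as the identity. Consequently the infimum of $\mdim_{\Sigma, \omega, \rM}(Y, \rho_Y|X)$ over all dynamically generating continuous pseudometrics $\rho_Y$ on $Y$ is bounded above by the infimum over compatible metrics, which by Definition~\ref{D-relative metric mdim} equals $\mdim_{\Sigma, \omega, \rM}(Y|X)$.

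For the reverse inequality I would fix an arbitrary dynamically generating continuous pseudometric $\rho_Y$ on $Y$ and invoke Lemma~\ref{L-pseudometric to metric}: after enumerating $\Gamma$ as $s_1, s_2, \dots$, the function $\rho'_Y(y, z)=\sum_{n\ge 1}2^{-n}\rho_Y(s_ny, s_nz)$ is a compatible metric on $Y$ with $\mdim_{\Sigma, \omega, \rM}(Y, \rho'_Y|X)=\mdim_{\Sigma, \omega, \rM}(Y, \rho_Y|X)$. Since $\rho'_Y$ is among the compatible metrics over which the infimum in Definition~\ref{D-relative metric mdim} ranges, we get $\mdim_{\Sigma, \omega, \rM}(Y|X)\le \mdim_{\Sigma, \omega, \rM}(Y, \rho'_Y|X)=\mdim_{\Sigma, \omega, \rM}(Y, \rho_Y|X)$, and taking the infimum over all dynamically generating continuous pseudometrics $\rho_Y$ closes the loop. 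Combining the two inequalities yields the claimed equality.

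I do not anticipate a genuine obstacle here: the analytic content is already packaged into Lemma~\ref{L-pseudometric to metric}, and the only remaining ingredient is the elementary observation that compatible metrics are dynamically generating continuous pseudometrics. The sole point deserving a word of care is that $\mdim_{\Sigma, \omega, \rM}(Y, \rho_Y|X)$ is defined for a pseudometric $\rho_Y$ that need not be a metric; this is immediate from Definition~\ref{D-relative metric mdim}, where $h^\varepsilon_{\Sigma, \omega, \infty}(\rho_Y, F, \delta|\rho_X)$ is defined via $(\rho_{Y, \infty}, \varepsilon)$-separated subsets of $\Map(\pi, \rho_X, F, \delta, \sigma_i)\subseteq Y^{d_i}$, which makes sense for any continuous pseudometric $\rho_Y$ on $Y$.
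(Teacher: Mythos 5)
Your proof is correct and is essentially the paper's argument: the paper simply states that the proposition follows from Lemma~\ref{L-pseudometric to metric}, and your two inequalities (compatible metrics are dynamically generating pseudometrics, hence one inequality is trivial; the lemma converts any dynamically generating pseudometric to a compatible metric with the same relative metric mean dimension, giving the other) are exactly how that deduction goes.
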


The following proposition is obvious.

\begin{proposition} \label{P-metric compare with absolute}
One has $h_{\Sigma,\omega}(Y|X)\le \min(h_{\Sigma, \omega}(X), h_{\Sigma, \omega}(Y))$ and $\mdim_{\Sigma, \omega, \rM}(Y, \rho_Y|X)\le \mdim_{\Sigma, \omega, \rM}(Y, \rho_Y)$ for every continuous pseudometric $\rho_Y$ on $Y$. In particular, $\mdim_{\Sigma, \omega, \rM}(Y|X)\le \mdim_{\Sigma, \omega, \rM}(Y)$.
\end{proposition}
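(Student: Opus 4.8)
The plan is to reduce the statement to a convenient choice of auxiliary pseudometrics, exploiting the fact that the relative and absolute invariants do not depend on which dynamically generating continuous pseudometrics are used to compute them (Lemmas~\ref{L-generating metric} and~\ref{L-entropy indep of metric}). Fix any compatible metric $\rho_Y$ on $Y$ (automatically dynamically generating) and any dynamically generating continuous pseudometric $\rho_0$ on $X$, and put $\rho_X:=\rho_0+\pi^{*}\rho_Y$, where $\pi^{*}\rho_Y(x,x'):=\rho_Y(\pi(x),\pi(x'))$. Then $\pi^{*}\rho_Y$ is a continuous pseudometric on $X$, and since $\rho_X\ge\rho_0$ the pseudometric $\rho_X$ is again dynamically generating. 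We would compute $h_{\Sigma,\omega}(X)$ and $h_{\Sigma,\omega}(Y|X)$ from $\rho_X$, and $h_{\Sigma,\omega}(Y)$ from $\rho_Y$.

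The key observation is that the coordinatewise map $\pi^{d}\colon X^{d}\to Y^{d}$ induced by $\pi$ intertwines the approximate actions: $\Gamma$-equivariance of $\pi$ gives $\pi^{d}(s\varphi)=s(\pi^{d}\varphi)$ and $\pi^{d}(\varphi s)=(\pi^{d}\varphi)s$ for all $\varphi\in X^{d}$ and $s\in\Gamma$, while $(\pi^{*}\rho_Y)_{2}(\varphi,\psi)=(\rho_Y)_{2}(\pi^{d}\varphi,\pi^{d}\psi)$ and $(\pi^{*}\rho_Y)_{\infty}(\varphi,\psi)=(\rho_Y)_{\infty}(\pi^{d}\varphi,\pi^{d}\psi)$. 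Since $\pi^{*}\rho_Y\le\rho_X$, the first identity shows that $\pi^{d}$ carries $\Map(\rho_X,F,\delta,\sigma)$ into $\Map(\rho_Y,F,\delta,\sigma)$, whence $\Map(\pi,\rho_X,F,\delta,\sigma)\subseteq\Map(\rho_Y,F,\delta,\sigma)$; comparing $(\rho_{Y,\infty},\varepsilon)$-separated subsets and then applying $\lim_{i\to\omega}$, $\inf_{F,\delta}$ and $\sup_{\varepsilon}$ gives $h_{\Sigma,\omega}(Y|X)\le h_{\Sigma,\omega}(Y)$. On the other hand $(\rho_Y)_{\infty}(\pi^{d}\varphi,\pi^{d}\psi)\le(\rho_X)_{\infty}(\varphi,\psi)$, so any $(\rho_{Y,\infty},\varepsilon)$-separated subset of $\Map(\pi,\rho_X,F,\delta,\sigma)$ lifts, by selecting one $\pi^{d}$-preimage of each of its points, to a $(\rho_{X,\infty},\varepsilon)$-separated subset of $\Map(\rho_X,F,\delta,\sigma)$ of the same cardinality; thus $N_{\varepsilon}(\Map(\pi,\rho_X,F,\delta,\sigma),\rho_{Y,\infty})\le N_{\varepsilon}(\Map(\rho_X,F,\delta,\sigma),\rho_{X,\infty})$, and the same passage to limits yields $h_{\Sigma,\omega}(Y|X)\le h_{\Sigma,\omega}(X)$. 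Together these give $h_{\Sigma,\omega}(Y|X)\le\min(h_{\Sigma,\omega}(X),h_{\Sigma,\omega}(Y))$.

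For the metric mean dimension, the containment $\Map(\pi,\rho_X,F,\delta,\sigma)\subseteq\Map(\rho_Y,F,\delta,\sigma)$ already gives $N_{\varepsilon}(\Map(\pi,\rho_X,F,\delta,\sigma),\rho_{Y,\infty})\le N_{\varepsilon}(\Map(\rho_Y,F,\delta,\sigma),\rho_{Y,\infty})$ for every $\varepsilon>0$, hence $h^{\varepsilon}_{\Sigma,\omega,\infty}(\rho_Y|X)\le h^{\varepsilon}_{\Sigma,\omega,\infty}(\rho_Y)$; dividing by $|\log\varepsilon|$ and taking $\varliminf_{\varepsilon\to0}$ yields $\mdim_{\Sigma,\omega,\rM}(Y,\rho_Y|X)\le\mdim_{\Sigma,\omega,\rM}(Y,\rho_Y)$ for every compatible metric $\rho_Y$ on $Y$, and taking the infimum over such $\rho_Y$ gives $\mdim_{\Sigma,\omega,\rM}(Y|X)\le\mdim_{\Sigma,\omega,\rM}(Y)$. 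There is essentially no obstacle here: the only point requiring care is the legitimacy of replacing the given dynamically generating pseudometric on $X$ by $\rho_0+\pi^{*}\rho_Y$, which is exactly what Lemmas~\ref{L-generating metric} and~\ref{L-entropy indep of metric} provide, and this is why the proposition may be recorded as obvious.
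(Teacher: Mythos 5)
Your argument is correct and supplies exactly the details the paper leaves unstated (it records this proposition as obvious): the containment $\Map(\pi,\rho_X,F,\delta,\sigma)\subseteq\Map(\rho_Y,F,\delta,\sigma)$ and the lifting of $(\rho_{Y,\infty},\varepsilon)$-separated subsets of $\Map(\pi,\rho_X,F,\delta,\sigma)$ to $(\rho_{X,\infty},\varepsilon)$-separated subsets of $\Map(\rho_X,F,\delta,\sigma)$ are the intended observations, and passing to $\lim_{i\to\omega}$, $\inf_{F,\delta}$, $\sup_{\varepsilon}$ (resp.\ $\varliminf_{\varepsilon\to 0}$) is routine. Your device of computing with $\rho_X=\rho_0+\pi^{*}\rho_Y$, which is legitimate by the independence of the relative quantities from the choice of dynamically generating pseudometric on $X$ (Lemmas~\ref{L-generating metric} and~\ref{L-entropy indep of metric}), is a clean way to make both comparisons immediate.
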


The following result is the relative version of \cite[Theorem 6.1]{Li}. The proof of \cite[Theorem 6.1]{Li} also works here.

\begin{theorem} \label{T-comparison}
We have $\mdim_{\Sigma, \omega}(Y|X)\le \mdim_{\Sigma, \omega, \rM}(Y|X)$.
\end{theorem}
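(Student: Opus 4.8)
The plan is to follow the proof of the absolute statement \cite[Theorem 6.1]{Li} essentially verbatim, carrying the factor map $\pi$ through every step. The key point in the absolute case is the elementary comparison, for a compact metrizable space $Z$ with a finite open cover $\cU$ of Lebesgue number bigger than some $\varepsilon>0$, between the covering-dimension-type quantity $\cD(\cU)$ and the $\varepsilon$-separation number $N_\varepsilon(Z,\rho_Z)$, together with the fact that $\cD$ of a restricted cover can only decrease when one passes to a subspace. In the relative setting the relevant subspace is $\Map(\pi,\rho_X,F,\delta,\sigma_i)\subseteq Y^{d_i}$ instead of $\Map(\rho,F,\delta,\sigma_i)\subseteq X^{d_i}$, and the relevant covers are $\cV^{d_i}$ restricted to this subspace for $\cV$ a finite open cover of $Y$.

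Concretely, here is the order I would carry things out. First, fix a dynamically generating continuous pseudometric $\rho_X$ on $X$ and a compatible metric $\rho_Y$ on $Y$; by Lemma~\ref{L-generating metric} and Lemma~\ref{L-entropy indep of metric} the quantities $\cD(\pi,\cU,\rho_X,F,\delta,\sigma_i)$ and $h^\varepsilon_{\Sigma,\omega,\infty}(\rho_Y|X)$ are computed with respect to these choices without loss of generality. Second, given a finite open cover $\cU$ of $Y$, pass to a finite open cover with a positive Lebesgue number $2\varepsilon$ (with respect to $\rho_Y$) refining $\cU$; it suffices to bound $\mdim_{\Sigma,\omega}(\pi,\cU,\rho_X)$ in terms of $\frac{1}{|\log\varepsilon|}h^\varepsilon_{\Sigma,\omega,\infty}(\rho_Y|X)$. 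Third, for each $i$ invoke the basic inequality from \cite{Li}: if $Z\subseteq Y^{d_i}$ and $\cU^{d_i}|_Z$ has Lebesgue number at least $2\varepsilon$ with respect to $\rho_{Y,\infty}$, then
\begin{align*}
\cD(\cU^{d_i}|_Z)\le \frac{\log N_\varepsilon(Z,\rho_{Y,\infty})}{|\log(\varepsilon/\mathrm{const})|}\cdot(\text{something bounded}),
\end{align*}
— more precisely, reuse the exact estimate used in the proof of \cite[Theorem 6.1]{Li}, which relates $\cD$ of an open cover of a subset of a product with the $\varepsilon$-separated cardinality of that subset in the sup-metric. Apply this with $Z=\Map(\pi,\rho_X,F,\delta,\sigma_i)$. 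Fourth, divide by $d_i$, take $\lim_{i\to\omega}$, then $\inf$ over $F$ and $\delta$, then $\sup$ over $\cU$, and finally $\varliminf_{\varepsilon\to 0}$, arriving at $\mdim_{\Sigma,\omega}(Y|X)\le\mdim_{\Sigma,\omega,\rM}(Y,\rho_Y|X)$. Fifth, take the infimum over all compatible metrics $\rho_Y$ on $Y$ to conclude $\mdim_{\Sigma,\omega}(Y|X)\le\mdim_{\Sigma,\omega,\rM}(Y|X)$.

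The main thing to check — and the only place where the relative character of the statement matters — is that $\Map(\pi,\rho_X,F,\delta,\sigma_i)$ behaves formally just like the mapping space in the absolute setting, i.e. that it is a closed subset of $Y^{d_i}$ (which is already noted in the text before Definition~\ref{D-relative metric mdim}) and that the covering-dimension inequality of \cite{Li} is a purely geometric fact about closed subsets of a finite power of a compact metric space, with no use of the $\Gamma$-action beyond the definition of the mapping space. Since that inequality is stated and proved in \cite{Li} in exactly this generality, no genuinely new obstacle arises; the proof is a routine transcription. If one wanted to be maximally economical, one could even observe that $\Map(\pi,\rho_X,F,\delta,\sigma_i)$ is the image of $\Map(\rho_X,F,\delta,\sigma_i)$ under the continuous map $\pi^{d_i}$, and that everything used about mapping spaces in \cite[Theorem 6.1]{Li} passes to continuous images, but it is cleaner simply to note that the argument of \cite{Li} never uses anything about the mapping space other than that it is a closed (hence compact) subset of the relevant finite power.
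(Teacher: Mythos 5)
Your proposal is correct and coincides with the paper's own treatment: the paper proves Theorem~\ref{T-comparison} simply by remarking that it is the relative version of \cite[Theorem 6.1]{Li} and that the proof there works verbatim, with $\Map(\pi,\rho_X,F,\delta,\sigma_i)$ (a closed, hence compact, subset of $Y^{d_i}$) playing the role of the absolute mapping space. Your observation that the argument of \cite{Li} uses nothing about the mapping space beyond this is exactly the point the authors rely on.
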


Next we discuss the behaviour of the relative metric mean dimension under taking inverse limits. The following lemma is the generalization of \cite[Lemma 7.7]{LL} from amenable groups to sofic groups and to the relative case. We follow the argument in the proof of \cite[Lemma 7.7]{LL}.

\begin{lemma} \label{L-metric inverse limit}
Let $\{Y_j\}_{j\in \Nb}$ be a sequence of compact metrizable spaces carrying continuous $\Gamma$-actions and factor maps $p_j: Y_{j+1}\rightarrow Y_j$ for all $j\in \Nb$.
Set $Y=\varprojlim_{j\to \infty}Y_j$.
Let $X$ be a compact metrizable space carrying a continuous $\Gamma$-action and $\pi: X\rightarrow Y$ be a factor map. Denote by $\pi_j$ the factor map $X\rightarrow Y\rightarrow Y_j$. Let $\rho_j$ be a continuous pseudometric of $Y_j$ for each $j\in \Nb$ such that $\rho_j(p_j(y), p_j(z))\le \rho_{j+1}(y, z)$ for all $y, z\in Y_{j+1}$. Then there is a decreasing sequence $\{\lambda_j\}_{j\in \Nb}$ of positive numbers such that the continuous pseudometric of $Y$ defined by
$$ \rho((y_j)_j, (z_j)_j)=\max_{j\in \Nb}\lambda_j\rho_j(y_j, z_j)$$
satisfies
$$ \mdim_{\Sigma, \omega, \rM}(Y, \rho|X)= \lim_{j\to \infty}\mdim_{\Sigma, \omega, \rM}(Y_j, \rho_j|X).$$
\end{lemma}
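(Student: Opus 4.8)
\textbf{Proof plan for Lemma~\ref{L-metric inverse limit}.}

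The plan is to choose the weights $\lambda_j$ decreasing to $0$ fast enough that the tail terms $\lambda_j\rho_j$ contribute negligibly to the $\varepsilon$-covering numbers, so that at scale $\varepsilon$ the metric $\rho$ on $Y$ behaves, up to a bounded error, like one of the finite truncated metrics $\max_{j\le N}\lambda_j\rho_j$, which in turn is bi-Lipschitz equivalent (with constants controlled by $\lambda_1,\dots,\lambda_N$) to the single pseudometric $\rho_N$ pulled back from $Y_N$. First I would record the easy half: since each coordinate projection $Y\to Y_j$ is a factor map and $\lambda_j\rho_j\le\rho$ (after pulling $\rho_j$ back to $Y$), one has $h^\varepsilon_{\Sigma,\omega,\infty}(\lambda_j\rho_j|X)\le h^\varepsilon_{\Sigma,\omega,\infty}(\rho|X)$ for every $j$, whence, taking the appropriate normalized $\varliminf_{\varepsilon\to0}$ and using that scaling a pseudometric by a constant does not change the metric mean dimension (it only shifts $\log\varepsilon$ by a constant), $\mdim_{\Sigma,\omega,\rM}(Y_j,\rho_j|X)\le\mdim_{\Sigma,\omega,\rM}(Y,\rho|X)$ for all $j$. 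Also $\mdim_{\Sigma,\omega,\rM}(Y_j,\rho_j|X)$ is nondecreasing in $j$ because of the hypothesis $\rho_j(p_j(y),p_j(z))\le\rho_{j+1}(y,z)$, so the limit on the right-hand side exists and is $\le\mdim_{\Sigma,\omega,\rM}(Y,\rho|X)$.

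For the reverse inequality, fix $\varepsilon>0$. I would split $\rho=\max(\max_{j\le N}\lambda_j\rho_j,\ \sup_{j>N}\lambda_j\rho_j)$ and choose $N=N(\varepsilon)$ and then $\lambda_{N+1},\lambda_{N+2},\dots$ so small (using compactness of $Y$, hence boundedness of each $\rho_j$, and a diagonal/recursive choice of the $\lambda_j$ made once and for all before $\varepsilon$ is chosen — this is the standard device in \cite[Lemma 7.7]{LL}) that $\sup_{j>N}\lambda_j\rho_j<\varepsilon/2$ uniformly on $Y\times Y$. Then a $\rho$-spanning argument shows $N_\varepsilon(\Map(\pi,\rho_X,F,\delta,\sigma),\rho_\infty)\le N_{\varepsilon/2}(\Map(\pi_N,\rho_X,F,\delta,\sigma),(\lambda_N\rho_N)_\infty)$, because two points of $Y^d$ that are $(\lambda_N\rho_N)_\infty$-close within $\varepsilon/2$ are automatically $\rho_\infty$-close within $\varepsilon$ once the tail is below $\varepsilon/2$ and each $\lambda_j\rho_j$ for $j<N$ is dominated by $\lambda_N\rho_N\le\dots$; here one uses $\lambda_j$ decreasing and $\rho_j\le\rho_N$ in the relevant sense — actually the clean statement is that $\max_{j\le N}\lambda_j\rho_j$ and $\lambda_N\rho_N$ are comparable only after noting $\rho_j(p\cdots)\le\rho_N$, so I would instead bound by $N_{\varepsilon/2}$ of the map space measured in $\max_{j\le N}\lambda_j\rho_j$ and then observe this finite max is bi-Lipschitz to a metric on $Y_N$ via the compatible projections, giving $h^\varepsilon_{\Sigma,\omega,\infty}(\rho|X)\le h^{\varepsilon/2}_{\Sigma,\omega,\infty}(c\rho_N|X)$ for a constant $c$ depending on $\lambda_1,\dots,\lambda_N$ only. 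Dividing by $|\log\varepsilon|$ and letting $\varepsilon\to0$, the constant $c$ and the factor $2$ disappear in the limit, and since $N(\varepsilon)\to\infty$ one obtains $\mdim_{\Sigma,\omega,\rM}(Y,\rho|X)\le\lim_{j\to\infty}\mdim_{\Sigma,\omega,\rM}(Y_j,\rho_j|X)$.

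The main obstacle is the order of quantifiers: the weights $\{\lambda_j\}$ must be fixed \emph{before} $\varepsilon$ varies, yet the bound for a given $\varepsilon$ needs the tail $\sum_{j>N(\varepsilon)}\lambda_j\mathrm{diam}(\rho_j)$ to be $<\varepsilon/2$ with $N(\varepsilon)\to\infty$ as $\varepsilon\to0$. This is handled by the usual recursive construction: pick an auxiliary sequence $\varepsilon_k\downarrow0$, set $N_k\uparrow\infty$, and choose $\lambda_{N_{k}+1},\dots,\lambda_{N_{k+1}}$ small enough that their total contribution is $<\varepsilon_k/2$; then for arbitrary $\varepsilon$ take $k$ with $\varepsilon_{k+1}\le\varepsilon<\varepsilon_k$ and use $N=N_k$. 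One must also verify that $\Map(\pi_N,\rho_X,F,\delta,\sigma)$ is exactly the image of $\Map(\pi,\rho_X,F,\delta,\sigma)$ under the coordinate map $Y^d\to Y_N^d$ so that covering numbers of the pushed-forward set control those upstairs in the right direction — this is immediate since $\pi_N=q_N\circ\pi$ and $q_N$ is $1$-Lipschitz from $(Y,\rho)$ to $(Y_N,\lambda_N\rho_N)$ up to the constant $\lambda_N$. Everything else is bookkeeping with the definitions in Definition~\ref{D-relative metric mdim} and the elementary fact that rescaling a pseudometric by a fixed positive constant leaves the metric mean dimension unchanged.
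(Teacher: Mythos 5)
Your skeleton is the same as the paper's: make the tail $\sup_{j>N}\lambda_j\rho_j$ small, compare $\rho$ with the truncated pseudometric $\rho'_N$ on $Y_N$ obtained by pulling back $\max_{j\le N}\lambda_j\rho_j$ along the maps $Y_N\to Y_j$, note $\rho'_N\le\rho_N$ (in fact $\lambda_N\rho_N\le\rho'_N\le\rho_N$, so no bi-Lipschitz constant $c$ is needed: a $((\rho'_N)_\infty,\varepsilon)$-separated set is already $((\rho_N)_\infty,\varepsilon)$-separated at the same scale), and use $\lambda_j\rho_j\circ q_j\le\rho$ plus scaling invariance for the easy inequality $\mdim_{\Sigma, \omega, \rM}(Y_j,\rho_j|X)\le \mdim_{\Sigma, \omega, \rM}(Y,\rho|X)$ and monotonicity in $j$; that half is fine. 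The gap is in the final limiting step of the reverse inequality. You fix an \emph{arbitrary} auxiliary sequence $\varepsilon_k\downarrow 0$ first and then shrink the later $\lambda_j$ so that the tails beat $\varepsilon_k$. But $\mdim_{\Sigma, \omega, \rM}(\cdot|X)$ is a $\varliminf_{\varepsilon\to 0}$ of $\frac{1}{|\log\varepsilon|}h^{\varepsilon}_{\Sigma, \omega, \infty}$, so from the comparison $h^{\varepsilon}_{\Sigma, \omega, \infty}(\rho|X)\le h^{\varepsilon/2}_{\Sigma, \omega, \infty}(c\,\rho_{N(\varepsilon)}|X)$, valid only at the $\varepsilon$'s in your windows, you cannot conclude $\mdim_{\Sigma, \omega, \rM}(Y,\rho|X)\le\lim_j \mdim_{\Sigma, \omega, \rM}(Y_j,\rho_j|X)$: for a fixed $j$ the liminf defining $\mdim_{\Sigma, \omega, \rM}(Y_j,\rho_j|X)$ may be approached only at scales far below your window for $N_j$ — indeed below the fixed positive tail $\sup_{i>N_j}\lambda_i\diam(Y_i,\rho_i)$ — while at the scales you actually use, $\frac{1}{|\log\varepsilon|}h^{\varepsilon}_{\Sigma, \omega, \infty}(\rho_{N_j}|X)$ may exceed $\mdim_{\Sigma, \omega, \rM}(Y_j,\rho_j|X)$ by an arbitrary amount. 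The sentence ``since $N(\varepsilon)\to\infty$ one obtains \dots'' is exactly the unjustified exchange of the liminf in $\varepsilon$ with the dependence of $N$ on $\varepsilon$.

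The paper resolves this by reversing the order of the two choices. Once $\lambda_1,\dots,\lambda_k$ are fixed, $\rho'_k$ is determined, so one may pick $\varepsilon_k<\frac1k$ \emph{adapted to} $\rho'_k$, namely with $\frac{1}{|\log\varepsilon_k|}h^{\varepsilon_k}_{\Sigma, \omega, \infty}(\rho'_k|X)\le \mdim_{\Sigma, \omega, \rM}(Y_k,\rho'_k|X)+\frac1k\le \mdim_{\Sigma, \omega, \rM}(Y_k,\rho_k|X)+\frac1k$ (the last step by $\rho'_k\le\rho_k$), and only afterwards require the later weights to satisfy $\sup_{j>k}\lambda_j\diam(Y_j,\rho_j)<\varepsilon_k$; this recursion is consistent precisely because $\varepsilon_k$ depends only on $\lambda_1,\dots,\lambda_k$. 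Then the pointwise bound $\rho(y,z)\le\max\bigl(\rho'_k(q_k(y),q_k(z)),\,\sup_{j>k}\lambda_j\diam(Y_j,\rho_j)\bigr)$ transfers $(\rho_\infty,\varepsilon_k)$-separated subsets of $\Map(\pi,\rho_X,F,\delta,\sigma)$ to $((\rho'_k)_\infty,\varepsilon_k)$-separated subsets of $\Map(\pi_k,\rho_X,F,\delta,\sigma)$, giving $\frac{1}{|\log\varepsilon_k|}h^{\varepsilon_k}_{\Sigma, \omega, \infty}(\rho|X)\le \mdim_{\Sigma, \omega, \rM}(Y_k,\rho_k|X)+\frac1k$, and the liminf along $\varepsilon_k\to 0$ finishes. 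If you replace your ``auxiliary sequence'' by this adapted choice (your window bookkeeping $N=N_k$ versus $N_{k+1}$ is a separate, harmless slip), the rest of your outline goes through and coincides with the paper's argument.
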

\begin{proof} We shall require that $\lambda_j\diam(Y_j, \rho_j)<1/2^j$ for all $j\in \Nb$, which implies that $\rho$ is a continuous pseudometric on $Y$. We shall also require $\lambda_{j+1}\le \lambda_j\le 1$ for all $j\in \Nb$.

Let $\rho_X$ be a compatible metric of $X$.

Let $k\in \Nb$. Define a continuous pseudometric $\rho'_k$ of $Y_k$ by
$$\rho'_k(y, z)=\max_{1\le j\le k}\lambda_j\rho_j(p_j\circ p_{j+1}\circ \cdots \circ p_{k-1}(y), p_j\circ p_{j+1}\circ \cdots \circ p_{k-1}(z)).$$
Then $\rho'_k\le \rho_k$.
Thus
$$ \mdim_{\Sigma, \omega, \rM}(Y_k, \rho'_k|X)\le \mdim_{\Sigma, \omega, \rM}(Y_k, \rho_k|X).$$
Then we can find some $0<\varepsilon_k<\frac{1}{k}$ such that
$$ \frac{1}{|\log \varepsilon_k|}h^{\varepsilon_k}_{\Sigma, \omega, \infty}(\rho'_k|X)\le \mdim_{\Sigma, \omega, \rM}(Y_k, \rho'_k|X)+\frac{1}{k}\le \mdim_{\Sigma, \omega, \rM}(Y_k, \rho_k|X)+\frac{1}{k}.$$
Denote by $q_k$ the natural factor map $Y\rightarrow Y_k$.
Note that
$$ \rho(y, z)\le \max(\rho'_k(q_k(y), q_k(z)), \max_{j>k}\lambda_j\diam(Y_j, \rho_j))$$
for all $y, z\in Y$. It follows that for any $d\in \Nb$, one has
$$ \rho_\infty(\varphi, \psi)\le \max((\rho'_k)_\infty(q_k\circ \varphi, q_k\circ \psi), \max_{j>k}\lambda_j\diam(Y_j, \rho_j))$$
for all $\varphi, \psi\in Y^d$. Thus for any $\varepsilon>\max_{j>k}\lambda_j\diam(Y_j, \rho_j)$, if $\Phi\subseteq Y^d$ is $(\rho_\infty, \varepsilon)$-separated, then
$q_k\circ \Phi:=\{q_k\circ \varphi: \varphi\in \Phi\}$ is $((\rho'_k)_\infty, \varepsilon)$-separated. Therefore, if $\max_{j>k}\lambda_j\diam(Y_j, \rho_j)<\varepsilon_k$, then
$$ N_{\varepsilon_k}(\Map(\pi, \rho_X, F, \delta, \sigma), \rho_\infty)\le N_{\varepsilon_k}(\Map(\pi_k, \rho_X, F, \delta, \sigma), (\rho'_k)_\infty)$$
for all $F\in \cF(\Gamma)$, $\delta>0$ and $\sigma: \Gamma\rightarrow \Sym(d)$, and hence
\begin{align*}
\frac{1}{|\log \varepsilon_k|}h^{\varepsilon_k}_{\Sigma, \omega, \infty}(\rho|X)&\le \frac{1}{|\log \varepsilon_k|}h^{\varepsilon_k}_{\Sigma, \omega, \infty}(\rho'_k|X)\le \mdim_{\Sigma, \omega, \rM}(Y_k, \rho_k|X)+\frac{1}{k}.
\end{align*}

Now we require further that $\max_{j>k}\lambda_j\diam(Y_j, \rho_j)<\varepsilon_k$ for all $k\in \Nb$. Since we can choose $\varepsilon_k$ once $\lambda_1, \dots, \lambda_k$ are given, by induction such a sequence $\{\lambda_j\}_{j\in \Nb}$ exists. Then
\begin{align*}
\mdim_{\Sigma, \omega, \rM}(Y, \rho|X)&\le \varliminf_{k\to \infty}\frac{1}{|\log \varepsilon_k|}h^{\varepsilon_k}_{\Sigma, \omega, \infty}(\rho|X)\\
&\le \varliminf_{k\to \infty}(\mdim_{\Sigma, \omega, \rM}(Y_k, \rho_k|X)+\frac{1}{k})\\
&=\varliminf_{k\to \infty}\mdim_{\Sigma, \omega, \rM}(Y_k, \rho_k|X).
\end{align*}

For any $k\in \Nb$, $\varepsilon>0$, $F\in \cF(\Gamma)$, $\delta>0$ and $\sigma: \Gamma\rightarrow \Sym(d)$, it is clear that
$$N_{\lambda_k\varepsilon}(\Map(\pi, \rho_X, F, \delta, \sigma), \rho_\infty)\ge  N_\varepsilon(\Map(\pi_k, \rho_X, F, \delta, \sigma), (\rho_k)_\infty),$$
and hence
$$h^{\lambda_k\varepsilon}_{\Sigma, \omega, \infty}(\rho|X)\ge h^\varepsilon_{\Sigma, \omega, \infty}(\rho_k|X).$$
It follows that $\mdim_{\Sigma, \omega, \rM}(Y, \rho|X)\ge \mdim_{\Sigma, \omega, \rM}(Y_k, \rho_k|X)$. Similarly, one has
$\mdim_{\Sigma, \omega, \rM}(Y_j, \rho_j|X)\ge \mdim_{\Sigma, \omega, \rM}(Y_k, \rho_k|X)$ for all $j\ge k$. Consequently,
$\mdim_{\Sigma, \omega, \rM}(Y, \rho|X)= \lim_{k\to \infty}\mdim_{\Sigma, \omega, \rM}(Y_k, \rho_k|X)$.
\end{proof}

To round this section, we discuss what happens when $\Gamma$ is  amenable.

We recall first the definition of the metric mean dimension of $\Gamma\curvearrowright X$ for amenable groups $\Gamma$ in \cite{LW}. Let $\rho$ be a continuous pseudometric of $X$.
For a finite open cover $\cU$ of $X$, we define the {\it mesh of $\cU$ with respect to $\rho$} as
$$ \mesh(\cU, \rho):=\max_{U\in \cU}\diam(U, \rho).$$
For a nonempty finite subset $F$ of $\Gamma$, we define a continuous pseudometric $\rho_F$ of $X$ by
$$ \rho_F(x_1, x_2):=\max_{s\in F}\rho(sx_1, sx_2).$$
For any $\varepsilon>0$, the function $F\mapsto \log \min_{\mesh(\cU, \rho_F)<\varepsilon}|\cU|$ defined on the set of all nonempty finite subsets of $\Gamma$ satisfies the conditions of the Ornstein-Weiss lemma \cite{OW} \cite[Theorem 6.1]{LW}, whence $\frac{1}{|F|}\log \min_{\mesh(\cU, \rho_F)<\varepsilon}|\cU|$ converges to some real number, denoted by $S(X, \varepsilon, \rho)$, as $F$ becomes more and more left invariant. The {\it metric mean dimension of $\Gamma\curvearrowright X$ with respect to $\rho$} \cite[page 13]{LW} is defined as
$$\mdim_\rM(X, \rho):=\varliminf_{\varepsilon\to 0}\frac{S(X, \varepsilon, \rho)}{|\log \varepsilon|}.$$
The {\it metric mean dimension of $\Gamma\curvearrowright X$} is defined as
$$\mdim_\rM(X):=\inf_\rho \mdim_\rM(X, \rho)$$
for $\rho$ ranging over all compatible metrics of $X$.

The following lemma is the relative version of \cite[Lemma 5.2]{Li}. The proof of \cite[Lemma 5.2]{Li} also works here.

\begin{lemma} \label{L-metric amenable case}
Suppose that $\Gamma$ is amenable. Let $\rho_Y$ be a continuous pseudometric on $Y$. Then $\mdim_{\Sigma, \omega, \rM}(Y, \rho_Y|X)\ge \mdim_\rM(Y, \rho_Y)$.
\end{lemma}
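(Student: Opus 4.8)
The plan is to carry over the proof of \cite[Lemma 5.2]{Li}, the only new point being that whenever that argument needs an approximately equivariant map into the space, we produce one by choosing $\varepsilon$-separated points of $Y$ and lifting them to $X$ through $\pi$. For $F\in\cF(\Gamma)$ write $(\rho_Y)_F(y_1,y_2)=\max_{s\in F}\rho_Y(sy_1,sy_2)$. The open $\tfrac{3\varepsilon}{2}$-balls about a maximal $((\rho_Y)_F,\varepsilon)$-separated set of $Y$ form an open cover of $(\rho_Y)_F$-mesh $<3\varepsilon$, so $N_\varepsilon(Y,(\rho_Y)_F)\ge\min_{\mesh(\cU,(\rho_Y)_F)<3\varepsilon}|\cU|$; hence, by the definition of $S(Y,3\varepsilon,\rho_Y)$, for each $\eta>0$ every sufficiently left invariant $F$ satisfies $\tfrac{1}{|F|}\log N_\varepsilon(Y,(\rho_Y)_F)\ge S(Y,3\varepsilon,\rho_Y)-\eta$. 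Since $|\log 3\varepsilon|/|\log\varepsilon|\to1$ as $\varepsilon\to0$, in view of Definition~\ref{D-relative metric mdim} it therefore suffices to prove that $h^\varepsilon_{\Sigma,\omega,\infty}(\rho_Y|X)\ge S(Y,3\varepsilon,\rho_Y)$ for every $\varepsilon>0$, as this yields $\mdim_{\Sigma,\omega,\rM}(Y,\rho_Y|X)=\varliminf_{\varepsilon\to0}\tfrac{1}{|\log\varepsilon|}h^\varepsilon_{\Sigma,\omega,\infty}(\rho_Y|X)\ge\varliminf_{\varepsilon\to0}\tfrac{S(Y,3\varepsilon,\rho_Y)}{|\log\varepsilon|}=\mdim_\rM(Y,\rho_Y)$.

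To prove this, fix $F\in\cF(\Gamma)$, $\delta>0$, $\eta>0$ and $\varepsilon'\in(0,1/2)$ with $(2+|F|)\varepsilon'\diam(X,\rho_X)^2<\delta^2$. I would apply Lemma~\ref{L-Rokhlin} with parameter $\varepsilon'$ and with $K\in\cF(\Gamma)$ chosen so large that $K\supseteq F$ and every $(K,\varepsilon')$-invariant $F'\in\cF(\Gamma)$ has $\tfrac{1}{|F'|}\log N_\varepsilon(Y,(\rho_Y)_{F'})\ge S(Y,3\varepsilon,\rho_Y)-\eta$; this gives $\ell\in\Nb$ and shapes $F_1,\dots,F_\ell\in\cF(\Gamma)$, each $(K,\varepsilon')$-invariant. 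For each $k$ I fix a maximal $((\rho_Y)_{F_k},\varepsilon)$-separated set $E_k\subseteq Y$ and, using surjectivity of $\pi$, a set $\widehat{E_k}\subseteq X$ that $\pi$ maps bijectively onto $E_k$; I also fix some $x_0\in X$. Given a sufficiently good sofic approximation $\sigma\colon\Gamma\to\Sym(d)$, put $\cW=\{v\in[d]:s(tv)=(st)v\text{ for all }s\in F,\ t\in\bigcup_{k}F_k\}$, so that $|\cW|\ge(1-\varepsilon')d$, and take $\cC_1,\dots,\cC_\ell\subseteq\cW$ as in Lemma~\ref{L-Rokhlin}: the sets $F_k\cC_k$ are pairwise disjoint, each $(s,c)\mapsto sc$ is injective on $F_k\times\cC_k$, and $\bigl|\bigcup_k F_k\cC_k\bigr|\ge(1-2\varepsilon')d$.

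Then for each function $\xi$ that sends every $(k,c)$ with $c\in\cC_k$ to an element $\xi(k,c)\in\widehat{E_k}$, I define $\varphi_\xi\colon[d]\to X$ by $\varphi_\xi(sc)=s\,\xi(k,c)$ for $c\in\cC_k,\ s\in F_k$ (well defined by injectivity and disjointness), and $\varphi_\xi(v)=x_0$ otherwise. I would check $\varphi_\xi\in\Map(\rho_X,F,\delta,\sigma)$: for $s\in F$, a coordinate $v=tc\in F_k\cC_k$ with $st\in F_k$ has $sv=(st)c$ (as $c\in\cW$), hence $\varphi_\xi(sv)=(st)\xi(k,c)=s\varphi_\xi(v)$, contributing $0$ to $\rho_{X,2}(s\varphi_\xi,\varphi_\xi s)$; the remaining coordinates number at most $2\varepsilon'd+\sum_k\bigl|\bigcup_{s\in F}(F_k\setminus s^{-1}F_k)\bigr|\,|\cC_k|\le(2+|F|)\varepsilon'd$ by the $(K,\varepsilon')$-invariance of the shapes, so $\rho_{X,2}(s\varphi_\xi,\varphi_\xi s)^2\le(2+|F|)\varepsilon'\diam(X,\rho_X)^2<\delta^2$; thus $\pi\circ\varphi_\xi\in\Map(\pi,\rho_X,F,\delta,\sigma)$. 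If $\xi\ne\xi'$, pick $(k,c)$ with $\xi(k,c)\ne\xi'(k,c)$; then $\pi(\xi(k,c)),\pi(\xi'(k,c))\in E_k$ are distinct, so $\rho_Y(s\pi(\xi(k,c)),s\pi(\xi'(k,c)))>\varepsilon$ for some $s\in F_k$, and as $\pi(\varphi_\xi(sc))=s\pi(\xi(k,c))$ this gives $\rho_{Y,\infty}(\pi\circ\varphi_\xi,\pi\circ\varphi_{\xi'})>\varepsilon$. Hence $\Map(\pi,\rho_X,F,\delta,\sigma)$ contains $\prod_k|E_k|^{|\cC_k|}$ pairwise $(\rho_{Y,\infty},\varepsilon)$-separated points, so that
\[
\tfrac1d\log N_\varepsilon\bigl(\Map(\pi,\rho_X,F,\delta,\sigma),\rho_{Y,\infty}\bigr)\ge\tfrac1d\sum_k|\cC_k|\log N_\varepsilon(Y,(\rho_Y)_{F_k})\ge(1-2\varepsilon')\bigl(S(Y,3\varepsilon,\rho_Y)-\eta\bigr),
\]
using $\log N_\varepsilon(Y,(\rho_Y)_{F_k})\ge|F_k|(S(Y,3\varepsilon,\rho_Y)-\eta)$ and $\sum_k|\cC_k||F_k|\ge(1-2\varepsilon')d$. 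This holds for all sufficiently good $\sigma$, hence for $\sigma_i$ with $i$ in a member of $\omega$, so $h^\varepsilon_{\Sigma,\omega,\infty}(\rho_Y,F,\delta|\rho_X)\ge(1-2\varepsilon')(S(Y,3\varepsilon,\rho_Y)-\eta)$; taking $\varepsilon',\eta\to0$ and then the infimum over $F,\delta$ gives $h^\varepsilon_{\Sigma,\omega,\infty}(\rho_Y|X)\ge S(Y,3\varepsilon,\rho_Y)$.

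The main obstacle is the bookkeeping in the third paragraph: one must choose $K$, $\varepsilon'$ and the quality of the sofic approximation so that the tile boundaries, the leftover set $[d]\setminus\bigcup_k F_k\cC_k$ and the defects of $\sigma$ together occupy a negligible fraction of $[d]$ — simultaneously small enough for membership in $\Map(\pi,\rho_X,F,\delta,\sigma)$ at the prescribed $\delta$ and small enough to keep the number of maps $\varphi_\xi$ exponentially large. This is exactly what is carried out in \cite[Lemma 5.2]{Li}; the passage to the relative setting introduces no genuinely new difficulty, since lifting an $\varepsilon$-separated subset of $Y$ through the surjective $\Gamma$-equivariant map $\pi$ and then pushing forward by $\pi$ preserves both approximate equivariance and $(\rho_{Y,\infty},\varepsilon)$-separation.
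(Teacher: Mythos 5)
Your proof is correct and follows essentially the route the paper intends: it carries out the quasi-tiling argument of \cite[Lemma 5.2]{Li} via Lemma~\ref{L-Rokhlin} (in the same way the paper does for mean length in Lemma~\ref{L-amenable mean length lower}), with the relative setting handled correctly by lifting $((\rho_Y)_{F_k},\varepsilon)$-separated points through the surjective equivariant map $\pi$, assembling approximately equivariant maps into $X$ over the tiles, and pushing them forward to get $(\rho_{Y,\infty},\varepsilon)$-separated elements of $\Map(\pi,\rho_X,F,\delta,\sigma)$. The only slip is a quantifier ordering in the choice of parameters: the Ornstein--Weiss convergence furnishes a pair $(K_0,\delta_0)$ for the tolerance $\eta$, so you should also require $\varepsilon'\le\delta_0$ (besides the diameter constraint) and take $K\supseteq K_0\cup F$, rather than asking a large $K$ alone to make every $(K,\varepsilon')$-invariant shape satisfy the covering-number bound for your already-fixed $\varepsilon'$; since $\varepsilon'$ is sent to $0$ at the end anyway, this changes nothing in the argument.
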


By Proposition~\ref{P-metric compare with absolute} and \cite[Lemma 5.3]{Li} when $\Gamma$ is amenable, for any continuous pseudometric $\rho_Y$ on $Y$ we have
$$\mdim_{\Sigma, \omega, \rM}(Y, \rho_Y|X)\le \mdim_{\Sigma, \omega, \rM}(Y, \rho_Y)\le \mdim_\rM(Y, \rho_Y).$$
Combining these inequalities with Lemma~\ref{L-metric amenable case}, we get

\begin{theorem} \label{T-metric amenable case}
Suppose that $\Gamma$ is amenable. Then
$$\mdim_{\Sigma, \omega, \rM}(Y, \rho_Y|X)=\mdim_{\Sigma, \omega, \rM}(Y, \rho_Y)=\mdim_\rM(Y, \rho_Y)$$
for every continuous pseudometric $\rho_Y$ on $Y$. In particular,
$$\mdim_{\Sigma, \omega, \rM}(Y|X)=\mdim_{\Sigma, \omega, \rM}(Y)=\mdim_\rM(Y).$$
\end{theorem}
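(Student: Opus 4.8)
The plan is to obtain the theorem by simply chaining together the inequalities that have already been assembled, so the proof will be short. Recall that we are assuming $\Gamma$ amenable and that $\rho_Y$ is an arbitrary continuous pseudometric on $Y$. From Proposition~\ref{P-metric compare with absolute} together with \cite[Lemma 5.3]{Li} one has
$$\mdim_{\Sigma, \omega, \rM}(Y, \rho_Y|X)\le \mdim_{\Sigma, \omega, \rM}(Y, \rho_Y)\le \mdim_\rM(Y, \rho_Y),$$
while Lemma~\ref{L-metric amenable case} supplies the reverse estimate $\mdim_{\Sigma, \omega, \rM}(Y, \rho_Y|X)\ge \mdim_\rM(Y, \rho_Y)$. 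Putting these together forces
$$\mdim_{\Sigma, \omega, \rM}(Y, \rho_Y|X)=\mdim_{\Sigma, \omega, \rM}(Y, \rho_Y)=\mdim_\rM(Y, \rho_Y),$$
which is the first assertion of the theorem.

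For the ``in particular'' part, I would note that the three absolute invariants $\mdim_{\Sigma, \omega, \rM}(Y|X)$, $\mdim_{\Sigma, \omega, \rM}(Y)$ and $\mdim_\rM(Y)$ are by definition (Definition~\ref{D-relative metric mdim}, Definition~\ref{D-entropy}, and the recalled definition of $\mdim_\rM$) the infima of $\mdim_{\Sigma, \omega, \rM}(Y, \rho_Y|X)$, $\mdim_{\Sigma, \omega, \rM}(Y, \rho_Y)$ and $\mdim_\rM(Y, \rho_Y)$ respectively, in each case over $\rho_Y$ ranging over \emph{the same} class of compatible metrics on $Y$. Since by the first part these three pseudometric-level quantities coincide for every such $\rho_Y$, taking the infimum over $\rho_Y$ immediately gives the equality of the three absolute invariants.

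There is essentially no obstacle in this argument itself: all the substantive work lies upstream, in Lemma~\ref{L-metric amenable case} (the relative analogue of \cite[Lemma 5.2]{Li}, where the amenable Ornstein--Weiss machinery enters) and in Proposition~\ref{P-metric compare with absolute} combined with \cite[Lemma 5.3]{Li}. The only point that warrants a moment's care is that the infima defining the three absolute invariants are taken over exactly the same family of compatible metrics on $Y$, so that the pointwise (per-pseudometric) equalities pass to the infima; this is immediate from the definitions.
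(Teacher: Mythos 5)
Your proposal is correct and coincides with the paper's own argument: the paper likewise combines Proposition~\ref{P-metric compare with absolute} and \cite[Lemma 5.3]{Li} (giving $\mdim_{\Sigma, \omega, \rM}(Y, \rho_Y|X)\le \mdim_{\Sigma, \omega, \rM}(Y, \rho_Y)\le \mdim_\rM(Y, \rho_Y)$) with Lemma~\ref{L-metric amenable case} to force the pseudometric-level equalities, and the ``in particular'' part follows by taking the infimum over the same family of compatible metrics. No gaps.
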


\section{Mean dimension and mean rank} \label{S-mdim vs mrk}

In this section we prove Theorem~\ref{T-mdim vs mrk}.
Throughout the rest of this paper,  we take $R=\Zb$, and consider the length function on $\Zb$-modules given by $\rk(\sM)=\dim_{\Qb}(\Qb\otimes_{\Zb}\sM)$.

For any countable $\Zb\Gamma$-module $\cM$, its Pontryagin dual $\widehat{\cM}$ consisting of all group homomorphisms $\cM\rightarrow \Rb/\Zb$ is a compact metrizable abelian group, and $\Gamma$ has an induced action on $\widehat{\cM}$ by continuous automorphisms. Explicitly,
$$ (sx)(a)=x(s^{-1}a)$$
for all $x\in \widehat{\cM}$, $a\in \cM$ and $s\in \Gamma$. In fact, all the actions of $\Gamma$ on compact metrizable abelian groups by continuous automorphisms, called {\it algebraic actions}, arise this way \cite{Schmidt}. A pseudometric $\rho$ on a  group $X$  is called {\it translation-invariant} if $\rho(x_1x_3, x_2x_3)=\rho(x_3x_1, x_3x_2)=\rho(x_1, x_2)$ for all $x_1, x_2, x_3\in X$.

\begin{theorem} \label{T-mdim vs mrk}
Let $\cM_1\subseteq \cM_2$ be countable $\Zb\Gamma$-modules. Then
$$\mdim_{\Sigma, \omega, \rM}(\widehat{\cM_1}|\widehat{\cM_2})=\mdim_{\Sigma, \omega}(\widehat{\cM_1}|\widehat{\cM_2})=\underline{\mdim}_{\Sigma, \omega}(\widehat{\cM_1}|\widehat{\cM_2})=\mrk_{\Sigma, \omega}(\cM_1|\cM_2).$$
Furthermore, there exists a translation-invariant compatible  metric $\rho$ on $\widehat{\cM_1}$ with
\begin{align*}
\mdim_{\Sigma, \omega, \rM}(\widehat{\cM_1}, \rho|\widehat{\cM_2})=\mdim_{\Sigma, \omega}(\widehat{\cM_1}|\widehat{\cM_2}).
\end{align*}
\end{theorem}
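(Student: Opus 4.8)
The plan is to run a sandwich argument between the (relative) mean rank and the (relative) metric mean dimension. By Proposition~\ref{P-compare with absolute} and Theorem~\ref{T-comparison}, for every compatible metric $\rho$ on $\widehat{\cM_1}$ one always has
$$\underline{\mdim}_{\Sigma,\omega}(\widehat{\cM_1}|\widehat{\cM_2})\le \mdim_{\Sigma,\omega}(\widehat{\cM_1}|\widehat{\cM_2})\le \mdim_{\Sigma,\omega,\rM}(\widehat{\cM_1}|\widehat{\cM_2})\le \mdim_{\Sigma,\omega,\rM}(\widehat{\cM_1},\rho|\widehat{\cM_2}).$$
Thus it suffices to prove the two bounds $\mrk_{\Sigma,\omega}(\cM_1|\cM_2)\le \underline{\mdim}_{\Sigma,\omega}(\widehat{\cM_1}|\widehat{\cM_2})$ and $\mdim_{\Sigma,\omega,\rM}(\widehat{\cM_1},\rho|\widehat{\cM_2})\le \mrk_{\Sigma,\omega}(\cM_1|\cM_2)$ for a suitably chosen translation-invariant compatible metric $\rho$ on $\widehat{\cM_1}$. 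Together these force all four quantities to coincide, and since $\mdim_{\Sigma,\omega}(\widehat{\cM_1},\rho|\widehat{\cM_2})=\mdim_{\Sigma,\omega}(\widehat{\cM_1}|\widehat{\cM_2})$ is independent of $\rho$, the chosen $\rho$ also yields the ``Furthermore'' clause.

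The next step is a routine reduction of both bounds to the case $\cM_2=(\Zb\Gamma)^{1\times n}$ free of finite rank and $\cM_1=(\Zb\Gamma)^{1\times m}f$ for some $f\in M_{m,n}(\Zb\Gamma)$, which is exactly the situation of Proposition~\ref{P-principal mean rank} (applicable since $\rk(\Zb)=1<\infty$). On the algebraic side one writes $\cM_1$ as the increasing union of its finitely generated $\Zb\Gamma$-submodules and uses Proposition~\ref{P-continuity} and Lemma~\ref{L-mean length generating}, then shrinks $\cM_2$ to finitely generated submodules containing $\cM_1$ via Proposition~\ref{P-continuity}, lifts to a finitely generated free module and invokes the addition formula of Theorem~\ref{T-addition for mean length}; this bookkeeping can be organized through Lemma~\ref{L-unique}, whose hypotheses $\mrk_{\Sigma,\omega}$ is already known to satisfy. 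On the dynamical side, Pontryagin duality turns increasing unions of submodules into inverse limits of factor maps, so $\widehat{\cM_1}=\varprojlim\widehat{\cM_1'}$ over finitely generated $\cM_1'\subseteq\cM_1$ and likewise $\widehat{\cM_2}=\varprojlim\widehat{\cM_2'}$; Proposition~\ref{P-topology inverse limit} then controls $\underline{\mdim}_{\Sigma,\omega}$ and $\mdim_{\Sigma,\omega}$ under these limits, and Lemma~\ref{L-metric inverse limit} controls $\mdim_{\Sigma,\omega,\rM}$. Crucially, Lemma~\ref{L-metric inverse limit} also \emph{produces} the metric $\rho$ on $\widehat{\cM_1}$ by assembling translation-invariant metrics from the finite stages, each stage being a closed $\Gamma$-invariant subgroup of a finite product of circles on which one puts a weighted coordinatewise metric in the spirit of Lemma~\ref{L-pseudometric to metric}; the resulting $\rho$ is translation-invariant.

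For the finitely presented core, identify $\widehat{(\Zb\Gamma)^{1\times n}}$ with $(\Tb^n)^{\Gamma}$ carrying the shift action, fix the dynamically generating continuous pseudometric $\rho_X$ on $\widehat{\cM_2}$ reading off only the $e_\Gamma$-indexed coordinates, and use on $\widehat{\cM_1}$ the dynamically generating pseudometric $\rho_Y$ obtained by pairing with the rows $g_1,\dots,g_m$ of $f$ (which generate $\cM_1$). For a good sofic approximation $\sigma_i:\Gamma\to\Sym(d_i)$ and small $\delta$, approximate equivariance lets one write each $\varphi\in\Map(\rho_X,F,\delta,\sigma_i)$, up to $O(\delta)$ in $(\rho_X)_\infty$, as the ``lattice of $\sigma_i$-translates'' of a single $w\in(\Tb^{d_i})^{1\times n}$, and (under the identifications of Section~\ref{S-fg submodule}) $\pi\circ\varphi$ measured in $(\rho_Y)_\infty$ is then within $O(\delta)$ of the image of $w$ under the integral linear map $\bar{\sigma}_{i,f}$ of \eqref{E-linear map2}, while conversely every $\bar{\sigma}_{i,f}(w)$ arises this way. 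Hence $\Map(\pi,\rho_X,F,\delta,\sigma_i)$ is, up to $O(\delta)$ Hausdorff error, the closed subgroup $\overline{\bar{\sigma}_{i,f}\big((\Tb^{d_i})^{1\times n}\big)}$ of $(\Tb^m)^{d_i}$, whose covering dimension equals $\rk(\im\bar{\sigma}_{i,f})$. Feeding this into the definitions gives, on the one hand, that the number of $(\rho_Y)_\infty$-separated points is between $c\,\varepsilon^{-\rk(\im\bar{\sigma}_{i,f})}$ and $e^{o(d_i)}\varepsilon^{-\rk(\im\bar{\sigma}_{i,f})}$ — by the standard metric-entropy estimates for integral linear images of tori used in proofs of Pontryagin--Schnirelmann-type theorems \cite{LW,CK,Li,LL} — so that after dividing by $d_i|\log\varepsilon|$ and letting $i\to\omega$ then $\varepsilon\to0$ one gets $\mdim_{\Sigma,\omega,\rM}(\widehat{\cM_1},\rho|\widehat{\cM_2})=\lim_{i\to\omega}\rk(\im\bar{\sigma}_{i,f})/d_i$; and on the other hand, pulling back a coordinate open cover of a torus factor yields covers $\cU$ of $\widehat{\cM_1}$ whose pullbacks have $\cD\big(\pi^{-1}(\cU)^{d_i}|_{\Map}\big)\ge \rk(\im\bar{\sigma}_{i,f})-o(d_i)$, since a $k$-torus admits no finite open cover refinable to order $<k$, giving $\underline{\mdim}_{\Sigma,\omega}(\widehat{\cM_1}|\widehat{\cM_2})\ge\lim_{i\to\omega}\rk(\im\bar{\sigma}_{i,f})/d_i$. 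By Proposition~\ref{P-principal mean rank}, $\lim_{i\to\omega}\rk(\im\bar{\sigma}_{i,f})/d_i=\mrk_{\Sigma,\omega}(\cM_1|\cM_2)$, so both required bounds hold in this case, and the reductions of the previous paragraph propagate them to the general case.

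The main obstacle is the metric mean dimension upper bound in the core computation: one must show the separation count of the image subtorus $\overline{\bar{\sigma}_{i,f}((\Tb^{d_i})^{1\times n})}$ is $\varepsilon^{-\rk(\im\bar{\sigma}_{i,f})}$ up to a factor sub-exponential in $d_i$ and mild enough in $\varepsilon$ to be absorbed by the $1/|\log\varepsilon|$ normalization, which requires bounding the ``skewness'' of that subtorus in terms of the fixed bound on the coefficients of $f$ and using a carefully chosen translation-invariant metric. This is exactly where the sofic-specific care (good approximations and the Hausdorff estimates above) meets the relative inverse-limit machinery (Proposition~\ref{P-topology inverse limit}, Lemma~\ref{L-metric inverse limit}), paralleling the amenable arguments of \cite{LL} and Hayes's work \cite{Hayes}, now carried out relative to the extension $\widehat{\cM_2}\to\widehat{\cM_1}$.
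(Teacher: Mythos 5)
Your global architecture (sandwich between $\mrk_{\Sigma,\omega}$ and the three dynamical quantities, reduction to a finitely presented core, inverse limits via Lemma~\ref{L-metric inverse limit} and Lemma~\ref{L-pseudometric to metric} to produce the translation-invariant metric, lower bound by embedding cubes/tori and a Lebesgue-type covering argument) matches the paper's outline, and the lower bound $\underline{\mdim}_{\Sigma,\omega}\ge\mrk_{\Sigma,\omega}$ is essentially Lemma~\ref{L-mdim vs mrank lower}. But the core upper bound is where your proposal has a genuine gap. First, the identification of $\Map(\pi,\rho_X,F,\delta,\sigma_i)$ with the subgroup $\overline{\bar{\sigma}_{i,f}\big((\Tb^{d_i})^{1\times n}\big)}$ ``up to $O(\delta)$ Hausdorff error'' in the $\infty$-metric is false as stated: membership in $\Map(\rho_X,F,\delta,\sigma_i)$ is an $\ell^2$-averaged condition (Definition~\ref{D-map space}), so an approximately equivariant map can deviate arbitrarily from a translate-lattice of a single $w$ on a small but positive fraction of $[d_i]$, and such deviations are fully visible to $\rho_{Y,\infty}$-separation; handling this costs an extra factor of roughly $\varepsilon^{-m\delta' d_i}$ which must be disposed of by the $\inf_\delta$ before the $\varepsilon\to0$ limit — doable, but it is exactly the kind of careful estimate you have not supplied. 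Second, and more seriously, the asserted bound $N_\varepsilon\le e^{o(d_i)}\varepsilon^{-\rk(\im\bar{\sigma}_{i,f})}$ for the skewed image subtorus is not a ``standard metric-entropy estimate'' available in \cite{LW,CK,Li,LL}: a $k$-dimensional integral image subtorus of $(\Tb^m)^{d_i}$ can have $k$-volume exponentially large in $d_i$ and can be badly collapsed in some directions, so controlling its separation numbers uniformly in $i$ is precisely the analytic difficulty (it is the content of Hayes's work \cite{Hayes} in the absolute, finitely generated case, done with quite different tools). You yourself flag this as ``the main obstacle,'' but the proposal never resolves it, so the inequality $\mdim_{\Sigma,\omega,\rM}(\widehat{\cM_1},\rho|\widehat{\cM_2})\le\mrk_{\Sigma,\omega}(\cM_1|\cM_2)$ is not established.

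The paper avoids this estimate entirely. Its only direct metric computation is for the full shift $\widehat{(\Zb\Gamma)^n}$ (Lemma~\ref{L-mdim vs mrank free module}, quoted from \cite{Li}), where no skewed subtorus appears. The upper bound for a general pair is then obtained indirectly by a squeeze: the superadditivity inequality for translation-invariant metrics under group extensions (Lemma~\ref{L-mdim addition lower bound}, specialized in Lemma~\ref{L-mdim addition lower bound algebraic}) gives
$\mdim_{\Sigma,\omega,\rM}(\widehat{\cM_2},\vartheta^{A_2})\ge\mdim_{\Sigma,\omega,\rM}(\widehat{\cM_1},\vartheta^{A_1}|\widehat{\cM_2})+\mdim_{\Sigma,\omega,\rM}(\widehat{\cM_2/\cM_1},\vartheta^{\overline{A_2}})$,
and combining this with Theorem~\ref{T-comparison}, Lemma~\ref{L-mdim vs mrank lower}, the exact addition formula of Theorem~\ref{T-addition for mean length}, and the free-module value forces every inequality in the chain to be an equality (Lemmas~\ref{L-mdim vs mrank fp}--\ref{L-mdim vs mrank fg2}); Proposition~\ref{P-principal mean rank} is never needed on the dynamical side. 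A secondary point: you cannot simply invoke Lemma~\ref{L-unique} to propagate the core case, since that lemma applies to assignments already known to satisfy an addition formula and the continuity axioms, which the relative mean dimensions are not known to satisfy a priori; the paper instead runs the one-sided inequalities through the explicit ladder of cases using Proposition~\ref{P-continuity}, Proposition~\ref{P-topology inverse limit} and Lemma~\ref{L-metric inverse limit}. If you want to keep your direct route, you would essentially be reproving (a relative version of) Hayes's estimates, which is a substantially harder path than the addition-formula argument the paper uses.
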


Theorem~\ref{T-mdim vs vrk} follows from Theorems~\ref{T-mrk vs vrk} and \ref{T-mdim vs mrk}.

\begin{remark} \label{R-ultrafiler fix}
Since it is not clear whether Proposition~\ref{P-ultrafilter} holds for the sofic metric mean dimension, Theorem~\ref{T-mdim vs mrk} does not imply directly the version using
$\varlimsup_{i\to \infty}$ instead of $\lim_{i\to \omega}$. However, one can define the $\varlimsup_{i\to \infty}$ and $\varliminf_{i\to \infty}$ versions of the sofic mean length in Definition~\ref{D-mean length}, the relative sofic mean topological dimension in Definition~\ref{D-relative mdim}, and the relative sofic metric mean dimension in Definition~
\ref{D-relative metric mdim}. Then the addition formula in Theorem~\ref{T-addition for mean length} becomes several inequalities. In this way one can follow the proofs of Theorem~\ref{T-mrk vs vrk} and \ref{T-mdim vs mrk} to prove their $\varlimsup_{i\to \infty}$ and $\varliminf_{i\to \infty}$ versions.
\end{remark}

We show first that the mean topological dimension is no less than the mean rank.

\begin{lemma} \label{L-mdim vs mrank lower}
For any countable $\Zb\Gamma$-modules $\cM_1\subseteq \cM_2$, one has
\begin{align*}
\underline{\mdim}_{\Sigma, \omega}(\widehat{\cM_1}|\widehat{\cM_2})\ge \mrk_{\Sigma, \omega}(\cM_1|\cM_2).
\end{align*}
\end{lemma}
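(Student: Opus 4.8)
The plan is to reduce to the case of finitely generated $\cM_1$, pass to Pontryagin duals, and exhibit, inside the space of approximately equivariant maps, an explicit closed subgroup whose covering dimension with respect to a carefully chosen cover is controlled by $\rk(\sM(\sA,\sB,F,\sigma))$. First I would reduce to $\cM_1$ being finitely generated over $\Zb\Gamma$: writing $\cM_1$ as the union of an increasing sequence $\{\cM_1^{(j)}\}_j$ of finitely generated $\Zb\Gamma$-submodules (possible as $\cM_1$ is countable), Pontryagin duality gives $\widehat{\cM_1}=\varprojlim_j\widehat{\cM_1^{(j)}}$, so Proposition~\ref{P-topology inverse limit} gives $\underline{\mdim}_{\Sigma,\omega}(\widehat{\cM_1}|\widehat{\cM_2})=\sup_j\underline{\mdim}_{\Sigma,\omega}(\widehat{\cM_1^{(j)}}|\widehat{\cM_2})$ and Proposition~\ref{P-continuity}(1) gives $\mrk_{\Sigma,\omega}(\cM_1|\cM_2)=\sup_j\mrk_{\Sigma,\omega}(\cM_1^{(j)}|\cM_2)$. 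Thus we may assume $\cM_1$ is generated over $\Zb\Gamma$ by some $\sA\in\sF(\cM_1)$, say $\sA=\Zb a_1+\cdots+\Zb a_n$. By Lemma~\ref{L-mean length generating}(1), $\mrk_{\Sigma,\omega}(\cM_1|\cM_2)=\mrk_{\Sigma,\omega}(\sA|\cM_2)$, so it suffices to prove $\underline{\mdim}_{\Sigma,\omega}(\widehat{\cM_1}|\widehat{\cM_2})\ge\mrk_{\Sigma,\omega}(\sA|\cM_2)$.

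For the dual picture, let $\pi:\widehat{\cM_2}\to\widehat{\cM_1}$ be dual to the inclusion, let $\Phi:\widehat{\cM_1}\to(\Rb/\Zb)^n$ be $x\mapsto(x(a_1),\dots,x(a_n))$, and put $\Psi:=\Phi\circ\pi$. Fixing an enumeration $\cM_2=\{b_1,b_2,\dots\}$, use on $\widehat{\cM_2}$ the dynamically generating metric $\rho(x,y)=\sum_{k\ge1}2^{-k}\min(1,|x(b_k)-y(b_k)|_{\Rb/\Zb})$, which is permissible by Lemma~\ref{L-generating metric}. Let $\cU_0$ be a finite open cover of $(\Rb/\Zb)^n$ whose required fineness is specified below, and take the cover $\cV:=\Phi^{-1}(\cU_0)$ of $\widehat{\cM_1}$; then $(\pi^{-1}\cV)^{d}=(\Psi^{d})^{-1}(\cU_0^{d})$ for every $d$. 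Given $F\in\cF(\Gamma)$ and $\delta>0$, choose $K$ with $2^{-K}<\delta$ and let $\sB\in\sF(\cM_2)$ be the $\Zb$-span of $\{s^{-1}b_k:s\in F,\ 1\le k\le K\}$. For \emph{any} map $\sigma:\Gamma\to\Sym(d)$ and any $x$ in the annihilator $S:=\widehat{\cM_2^{d}/\sM(\sB,F,\sigma)}\subseteq\widehat{\cM_2}^{d}$, the defining relations $x_v(b)=x_{sv}(sb)$ (taken with $b=s^{-1}b_k\in\sB$) force $sx_v$ and $x_{sv}$ to agree on $\{b_1,\dots,b_K\}$ for all $s\in F$ and $v\in[d]$, whence $\rho(sx_v,x_{sv})\le 2^{-K}<\delta$ and so $\rho_2(sx,xs)<\delta$; thus $S\subseteq\Map(\rho,F,\delta,\sigma)$. (Note that soficity of $\sigma$ plays no role here.)

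Therefore $\cD\big((\pi^{-1}\cV)^{d}|_{\Map(\rho,F,\delta,\sigma)}\big)\ge\cD\big((\Psi^{d}|_S)^{-1}(\cU_0^{d}|_{\Psi^{d}(S)})\big)$. A routine duality computation shows that the continuous surjective homomorphism of compact metrizable groups $\Psi^{d}|_S:S\to\Psi^{d}(S)$ has image $\Psi^{d}(S)$ equal to the closed subgroup of $(\Rb/\Zb)^{nd}$ dual to $\sA^{d}/(\sA^{d}\cap\sM(\sB,F,\sigma))=\sM(\sA,\sB,F,\sigma)$, so that $\dim\Psi^{d}(S)=\rk(\sM(\sA,\sB,F,\sigma))$ (recall $\sM(\sA,\sB,F,\sigma)$ is finitely generated over $\Zb$, being a quotient of $\sA^{d}$). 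Granting the dimension-theoretic inequality
$$\cD\big((\Psi^{d}|_S)^{-1}(\cU_0^{d}|_{\Psi^{d}(S)})\big)\ \ge\ \dim\Psi^{d}(S)\ =\ \rk(\sM(\sA,\sB,F,\sigma))$$
(valid once $\cU_0$ is fine enough), we get $\cD\big((\pi^{-1}\cV)^{d}|_{\Map(\rho,F,\delta,\sigma)}\big)\ge\rk(\sM(\sA,\sB,F,\sigma))$. Since $(\sB,F)$ is one competitor in the infimum defining $\mrk_{\Sigma,\omega}(\sA|\cM_2)$, one has $\lim_{i\to\omega}\frac{\rk(\sM(\sA,\sB,F,\sigma_i))}{d_i}\ge\mrk_{\Sigma,\omega}(\sA|\cM_2)$, and hence $\lim_{i\to\omega}\frac{\cD((\pi^{-1}\cV)^{d_i}|_{\Map(\rho,F,\delta,\sigma_i)})}{d_i}\ge\mrk_{\Sigma,\omega}(\sA|\cM_2)$ for every $F$ and $\delta$. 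Taking the infimum over $F,\delta$ yields $\mdim_{\Sigma,\omega}(\pi^{-1}(\cV))\ge\mrk_{\Sigma,\omega}(\sA|\cM_2)$, hence $\underline{\mdim}_{\Sigma,\omega}(\widehat{\cM_1}|\widehat{\cM_2})\ge\mrk_{\Sigma,\omega}(\sA|\cM_2)=\mrk_{\Sigma,\omega}(\cM_1|\cM_2)$.

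The main obstacle is the displayed dimension-theoretic inequality. It rests on: (i) the fact that the dimension of a closed subgroup of a torus equals the rank of its annihilator, and that a sufficiently fine cover of the torus detects this dimension on the subgroup (a Pontryagin--Schnirelmann-type statement); (ii) the invariance of $\cD$ under pulling a cover back along the surjective homomorphism $\Psi^{d}|_S:S\to\Psi^{d}(S)$, which is a locally trivial bundle with compact structure group; and (iii) the genuinely sofic point that one fixed cover $\cU_0$ must work simultaneously for all $\sigma_i$ and all $F,\delta$, which would be handled by exploiting that $\sM(\sB,F,\sigma)$ is generated by the elements $\delta_v b-\delta_{sv}sb$, each supported on only two of the $d$ coordinates, so that the subgroups $\Psi^{d}(S)=\widehat{\sM(\sA,\sB,F,\sigma)}$ of $\widehat{\sA}^{d}$ have bounded local complexity uniformly in $d$ and $\sigma$. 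I expect (i) and (ii) to be available from the dimension theory of compact abelian groups together with the sofic mean dimension techniques of \cite{Li}, while (iii) is the step that demands the most care and is where I would concentrate the effort.
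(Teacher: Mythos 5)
Your setup largely parallels the paper's: the reduction to a finitely generated $\sA$, the choice of $\sB$ spanned by $F^{-1}\{b_1,\dots,b_K\}$ so that annihilating $\sM(\sB,F,\sigma)$ forces membership in $\Map(\rho,F,\delta,\sigma)$, and the identification of $\Psi^d(S)$ with $\widehat{\sM(\sA,\sB,F,\sigma)}$ (of covering dimension $\rk(\sM(\sA,\sB,F,\sigma))$) are all correct and are implicitly present in the paper's argument, whose maps $\Phi_\lambda$ are likewise built to kill $\sM(\sB,F,\sigma)$. The problem is that the displayed inequality $\cD\big((\Psi^{d}|_S)^{-1}(\cU_0^{d}|_{\Psi^{d}(S)})\big)\ge\dim\Psi^{d}(S)$, which is the entire content of the lemma, is deferred, and the ingredients you propose for it are not available as stated. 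For (ii): $\cD$ is not monotone under pulling a cover back along a continuous surjection in general (a surjection from a Cantor set onto $[0,1]$ kills $\cD$), so the group structure must be used; the assertion that $\Psi^d|_S$ is a locally trivial bundle is unsupported for general surjective homomorphisms of compact metrizable abelian groups, and even where local triviality holds, the natural proof of the pullback inequality needs a global continuous section (restrict a refinement to the section and push it down), which need not exist (e.g. $z\mapsto z^2$ on $\Rb/\Zb$ has none), so a genuinely new argument is required. For (iii): one fixed cover $\cU_0$ must detect $\dim\Psi^{d_i}(S_i)$ for all $i$ simultaneously, with $d_i\to\infty$ and the subgroups varying; "bounded local complexity" names the difficulty but is not an argument. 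Since (ii) and (iii) together are exactly where the mean-dimension lower bound lives, the proposal has a genuine gap at its key step.

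The paper closes this gap by a more concrete device that you may want to adopt: pick $V\subseteq[d]\times A$ maximal with $\{\varphi(\delta_v a):(v,a)\in V\}$ linearly independent in $\cM_2^d/\sM(\sB,F,\sigma)$, so $|V|=\rk(\sM(\sA,\sB,F,\sigma))$, and use a $\Qb$-linear extension (tensor with $\Qb$, complete to a basis, compose with a linear functional and reduce mod $\Zb$) to embed the cube $[0,1/2]^V$ into $\Map(\rho,F,\delta,\sigma)$ so that the value of the $v$-th coordinate of the image point at $a$ is exactly $\lambda_{v,a}+\Zb$. Choosing $\cU$ only so that no member meets both $a^{-1}(\Zb)$ and $a^{-1}(1/2+\Zb)$ for $a\in A$ --- a fineness condition depending only on the finite set $A$, hence uniform in $d$ and $\sigma$, which is what resolves your (iii) --- one checks that no member of the pulled-back cover meets two opposing faces of the cube, and \cite[Lemma 3.2]{LW} then gives order at least $|V|$, hence $\cD(\pi^{-1}(\cU),\rho,F,\delta,\sigma)\ge\rk(\sM(\sA,\sB,F,\sigma))$. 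In effect this cube is a continuous section of your $\Psi^d|_S$ over a $|V|$-dimensional cube inside $\Psi^d(S)$, i.e. precisely the substitute for (ii) and (iii) that your outline is missing; with it, your remaining bookkeeping (infimum over $\sB$, $F$, $\delta$, then supremum over $\sA$ and $\cU$) goes through as you wrote it.
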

\begin{proof}Fix a compatible metric $\rho$ on $\widehat{\cM_2}$, and denote by $\pi$ the factor map $\widehat{\cM_2}\rightarrow \widehat{\cM_1}$.

Let $\sA\in \sF(\cM_1)$. Say, $\sA$ is generated by a finite set $A$ as an abelian group. Take a finite open cover $\cU$ of $\widehat{\cM_1}$ such that for any $a\in A$, no item of $\cU$ intersects both $a^{-1}(\Zb)$ and $a^{-1}(1/2+\Zb)$. Then it suffices to show that
$$ \mdim_{\Sigma, \omega}(\pi^{-1}(\cU))\ge \mrk_{\Sigma, \omega}(\sA|\cM_2).$$

Let $F\in \cF(\Gamma)$ and $\delta>0$. Take a finite subset $B'$ of $\cM_2$ such that for any $\xi, \zeta\in \widehat{\cM_2}$ with $\xi(b')=\zeta(b')$ for all $b'\in B'$, one has $\rho(\xi, \zeta)<\delta$. Set $\sB$ to be the subgroup of $\cM_2$ generated by $F^{-1}B'$. Let $\sigma$ be a map from $\Gamma$ to $\Sym(d)$ for some $d\in \Nb$.
Denote by $\varphi$ the quotient map $\cM_2^d\rightarrow \cM_2^d/\sM(\sB, F, \sigma)$.

Take a maximal subset $V$ of $[d]\times A$ subject to the condition that $W:=\{\varphi(\delta_v a): (v, a)\in V\}$ is linearly independent. Then $|V|=\rk(\sM(\sA, \sB, F, \sigma))$.
Consider the abelian group homomorphism $\phi: \cM_2^d/\sM(\sB, F, \sigma)\rightarrow \Qb\otimes_\Zb(\cM_2^d/\sM(\sB, F, \sigma))$ sending $w$ to $1\otimes w$. Then $\phi$ is injective on $W$, and $\phi(W)$ is linearly independent. Extending $\phi(W)$ to a basis of $\Qb\otimes_\Zb(\cM_2^d/\sM(\sB, F, \sigma))$, we find a linear map $\psi: \Qb\otimes_\Zb(\cM_2^d/\sM(\sB, F, \sigma))\rightarrow \rspan_\Qb(\phi(W))$ which is the identity map on $\rspan_\Qb(\phi(W))$. For each $\lambda=(\lambda_{v, a})_{v, a}\in [0, 1/2]^V$, we denote by  $\theta_\lambda$ the linear map $\rspan_\Qb(\phi(W))\rightarrow \Rb$ sending $\phi(\varphi(\delta_v a))$ to $\lambda_{v, a}$ for all $(v, a)\in V$, and by
$\Phi_\lambda$ the abelian group homomorphism $\cM_2^d\rightarrow \Rb/\Zb$ sending $h$ to $\theta_\lambda(\psi(\phi(\varphi(h))))+\Zb$. Then $\Phi_\lambda\in \widehat{\cM_2^d}=\widehat{\cM_2}^d$.
The map $\Phi: [0, 1/2]^V\rightarrow \widehat{\cM_2^d}$ sending $\lambda$ to $\Phi_\lambda$ is clearly continuous.
 If $\lambda$ and $\lambda'$ are different elements of $[0, 1/2]^V$, then
$\lambda_{v, a}\neq \lambda'_{v, a}$ for some $(v, a)\in V$, and hence $\Phi_\lambda(\delta_va)=\lambda_{v, a}+\Zb\neq \lambda'_{v, a}+\Zb=\Phi_{\lambda'}(\delta_va)$. Thus $\Phi$ is an embedding of $[0, 1/2]^V$ into $\widehat{\cM_2}^d$.

We claim that the image of $\Phi$ is contained in $\Map(\rho, F, \delta, \sigma)$.
Let $\lambda \in [0, 1/2]^V$. Let $v\in [d]$ and $s\in F$. For any $b'\in B'$, noting that $b:=s^{-1}b'$ is in $\sB$, we have
\begin{align*}
(s\Phi_{\lambda, v})(b')-\Phi_{\lambda, sv}(b')&=\Phi_{\lambda, v}(s^{-1}b')-\Phi_{\lambda, sv}(b')\\
&=\Phi_\lambda (\delta_v s^{-1}b')-\Phi_\lambda(\delta_{sv}b')\\
&=\Phi_\lambda(\delta_v b-\delta_{sv}sb)=0.
\end{align*}
By our choice of $B'$, we have $\rho(s\Phi_{\lambda, v}, \Phi_{\lambda, sv})<\delta$. Thus $\rho_\infty(s\Phi_\lambda, \Phi_\lambda s)<\delta$. Therefore $\Phi_\lambda\in \Map(\rho, F, \delta, \sigma)$. This proves our claim.

The pull back $\Phi^{-1}(\pi^{-1}(\cU)^d)$ is a finite open cover of $[0, 1/2]^V$. We claim that no atom of $\Phi^{-1}(\pi^{-1}(\cU)^d)$ intersects two opposing faces of $[0, 1/2]^V$. Suppose that some atom $\Phi^{-1}(U)$ of $\Phi^{-1}(\pi^{-1}(\cU)^d)$ contains two points $\lambda$ and $\lambda'$ in opposing faces of $[0, 1/2]^V$, where $U$ is an atom of $\pi^{-1}(\cU)^d$. Say, $U=\prod_{v'\in [d]}\pi^{-1}(U_{v'})$ with $U_{v'}\in \cU$ for each $v'\in [d]$, and $\lambda_{v, a}=0$ and $\lambda'_{v, a}=1/2$ for some $(v, a)\in V$.
Then
$$(\pi(\Phi_{\lambda, v}))(a)=\Phi_{\lambda, v}(a)=\theta_\lambda(\psi(\phi(\varphi(\delta_v a))))+\Zb=\lambda_{v, a}+\Zb=\Zb,$$
and
$$(\pi(\Phi_{\lambda', v}))(a)=\Phi_{\lambda', v}(a)=\theta_{\lambda'}(\psi(\phi(\varphi(\delta_va))))+\Zb=\lambda'_{v, a}+\Zb=1/2+\Zb. $$
Note that $\Phi_\lambda, \Phi_{\lambda'}\in U$, and hence $\pi(\Phi_{\lambda, v}), \pi(\Phi_{\lambda', v})\in U_v$. Thus $U_v$ intersects both $a^{-1}(\Zb)$ and $a^{-1}(1/2+\Zb)$, contradicting our choice of $\cU$. This proves our claim.

By \cite[Lemma 3.2]{LW} for any finite open cover $\cV$ of $[0, 1/2]^V$ with no atom intersecting two opposing faces of the cube $[0, 1/2]^V$ one has $\ord(\cV)\ge |V|$. Therefore
$$\cD(\pi^{-1}(\cU), \rho, F, \delta, \sigma)\ge \cD(\Phi^{-1}(\pi^{-1}(\cU)^d))\ge |V|=\rk(\sM(\sA, \sB, F, \sigma)).$$
It follows that $\mdim_{\Sigma, \omega}(\pi^{-1}(\cU))\ge \mrk_{\Sigma, \omega}(\sA|\cM_2)$.
\end{proof}

The following lemma establishes one direction of the addition formula for the metric mean dimension in the algebraic setting.

\begin{lemma} \label{L-mdim addition lower bound}
Let $X$ be a compact metrizable group, $G$ be a closed subgroup of $X$, and $Y$ be  the homogeneous space $X/G$. Let $\Gamma$ act on $X$ by continuous automorphisms preserving $G$, and consider the induced $\Gamma$-action on $Y$. Let $\rho_X$ be a translation-invariant dynamically generating continuous pseudometric  on $X$.
Let $\rho_G$ be the restriction of $\rho_X$ to $G$. Let $\rho_Y$ be a continuous pseudometric on $Y$ such that for some constant $C>0$ and $K\in \cF(\Gamma)$ one has
$ \rho_Y(\pi(x_1), \pi(x_2))\le C\max_{s\in K}\rho_X(sx_1, sx_2)$ for all $x_1, x_2\in X$, where $\pi: X\rightarrow Y$ is the quotient map.
Then
\begin{align*}
\mdim_{\Sigma, \omega, \rM}(X, \rho_X)\ge \mdim_{\Sigma, \omega, \rM}(Y, \rho_Y|X)+\mdim_{\Sigma, \omega, \rM}(G, \rho_G).
\end{align*}
\end{lemma}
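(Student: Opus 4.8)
## Proof Proposal for Lemma~\ref{L-mdim addition lower bound}

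The plan is to produce, from good approximating maps into $Y$ and good approximating maps into $G$, a large separated family of approximating maps into $X$, so that the $\varepsilon$-covering numbers multiply (at the exponential scale) and the metric mean dimensions add. The starting observation is that the quotient map $\pi\colon X\to Y$ admits a Borel (indeed, one can arrange it to be uniformly continuous on a suitable scale) section $\tau\colon Y\to X$; since $\rho_X$ is translation-invariant, the map $(y,g)\mapsto \tau(y)g$ is a homeomorphism $Y\times G\to X$, and using the hypothesis $\rho_Y(\pi(x_1),\pi(x_2))\le C\max_{s\in K}\rho_X(sx_1,sx_2)$ together with translation-invariance one controls $\rho_X$ on $X$ from above and below in terms of $\rho_Y$ on the $Y$-coordinate and $\rho_G$ on the $G$-coordinate, after dynamically thickening the pseudometrics by a fixed finite $K$. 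This reduction is essentially the argument in the proof of \cite[Lemma 7.7]{LL}, adapted to the sofic relative setting.

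Next I would fix $F\in\cF(\Gamma)$, $\delta>0$, $\varepsilon>0$, and a good sofic approximation $\sigma\colon\Gamma\to\Sym(d)$. Given $\varphi\in\Map(\pi,\rho_X,F,\delta,\sigma)$ — that is, $\varphi=\pi\circ\tilde\varphi$ for some $\tilde\varphi\in\Map(\rho_X,F,\delta,\sigma)$ — and given any $g\in\Map(\rho_G,F',\delta',\sigma)$ for suitable $F'\supseteq F$ and $\delta'\le\delta$, I form the map $v\mapsto \tau(\varphi_v)\,g_v\in X$. One checks that this lies in $\Map(\rho_X,F,\delta'',\sigma)$ for a $\delta''$ that tends to $0$ as $\delta'\to0$ and the approximation improves: this uses that $\tilde\varphi$ and $\varphi$ are approximately equivariant, that $\tau$ is approximately equivariant on a large fraction of coordinates (because $\sigma$ is multiplicative on a large fraction), and translation-invariance of $\rho_X$. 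Conversely, separation multiplies: if $\{\varphi^{(a)}\}$ is $(\rho_{Y,\infty},\varepsilon)$-separated in $\Map(\pi,\rho_X,\dots)$ and $\{g^{(b)}\}$ is $(\rho_{G,\infty},\varepsilon)$-separated in $\Map(\rho_G,\dots)$, then the family $\{v\mapsto \tau(\varphi^{(a)}_v)g^{(b)}_v\}_{a,b}$ is $(\rho_{X,\infty},c\varepsilon)$-separated for a fixed constant $c>0$ depending only on $C,K$, again via the two-sided comparison of $\rho_X$ with $(\rho_Y,\rho_G)$. Taking logarithms, dividing by $d_i$, passing to the limit along $\omega$, and then taking the infimum over $F,\delta$ gives
\begin{align*}
h^{c\varepsilon}_{\Sigma,\omega,\infty}(\rho_X)\ge h^{\varepsilon}_{\Sigma,\omega,\infty}(\rho_Y|X)+h^{\varepsilon}_{\Sigma,\omega,\infty}(\rho_G)-o(1),
\end{align*}
and dividing by $|\log\varepsilon|$ and letting $\varepsilon\to0$ (noting $|\log(c\varepsilon)|/|\log\varepsilon|\to1$) yields the asserted inequality for the metric mean dimensions.

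The main obstacle I anticipate is the honest verification that the composed maps $v\mapsto\tau(\varphi_v)g_v$ actually belong to the map space for $X$ with a controlled $\delta''$, and the matching lower bound on separation — in other words, getting clean two-sided estimates relating $\rho_{X,\infty}(s\Psi,\Psi s)$ to the corresponding quantities on the $Y$- and $G$-factors uniformly over the relevant fraction of $[d]$. The subtle point is that a section $\tau$ need not be equivariant, so one must absorb the defect into the bad set of coordinates (where $\sigma$ fails multiplicativity or where approximate equivariance of $\tilde\varphi$ degrades) and show this bad set has density $\to0$; the translation-invariance of $\rho_X$ is exactly what makes the errors from $\tau$ and from $g$ combine additively rather than uncontrollably. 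A secondary technical point is choosing $\tau$ with enough regularity: one can take $\tau$ to be any Borel section and note that, since $X$ and $G$ are compact metrizable groups, $\tau$ is uniformly continuous off a set of small measure — but in fact for the $\infty$-metric estimates one only needs pointwise bounds, so continuity of $\tau$ is not strictly required; uniform bounds on $\rho_X(\tau(y_1),\tau(y_2))$ in terms of $\rho_Y(y_1,y_2)$ via the product homeomorphism suffice, and these are automatic by compactness once the comparison of metrics is in place.
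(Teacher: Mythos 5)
There is a genuine gap in the central construction. You build candidate elements of $\Map(\rho_X,F,\delta'',\sigma)$ as $v\mapsto\tau(\varphi_v)g_v$ where $\tau\colon Y\to X$ is a (Borel) section, and you justify their approximate equivariance by saying that ``$\tau$ is approximately equivariant on a large fraction of coordinates (because $\sigma$ is multiplicative on a large fraction).'' That is a non sequitur: the equivariance defect of a section is an intrinsic feature of the extension $G\to X\to Y$, encoded by the cocycle $c(s,y)\in G$ defined by $s\tau(y)=\tau(sy)\,c(s,y)$, and it has nothing to do with how multiplicative $\sigma$ is on $[d]$. In general $c(s,y)$ is nowhere close to the identity (a continuous, let alone equivariant, section need not even exist --- so your claimed homeomorphism $Y\times G\to X$, $(y,g)\mapsto\tau(y)g$, is only a Borel isomorphism), and therefore $s\bigl(\tau(\varphi_v)g_v\bigr)=\tau(s\varphi_v)\,c(s,\varphi_v)\,(sg_v)$ differs from $\tau(\varphi_{sv})g_{sv}$ by an amount that cannot be absorbed into a density-zero bad set of coordinates. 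So the maps you construct need not lie in any $\Map(\rho_X,F,\delta'',\sigma)$ with small $\delta''$, and the inequality between covering numbers does not follow. (Your separation estimate, by contrast, is fine: for fixed $Y$-part the translation invariance of $\rho_X$ gives $\rho_G$-separation, and for distinct $Y$-parts the hypothesis $\rho_Y(\pi x_1,\pi x_2)\le C\max_{s\in K}\rho_X(sx_1,sx_2)$ does the job after the dynamical thickening of $\rho_X$, exactly as in the paper, which replaces $\rho_X$ by $\tilde\rho_X(x_1,x_2)=\sum_j 2^{-j}\rho_X(s_jx_1,s_jx_2)$ so that $\rho_Y\circ(\pi\times\pi)\le C'\tilde\rho_X$ and invokes \cite[Lemma 4.4]{Li} to keep the metric mean dimensions unchanged.)

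The repair is already implicit in your own setup and is precisely the paper's route: do not use a section at all. By the definition of $\Map(\pi,\rho_X,F,\delta,\sigma)$, each $\psi$ in your $(\rho_{Y,\infty},C'\varepsilon)$-separated family is of the form $\psi=\pi\circ\psi'$ for an honest $\psi'\in\Map(\tilde\rho_X,F,\delta/2,\sigma)$; fix one such lift per $\psi$ and form the pointwise products $v\mapsto\psi'(v)\phi(v)$ with $\phi\in\Map(\tilde\rho_G,F,\delta/2,\sigma)$. Approximate equivariance then follows from the triangle inequality and translation invariance of $\tilde\rho_X$ alone, since $\tilde\rho_X\bigl(s(\psi'(v)\phi(v)),\psi'(sv)\phi(sv)\bigr)\le\tilde\rho_X(s\psi'(v),\psi'(sv))+\tilde\rho_G(s\phi(v),\phi(sv))$, and the two errors combine in the $\rho_2$-average (note the map spaces are defined via $\rho_2$, so one needs the elementary $(a+b)^2\le 2a^2+2b^2$ bookkeeping, which your sketch also glosses over). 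Separation of the products is checked exactly as you indicate, using $\pi\circ(\psi'\phi)=\pi\circ\psi'$ when comparing distinct $Y$-parts and translation invariance when the $Y$-parts coincide. With this substitution the multiplicativity of covering numbers, and hence the additivity of metric mean dimensions, goes through.
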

\begin{proof}
Fix an enumeration $s_1, s_2, \dots$ of the elements of $\Gamma$. Define
$$\tilde{\rho}_X(x_1, x_2)=\sum_{j=1}^\infty\frac{1}{2^j}\rho_X(s_jx_1, s_jx_2)$$
for all $x_1, x_2\in X$. Then $\tilde{\rho}_X$ is a translation-invariant compatible metric on $X$, and there exists $C'>0$ such that
$$\rho_Y(\pi(x_1), \pi(x_2))\le C'\tilde{\rho}_X(x_1, x_2)$$
for all $x_1, x_2\in X$.
 Denote by $\tilde{\rho}_G$ the restriction of $\tilde{\rho}_X$ to $G$. By \cite[Lemma 4.4]{Li} we have
\begin{align} \label{E-mdim addition lower bound4}
\mdim_{\Sigma, \omega, \rM}(X, \rho_X)=\mdim_{\Sigma, \omega, \rM}(X, \tilde{\rho}_X), \mbox{ and } \mdim_{\Sigma, \omega, \rM}(G, \rho_G)=\mdim_{\Sigma, \omega, \rM}(G, \tilde{\rho}_G).
\end{align}

Let $\varepsilon>0$. Let $F\in \cF(\Gamma)$ and $\delta>0$.
Let $\sigma$ be a map $\Gamma\rightarrow \Sym(d)$ for some $d\in \Nb$.

Let $\Psi_Y$ be a $(\rho_{Y, \infty}, C'\varepsilon)$-separated subset of $\Map(\pi, \tilde{\rho}_X, F, \delta/2, \sigma)$ with
$$|\Psi_Y|=N_{C'\varepsilon}(\Map(\pi, \tilde{\rho}_X, F, \delta/2, \sigma), \rho_{Y, \infty}).$$
For each $\psi\in \Psi_Y$ take $\psi'\in \Map(\tilde{\rho}_X, F, \delta/2, \sigma)$ with $\psi=\pi\circ \psi'$. Set
$\Psi'_X=\{\psi': \psi\in \Psi_Y\}$.
Also let $\Phi_G$ be a $(\tilde{\rho}_{G, \infty}, \varepsilon)$-separated subset of $\Map(\tilde{\rho}_G, F, \delta/2, \sigma)$ with
$$|\Phi_G|=N_\varepsilon(\Map(\tilde{\rho}_G, F, \delta/2, \sigma), \tilde{\rho}_{G, \infty}).$$

Let $\psi'\in \Psi'_X$ and $\phi\in \Phi_G$. Denote by $\psi'\phi$ the map $[d]\rightarrow X$ sending $v$ to $\psi'(v)\phi(v)$.
For any $s\in F$,
we have
\begin{align*}
\sum_{v\in [d]}\tilde{\rho}_X(s(\psi'\phi(v)), \psi'\phi(sv))^2&=\sum_{v\in [d]}\tilde{\rho}_X((s\psi'(v))(s\phi(v)), \psi'(sv)\phi(sv))^2\\
&\le \sum_{v\in [d]}(\tilde{\rho}_X(s\psi'(v), \psi'(sv))+\tilde{\rho}_X(s\phi(v), \phi(sv)))^2\\
&= \sum_{v\in [d]}(\tilde{\rho}_X(s\psi'(v), \psi'(sv))+\tilde{\rho}_G(s\phi(v), \phi(sv)))^2\\
&\le 2\sum_{v\in [d]}\tilde{\rho}_X(s\psi'(v), \psi'(sv))^2+2\sum_{v\in [d]}\tilde{\rho}_G(s\phi(v), \phi(sv))^2\\
&\le 2d(\delta/2)^2+2d(\delta/2)^2=\delta^2d,
\end{align*}
where in the first inequality we use the fact that $\tilde{\rho}_X$ is translation-invariant.
Thus
$\psi'\phi\in \Map(\tilde{\rho}_X, F, \delta, \sigma)$.

We claim that the set $\{\psi'\phi: \psi'\in \Psi'_X, \phi\in \Phi_G\}$ is $(\tilde{\rho}_{X, \infty}, \varepsilon)$-separated.
Let $\psi'_1, \psi'_2\in \Psi'_X$ and $\phi_1, \phi_2\in \Phi_G$. If $\psi_1\neq \psi_2$, then
\begin{align*}
C'\tilde{\rho}_{X, \infty}(\psi'_1\phi_1, \psi'_2\phi_2)\ge \rho_{Y, \infty}(\pi\circ (\psi'_1\phi_1), \pi\circ (\psi'_2\phi_2))=\rho_{Y, \infty}(\psi_1, \psi_2)>C'\varepsilon.
\end{align*}
If $\psi_1=\psi_2$ and $\phi_1\neq \phi_2$, then $\tilde{\rho}_{G, \infty}(\phi_1, \phi_2)>\varepsilon$, and hence
\begin{align*}
\tilde{\rho}_{X, \infty}(\psi'_1\phi_1, \psi'_2\phi_2)=\tilde{\rho}_{X, \infty}(\phi_1, \phi_2)=\tilde{\rho}_{G, \infty}(\phi_1, \phi_2)>\varepsilon,
\end{align*}
where in the first equality we use the fact that $\tilde{\rho}_X$ is translation-invariant. This proves our claim.

Now we have
$$N_\varepsilon(\Map(\tilde{\rho}_X, F, \delta, \sigma), \tilde{\rho}_{X, \infty})\ge N_{C'\varepsilon}(\Map(\pi, \tilde{\rho}_X, F, \delta/2, \sigma), \rho_{Y, \infty})N_\varepsilon(\Map(\tilde{\rho}_G, F, \delta/2, \sigma), \tilde{\rho}_{G, \infty}).$$
Thus
\begin{align*}
h^\varepsilon_{\Sigma, \omega, \infty}(\tilde{\rho}_X, F, \delta)&\ge h^{C'\varepsilon}_{\Sigma, \omega, \infty}(\rho_Y, F, \delta/2|\tilde{\rho}_X)+h^\varepsilon_{\Sigma, \omega, \infty}( \tilde{\rho}_G, F, \delta/2)\\
&\ge h^{C'\varepsilon}_{\Sigma, \omega, \infty}(\rho_Y|X)+h^\varepsilon_{\Sigma, \omega, \infty}(\tilde{\rho}_G).
\end{align*}
Since $F$ is an arbitrary finite subset of $\Gamma$ and $\delta>0$ is arbitrary, we get
$$h^\varepsilon_{\Sigma, \omega, \infty}(\tilde{\rho}_X)\ge h^{C'\varepsilon}_{\Sigma, \omega, \infty}(\rho_Y|X)+h^\varepsilon_{\Sigma, \omega, \infty}(\tilde{\rho}_G).$$
It follows that
$$\mdim_{\Sigma, \omega, \rM}(X, \tilde{\rho}_X)\ge \mdim_{\Sigma, \omega, \rM}(Y, \rho_Y|X)+\mdim_{\Sigma, \omega, \rM}(G, \tilde{\rho}_G).$$
Now the lemma  follows from \eqref{E-mdim addition lower bound4}.
\end{proof}

Consider the translation-invariant metric $\vartheta$ on $\Rb/\Zb$ defined by
\begin{align*}
\vartheta(x+\Zb, y+\Zb):=\min_{z\in \Zb}|x-y-z|.
\end{align*}
For any $\Zb\Gamma$-module $\cM$ and any finite subset $A$ of $\cM$, we define a translation-invariant continuous pseudometric $\vartheta^A$ on $\widehat{\cM}$ by
\begin{align*}
\vartheta^A(x, y):=\max_{a\in A}\vartheta(x(a), y(a)).
\end{align*}
Note that $A$ generates $\cM$ as a $\Zb\Gamma$-module if and only if $\vartheta^A$ is dynamically generating. Now we specialize Lemma~\ref{L-mdim addition lower bound} to algebraic actions.

\begin{lemma} \label{L-mdim addition lower bound algebraic}
Let $\cM_1\subseteq \cM_2$ be finitely generated $\Zb\Gamma$-modules. Let $A_1$ and $A_2$ be finite generating subsets of $\cM_1$ and $\cM_2$ respectively. Denote by $\overline{A_2}$ the image of $A_2$ under the quotient map $\cM_2\rightarrow \cM_2/\cM_1$.
Then
\begin{align*}
\mdim_{\Sigma, \omega, \rM}(\widehat{\cM_2}, \vartheta^{A_2})\ge \mdim_{\Sigma, \omega, \rM}(\widehat{\cM_1}, \vartheta^{A_1}|\widehat{\cM_2})+\mdim_{\Sigma, \omega, \rM}(\widehat{\cM_2/\cM_1}, \vartheta^{\overline{A_2}}).
\end{align*}
\end{lemma}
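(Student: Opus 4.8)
The plan is to deduce Lemma~\ref{L-mdim addition lower bound algebraic} from the general inequality in Lemma~\ref{L-mdim addition lower bound} by setting up the right group-theoretic picture. Take $X=\widehat{\cM_2/\cM_1}\times\widehat{\cM_1}$... no: the cleaner choice is to exploit Pontryagin duality directly. The short exact sequence of $\Zb\Gamma$-modules $0\to\cM_1\to\cM_2\to\cM_2/\cM_1\to 0$ dualizes to a short exact sequence of compact metrizable $\Gamma$-groups
\begin{align*}
0\to\widehat{\cM_2/\cM_1}\to\widehat{\cM_2}\xrightarrow{\pi}\widehat{\cM_1}\to 0,
\end{align*}
where $\pi$ is the dual of the inclusion $\cM_1\hookrightarrow\cM_2$. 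So in the notation of Lemma~\ref{L-mdim addition lower bound} I would put $X=\widehat{\cM_2}$, the closed $\Gamma$-invariant subgroup $G=\widehat{\cM_2/\cM_1}=\ker\pi$, and the homogeneous space $Y=X/G\cong\widehat{\cM_1}$, with $\pi$ the quotient map. Here $\cM_1,\cM_2$ are finitely generated, so these duals are compact metrizable and all pseudometrics in play are genuine.

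The metrics require a little care. For $\rho_X$ I would take the translation-invariant dynamically generating pseudometric $\vartheta^{A_2}$ on $\widehat{\cM_2}$; since $A_2$ generates $\cM_2$ as a $\Zb\Gamma$-module this is dynamically generating. For $\rho_Y$ I take $\vartheta^{A_1}$ on $\widehat{\cM_1}$, which is dynamically generating since $A_1$ generates $\cM_1$. The key technical point is to produce the constant $C>0$ and finite set $K\in\cF(\Gamma)$ with $\vartheta^{A_1}(\pi(x_1),\pi(x_2))\le C\max_{s\in K}\vartheta^{A_2}(sx_1,sx_2)$ for all $x_1,x_2\in\widehat{\cM_2}$. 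This follows because each element $a\in A_1\subseteq\cM_1\subseteq\cM_2$ lies in the $\Zb\Gamma$-submodule generated by $A_2$, hence can be written as a finite integer combination $a=\sum_{s\in K}\sum_{b\in A_2}n_{s,b}\,(sb)$ with $K\in\cF(\Gamma)$ and $n_{s,b}\in\Zb$; one may take $K$ large enough to work simultaneously for all finitely many $a\in A_1$. Then for $x\in\widehat{\cM_2}$ one has $(\pi x)(a)=x(a)=\sum_{s,b}n_{s,b}\,x(sb)=\sum_{s,b}n_{s,b}\,(s^{-1}x)(b)$, so that $\vartheta(x_1(a),x_2(a))$ is controlled by $\sum_{s,b}|n_{s,b}|\cdot\vartheta((s^{-1}x_1)(b),(s^{-1}x_2)(b))$; replacing $K$ by $K^{-1}$ and taking $C=\max_a\sum_{s,b}|n_{s,b}|$ gives the desired estimate (here using that $\vartheta$ satisfies $\vartheta(nx,ny)\le|n|\vartheta(x,y)$ and the triangle inequality). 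Finally the restriction $\rho_G$ of $\vartheta^{A_2}$ to $G=\widehat{\cM_2/\cM_1}$ must be compared with $\vartheta^{\overline{A_2}}$: identifying $\widehat{\cM_2/\cM_1}$ with the annihilator of $\cM_1$ inside $\widehat{\cM_2}$, for $x$ in this annihilator one has $x(b)=x(\overline b)$ under the identification $\widehat{\cM_2/\cM_1}\cong\{x:x|_{\cM_1}=0\}$, and hence $\rho_G=\vartheta^{\overline{A_2}}$ exactly — $\overline{A_2}$ being the image of $A_2$, its elements pair with $x$ in the same way $A_2$ does.

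Putting these together, Lemma~\ref{L-mdim addition lower bound} yields
\begin{align*}
\mdim_{\Sigma,\omega,\rM}(\widehat{\cM_2},\vartheta^{A_2})\ge\mdim_{\Sigma,\omega,\rM}(\widehat{\cM_1},\vartheta^{A_1}|\widehat{\cM_2})+\mdim_{\Sigma,\omega,\rM}(\widehat{\cM_2/\cM_1},\vartheta^{\overline{A_2}}),
\end{align*}
which is exactly the claimed inequality. I expect the main obstacle to be the metric bookkeeping in the middle step: verifying the comparison $\rho_Y\le C\max_{s\in K}\rho_X(s\cdot,s\cdot)$ and the identification $\rho_G=\vartheta^{\overline{A_2}}$ precisely enough that the hypotheses of Lemma~\ref{L-mdim addition lower bound} are met verbatim, rather than up to some bi-Lipschitz fudge that would then require invoking the metric-independence statements (Lemma~\ref{L-entropy indep of metric}, \cite[Lemma 4.4]{Li}) to clean up. A secondary point worth checking is that $\vartheta^{A_2}$ is indeed translation-invariant and that, as a pseudometric on the quotient, the induced structure on $Y=X/G$ is compatible with $\vartheta^{A_1}$ in the sense the lemma needs; both are straightforward from translation-invariance of $\vartheta$ but should be stated explicitly.
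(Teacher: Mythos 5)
Your proposal is correct and follows essentially the same route as the paper: dualize the short exact sequence so that $G=\widehat{\cM_2/\cM_1}$ sits inside $X=\widehat{\cM_2}$ with quotient $Y\cong\widehat{\cM_1}$, note that $\vartheta^{\overline{A_2}}$ is exactly the restriction of $\vartheta^{A_2}$ to $G$, establish the Lipschitz comparison $\vartheta^{A_1}(\pi(x),\pi(y))\le C\max_{t\in K^{-1}}\vartheta^{A_2}(tx,ty)$ by expanding each $a\in A_1$ as $\sum_{b\in A_2}f_b b$ with $f_b\in\Zb\Gamma$, and then invoke Lemma~\ref{L-mdim addition lower bound}. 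This is precisely the paper's argument, including the exact (not merely bi-Lipschitz) identification of the metrics, so no further cleanup via metric-independence lemmas is needed.
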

\begin{proof} Note that $\vartheta^{\overline{A_2}}$ is the restriction of $\vartheta^{A_2}$ to $\widehat{\cM_2/\cM_1}$.
Denote by $\pi$ the factor map $\widehat{\cM_2}\rightarrow \widehat{\cM_1}$.
Let $a\in A_1$. Then $a=\sum_{b\in A_2}f_bb$ for some $f_b\in \Zb\Gamma$. Write
each $f_b$ as $\sum_{s\in K} f_{b, s}s$ with some $K\in \cF(\Gamma)$ (independent of $a$ and $b$) and $f_{b, s}\in \Zb$.
Set $C_a=\sum_{b\in A_2}\sum_{s\in K}|f_{b, s}|$. For any $x, y\in \widehat{\cM_2}$,
we have
\begin{align*}
\vartheta(x(a), y(a))&=\vartheta(\sum_{b\in A_2}x(f_bb), \sum_{b\in A_2}y(f_bb))\\
&=\vartheta(\sum_{b\in A_2}\sum_{s\in K}f_{b, s}(s^{-1}x)(b), \sum_{b\in A_2}\sum_{s\in K}f_{b, s}(s^{-1}y)(b))\\
&\le \sum_{b\in A_2}\sum_{s\in K}|f_{b, s}|\vartheta((s^{-1}x)(b), (s^{-1}y)(b))\\
&\le \sum_{b\in A_2}\sum_{s\in K}|f_{b, s}|\max_{t\in K^{-1}}\vartheta^{A_2}(tx, ty)\\
&=C_a \max_{t\in K^{-1}}\vartheta^{A_2}(tx, ty).
\end{align*}
Set $C=\max_{a\in A_1}C_a$.
Then
$$\vartheta^{A_1}(\pi(x), \pi(y))\le C\max_{t\in K^{-1}}\vartheta^{A_2}(tx, ty)$$
for all $x, y\in \widehat{\cM_2}$. Therefore the lemma follows from Lemma~\ref{L-mdim addition lower bound}.
\end{proof}

The following lemma is a special case of \cite[Lemma 7.3]{Li}.

\begin{lemma} \label{L-mdim vs mrank free module}
Let $\cM=(\Zb\Gamma)^n$ for some $n\in \Nb$. Take $A\in \cF(\cM)$ to be the standard basis of $\cM$ as a $\Zb\Gamma$-module.
Then
$$\mdim_{\Sigma, \omega, \rM}(\widehat{\cM}, \vartheta^A)=n.$$
\end{lemma}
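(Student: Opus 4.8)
The plan is to compute $\mdim_{\Sigma, \omega, \rM}(\widehat{\cM}, \vartheta^A)$ for the free module $\cM = (\Zb\Gamma)^n$ by sandwiching it between $n$ from below and $n$ from above; since $\widehat{\cM} = ((\Rb/\Zb)^\Gamma)^n$ with the shift action, this is the sofic metric mean dimension of the Bernoulli shift $\Gamma \curvearrowright ((\Rb/\Zb)^n)^\Gamma$ equipped with a particular natural metric. By the additivity of the map space under products (a Bernoulli shift over $(\Rb/\Zb)^n$ is the $n$-fold product of the shift over $\Rb/\Zb$), it suffices to treat $n=1$ and show $\mdim_{\Sigma, \omega, \rM}(\widehat{\Zb\Gamma}, \vartheta^{A}) = 1$, where $A = \{e_\Gamma\}$ viewed as the generator. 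I will then invoke that this is precisely the content of \cite[Lemma 7.3]{Li}, of which the present statement is declared to be a special case; so strictly speaking the proof is a citation. But I would also sketch the underlying argument for completeness.

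First, for the lower bound: by Theorem~\ref{T-comparison} we have $\mdim_{\Sigma, \omega, \rM}(\widehat{\cM}, \vartheta^A) \ge \mdim_{\Sigma, \omega}(\widehat{\cM})$, so it suffices to produce the lower bound $n$ at the level of mean topological dimension, or alternatively to argue directly with separated sets. The direct route: fix $F \in \cF(\Gamma)$ and $\delta > 0$; for a good sofic approximation $\sigma: \Gamma \to \Sym(d)$, the set $\Map(\vartheta^A, F, \delta, \sigma)$ contains a large "cube" — roughly, for most $v \in [d]$ one can choose the coordinate freely in $\Rb/\Zb$ up to $\delta$-constraints coming from the relations $\delta_v a - \delta_{sv} sa$, and the effective number of free coordinates is $\ge (1-o(1))d$ because $\sigma$ is nearly free and nearly a homomorphism. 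This yields $N_\varepsilon(\Map(\vartheta^A, F, \delta, \sigma), (\vartheta^A)_\infty) \ge (c/\varepsilon)^{(1-o(1))d}$, so $h^\varepsilon_{\Sigma, \omega, \infty}(\vartheta^A) \ge (1 - o(1))|\log \varepsilon| + O(1)$, giving $\mdim_{\Sigma, \omega, \rM} \ge 1$ after dividing by $|\log \varepsilon|$ and letting $\varepsilon \to 0$; multiplying by $n$ handles the general case. For the upper bound: for any $\varphi \in \Map(\vartheta^A, F, \delta, \sigma)$, each of the $d$ coordinates lives in $\Rb/\Zb$, so a trivial covering argument gives $N_\varepsilon(\Map, (\vartheta^A)_\infty) \le (C/\varepsilon)^{d}$, hence $h^\varepsilon_{\Sigma, \omega, \infty}(\vartheta^A) \le |\log\varepsilon| + O(1)$ and $\mdim_{\Sigma, \omega, \rM}(\widehat{\Zb\Gamma}, \vartheta^A) \le 1$; again multiply by $n$.

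The main obstacle is making the lower bound rigorous: one must show that the "free" coordinates really do contribute independently in the $(\vartheta^A)_\infty$ metric, i.e.\ that the relations $\delta_v e_\Gamma - \delta_{sv} s$ for $s \in F$ do not force the chosen perturbations on distinct elements of a large separated family to collide. This is exactly the kind of combinatorial estimate — choosing a large subset $\cV \subseteq [d]$ on which $\sigma$ behaves like a genuine free partial action of $F$, and perturbing only the $\cV$-coordinates — that appears in Proposition~\ref{P-mean rank basic}(2) and Proposition~\ref{P-finite} in this paper, and in the proof of \cite[Lemma 7.3]{Li}. Since the statement is explicitly flagged as a special case of that lemma, the cleanest proof in the paper is simply:

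\begin{proof}
This is the special case of \cite[Lemma 7.3]{Li} with $\cM = (\Zb\Gamma)^n$ and $\vartheta^A$ the translation-invariant continuous pseudometric on $\widehat{\cM}$ associated to the standard generating set $A$.
\end{proof}
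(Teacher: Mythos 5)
Your proposal matches the paper exactly: the paper gives no argument beyond the sentence that the lemma is a special case of \cite[Lemma 7.3]{Li}, which is precisely your "cleanest proof." Your supplementary sketch of the lower and upper bounds (large cube of nearly free coordinates for separation, and the fact that $(\vartheta^A)_\infty$ only sees $nd$ circle coordinates for the covering bound) is a reasonable outline of what that cited lemma actually proves, but it is not needed for, nor part of, the paper's own proof.
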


In the next several lemmas, we prove the equality between the metric mean dimension and  the mean rank for modules in increasing generality.

\begin{lemma} \label{L-mdim vs mrank fp}
Let $\cM$ be a  finitely presented $\Zb\Gamma$-module, and let $A$ be a finite generating subset of $\cM$. Then
$ \mdim_{\Sigma, \omega, \rM}(\widehat{\cM}, \vartheta^A)=\mrk_{\Sigma, \omega}(\cM)$.
\end{lemma}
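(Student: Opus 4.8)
The plan is to sandwich $\mdim_{\Sigma, \omega, \rM}(\widehat{\cM}, \vartheta^A)$ between $\mrk_{\Sigma, \omega}(\cM)$ from below and from above, using the results already assembled. For the lower bound, I would invoke Theorem~\ref{T-comparison} and Lemma~\ref{L-mdim vs mrank lower} (together with Proposition~\ref{P-compare with absolute}): one has
$$\mdim_{\Sigma, \omega, \rM}(\widehat{\cM}, \vartheta^A)\ge \mdim_{\Sigma, \omega}(\widehat{\cM})\ge \underline{\mdim}_{\Sigma, \omega}(\widehat{\cM}|\widehat{\cM})\ge \mrk_{\Sigma, \omega}(\cM).$$
Strictly speaking Lemma~\ref{L-mdim vs mrank lower} is stated for the pair $\cM_1\subseteq \cM_2$; taking $\cM_1=\cM_2=\cM$ gives the absolute statement, and $\mdim_{\Sigma,\omega}(\widehat{\cM})\ge \underline{\mdim}_{\Sigma,\omega}(\widehat{\cM}|\widehat{\cM})$ since the latter is a supremum over covers of the form $\pi^{-1}(\cU)$ with $\pi=\id$.

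For the upper bound, I would use the finite presentation $\cM=(\Zb\Gamma)^{1\times n}/(\Zb\Gamma)^{1\times m}f$ with $f\in M_{m,n}(\Zb\Gamma)$. Set $\cM_2=(\Zb\Gamma)^{1\times n}$, $\cM_1=(\Zb\Gamma)^{1\times m}f$, and let $A_2$ be the standard basis of $\cM_2$, $\overline{A_2}$ its image in $\cM$ (which I may assume equals the given $A$, or relate to it via Lemma~\ref{L-pseudometric to metric} / \cite[Lemma 4.4]{Li} since changing a finite generating set changes $\vartheta^A$ only up to the equivalence that preserves metric mean dimension). Applying Lemma~\ref{L-mdim addition lower bound algebraic} to $\cM_1\subseteq \cM_2$ gives
$$\mdim_{\Sigma, \omega, \rM}(\widehat{\cM_2}, \vartheta^{A_2})\ge \mdim_{\Sigma, \omega, \rM}(\widehat{\cM_1}, \vartheta^{A_1}|\widehat{\cM_2})+\mdim_{\Sigma, \omega, \rM}(\widehat{\cM}, \vartheta^{\overline{A_2}}),$$
where $A_1$ is a finite generating set of $\cM_1$ (e.g.\ the rows of $f$). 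By Lemma~\ref{L-mdim vs mrank free module}, $\mdim_{\Sigma, \omega, \rM}(\widehat{\cM_2}, \vartheta^{A_2})=n$. For the middle term, I would combine Theorem~\ref{T-comparison} with Lemma~\ref{L-mdim vs mrank lower} applied to the pair $\cM_1\subseteq \cM_2$ to get $\mdim_{\Sigma, \omega, \rM}(\widehat{\cM_1}, \vartheta^{A_1}|\widehat{\cM_2})\ge \mdim_{\Sigma, \omega}(\widehat{\cM_1}|\widehat{\cM_2})\ge \underline{\mdim}_{\Sigma, \omega}(\widehat{\cM_1}|\widehat{\cM_2})\ge \mrk_{\Sigma, \omega}(\cM_1|\cM_2)$. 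Now Proposition~\ref{P-principal mean rank}, Proposition~\ref{P-mean rank basic}, and the addition formula (Theorem~\ref{T-addition for mean length}) give $\mrk_{\Sigma,\omega}(\cM_1|\cM_2)=n-\mrk_{\Sigma,\omega}(\cM)$ (this is exactly the computation in the proof of Lemma~\ref{L-mrk vs vrk fp}, using $\mrk_{\Sigma,\omega}(\cM_2)=n$). Chaining the inequalities: $n\ge (n-\mrk_{\Sigma,\omega}(\cM))+\mdim_{\Sigma, \omega, \rM}(\widehat{\cM}, \vartheta^{\overline{A_2}})$, hence $\mdim_{\Sigma, \omega, \rM}(\widehat{\cM}, \vartheta^{\overline{A_2}})\le \mrk_{\Sigma,\omega}(\cM)$.

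Combining the two bounds yields $\mdim_{\Sigma, \omega, \rM}(\widehat{\cM}, \vartheta^A)=\mrk_{\Sigma,\omega}(\cM)$, modulo the bookkeeping that $\vartheta^A$ and $\vartheta^{\overline{A_2}}$ give the same metric mean dimension — which holds because both are dynamically generating translation-invariant continuous pseudometrics attached to finite generating sets, and for two finite generating sets $A, A'$ each element of $A$ is a fixed $\Zb\Gamma$-combination of elements of $A'$, so $\vartheta^A \le C\max_{s\in K}\vartheta^{A'}(s\cdot, s\cdot)$ and symmetrically; a reduction essentially packaged by \cite[Lemma 4.4]{Li} and Lemma~\ref{L-pseudometric to metric}. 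The main obstacle I anticipate is precisely this last point: making sure the choice of $A$ is immaterial and that the ``$\ge$'' from Lemma~\ref{L-mdim addition lower bound algebraic} (an inequality, not an equality) is the one pointing in the direction needed for the upper bound. Everything else is an assembly of the cited lemmas; the delicate accounting is that the lower bound on the absolute quantity and the lower bound on the relative quantity $\mdim_{\Sigma,\omega,\rM}(\widehat{\cM_1},\vartheta^{A_1}|\widehat{\cM_2})$ combine through the subadditivity in Lemma~\ref{L-mdim addition lower bound algebraic} to force the upper bound on $\mdim_{\Sigma,\omega,\rM}(\widehat{\cM})$ — a trick that works only because $\mdim_{\Sigma,\omega,\rM}(\widehat{\cM_2},\vartheta^{A_2})$ is \emph{exactly} $n=\mrk_{\Sigma,\omega}(\cM_2)$ by Lemma~\ref{L-mdim vs mrank free module}.
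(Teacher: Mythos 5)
Your proposal is correct and is essentially the paper's own proof: the paper also writes $\cM=(\Zb\Gamma)^n/\cM_1$ with the presentation chosen so that $A$ is the image of the standard basis (your first option, which sidesteps the generating-set bookkeeping entirely), and runs the same squeeze combining Lemma~\ref{L-mdim addition lower bound algebraic}, Lemma~\ref{L-mdim vs mrank free module}, Theorem~\ref{T-comparison}, Proposition~\ref{P-compare with absolute}, Lemma~\ref{L-mdim vs mrank lower}, Proposition~\ref{P-mean rank basic} and the addition formula of Theorem~\ref{T-addition for mean length}. The only cosmetic difference is that you phrase it as separate lower and upper bounds (and cite Proposition~\ref{P-principal mean rank}, which is not needed), whereas the paper chains all inequalities into one cycle starting and ending at $n$ and concludes that every inequality is an equality.
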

\begin{proof} We may write $\cM$ as $(\Zb\Gamma)^n/\cM_1$ for some $n\in \Nb$ and some $\Zb\Gamma$-submodule $\cM_1$ of $(\Zb\Gamma)^n$ such that, denoting by $A_2$ the standard basis of $(\Zb\Gamma)^n$ as a $\Zb\Gamma$-module,  $A$ is the image of $A_2$ under the quotient map $(\Zb\Gamma)^n\rightarrow \cM$. Set $\cM_2=(\Zb\Gamma)^n$. Since $\cM$ is finitely presented, $\cM_1$ is finitely generated \cite[Proposition 4.26]{Lam}. Take a finite generating subset $A_1$ of $\cM_1$.

By Lemma~\ref{L-mdim vs mrank free module}
and  Proposition~\ref{P-mean rank basic} we have
\begin{align*}
\mdim_{\Sigma, \omega, \rM}(\widehat{\cM_2}, \vartheta^{A_2})=n=\mrk_{\Sigma, \omega}(\cM_2).
\end{align*}
By Lemma~\ref{L-mdim addition lower bound algebraic}, Theorem~\ref{T-comparison}, Proposition~\ref{P-compare with absolute},  Lemma~\ref{L-mdim vs mrank lower}, and Theorem~\ref{T-addition for mean length}
we have
\begin{align*}
\mdim_{\Sigma, \omega, \rM}(\widehat{\cM_2}, \vartheta^{A_2})
&\ge \mdim_{\Sigma, \omega, \rM}(\widehat{\cM_1}, \vartheta^{A_1}|\widehat{\cM_2})+\mdim_{\Sigma, \omega, \rM}(\widehat{\cM}, \vartheta^{A})\\
&\ge \mdim_{\Sigma, \omega}(\widehat{\cM_1}|\widehat{\cM_2})+\mdim_{\Sigma, \omega}(\widehat{\cM})\\
&\ge \underline{\mdim}_{\Sigma, \omega}(\widehat{\cM_1}|\widehat{\cM_2})+\mdim_{\Sigma, \omega}(\widehat{\cM})\\
&\ge \mrk_{\Sigma, \omega}(\cM_1|\cM_2)+\mrk_{\Sigma, \omega}(\cM)\\
&= \mrk_{\Sigma, \omega}(\cM_2)\\
&=\mdim_{\Sigma, \omega, \rM}(\widehat{\cM_2}, \vartheta^{A_2}).
\end{align*}
Thus $\mdim_{\Sigma, \omega, \rM}(\widehat{\cM}, \vartheta^A)=\mrk_{\Sigma, \omega}(\cM)$.
\end{proof}

\begin{lemma} \label{L-mdim vs mrank fg}
Let $\cM$ be a  finitely generated $\Zb\Gamma$-module, and let $A$ be a finite generating subset of $\cM$. Then
$ \mdim_{\Sigma, \omega, \rM}(\widehat{\cM}, \vartheta^A)=\mrk_{\Sigma, \omega}(\cM)$.
\end{lemma}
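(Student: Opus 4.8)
The plan is to sandwich $\mdim_{\Sigma,\omega,\rM}(\widehat{\cM},\vartheta^A)$ between $\mrk_{\Sigma,\omega}(\cM)$ from both sides, by realizing $\widehat{\cM}$ as a decreasing intersection of Pontryagin duals of finitely presented modules and invoking Lemma~\ref{L-mdim vs mrank fp}. The lower bound $\mdim_{\Sigma,\omega,\rM}(\widehat{\cM},\vartheta^A)\ge\mrk_{\Sigma,\omega}(\cM)$ is obtained exactly as in the proof of Lemma~\ref{L-mdim vs mrank fp}: since $A$ generates $\cM$ as a $\Zb\Gamma$-module, $\vartheta^A$ is dynamically generating, so by Proposition~\ref{P-pseudometric vs metric}, Theorem~\ref{T-comparison}, Proposition~\ref{P-compare with absolute} and Lemma~\ref{L-mdim vs mrank lower} (applied with $\cM_1=\cM_2=\cM$) one has $\mdim_{\Sigma,\omega,\rM}(\widehat{\cM},\vartheta^A)\ge\mdim_{\Sigma,\omega,\rM}(\widehat{\cM})\ge\mdim_{\Sigma,\omega}(\widehat{\cM})=\underline{\mdim}_{\Sigma,\omega}(\widehat{\cM}|\widehat{\cM})\ge\mrk_{\Sigma,\omega}(\cM)$.

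For the upper bound, write $\cM=(\Zb\Gamma)^n/\cK$ with $A$ the image of the standard basis $\{e_1,\dots,e_n\}$ of $(\Zb\Gamma)^n$; since $\Zb\Gamma$ is countable, $\cK$ is the union of an increasing sequence $\{\cK_j\}_{j\in\Nb}$ of finitely generated $\Zb\Gamma$-submodules. Set $\cM_j=(\Zb\Gamma)^n/\cK_j$, a finitely presented $\Zb\Gamma$-module, and let $A_j$ be the image of $\{e_i\}$ in $\cM_j$. The surjection $\cM_j\twoheadrightarrow\cM$ dualizes to a closed $\Gamma$-equivariant embedding $\widehat{\cM}\hookrightarrow\widehat{\cM_j}$ under which $\vartheta^A$ is exactly the restriction of $\vartheta^{A_j}$, because an element of $\widehat{\cM}$, viewed inside $\widehat{\cM_j}$, takes the same value on each generator in $A_j$ as on its image in $A$. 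Here comes the one ingredient not already in the excerpt, an elementary subsystem monotonicity: if $Z$ is a closed $\Gamma$-invariant subset of a compact metrizable $\Gamma$-space $W$ and $\rho$ is a continuous pseudometric on $W$, then $\mdim_{\Sigma,\omega,\rM}(Z,\rho|_Z)\le\mdim_{\Sigma,\omega,\rM}(W,\rho)$. This holds because $\Map(\rho|_Z,F,\delta,\sigma)$ is precisely the set of $\varphi\in\Map(\rho,F,\delta,\sigma)$ with image contained in $Z$, hence a subset of $\Map(\rho,F,\delta,\sigma)$, and any $(\rho_\infty,\varepsilon)$-separated subset of it is $(\rho_\infty,\varepsilon)$-separated in $\Map(\rho,F,\delta,\sigma)$; thus $h^\varepsilon_{\Sigma,\omega,\infty}(\rho|_Z)\le h^\varepsilon_{\Sigma,\omega,\infty}(\rho)$, and dividing by $|\log\varepsilon|$ and taking $\varliminf_{\varepsilon\to 0}$ gives the inequality. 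Applying this with $W=\widehat{\cM_j}$, $Z=\widehat{\cM}$, together with Lemma~\ref{L-mdim vs mrank fp}, yields $\mdim_{\Sigma,\omega,\rM}(\widehat{\cM},\vartheta^A)\le\mdim_{\Sigma,\omega,\rM}(\widehat{\cM_j},\vartheta^{A_j})=\mrk_{\Sigma,\omega}(\cM_j)$ for every $j$.

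Finally I would check $\inf_j\mrk_{\Sigma,\omega}(\cM_j)=\mrk_{\Sigma,\omega}(\cM)$. Since $\rk(\Zb)=1<\infty$, every $\Zb\Gamma$-module is locally $\rk$-finite, so Theorem~\ref{T-addition for mean length} applies to $\cK_j\subseteq(\Zb\Gamma)^n$ and to $\cK\subseteq(\Zb\Gamma)^n$, giving $n=\mrk_{\Sigma,\omega}(\cK_j|(\Zb\Gamma)^n)+\mrk_{\Sigma,\omega}(\cM_j)$ and $n=\mrk_{\Sigma,\omega}(\cK|(\Zb\Gamma)^n)+\mrk_{\Sigma,\omega}(\cM)$, where $\mrk_{\Sigma,\omega}((\Zb\Gamma)^n)=n$ by Proposition~\ref{P-mean rank basic}. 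By Proposition~\ref{P-continuity}(1) the finite numbers $\mrk_{\Sigma,\omega}(\cK_j|(\Zb\Gamma)^n)$ increase to $\mrk_{\Sigma,\omega}(\cK|(\Zb\Gamma)^n)$, whence $\mrk_{\Sigma,\omega}(\cM_j)=n-\mrk_{\Sigma,\omega}(\cK_j|(\Zb\Gamma)^n)$ decreases to $\mrk_{\Sigma,\omega}(\cM)$. Combining with the previous paragraph completes the proof. No step presents a genuine obstacle; the subsystem monotonicity of $\mdim_{\Sigma,\omega,\rM}(\cdot,\rho)$ is a one-line unwinding of the definition, and the only real idea is to exhaust $\cK$ rather than $\cM$ by finitely generated submodules and to descend to the restricted pseudometrics $\vartheta^{A_j}$.
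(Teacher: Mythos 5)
Your proof is correct and follows essentially the same route as the paper: present $\cM$ as a quotient of $(\Zb\Gamma)^n$, exhaust the relation module by finitely generated submodules, apply Lemma~\ref{L-mdim vs mrank fp} together with the monotonicity of $\mdim_{\Sigma,\omega,\rM}$ under restricting $\vartheta^{A_j}$ to the subsystem $\widehat{\cM}\subseteq\widehat{\cM_j}$, and pass to the limit (your explicit use of Theorem~\ref{T-addition for mean length} is just the proof of the second half of Proposition~\ref{P-continuity}(1), which the paper invokes directly), while the lower bound via Theorem~\ref{T-comparison} and Lemma~\ref{L-mdim vs mrank lower} is identical.
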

\begin{proof} We may write $\cM$ as $\cM'/\cM_\infty$ for some finitely generated free $\Zb\Gamma$-module $\cM'$  and some $\Zb\Gamma$-submodule $\cM_\infty$ of $\cM'$ such that,
denoting by $A'$ the standard basis of $\cM'$ as a $\Zb\Gamma$-module,  $A$ is the image of $A'$ under the quotient map $\cM'\rightarrow \cM$.
Take an increasing sequence $\{\cM_j\}_{j=1}^\infty$ of finitely generated $\Zb\Gamma$-submodules of $\cM_\infty$ with $\bigcup_{j=1}^\infty\cM_j=\cM_\infty$.
Denote by $A_j$ the image of $A'$ under the quotient map $\cM'\rightarrow \cM'/\cM_j$. For each $j\in \Nb$, $\cM'/\cM_j$ is a finitely presented $\Zb\Gamma$-module. Thus by Lemma~\ref{L-mdim vs mrank fp} we have
$$\mdim_{\Sigma, \omega, \rM}(\widehat{\cM'/\cM_j}, \vartheta^{A_j})=\mrk_{\Sigma, \omega}(\cM'/\cM_j).$$
Note that $\vartheta^A$ is the restriction of $\vartheta^{A_j}$ to $\widehat{\cM}$, thus $\mdim_{\Sigma, \omega, \rM}(\widehat{\cM}, \vartheta^A)\le \mdim_{\Sigma, \omega, \rM}(\widehat{\cM'/\cM_j}, \vartheta^{A_j})$.
Therefore by Proposition~\ref{P-continuity} we have
\begin{align*}
\mdim_{\Sigma, \omega, \rM}(\widehat{\cM}, \vartheta^A)&\le \lim_{j\to \infty}\mdim_{\Sigma, \omega, \rM}(\widehat{\cM'/\cM_j}, \vartheta^{A_j})\\
&=\lim_{j\to \infty}\mrk_{\Sigma, \omega}(\cM'/\cM_j)\\
&=\mrk_{\Sigma, \omega}(\cM).
\end{align*}
By Theorem~\ref{T-comparison} and Lemma~\ref{L-mdim vs mrank lower} we also have
$$\mdim_{\Sigma, \omega, \rM}(\widehat{\cM}, \vartheta^A)\ge  \mdim_{\Sigma, \omega}(\widehat{\cM})\ge \mrk_{\Sigma, \omega}(\cM).$$
Thus $\mdim_{\Sigma, \omega, \rM}(\widehat{\cM}, \vartheta^A)=\mrk_{\Sigma, \omega}(\cM)$.
\end{proof}

\begin{lemma} \label{L-mdim vs mrank fg fg}
Let $\cM_1\subseteq \cM_2$ be finitely generated $\Zb\Gamma$-modules, and let $A$ be a finite generating subset of $\cM_1$. Then $\mdim_{\Sigma, \omega, \rM}(\widehat{\cM_1}, \vartheta^A|\widehat{\cM_2})=\mrk_{\Sigma, \omega}(\cM_1|\cM_2)$.
\end{lemma}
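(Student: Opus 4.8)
The plan is to deduce the equality from the additivity of both invariants together with the one-sided inequalities already established, so that no new dynamical estimate is needed. First I would set $A_1:=A$ and choose any finite generating subset $A_2$ of the finitely generated module $\cM_2$, writing $\overline{A_2}$ for the image of $A_2$ under the quotient map $\cM_2\rightarrow \cM_2/\cM_1$; then $\vartheta^{A}=\vartheta^{A_1}$, $\vartheta^{A_2}$ and $\vartheta^{\overline{A_2}}$ are dynamically generating continuous pseudometrics on $\widehat{\cM_1}$, $\widehat{\cM_2}$ and $\widehat{\cM_2/\cM_1}$ respectively.

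For the lower bound I would argue exactly as in the proof of Lemma~\ref{L-mdim vs mrank fp}: by Theorem~\ref{T-comparison} (whose proof applies to each dynamically generating pseudometric), Proposition~\ref{P-compare with absolute}, and Lemma~\ref{L-mdim vs mrank lower},
$$\mdim_{\Sigma, \omega, \rM}(\widehat{\cM_1}, \vartheta^{A}|\widehat{\cM_2})\ge \mdim_{\Sigma, \omega}(\widehat{\cM_1}|\widehat{\cM_2})\ge \underline{\mdim}_{\Sigma, \omega}(\widehat{\cM_1}|\widehat{\cM_2})\ge \mrk_{\Sigma, \omega}(\cM_1|\cM_2).$$

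For the upper bound I would avoid proving a separate subadditivity statement for the metric mean dimension and instead combine pieces already available. Since $\cM_2$ and $\cM_2/\cM_1$ are finitely generated $\Zb\Gamma$-modules, Lemma~\ref{L-mean length generating}(1) gives $\mrk_{\Sigma, \omega}(\cM_2/\cM_1)\le \rk(\text{subgroup of }\cM_2/\cM_1\text{ generated by }\overline{A_2})<+\infty$. Applying Lemma~\ref{L-mdim vs mrank fg} to $\cM_2$ and to $\cM_2/\cM_1$, then the superadditivity of Lemma~\ref{L-mdim addition lower bound algebraic}, and finally Theorem~\ref{T-addition for mean length}, I would obtain
\begin{align*}
\mrk_{\Sigma, \omega}(\cM_1|\cM_2)+\mrk_{\Sigma, \omega}(\cM_2/\cM_1)
&=\mrk_{\Sigma, \omega}(\cM_2)=\mdim_{\Sigma, \omega, \rM}(\widehat{\cM_2}, \vartheta^{A_2})\\
&\ge \mdim_{\Sigma, \omega, \rM}(\widehat{\cM_1}, \vartheta^{A}|\widehat{\cM_2})+\mdim_{\Sigma, \omega, \rM}(\widehat{\cM_2/\cM_1}, \vartheta^{\overline{A_2}})\\
&=\mdim_{\Sigma, \omega, \rM}(\widehat{\cM_1}, \vartheta^{A}|\widehat{\cM_2})+\mrk_{\Sigma, \omega}(\cM_2/\cM_1).
\end{align*}
Cancelling the finite term $\mrk_{\Sigma, \omega}(\cM_2/\cM_1)$ gives $\mrk_{\Sigma, \omega}(\cM_1|\cM_2)\ge \mdim_{\Sigma, \omega, \rM}(\widehat{\cM_1}, \vartheta^{A}|\widehat{\cM_2})$, and with the lower bound the lemma follows.

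The only delicate point is the cancellation step, which requires $\mrk_{\Sigma, \omega}(\cM_2/\cM_1)<+\infty$; this is precisely where finite generation of $\cM_2$ enters, via Lemma~\ref{L-mean length generating}(1). Everything else is bookkeeping with results already proved, the half of the additivity that is not formal being supplied by Lemma~\ref{L-mdim addition lower bound algebraic}.
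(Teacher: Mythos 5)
Your proof is correct and is essentially the paper's argument: the same decomposition $\cM_1\subseteq\cM_2\to\cM_2/\cM_1$, with the upper bound coming from Lemma~\ref{L-mdim addition lower bound algebraic} plus Theorem~\ref{T-addition for mean length} and Lemma~\ref{L-mdim vs mrank fg}, and the lower bound from Theorem~\ref{T-comparison}, Proposition~\ref{P-compare with absolute} and Lemma~\ref{L-mdim vs mrank lower}. The only cosmetic difference is that you invoke Lemma~\ref{L-mdim vs mrank fg} for $\cM_2/\cM_1$ and cancel the finite term $\mrk_{\Sigma,\omega}(\cM_2/\cM_1)$ explicitly, whereas the paper sandwiches the whole chain of inequalities between two equal finite quantities; your attention to the finiteness needed for the cancellation is exactly the point that makes either version work.
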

\begin{proof} Take a finite generating subset $A_2$ of $\cM_2$. Denote by $\overline{A_2}$ the image of $A_2$ under the quotient map $\cM_2\rightarrow \cM_2/\cM_1$.
By Lemma~\ref{L-mdim addition lower bound algebraic}, Theorem~\ref{T-comparison},  Proposition~\ref{P-compare with absolute}, Lemma~\ref{L-mdim vs mrank lower}, Theorem~\ref{T-addition for mean length}, and Lemma~\ref{L-mdim vs mrank fg}
we have
\begin{align*}
\mdim_{\Sigma, \omega, \rM}(\widehat{\cM_2}, \vartheta^{A_2})
&\ge \mdim_{\Sigma, \omega, \rM}(\widehat{\cM_1}, \vartheta^A|\widehat{\cM_2})+\mdim_{\Sigma, \omega, \rM}(\widehat{\cM_2/\cM_1}, \vartheta^{\overline{A_2}})\\
&\ge \mdim_{\Sigma, \omega}(\widehat{\cM_1}|\widehat{\cM_2})+\mdim_{\Sigma, \omega}(\widehat{\cM_2/\cM_1})\\
&\ge \underline{\mdim}_{\Sigma, \omega}(\widehat{\cM_1}|\widehat{\cM_2})+\mdim_{\Sigma, \omega}(\widehat{\cM_2/\cM_1})\\
&\ge \mrk_{\Sigma, \omega}(\cM_1|\cM_2)+\mrk_{\Sigma, \omega}(\cM_2/\cM_1)\\
&= \mrk_{\Sigma, \omega}(\cM_2)\\
&=\mdim_{\Sigma, \omega, \rM}(\widehat{\cM_2}, \vartheta^{A_2}).
\end{align*}
Thus
$$\mdim_{\Sigma, \omega, \rM}(\widehat{\cM_1}, \vartheta^A|\widehat{\cM_2})
=\mrk_{\Sigma, \omega}(\cM_1|\cM_2).$$
\end{proof}

\begin{lemma} \label{L-mdim vs mrank fg2}
Let $\cM_2$ be a countable $\Zb\Gamma$-module, and let $\cM_1$ be a finitely generated $\Zb\Gamma$-submodule of $\cM_2$ with  a finite generating set $A$. Then
$$\mdim_{\Sigma, \omega, \rM}(\widehat{\cM_1}, \vartheta^A|\widehat{\cM_2})= \mrk_{\Sigma, \omega}(\cM_1|\cM_2).$$
\end{lemma}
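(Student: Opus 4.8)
The plan is to reduce to the finitely generated case treated in Lemma~\ref{L-mdim vs mrank fg fg} by exhausting $\cM_2$ and then squeezing $\mdim_{\Sigma, \omega, \rM}(\widehat{\cM_1}, \vartheta^A|\widehat{\cM_2})$ from below and above. First I would fix an increasing sequence $\{\cM_2^{(j)}\}_{j\in\Nb}$ of finitely generated $\Zb\Gamma$-submodules of $\cM_2$, each containing $\cM_1$, with $\bigcup_{j}\cM_2^{(j)}=\cM_2$; this is possible since $\cM_2$ is countable and $\cM_1$ is finitely generated. Taking Pontryagin duals yields factor maps $\widehat{\cM_2}\to\widehat{\cM_2^{(j+1)}}\to\widehat{\cM_2^{(j)}}\to\widehat{\cM_1}$, and $\vartheta^A$ is a dynamically generating continuous pseudometric on $\widehat{\cM_1}$ because $A$ generates $\cM_1$ as a $\Zb\Gamma$-module.

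For the lower bound I would simply chain the inequalities already available: by Proposition~\ref{P-pseudometric vs metric} one has $\mdim_{\Sigma, \omega, \rM}(\widehat{\cM_1}, \vartheta^A|\widehat{\cM_2})\ge \mdim_{\Sigma, \omega, \rM}(\widehat{\cM_1}|\widehat{\cM_2})$; then Theorem~\ref{T-comparison} gives $\mdim_{\Sigma, \omega, \rM}(\widehat{\cM_1}|\widehat{\cM_2})\ge \mdim_{\Sigma, \omega}(\widehat{\cM_1}|\widehat{\cM_2})$, Proposition~\ref{P-compare with absolute} gives $\mdim_{\Sigma, \omega}(\widehat{\cM_1}|\widehat{\cM_2})\ge \underline{\mdim}_{\Sigma, \omega}(\widehat{\cM_1}|\widehat{\cM_2})$, and Lemma~\ref{L-mdim vs mrank lower} (which is already stated for countable modules) gives $\underline{\mdim}_{\Sigma, \omega}(\widehat{\cM_1}|\widehat{\cM_2})\ge \mrk_{\Sigma, \omega}(\cM_1|\cM_2)$. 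Hence $\mdim_{\Sigma, \omega, \rM}(\widehat{\cM_1}, \vartheta^A|\widehat{\cM_2})\ge \mrk_{\Sigma, \omega}(\cM_1|\cM_2)$.

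For the upper bound the key point is that the relative metric mean dimension $\mdim_{\Sigma,\omega,\rM}(Y,\rho_Y|X)$ is monotone non-increasing when the extension $X$ is enlarged. Concretely, if $q\colon X'\to X$ and $p\colon X\to Y$ are factor maps, pick a dynamically generating continuous pseudometric $\rho_X$ on $X$ and set $\rho_{X'}=\rho_X\circ(q\times q)+\rho_0$ for any compatible metric $\rho_0$ on $X'$; then $\rho_{X'}$ is a compatible metric on $X'$ and $q$ is $\Gamma$-equivariant and non-expanding from $\rho_{X'}$ to $\rho_X$, so $q\circ\varphi'\in\Map(\rho_X,F,\delta,\sigma)$ whenever $\varphi'\in\Map(\rho_{X'},F,\delta,\sigma)$, whence $\Map(p\circ q,\rho_{X'},F,\delta,\sigma)\subseteq\Map(p,\rho_X,F,\delta,\sigma)$. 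Comparing $(\rho_{Y,\infty},\varepsilon)$-separated subsets gives $h^\varepsilon_{\Sigma,\omega,\infty}(\rho_Y|X')\le h^\varepsilon_{\Sigma,\omega,\infty}(\rho_Y|X)$ for every $\varepsilon>0$, and passing to $\varliminf_{\varepsilon\to0}\frac{1}{|\log\varepsilon|}(\cdot)$ yields $\mdim_{\Sigma,\omega,\rM}(Y,\rho_Y|X')\le\mdim_{\Sigma,\omega,\rM}(Y,\rho_Y|X)$, the right-hand side being independent of the auxiliary choice $\rho_X$ by Lemma~\ref{L-generating metric}. Applying this with $X'=\widehat{\cM_2}$, $X=\widehat{\cM_2^{(j)}}$, $Y=\widehat{\cM_1}$, $\rho_Y=\vartheta^A$, and then Lemma~\ref{L-mdim vs mrank fg fg} (both $\cM_1\subseteq\cM_2^{(j)}$ finitely generated), we get $\mdim_{\Sigma,\omega,\rM}(\widehat{\cM_1},\vartheta^A|\widehat{\cM_2})\le\mdim_{\Sigma,\omega,\rM}(\widehat{\cM_1},\vartheta^A|\widehat{\cM_2^{(j)}})=\mrk_{\Sigma,\omega}(\cM_1|\cM_2^{(j)})$ for all $j$. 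Taking the infimum over $j$ and using Proposition~\ref{P-continuity}(2) (valid since $\cM_1$ is finitely generated), $\mrk_{\Sigma,\omega}(\cM_1|\cM_2^{(j)})\searrow\mrk_{\Sigma,\omega}(\cM_1|\cM_2)$, so $\mdim_{\Sigma,\omega,\rM}(\widehat{\cM_1},\vartheta^A|\widehat{\cM_2})\le\mrk_{\Sigma,\omega}(\cM_1|\cM_2)$. Together with the lower bound this proves the lemma.

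I expect the only non-routine step to be the monotonicity-in-the-extension observation, which is not recorded explicitly in Sections~\ref{S-relative mdim}--\ref{S-relative metric mdim}; the rest is a bookkeeping chain of already-proved inequalities. That step is easy once one chooses the pseudometric on $X'$ so that the projection to $X$ is non-expanding, which is exactly the role of the term $\rho_X\circ(q\times q)$ above.
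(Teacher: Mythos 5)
Your proposal is correct and follows essentially the same route as the paper's proof: the same exhaustion of $\cM_2$ by finitely generated submodules containing $\cM_1$, the upper bound via Lemma~\ref{L-mdim vs mrank fg fg} and Proposition~\ref{P-continuity}, and the lower bound via Theorem~\ref{T-comparison}, Proposition~\ref{P-compare with absolute} and Lemma~\ref{L-mdim vs mrank lower}. The only difference is that you spell out the monotonicity $\mdim_{\Sigma,\omega,\rM}(\widehat{\cM_1},\vartheta^A|\widehat{\cM_2})\le \mdim_{\Sigma,\omega,\rM}(\widehat{\cM_1},\vartheta^A|\widehat{\cM'_j})$ with an explicit non-expanding choice of pseudometric on the larger extension, a step the paper simply asserts; your verification of it is correct.
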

\begin{proof}Take an increasing sequence $\{\cM'_j\}_{j=1}^\infty$ of finitely generated $\Zb\Gamma$-submodules of $\cM_2$ containing $\cM_1$ such that $\bigcup_{j=1}^\infty\cM'_j=\cM_2$. For each $j\in \Nb$, we have
$$\mdim_{\Sigma, \omega, \rM}(\widehat{\cM_1}, \vartheta^A|\widehat{\cM_2})\le \mdim_{\Sigma, \omega, \rM}(\widehat{\cM_1}, \vartheta^A|\widehat{\cM'_j}).$$
Thus by Lemma~\ref{L-mdim vs mrank fg fg} and Proposition~\ref{P-continuity} we get
\begin{align*}
\mdim_{\Sigma, \omega, \rM}(\widehat{\cM_1}, \vartheta^A|\widehat{\cM_2})&\le \lim_{j\to \infty}\mdim_{\Sigma, \omega, \rM}(\widehat{\cM_1}, \vartheta^A|\widehat{\cM'_j})\\
&=\lim_{j\to \infty}\mrk_{\Sigma, \omega}(\cM_1|\cM'_j)\\
&=\mrk_{\Sigma, \omega}(\cM_1|\cM_2).
\end{align*}
By Theorem~\ref{T-comparison}, Proposition~\ref{P-compare with absolute}, and Lemma~\ref{L-mdim vs mrank lower}, we also have
$$\mdim_{\Sigma, \omega, \rM}(\widehat{\cM_1}, \vartheta^A|\widehat{\cM_2})\ge \mdim_{\Sigma, \omega}(\widehat{\cM_1}|\widehat{\cM_2})\ge \underline{\mdim}_{\Sigma, \omega}(\widehat{\cM_1}|\widehat{\cM_2})\ge\mrk_{\Sigma, \omega}(\cM_1|\cM_2).$$
Thus $\mdim_{\Sigma, \omega, \rM}(\widehat{\cM_1}, \vartheta^A|\widehat{\cM_2})= \mrk_{\Sigma, \omega}(\cM_1|\cM_2)$.
\end{proof}

We are ready to prove Theorem~\ref{T-mdim vs mrk}.

\begin{proof}[Proof of Theorem~\ref{T-mdim vs mrk}]
Take an increasing sequence $\{\cM'_j\}_{j\in \Nb}$ of finitely generated $\Zb\Gamma$-submodules of $\cM_1$ with $\bigcup_{j=1}^\infty\cM'_j=\cM_1$.
Take a finite generating subset $A_j$ of $\cM'_j$ for each $j\in \Nb$ such that $A_j\subseteq A_{j+1}$ for all $j\in \Nb$. Denote by $p_j$ the factor map
$\widehat{\cM'_{j+1}}\rightarrow \widehat{\cM'_j}$. Then $\vartheta^{A_j}(p_j(x), p_j(y))\le \vartheta^{A_{j+1}}(x, y)$ for all $x, y\in \widehat{\cM'_{j+1}}$.
By Lemma~\ref{L-metric inverse limit} there is a decreasing sequence $\{\lambda_j\}_{j\in \Nb}$ of positive numbers such that
the dynamically generating continuous pseudometric $\rho$ of $\varprojlim_{j\to \infty}\widehat{\cM'_j}=\widehat{\cM_1}$ defined by
$$ \rho(x, y):=\max_{j\in \Nb}\lambda_j\vartheta^{A_j}(x, y)$$
satisfies
$$\mdim_{\Sigma, \omega, \rM}(\widehat{\cM_1}, \rho|\widehat{\cM_2})= \lim_{j\to \infty}\mdim_{\Sigma, \omega, \rM}(\widehat{\cM'_j}, \vartheta^{A_j}|\widehat{\cM_2}).$$
By Lemma~\ref{L-mdim vs mrank fg2} and Proposition~\ref{P-continuity} we have
\begin{align*}
\mdim_{\Sigma, \omega, \rM}(\widehat{\cM_1}, \rho|\widehat{\cM_2})&= \lim_{j\to \infty}\mdim_{\Sigma, \omega, \rM}(\widehat{\cM'_j}, \vartheta^{A_j}|\widehat{\cM_2})\\
&=\lim_{j\to \infty}\mrk_{\Sigma, \omega}(\cM'_j|\cM_2)\\
&=\mrk_{\Sigma, \omega}(\cM_1|\cM_2).
\end{align*}
By Theorem~\ref{T-comparison}, Proposition~\ref{P-compare with absolute}, and Lemma~\ref{L-mdim vs mrank lower}, we also have
$$\mdim_{\Sigma, \omega, \rM}(\widehat{\cM_1}, \rho|\widehat{\cM_2})\ge \mdim_{\Sigma, \omega}(\widehat{\cM_1}|\widehat{\cM_2})\ge \underline{\mdim}_{\Sigma, \omega}(\widehat{\cM_1}|\widehat{\cM_2})\ge \mrk_{\Sigma, \omega}(\cM_1|\cM_2).$$
Thus
\begin{align*}
\mdim_{\Sigma, \omega, \rM}(\widehat{\cM_1}, \rho|\widehat{\cM_2})&=\mdim_{\Sigma, \omega}(\widehat{\cM_1}|\widehat{\cM_2})=\underline{\mdim}_{\Sigma, \omega}(\widehat{\cM_1}|\widehat{\cM_2})= \mrk_{\Sigma, \omega}(\cM_1|\cM_2).
\end{align*}
Fix an enumeration $s_1, s_2, \dots$ of the elements of $\Gamma$. Define
$$\tilde{\rho}(x, y)=\sum_{j=1}^\infty\frac{1}{2^j}\rho(s_jx, s_jx)$$
for all $x, y\in \widehat{\cM_1}$. By Lemma~\ref{L-pseudometric to metric}, $\tilde{\rho}$ is a translation-invariant compatible metric on $\widehat{\cM_1}$, and
\begin{align*}
\mdim_{\Sigma, \omega, \rM}(\widehat{\cM_1}, \tilde{\rho}|\widehat{\cM_2})=\mdim_{\Sigma, \omega, \rM}(\widehat{\cM_1}, \rho|\widehat{\cM_2}).
\end{align*}
\end{proof}

\section{Applications to mean dimension} \label{S-application}

In this section we give three applications of Theorems~\ref{T-mrk vs vrk}, \ref{T-mdim vs mrk} and \ref{T-mdim vs vrk}  to the mean dimension of algebraic actions.

Our first application is the following addition formula for the mean topological dimension of algebraic actions, which follows  from Theorems~\ref{T-addition for mean length} and \ref{T-mdim vs mrk}.

\begin{theorem}\label{T-addition for mdim}
Let $\Gamma$ be a countable sofic group, and let
$$0\rightarrow X_1\rightarrow X_2\rightarrow X_3\rightarrow 0$$
be a short exact sequence of compact metrizable abelian groups equipped with $\Gamma$-actions by continuous automorphisms such that the maps are $\Gamma$-equivariant. Then
$$ \mdim_{\Sigma, \omega}(X_2)=\mdim_{\Sigma, \omega}(X_3|X_2)+\mdim_{\Sigma, \omega}(X_1).$$
\end{theorem}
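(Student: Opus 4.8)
The plan is to transport the statement across Pontryagin duality to the addition formula for mean rank (Theorem~\ref{T-addition for mean length} with $R=\Zb$ and $\rL=\rk$), and then translate back using Theorem~\ref{T-mdim vs mrk}.

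First I would dualize the given sequence. Since $X_1,X_2,X_3$ are compact metrizable abelian groups, their Pontryagin duals $\widehat{X_i}$ are countable discrete abelian groups, and the $\Gamma$-actions by continuous automorphisms make each $\widehat{X_i}$ a countable $\Zb\Gamma$-module. As Pontryagin duality is a contravariant exact functor, the short exact sequence $0\rightarrow X_1\rightarrow X_2\rightarrow X_3\rightarrow 0$ dualizes to a short exact sequence of countable $\Zb\Gamma$-modules $0\rightarrow \widehat{X_3}\rightarrow \widehat{X_2}\rightarrow \widehat{X_1}\rightarrow 0$. Put $\cM_2=\widehat{X_2}$ and let $\cM_1\subseteq \cM_2$ be the image of $\widehat{X_3}$; then $\cM_1\cong \widehat{X_3}$ and $\cM_2/\cM_1\cong \widehat{X_1}$ as $\Zb\Gamma$-modules. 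By double duality (naturally and $\Gamma$-equivariantly) one has $\widehat{\cM_2}=X_2$, $\widehat{\cM_1}=X_3$, $\widehat{\cM_2/\cM_1}=X_1$, and the factor map $\widehat{\cM_2}\rightarrow \widehat{\cM_1}$ is precisely the original $\pi\colon X_2\rightarrow X_3$.

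Next, observe that every $\Zb\Gamma$-module is locally $\rk$-finite: a finitely generated $\Zb$-submodule of a $\Zb\Gamma$-module is a finitely generated abelian group, hence of finite $\rk$. Thus Theorem~\ref{T-addition for mean length}, applied with $\cM_3=\cM_2$, yields
$$\mrk_{\Sigma,\omega}(\cM_2)=\mrk_{\Sigma,\omega}(\cM_1|\cM_2)+\mrk_{\Sigma,\omega}(\cM_2/\cM_1).$$
Now I would rewrite each term using Theorem~\ref{T-mdim vs mrk}: the absolute case ($\cM_1=\cM_2$) gives $\mrk_{\Sigma,\omega}(\cM_2)=\mdim_{\Sigma,\omega}(\widehat{\cM_2})=\mdim_{\Sigma,\omega}(X_2)$; the relative case gives $\mrk_{\Sigma,\omega}(\cM_1|\cM_2)=\mdim_{\Sigma,\omega}(\widehat{\cM_1}|\widehat{\cM_2})=\mdim_{\Sigma,\omega}(X_3|X_2)$; and the absolute case again gives $\mrk_{\Sigma,\omega}(\cM_2/\cM_1)=\mdim_{\Sigma,\omega}(\widehat{\cM_2/\cM_1})=\mdim_{\Sigma,\omega}(X_1)$. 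Substituting proves $\mdim_{\Sigma,\omega}(X_2)=\mdim_{\Sigma,\omega}(X_3|X_2)+\mdim_{\Sigma,\omega}(X_1)$.

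The only obstacle is the duality bookkeeping, and it is mild: one must check that the maps induced on duals are $\Gamma$-equivariant, that $\cM_1\cong\widehat{X_3}$ and $\cM_2/\cM_1\cong\widehat{X_1}$ are $\Zb\Gamma$-module isomorphisms (via the annihilator correspondence), and that the inclusion $\cM_1\hookrightarrow\cM_2$ dualizes to $\pi$ under double duality. These are standard facts about Pontryagin duality in the presence of a group action, requiring only an unwinding of definitions, and no genuinely new estimate enters the argument.
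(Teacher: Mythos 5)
Your proposal is correct and follows exactly the paper's route: the paper also deduces Theorem~\ref{T-addition for mdim} by dualizing the exact sequence to countable $\Zb\Gamma$-modules, applying the addition formula of Theorem~\ref{T-addition for mean length} (with $\rL=\rk$, every $\Zb\Gamma$-module being locally $\rk$-finite), and converting each term via Theorem~\ref{T-mdim vs mrk}. Your write-up merely makes explicit the Pontryagin duality bookkeeping that the paper leaves implicit.
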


Our second application concerns the possible values of the mean topological dimension for algebraic actions. It is known that for any countable amenable group with subgroups of arbitrarily large finite index, the mean topological dimension of its continuous actions on compact metrizable spaces can take all values in $\Rb_{\ge 0}\cup \{+\infty\}$ \cite{LW, CK}. We shall show that this is not the case when one restricts to algebraic actions.

For any discrete group $\Gamma$, denote by $\cH(\Gamma)$ the subgroup of $\Qb$ generated by $|H|^{-1}$ for $H$ ranging over all finite subgroups
of $\Gamma$. The class of {\it elementary amenable groups} is the smallest class of groups containing all finite groups and abelian groups and being closed
under taking subgroups, quotient groups, group extensions and directed unions \cite{Chou}.

\begin{notation} \label{N-class C}
We denote by $\fC$ the smallest class of groups containing all free groups and being closed under directed unions and extensions with elementary amenable quotients.
\end{notation}

The following result is due to Linnell (see \cite[Theorem 1.5]{Linnell} and also \cite[Theorem 10.19]{Luck02}).

\begin{theorem} \label{T-Linnell}
If a group $\Gamma$ belongs to $\fC$ and there is an upper bound on the orders of the finite subgroups of $\Gamma$, then the strong Atiyah conjecture holds for $\Gamma$, i.e. for any $m, n\in \Nb$ and any
$f\in M_{m, n}(\Cb\Gamma)$, denoting by $P_f$ the orthogonal projection from $(\ell^2(\Gamma))^{n\times 1}$ to $\ker f$, one has $\tr_{\cL\Gamma}P_f\in \cH(\Gamma)$.
\end{theorem}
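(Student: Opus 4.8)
The plan is to follow Linnell's division‑ring strategy and translate the analytic statement about $\tr_{\cL\Gamma}P_f$ into a ring‑theoretic one inside the algebra $\mathcal{U}(\Gamma)$ of (unbounded) operators affiliated to $\cL\Gamma$. Fix a subfield $k\subseteq\Cb$ (one may take $k=\Cb$). Let $\mathcal{R}_{k\Gamma}\subseteq\mathcal{U}(\Gamma)$ be the \emph{rational closure} of $k\Gamma$, i.e.\ the smallest subring containing $k\Gamma$ and closed under inverting those square matrices over it that become invertible in $\mathcal{U}(\Gamma)$. Since $\cL\Gamma$ is a finite von Neumann algebra, $\mathcal{U}(\Gamma)$ is a self‑injective von Neumann regular ring, every module over a semisimple Artinian ring is flat, and the von Neumann dimension of a finitely generated submodule of $\mathcal{U}(\Gamma)^{n}$ coincides with the (normalized) $\mathcal{R}_{k\Gamma}$‑rank of the corresponding module whenever $\mathcal{R}_{k\Gamma}$ is semisimple Artinian. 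Under this translation the strong Atiyah conjecture for $\Gamma$ becomes the assertion that $\mathcal{R}_{k\Gamma}$ is semisimple Artinian and that the von Neumann dimensions of its simple modules all lie in $\cH(\Gamma)$; call such a group \emph{good}. Because the orders of the finite subgroups of $\Gamma$ are bounded, $\cH(\Gamma)=\tfrac1l\Zb$ with $l$ their least common multiple. It then suffices to prove: (i) free groups are good; (ii) the class of good groups with a uniform bound on the orders of finite subgroups is closed under directed unions; (iii) it is closed under extensions with elementary amenable quotient. Combined with a transfinite induction along the construction of $\fC$ (free groups, directed unions, extensions with elementary amenable quotients), these give the theorem.

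For (i): a free group $F$ is torsion‑free, so $\cH(F)=\Zb$ and goodness means $\mathcal{R}_{kF}$ is a skew field, over which every finitely generated module is free. This holds because $k[F]$ is a free ideal ring and hence has a universal field of fractions in Cohn's sense, which Linnell identifies with the division closure of $k[F]$ inside $\mathcal{U}(F)$. For (ii): the entries of $f$ lie in $k\Gamma_{i}$ for some term of the directed system, $\mathcal{R}_{k\Gamma}=\varinjlim_{i}\mathcal{R}_{k\Gamma_{i}}$, von Neumann dimension is continuous along an increasing union of subgroups, and the uniform bound forces all denominators occurring at finite stages to lie already in $\cH(\Gamma)=\bigcup_{i}\cH(\Gamma_{i})$. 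For (iii), since an elementary amenable group $Q$ is itself built from finite and abelian groups by iterated extensions and directed unions, I would run a second transfinite induction on the elementary amenable hierarchy of $Q$, using (ii) for the directed‑union steps, reducing to the two atomic cases $Q$ finite and $Q\cong\Zb$ (finitely generated abelian $Q$ following by iteration). If $Q\cong\Zb$, then $\mathcal{R}_{k\Gamma}$ contains the skew Laurent extension $\mathcal{R}_{kN}\ast\Zb$ of a semisimple Artinian ring, which is an Ore ring whose classical quotient ring is semisimple Artinian and equal to $\mathcal{R}_{k\Gamma}$; Ore localization leaves the relevant $G_0$‑classes unchanged, so the dimensions stay in $\cH(N)=\cH(\Gamma)$.

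The main obstacle is the finite‑quotient step of (iii). There $\mathcal{R}_{k\Gamma}$ contains the crossed product $\mathcal{R}_{kN}\ast Q$, which is semisimple Artinian by a Maschke‑type argument valid in characteristic $0$; the difficulty is to show that the von Neumann dimensions of its simple modules, viewed inside $\cL\Gamma$, lie in $\cH(\Gamma)$. This is exactly where Linnell invokes Moody's induction theorem (a crossed‑product analogue of the induction theorems for the algebraic $K$‑theory of group rings): it shows that $G_0(\mathcal{R}_{kN}\ast Q)$ is generated by classes of modules induced from the finite subgroups of $\Gamma$, each such class contributing a denominator dividing the order of the corresponding finite subgroup. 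One then checks, using normality and faithfulness of $\tr_{\cL\Gamma}$, that von Neumann dimension descends to a well‑defined homomorphism on $G_0$ which sends an induced class to an integer multiple of the inverse of a finite‑subgroup order, hence into $\cH(\Gamma)$. Assembling the two nested inductions — along the elementary amenable hierarchy of $Q$ inside the transfinite construction of $\fC$, carrying the finite‑subgroup bound through at every stage — then completes the proof. The delicate bookkeeping is to track, at each operation, precisely which ring‑theoretic property (``semisimple Artinian rational closure with all simple‑module dimensions in $\cH(\cdot)$'') is preserved, and to verify that the various $G_0$‑arguments are compatible with the von Neumann dimension functional.
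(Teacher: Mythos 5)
This statement is not proved in the paper at all: the authors quote it as Linnell's theorem, citing \cite[Theorem 1.5]{Linnell} and \cite[Theorem 10.19]{Luck02}, and then only use it as a black box (in Lemma~\ref{L-range of vrk}). So there is no in-paper argument to compare yours against; what you have written is, in effect, a condensed roadmap of the Linnell--L\"{u}ck proof itself.

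As a roadmap it is essentially faithful: translating the strong Atiyah conjecture into the statement that the rational (division) closure of $\Cb\Gamma$ in the algebra of affiliated operators is semisimple Artinian with simple-module dimensions in $\cH(\Gamma)$, handling free groups via Cohn's universal field of fractions, passing through directed unions by compatibility of von Neumann dimension, and treating extensions with elementary amenable quotient by a second transfinite induction that bottoms out in finite and (virtually finitely generated) abelian quotients, with Moody's induction theorem supplying the $G_0$-generation by modules induced from finite subgroups --- this is the architecture of L\"{u}ck's Chapter 10. But be aware that, as a proof, your text defers exactly the hard content to unproved invocations: (a) the identification of the division closure of $\Cb F$ with the universal field of fractions of the free group algebra is not a formal consequence of $\Cb F$ being a fir --- it requires Linnell's analytic argument that full matrices over $\Cb F$ become invertible in $\mathcal{U}(F)$; (b) Moody's induction theorem is a deep external input, and the reduction of the elementary amenable quotient case is usually organized around virtually finitely generated abelian quotients rather than literally the two atomic cases ``finite'' and ``$\Zb$''; (c) the assertion that von Neumann dimension descends to a homomorphism on $G_0$ compatible with induction needs the careful dimension theory for modules over $\mathcal{U}(\Gamma)$ (L\"{u}ck's dimension function), not just faithfulness and normality of the trace. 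So your proposal matches how the theorem is actually established in the literature, but it is a sketch relying on precisely those results, not a self-contained proof; in the context of this paper the appropriate move is the one the authors make, namely to cite Linnell and L\"{u}ck.
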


The class of sofic groups is closed under directed unions, free products and extensions with amenable quotients \cite[Theorem 1]{ES06}. Thus every group in the class $\fC$ is sofic.

\begin{lemma} \label{L-range of vrk}
Let $\Gamma$ be a group in $\fC$ with an upper bound on the orders of the finite subgroups of $\Gamma$. Let $R$ be a unital subring of $\Cb$. For any $R\Gamma$-modules $\cM_1\subseteq \cM_2$, one has
$\vrk(\cM_1|\cM_2)\in \cH(\Gamma)\cup\{+\infty\}$.
\end{lemma}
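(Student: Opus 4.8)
The plan is to reduce the computation of $\vrk(\cM_1|\cM_2)$ to the finitely presented case, where the strong Atiyah conjecture applies, and then propagate via the uniqueness mechanism of Lemma~\ref{L-unique}. The point that makes everything work --- and the only place the bound on the orders of the finite subgroups of $\Gamma$ is used --- is that $\cH(\Gamma)$ is a \emph{discrete} subgroup of $\Rb$: by hypothesis the set of orders of finite subgroups of $\Gamma$ is finite, so letting $L$ be its least common multiple we have $\Zb\subseteq\cH(\Gamma)\subseteq\tfrac1L\Zb$, whence $\cH(\Gamma)$ is infinite cyclic and closed in $\Rb$. In particular, any net in $\cH(\Gamma)$ that converges in $\Rb$ is eventually constant with limit in $\cH(\Gamma)$, and a monotone net in $\cH(\Gamma)$ either stabilizes (with limit in $\cH(\Gamma)$) or diverges to $+\infty$.

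First I would treat a finitely presented $R\Gamma$-module $\cM=(R\Gamma)^{1\times n}/(R\Gamma)^{1\times m}f$ with $f\in M_{m,n}(R\Gamma)$. Since $R\subseteq\Cb$ we have $f\in M_{m,n}(\Cb\Gamma)$, so Lemma~\ref{L-vrk for fp} gives $\vrk(\cM)=\tr_{\cL\Gamma}P_f$, and Theorem~\ref{T-Linnell}, applicable because $\Gamma\in\fC$ has an upper bound on the orders of its finite subgroups, yields $\tr_{\cL\Gamma}P_f\in\cH(\Gamma)$. Hence $\vrk(\cM)\in\cH(\Gamma)$, in particular finite.

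Next I would run the four stages of the algorithm in the proof of Lemma~\ref{L-unique}, which is legitimate because $\vrk(\cdot|\cdot)$ satisfies the hypotheses of that lemma by Lemma~\ref{L-vrk basic}. Stage 1: if $\cM_2$ is finitely generated free and $\cM_1\subseteq\cM_2$ is finitely generated, then $\cM_2$ and $\cM_2/\cM_1$ are finitely presented, so $\vrk(\cM_1|\cM_2)=\vrk(\cM_2)-\vrk(\cM_2/\cM_1)\in\cH(\Gamma)$ (a group) and is finite. Stage 2: for finitely generated $\cM$, write $\cM=\cM_2/\cM_1$ with $\cM_2$ finitely generated free and take finitely generated submodules $\cM_j'\nearrow\cM_1$; the continuity hypothesis of Lemma~\ref{L-unique} (valid for $\vrk$) expresses $\vrk(\cM_1|\cM_2)$ as the supremum of the increasing net $\vrk(\cM_j'|\cM_2)$, which by the addition property of $\vrk$ is $\le\vrk(\cM_2)<+\infty$ and whose terms lie in $\cH(\Gamma)$ by Stage 1, so $\vrk(\cM_1|\cM_2)\in\cH(\Gamma)$ and then $\vrk(\cM)=\vrk(\cM_2)-\vrk(\cM_1|\cM_2)\in\cH(\Gamma)$, both finite. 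Stage 3: for $\cM_1$ finitely generated inside an arbitrary $\cM_2$, take finitely generated $\cM_j'\nearrow\cM_2$ with $\cM_1\subseteq\cM_j'$; by Stage 2 and the addition property $\vrk(\cM_1|\cM_j')=\vrk(\cM_j')-\vrk(\cM_j'/\cM_1)\in\cH(\Gamma)$ and is finite, and $\vrk(\cM_1|\cM_2)$ is the limit of this decreasing, nonnegative net in $\cH(\Gamma)$, hence lies in $\cH(\Gamma)$ and is finite. Stage 4: for arbitrary $\cM_1\subseteq\cM_2$, take finitely generated $\cM_j'\nearrow\cM_1$; by Stage 3 each $\vrk(\cM_j'|\cM_2)\in\cH(\Gamma)$, and $\vrk(\cM_1|\cM_2)$ is the supremum of this increasing net in the discrete group $\cH(\Gamma)$, hence is either a finite element of $\cH(\Gamma)$ or $+\infty$ --- which is the assertion.

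The hard part is purely bookkeeping: at Stages 2 and 3 one must certify that the relevant monotone net is \emph{bounded}, so that its limit stays inside $\cH(\Gamma)$ and does not jump to $+\infty$; this is exactly what the addition property of $\vrk$ together with the finiteness of $\vrk$ on finitely generated modules provide, and it is only at Stage 4, where no such bound exists, that the value $+\infty$ genuinely occurs. All the remaining ingredients are direct citations: Lemma~\ref{L-vrk for fp}, Theorem~\ref{T-Linnell}, Lemma~\ref{L-unique}, and Lemma~\ref{L-vrk basic}.
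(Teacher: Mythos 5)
Your proposal is correct and follows the paper's own route: the finitely presented case via Lemma~\ref{L-vrk for fp} and Theorem~\ref{T-Linnell}, then propagation through the algorithm of Lemma~\ref{L-unique} (justified by Lemma~\ref{L-vrk basic}), using the discreteness of $\cH(\Gamma)$ to keep limits of the monotone nets inside $\cH(\Gamma)\cup\{+\infty\}$. You have merely written out the stage-by-stage bookkeeping and the verification that $\cH(\Gamma)\subseteq \tfrac1L\Zb$, which the paper leaves implicit.
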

\begin{proof} Let $\cM$ be a finitely presented $R\Gamma$-module. Then $\cM$ is of the form $(R\Gamma)^{1\times n}/(R\Gamma)^{1\times m}f$ for some $m,n\in \Nb$ and some $f\in M_{m, n}(R\Gamma)$.
By Lemma~\ref{L-vrk for fp} and
Theorem~\ref{T-Linnell} we have $\vrk(\cM)=\tr_{\cL\Gamma}P_f \in \cH(\Gamma)$.

By Lemma~\ref{L-vrk basic} one can compute $\vrk(\cM_1|\cM_2)$ for all $R\Gamma$-modules $\cM_1\subseteq \cM_2$ using the algorithm in the proof of Lemma~\ref{L-unique}.
Note that $\cH(\Gamma)$ is a discrete subgroup of $\Rb$. It follows that $\vrk(\cM_1|\cM_2)\in \cH(\Gamma)\cup\{+\infty\}$.
\end{proof}

Combining Lemma~\ref{L-range of vrk} and Theorems~\ref{T-mdim vs vrk} and \ref{T-mrk vs vrk}, we get

\begin{corollary} \label{C-range of mdim}
Let $\Gamma$ be a countable group in the class $\fC$ with an upper bound on the orders of the finite subgroups of $\Gamma$. For any countable $\Zb\Gamma$-modules $\cM_1\subseteq \cM_2$, one has
$$\mdim_{\Sigma, \omega}(\widehat{\cM_1}|\widehat{\cM_2}), \mrk_{\Sigma, \omega}(\cM_1|\cM_2)\in \cH(\Gamma)\cup \{+\infty\}.$$
\end{corollary}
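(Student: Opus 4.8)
The plan is to deduce the statement directly from the two correspondence theorems of the previous sections together with the range constraint coming from the strong Atiyah conjecture; essentially no new computation is needed, since the work has already been packaged into Lemma~\ref{L-range of vrk} and Theorems~\ref{T-mrk vs vrk} and \ref{T-mdim vs vrk}.

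First I would check that the hypotheses of those results are met for $\Gamma$. By assumption $\Gamma$ is countable, and since the class of sofic groups is closed under directed unions, free products, and extensions with amenable quotients, the class $\fC$ is contained in the class of sofic groups, as recorded before Lemma~\ref{L-range of vrk}; hence $\Gamma$ is a countable sofic group. Thus $\Gamma$ satisfies the standing hypotheses of Sections~\ref{S-mrk vs vrk} and \ref{S-mdim vs mrk}: in particular Theorems~\ref{T-mrk vs vrk} and \ref{T-mdim vs vrk} apply to $\Gamma$ with the fixed sofic approximation sequence $\Sigma$ and free ultrafilter $\omega$, while Lemma~\ref{L-range of vrk} applies with the unital subring $R=\Zb$ of $\Cb$.

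Next I would apply Lemma~\ref{L-range of vrk} with $R=\Zb$ to the countable $\Zb\Gamma$-modules $\cM_1\subseteq\cM_2$: since $\Gamma\in\fC$ has an upper bound on the orders of its finite subgroups, this yields $\vrk(\cM_1|\cM_2)\in\cH(\Gamma)\cup\{+\infty\}$. Finally, since $\Zb\subseteq\bar\Qb$, Theorem~\ref{T-mrk vs vrk} gives $\mrk_{\Sigma,\omega}(\cM_1|\cM_2)=\vrk(\cM_1|\cM_2)$, and Theorem~\ref{T-mdim vs vrk} (which moreover identifies the relative metric and lower mean dimensions with the same value) gives $\mdim_{\Sigma,\omega}(\widehat{\cM_1}|\widehat{\cM_2})=\vrk(\cM_1|\cM_2)$. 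Combining these two identities with the previous line gives $\mrk_{\Sigma,\omega}(\cM_1|\cM_2),\ \mdim_{\Sigma,\omega}(\widehat{\cM_1}|\widehat{\cM_2})\in\cH(\Gamma)\cup\{+\infty\}$, which is the assertion.

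There is no genuine obstacle at the level of this corollary; the substantive content sits one layer below, in the proof of Lemma~\ref{L-range of vrk}. There one first checks $\vrk(\cM)=\tr_{\cL\Gamma}P_f\in\cH(\Gamma)$ for a finitely presented module $\cM=(R\Gamma)^{1\times n}/(R\Gamma)^{1\times m}f$ via Lemma~\ref{L-vrk for fp} and Linnell's Theorem~\ref{T-Linnell}, and then runs the reconstruction algorithm of Lemma~\ref{L-unique} --- legitimate because $\vrk$ satisfies its hypotheses by Lemma~\ref{L-vrk basic} --- to pass to arbitrary modules. The one point requiring care is that the limits over increasing nets produced by that algorithm remain inside $\cH(\Gamma)$ rather than merely in its closure in $\Rb$; this is exactly where the discreteness of the subgroup $\cH(\Gamma)\subseteq\Rb$ is used.
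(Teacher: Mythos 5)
Your argument is correct and is essentially the paper's own proof: the paper obtains the corollary by combining Lemma~\ref{L-range of vrk} (applied with $R=\Zb\subseteq\bar{\Qb}$) with Theorems~\ref{T-mrk vs vrk} and \ref{T-mdim vs vrk}, exactly as you do, with the observation that groups in $\fC$ are sofic already recorded before Lemma~\ref{L-range of vrk}. Your closing remarks on the internals of Lemma~\ref{L-range of vrk} (including the role of the discreteness of $\cH(\Gamma)$ in $\Rb$) are accurate but not needed for the corollary itself.
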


Our last application concerns the relation between zero mean topological dimension actions and finite entropy actions.
Lindenstrauss showed that when $\Gamma$ is amenable, inverse limits of finite entropy actions must have zero mean topological dimension \cite[Proposition 6.11]{Lindenstrauss}. This is also true for sofic groups.

\begin{proposition} \label{P-finite entropy to zero mdim}
Let $\Gamma$ be a countable sofic group. Let $\{X_j\}_{j\in \Nb}$ be a sequence of compact metrizable spaces carrying continuous $\Gamma$-actions and factor maps $X_{j+1}\rightarrow X_j$ for all $j\in \Nb$. Set  $X=\varprojlim_{j\to \infty}X_j$. Suppose that $h_{\Sigma, \omega}(X_j)<+\infty$ for every $j\in \Nb$. Then $\mdim_{\Sigma, \omega}(X)=0$ or $-\infty$.
If furthermore $0\le h_{\Sigma, \omega}(X_j)<+\infty$ for every $j\in \Nb$, then $\mdim_{\Sigma, \omega}(X)=0$.
\end{proposition}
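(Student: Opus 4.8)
The plan is to reduce the statement to a bound on each individual system, push that bound through the inverse limit using the relative machinery of Section~\ref{S-relative mdim}, and then rule out the value $-\infty$ in the second part by a direct lifting argument. First I would record the following bound: if $Z$ is a compact metrizable space with a continuous $\Gamma$-action and $h_{\Sigma,\omega}(Z)<+\infty$, then $\mdim_{\Sigma,\omega}(Z)\le 0$. Indeed, fixing a compatible metric $\rho$ on $Z$ (which is automatically dynamically generating), Definition~\ref{D-entropy} gives $h^{\varepsilon}_{\Sigma,\omega,\infty}(\rho)\le h_{\Sigma,\omega,\infty}(\rho)=h_{\Sigma,\omega}(Z)<+\infty$ for every $\varepsilon>0$, whence
$$\mdim_{\Sigma,\omega,\rM}(Z,\rho)=\varliminf_{\varepsilon\to 0}\frac{1}{|\log\varepsilon|}h^{\varepsilon}_{\Sigma,\omega,\infty}(\rho)\le\varlimsup_{\varepsilon\to 0}\frac{h_{\Sigma,\omega}(Z)}{|\log\varepsilon|}\le 0.$$
Hence $\mdim_{\Sigma,\omega,\rM}(Z)\le 0$, and by Theorem~\ref{T-comparison} applied to the identity factor map $Z\to Z$ one gets $\mdim_{\Sigma,\omega}(Z)\le\mdim_{\Sigma,\omega,\rM}(Z)\le 0$.

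Next I would apply Proposition~\ref{P-topology inverse limit} to the tower $\{X_j\}$ with inverse limit $X$, taking for the extension appearing there the space $X$ itself together with the identity factor map. Since the identity is a homeomorphism one has $\mdim_{\Sigma,\omega}(X|X)=\mdim_{\Sigma,\omega}(X)$, and $\mdim_{\Sigma,\omega}(X_j|X)\le\mdim_{\Sigma,\omega}(X_j)$ by Proposition~\ref{P-compare with absolute}; combining with the previous bound applied to each $X_j$ yields
$$\mdim_{\Sigma,\omega}(X)=\mdim_{\Sigma,\omega}(X|X)\le\varlimsup_{j\to\infty}\mdim_{\Sigma,\omega}(X_j|X)\le\varlimsup_{j\to\infty}\mdim_{\Sigma,\omega}(X_j)\le 0.$$
On the other hand, $\mdim_{\Sigma,\omega}(X)$ is always either $-\infty$ or $\ge 0$: for the one-element open cover $\cU_{0}=\{X\}$ one has $\cD(\cU_{0}^{d}|_{\Map(\rho,F,\delta,\sigma)})=0$ whenever $\Map(\rho,F,\delta,\sigma)\neq\emptyset$, so if for all $F\in\cF(\Gamma)$ and $\delta>0$ the set $\{i:\Map(\rho,F,\delta,\sigma_{i})\neq\emptyset\}$ lies in $\omega$ then $\mdim_{\Sigma,\omega}(X)\ge\mdim_{\Sigma,\omega}(\cU_{0})=0$, while otherwise the convention in Definition~\ref{D-mdim} forces $\mdim_{\Sigma,\omega}(\cU,\rho,F,\delta)=-\infty$ for the bad pair $(F,\delta)$ and every cover $\cU$, hence $\mdim_{\Sigma,\omega}(X)=-\infty$. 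Together this gives $\mdim_{\Sigma,\omega}(X)\in\{0,-\infty\}$, which is the first assertion.

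For the second assertion I must rule out the value $-\infty$ under the extra hypothesis $0\le h_{\Sigma,\omega}(X_j)$; equivalently, by the dichotomy just established, I must show the approximation spaces of $X$ are non-empty for a set of indices in $\omega$, and since mean dimension is independent of the dynamically generating pseudometric it suffices to exhibit one convenient such pseudometric. As in the proof of Lemma~\ref{L-metric inverse limit}, I would take compatible metrics $\rho_{j}$ on $X_{j}$ with $\rho_{j}(p_{j}(y),p_{j}(z))\le\rho_{j+1}(y,z)$ and a decreasing sequence $0<\lambda_{j}\le 1$ with $\lambda_{j}^{2}\diam(X_{j},\rho_{j})^{2}<4^{-j}$, and set $\rho=\max_{j}\lambda_{j}\rho_{j}(q_{j}\cdot,q_{j}\cdot)$ on $X$, where $q_{j}\colon X\to X_{j}$ is the canonical surjective factor map. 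Given $F$ and $\delta$, choose $N$ with $4^{-N}\le\tfrac{9}{4}\delta^{2}$ and put $\delta'=\delta/(2\sqrt{N})$. Since $h_{\Sigma,\omega}(X_{N})\ge 0$, the set of $i$ with $\Map(\rho_{N},F,\delta',\sigma_{i})\neq\emptyset$ lies in $\omega$; for such $i$ I pick $\psi$ in that map space and lift it pointwise through the surjection $q_{N}$ to some $\varphi\colon[d_{i}]\to X$ with $q_{N}\circ\varphi=\psi$. For $j\le N$ the map $q_{j}\circ\varphi$ is the projection of $\psi$ to $X_{j}$, which lies in $\Map(\rho_{j},F,\delta',\sigma_{i})$ by monotonicity of the $\rho_{j}$, and using $\rho(x,y)^{2}\le\sum_{j}\lambda_{j}^{2}\rho_{j}(q_{j}x,q_{j}y)^{2}$ together with $\Gamma$-equivariance of the $q_{j}$ one checks, for every $s\in F$,
$$\rho_{2}(s\varphi,\varphi s)^{2}\le\sum_{j\le N}\lambda_{j}^{2}(\delta')^{2}+\sum_{j>N}\lambda_{j}^{2}\diam(X_{j},\rho_{j})^{2}\le N(\delta')^{2}+\tfrac{1}{3}4^{-N}\le\delta^{2},$$
so $\varphi\in\Map(\rho,F,\delta,\sigma_{i})$. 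Hence these approximation spaces are non-empty for a set in $\omega$ for every $F$ and $\delta$, so $\mdim_{\Sigma,\omega}(X)\ge 0$, and with the first assertion $\mdim_{\Sigma,\omega}(X)=0$. The main obstacle is precisely this last step: non-emptiness of the approximation spaces of $X$ cannot be read off from non-emptiness at a single finite level through a factor map, because pushing forward along a factor map can strictly shrink a map space (this is exactly why $h_{\Sigma,\omega}(X_j|X)<h_{\Sigma,\omega}(X_j)$ is possible); one must instead use that $q_{N}\colon X\to X_{N}$ is onto, so a single sufficiently fine level $X_{N}$ already yields a lift, with the uncontrolled higher coordinates absorbed by the fast-decaying weights $\lambda_{j}$, and the remainder is bookkeeping with these constants and the pseudometric of Lemma~\ref{L-metric inverse limit}.
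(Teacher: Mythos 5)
Your argument is correct and follows essentially the same route as the paper's proof: finite entropy forces the (metric) mean dimension terms to be $\le 0$, Theorem~\ref{T-comparison} together with Proposition~\ref{P-topology inverse limit} pushes this bound through the inverse limit, and the lower bound $\mdim_{\Sigma,\omega}(X)\ge 0$ is obtained exactly as in the paper by lifting an approximating map through the surjection $X\to X_N$ and absorbing the higher coordinates with fast-decaying weights. The only cosmetic differences are that you bound the absolute quantities $\mdim_{\Sigma,\omega}(X_j)$ and then use Proposition~\ref{P-compare with absolute} instead of bounding $\mdim_{\Sigma,\omega}(X_j|X)$ directly via $h_{\Sigma,\omega}(X_j|X)$, you spell out the $\{0,-\infty\}$ dichotomy explicitly, and you use a weighted-max pseudometric where the paper uses a weighted sum.
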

\begin{proof} Let $j\in \Nb$.
By Proposition~\ref{P-metric compare with absolute} we have $h_{\Sigma, \omega}(X_j|X)\le h_{\Sigma, \omega}(X_j)<+\infty$. From Definition~\ref{D-relative metric mdim} it is clear that  $\mdim_{\Sigma, \omega, \rM}(X_j, \rho_j|X)\le 0$ for every compatible metric $\rho_j$ of $X_j$. By Theorem~\ref{T-comparison} we get
$\mdim_{\Sigma, \omega}(X_j|X)\le \mdim_{\Sigma, \omega, \rM}(X_j|X)\le 0$.

From Proposition~\ref{P-topology inverse limit} we conclude that $\mdim_{\Sigma, \omega}(X)\le \varlimsup_{j\to \infty}\mdim_{\Sigma, \omega}(X_j|X)\le 0$.

Now assume that $0\le h_{\Sigma, \omega}(X_j)<+\infty$ for every $j\in \Nb$. Take a compatible metric $\rho'_j$ for each $X_j$ with $\diam(X_j, \rho'_j)\le 1$.
Denote by $p_{j, k}$ and $\pi_j$ the factor maps $X_k\rightarrow X_j$ for $j\le k$ and  $X\rightarrow X_j$ respectively. Define a new compatible metric $\rho_k$ on $X_k$ by
$$ \rho_k(x, y):=\max_{1\le j\le k}\rho'_j(p_{j, k}(x), p_{j, k}(y)).$$
Then $\diam(X_k, \rho_k)\le 1$, and $\rho_j(p_{j, k}(x), p_{j, k}(y))\le \rho_k(x, y)$ for all $j\le k$ and $x, y\in X_k$.
Now we define a compatible metric $\rho$ for $X$ by
$$\rho(x, y):=\sum_{j=1}^\infty \frac{1}{2^j}\rho_j(\pi_j(x), \pi_j(y)).$$

Let $F\in \cF(\Gamma)$ and $\delta>0$. We can find some $j\in \Nb$ such that $\rho(x, y)\le \rho_j(\pi_j(x), \pi_j(y))+\delta$ for all $x, y\in X$.
It is easy to check that for any map $\sigma: \Gamma\rightarrow \Sym(d)$, every $\varphi\in X^d$ with $\pi_j\circ \varphi\in \Map(\rho_j, F, \delta, \sigma)$ belongs to $\Map(\rho, F, 2\delta, \sigma)$.
Since $0\le h_{\Sigma, \omega}(X_j)$, the set of $i\in \Nb$ with $\Map(\rho_j, F, \delta, \sigma_i)\neq \emptyset$ is in $\omega$. Thus the set of $i\in \Nb$ with $\Map(\rho, F, 2\delta, \sigma_i)\neq \emptyset$ is in $\omega$. Therefore $\mdim_{\Sigma, \omega}(\cU, \rho, F, 2\delta)\ge 0$ for every finite open cover $\cU$ of $X$. Consequently, $\mdim_{\Sigma, \omega}(X)\ge 0$.

\end{proof}

 Now we turn our attention to algebraic actions.
Note that the algebraic actions of countable sofic groups have fixed points, thus their  topological entropy and  mean topological dimension are always nonnegative.

We consider first finitely presented modules.

\begin{corollary} \label{C-entropy fp}
Let $\Gamma$ be a countable sofic group and $\cM$ be a finitely presented $\Zb\Gamma$-module. Then $\mdim_{\Sigma, \omega}(\widehat{\cM})=0$ if and only if
$h_{\Sigma, \omega}(\widehat{\cM})<+\infty$.
\end{corollary}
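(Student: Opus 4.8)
The plan is to prove the two implications separately; the first is immediate from the machinery already developed, while the second needs one external input.

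For the implication ``$h_{\Sigma,\omega}(\widehat{\cM})<+\infty$ implies $\mdim_{\Sigma,\omega}(\widehat{\cM})=0$'', fix a compatible metric $\rho$ on $\widehat{\cM}$. Since $h^\varepsilon_{\Sigma,\omega,\infty}(\rho)\le h_{\Sigma,\omega}(\widehat{\cM})$ for every $\varepsilon>0$ and the right-hand side is a fixed finite number, dividing by $|\log\varepsilon|$ and letting $\varepsilon\to 0$ gives $\mdim_{\Sigma,\omega,\rM}(\widehat{\cM},\rho)=0$, hence $\mdim_{\Sigma,\omega,\rM}(\widehat{\cM})=0$; by Theorem~\ref{T-comparison} we get $\mdim_{\Sigma,\omega}(\widehat{\cM})\le\mdim_{\Sigma,\omega,\rM}(\widehat{\cM})=0$, and $\mdim_{\Sigma,\omega}(\widehat{\cM})\ge 0$ because the algebraic action $\widehat{\cM}$ has the fixed point $0$. (Alternatively, apply Proposition~\ref{P-finite entropy to zero mdim} to the constant sequence $X_j:=\widehat{\cM}$ with identity factor maps.)

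For the converse, suppose $\mdim_{\Sigma,\omega}(\widehat{\cM})=0$ and, using finite presentation, write $\cM=(\Zb\Gamma)^{1\times n}/(\Zb\Gamma)^{1\times m}f$ with $f\in M_{m,n}(\Zb\Gamma)$. First I would invoke Theorem~\ref{T-mdim vs mrk} in the absolute case $\cM_1=\cM_2=\cM$ to get $\mrk_{\Sigma,\omega}(\cM)=0$, and then Theorem~\ref{T-mrk vs vrk}, applicable since $R=\Zb\subseteq\bar{\Qb}$, to get $\vrk(\cM)=0$. By Lemma~\ref{L-vrk for fp}, $\tr_{\cL\Gamma}P_f=\vrk(\cM)=0$, so faithfulness of $\tr_{\cL\Gamma}$ forces $P_f=0$; that is, the operator $M_f\colon(\ell^2(\Gamma))^{n\times 1}\to(\ell^2(\Gamma))^{m\times 1}$, $z\mapsto fz$, is injective. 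Now I would use the known description of the sofic topological entropy of algebraic actions via Fuglede--Kadison determinants (Bowen and Kerr--Li in the principal case, and Hayes for matrices): since $\ker M_f=0$, one has $h_{\Sigma,\omega}(\widehat{\cM})\le h_\Sigma(\widehat{\cM})=\log\ddet_{\cL\Gamma}(f)=\tfrac12\tr_{\cL\Gamma}\log(f^*f)$, which is finite because each entry $f_{j,k}\in\Zb\Gamma$ has finite support, so $\tr_{\cL\Gamma}(f^*f)=\sum_{j,k}\|f_{j,k}\|_2^2<+\infty$, and the elementary inequality $\log t\le t-1$ gives $\tr_{\cL\Gamma}\log(f^*f)\le\tr_{\cL\Gamma}(f^*f)-n<+\infty$. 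Hence $h_{\Sigma,\omega}(\widehat{\cM})<+\infty$.

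The main obstacle is exactly this last step. The tools of the paper produce a translation-invariant compatible metric $\rho$ on $\widehat{\cM}$ with $\mdim_{\Sigma,\omega,\rM}(\widehat{\cM},\rho)=\mdim_{\Sigma,\omega}(\widehat{\cM})=0$ (Theorem~\ref{T-mdim vs mrk}), but vanishing of the metric mean dimension for a single metric does not control the metric-independent sofic topological entropy, so a genuinely external entropy estimate is needed; however one only uses the finiteness (upper-bound) half of the determinantal formula, and it is harmless that the literature states it with $\varlimsup_{i\to\infty}$ rather than $\lim_{i\to\omega}$, since $h_{\Sigma,\omega}(\widehat{\cM})\le h_\Sigma(\widehat{\cM})$.
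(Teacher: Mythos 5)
Your proposal is correct and follows essentially the same route as the paper: the easy direction via finiteness of entropy killing metric mean dimension (the paper simply cites Proposition~\ref{P-finite entropy to zero mdim}, your stated alternative), and the converse via Theorem~\ref{T-mdim vs mrk} together with Theorem~\ref{T-mrk vs vrk} (i.e.\ Theorem~\ref{T-mdim vs vrk}), Lemma~\ref{L-vrk for fp}, faithfulness of $\tr_{\cL\Gamma}$ to get $P_f=0$, and then Hayes's determinant theorem \cite[Theorem 1.1]{Hayes15} for finiteness. Your extra remarks (the bound $\tr_{\cL\Gamma}\log(f^*f)\le \tr_{\cL\Gamma}(f^*f)-n$ and the observation $h_{\Sigma,\omega}\le h_\Sigma$) are harmless elaborations of the same argument.
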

\begin{proof} By Proposition~\ref{P-finite entropy to zero mdim} we just need to show the ``only if'' part. We have $\cM=(\Zb\Gamma)^{1\times n}/(\Zb\Gamma)^{1\times m}f$ for some $m,n\in \Nb$ and $f\in M_{m, n}(\Zb\Gamma)$.
By Theorem~\ref{T-mdim vs vrk} if $\mdim_{\Sigma, \omega}(\widehat{\cM})=0$, then $\vrk(\widehat{\cM})=0$, and hence by Lemma~\ref{L-vrk for fp} one has $\tr_{\cL\Gamma}P_f=0$.
Since $\tr_{\cL\Gamma}$ is  faithful, we get $P_f=0$, i.e. $f$ is injective on $(\ell^2(\Gamma))^{n\times 1}$. By \cite[Theorem 1.1]{Hayes15}, one concludes that $h_{\Sigma, \omega}(\widehat{\cM})<+\infty$.
\end{proof}

Next we consider finitely generated modules. The following corollary was proved in \cite[Corollary 9.5]{LL} under the further assumption that $\Gamma$ is elementary amenable. Using
Theorem~\ref{T-mdim vs mrk}, Proposition~\ref{P-continuity}, and Corollaries~\ref{C-range of mdim} and \ref{C-entropy fp} one can carry out the proof of \cite[Corollary 9.5]{LL}  in general case. Recall the class $\fC$ of groups in Notation~\ref{N-class C}.

\begin{corollary} \label{C-entropy fg}
Let $\Gamma$ be a countable sofic group in the class $\fC$ with an upper bound on the orders of the finite subgroups of $\Gamma$, and $\cM$ be a finitely generated $\Zb\Gamma$-module. Then $\mdim_{\Sigma, \omega}(\widehat{\cM})=0$ if and only if
$h_{\Sigma, \omega}(\widehat{\cM})<+\infty$.
\end{corollary}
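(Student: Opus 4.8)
The plan is to run the argument of \cite[Corollary 9.5]{LL}, with Corollary~\ref{C-range of mdim} replacing the input that comes there from elementary amenability. The ``if'' direction is immediate: if $h_{\Sigma, \omega}(\widehat{\cM})<+\infty$, then since the algebraic action $\Gamma\curvearrowright\widehat{\cM}$ has a fixed point (so $h_{\Sigma, \omega}(\widehat{\cM})\ge 0$), Proposition~\ref{P-finite entropy to zero mdim} applied to the constant sequence $X_j=\widehat{\cM}$ (whose inverse limit is $\widehat{\cM}$) gives $\mdim_{\Sigma, \omega}(\widehat{\cM})=0$.

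For the ``only if'' direction, assume $\mdim_{\Sigma, \omega}(\widehat{\cM})=0$; by the absolute case of Theorem~\ref{T-mdim vs mrk} this means $\mrk_{\Sigma, \omega}(\cM)=0$. Write $\cM=\cM'/\cM_\infty$ with $\cM'$ a finitely generated free $\Zb\Gamma$-module of rank $n$ and $\cM_\infty\subseteq\cM'$, and fix an increasing sequence $\{\cM_j\}_{j\in\Nb}$ of finitely generated $\Zb\Gamma$-submodules of $\cM_\infty$ with $\bigcup_j\cM_j=\cM_\infty$, so that each $\cM'/\cM_j$ is finitely presented. Since $\rk(\Zb)=1$, every $\Zb\Gamma$-module is locally $\rk$-finite and $\mrk_{\Sigma, \omega}(\cM')=n<+\infty$; hence Proposition~\ref{P-continuity} (applied to $\cM_j\subseteq\cM_\infty\subseteq\cM'$) yields $\mrk_{\Sigma, \omega}(\cM'/\cM_j)\searrow\mrk_{\Sigma, \omega}(\cM'/\cM_\infty)=\mrk_{\Sigma, \omega}(\cM)=0$ as $j\to\infty$. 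Each term here is finite: by the addition formula (Theorem~\ref{T-addition for mean length}) $\mrk_{\Sigma, \omega}(\cM'/\cM_j)\le\mrk_{\Sigma, \omega}(\cM')=n$. By Corollary~\ref{C-range of mdim} it therefore lies in $\cH(\Gamma)$, which is a discrete subgroup of $\Rb$ because $\Gamma$ has an upper bound on the orders of its finite subgroups. A decreasing sequence in a discrete subgroup of $\Rb$ that converges to $0$ is eventually $0$, so there is some $j_0$ with $\mrk_{\Sigma, \omega}(\cM'/\cM_{j_0})=0$.

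Now $\cM'/\cM_{j_0}$ is finitely presented with $\mrk_{\Sigma, \omega}(\cM'/\cM_{j_0})=0$, so Theorem~\ref{T-mdim vs mrk} gives $\mdim_{\Sigma, \omega}(\widehat{\cM'/\cM_{j_0}})=0$, and then Corollary~\ref{C-entropy fp} gives $h_{\Sigma, \omega}(\widehat{\cM'/\cM_{j_0}})<+\infty$. Finally $\cM=\cM'/\cM_\infty$ is a quotient of $\cM'/\cM_{j_0}$, so dualizing, $\widehat{\cM}$ is (equivariantly homeomorphic to) a closed $\Gamma$-invariant subset of $\widehat{\cM'/\cM_{j_0}}$; since $\Map(\rho, F, \delta, \sigma)$ for a subsystem, measured with the restricted pseudometric, is contained in that of the ambient system, sofic topological entropy is monotone under passage to subsystems, whence $h_{\Sigma, \omega}(\widehat{\cM})\le h_{\Sigma, \omega}(\widehat{\cM'/\cM_{j_0}})<+\infty$, as desired.

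The points requiring care are: ensuring $\mrk_{\Sigma, \omega}(\cM'/\cM_j)$ is finite (via finite generation and monotonicity of mean length), so that Corollary~\ref{C-range of mdim} places it in $\cH(\Gamma)$ rather than allowing the value $+\infty$; the discreteness of $\cH(\Gamma)$ under the hypothesis on finite subgroups; and the easy but necessary monotonicity of $h_{\Sigma, \omega}$ under subsystems. The rest is a routine assembly of Theorem~\ref{T-mdim vs mrk}, the addition formula, Proposition~\ref{P-continuity}, and Corollaries~\ref{C-range of mdim} and \ref{C-entropy fp}.
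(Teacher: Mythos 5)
Your proposal is correct and follows essentially the same route as the paper, which simply carries out the proof of \cite[Corollary 9.5]{LL} using Theorem~\ref{T-mdim vs mrk}, Proposition~\ref{P-continuity}, and Corollaries~\ref{C-range of mdim} and \ref{C-entropy fp}: approximate $\cM$ by finitely presented quotients $\cM'/\cM_j$, use discreteness of $\cH(\Gamma)$ to get $\mrk_{\Sigma,\omega}(\cM'/\cM_{j_0})=0$ for some $j_0$, invoke the finitely presented case, and conclude by monotonicity of sofic topological entropy under passing to the closed invariant subsystem $\widehat{\cM}\subseteq\widehat{\cM'/\cM_{j_0}}$. The points you flag (finiteness of $\mrk_{\Sigma,\omega}(\cM'/\cM_j)$, discreteness of $\cH(\Gamma)$, and the subsystem monotonicity via nesting of the map spaces with the restricted metric) are exactly the ones that need checking, and your treatment of them is sound.
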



\end{document}